\DeclareRobustCommand\longtwoheadrightarrow
\DeclareRobustCommand{\rvdots}{%
  \vbox{
    \baselineskip4\p@\lineskiplimit\z@
    \kern-\p@
    \hbox{.}\hbox{.}\hbox{.}
  }}
\newcommand{\Mod}[1]{\ (\textup{mod}\ #1)}
\def\moverlay{\mathpalette\mov@rlay}
\def\mov@rlay#1#2{\leavevmode\vtop{%
   \baselineskip\z@skip \lineskiplimit-\maxdimen
   \ialign{\hfil$\m@th#1##$\hfil\cr#2\crcr}}}
\newcommand{\charfusion}[3][\mathord]{
    #1{\ifx#1\mathop\vphantom{#2}\fi
        \mathpalette\mov@rlay{#2\cr#3}
      }
    \ifx#1\mathop\expandafter\displaylimits\fi}
\theoremstyle{plain} 
\newtheorem{theorem}{\indent\sc Theorem}[section]
\newtheorem{lemma}[theorem]{\indent\sc Lemma}
\newtheorem{corollary}[theorem]{\indent\sc Corollary}
\newtheorem{proposition}[theorem]{\indent\sc Proposition}
\theoremstyle{definition} 
\newtheorem{definition}[theorem]{\indent\sc Definition}
\newtheorem{remark}[theorem]{\indent\sc Remark}
\newtheorem{thmx}{Theorem}
\def\address#1#2{\begingroup
\noindent\parbox[t]{7.8cm}{%
\small{\scshape\ignorespaces#1}\par\vskip1ex
\noindent\small{\itshape E-mail address}%
\/: #2\par\vskip4ex}\hfill%
\endgroup}%
\title{Arithmetic properties of orders in imaginary quadratic fields}
\author{
\textsc{Ho Yun Jung, Ja Kyung Koo, Dong Hwa Shin and Dong Sung Yoon} 
}
\date{} 
\begin{document}

\allowdisplaybreaks

\maketitle

\footnote{ 
2020 \textit{Mathematics Subject Classification}. Primary 11R37; Secondary 11E12, 11F80, 11G05, 11R42,  11R29.}
\footnote{ 
\textit{Key words and phrases}. Class field theory,  elliptic curves, form class groups, Galois representations, $L$-functions.} \footnote{
\thanks{
}
}

\begin{abstract}
Let $K$ be an imaginary quadratic field. For an order $\mathcal{O}$ in $K$
and a positive integer $N$, let $K_{\mathcal{O},\,N}$ be the ray class field
of $\mathcal{O}$ modulo $N\mathcal{O}$. We deal with various subjects related to $K_{\mathcal{O},\,N}$, mainly about
Galois representations attached to elliptic curves
with complex multiplication, form class groups and $L$-functions for orders.
\end{abstract}

\maketitle

\tableofcontents

\section {Introduction}

Let $K$ be an imaginary quadratic field.
In 1935, H. S\"{o}hngen (\cite{Sohngen})
first dealt with ray class fields for orders in $K$
which generalizes the usual ray class fields for the maximal order $\mathcal{O}_K$.
Later in \cite{Stevenhagen} Stevenhagen paid attention to the ray class fields for orders
in order to have explicit description of Shimura's reciprocity law (Proposition \ref{reciprocity}).
Furthermore, Cho (\cite{Cho}) constructed 
the fields in view of certain Diophantine equations by extending the ideas in \cite{Cox}.
\par
Let $\mathcal{O}$ be an order in $K$ of discriminant $D_\mathcal{O}$.
We denote by $I(\mathcal{O})$ the group of
proper fractional $\mathcal{O}$-ideals in the sense of \cite[$\S$7.A]{Cox}.
It is well known that every fractional $\mathcal{O}$-ideal is proper
if and only if it is invertible.
We say that a nonzero $\mathcal{O}$-ideal $\mathfrak{a}$ is prime to
a positive integer $\ell$ if $\mathfrak{a}+\ell\mathcal{O}=
\mathcal{O}$.
Let $P(\mathcal{O})$ be the subgroup of $I(\mathcal{O})$ consisting of
principal fractional $\mathcal{O}$-ideals.
For a positive integer $N$, we define the subgroups of $I(\mathcal{O})$ and $P(\mathcal{O})$ as
\begin{eqnarray}
I(\mathcal{O},\,N)&=&\langle\mathfrak{a}~|~
\mathfrak{a}~\textrm{is a nonzero proper $\mathcal{O}$-ideal
prime to $N$}\rangle,\nonumber\\
P_N(\mathcal{O})&=&\langle
\nu\mathcal{O}~|~\nu\in\mathcal{O}\setminus\{0\}~
\textrm{and}~\nu\equiv1\Mod{N\mathcal{O}}\rangle,\label{PNO}
\end{eqnarray}
respectively.
In particular, we have $I(\mathcal{O},\,1)=I(\mathcal{O})$
(cf. \cite[Exercise 7.7]{Cox}) and $P_1(\mathcal{O})=P(\mathcal{O})$
because the field of fractions of $\mathcal{O}$ is $K$.
Then the associated quotient group
\begin{equation*}
\mathcal{C}_N(\mathcal{O})
=I(\mathcal{O},\,N)/P_N(\mathcal{O})
\end{equation*}
is isomorphic to
a generalized ideal class group
modulo $\ell_\mathcal{O}N\mathcal{O}_K$, where
$\ell_\mathcal{O}=[\mathcal{O}_K:\mathcal{O}]$ is the conductor of $\mathcal{O}$
(cf. \cite[Theorem 3.1.8]{Schertz}).
And the existence theorem of class field theory
asserts that there
is a unique abelian extension $K_{\mathcal{O},\,N}$
for which the Artin map
for the modulus $\ell_\mathcal{O}N\mathcal{O}_K$ induces
an isomorphism of the generalized ideal class group
onto $\mathrm{Gal}(K_{\mathcal{O},\,N}/K)$
(cf. \cite[$\S$8]{Cox} or \cite[V.9]{Janusz}).
We call this extension field $K_{\mathcal{O},\,N}$ of $K$
the \textit{ray class field of $\mathcal{O}$ modulo $N\mathcal{O}$}.
In particular,
$K_{\mathcal{O},\,1}$ is just the ring class field $H_\mathcal{O}$
of order $\mathcal{O}$ and $K_{\mathcal{O}_K,\,N}$ is the
ray class field $K_{(N)}$ modulo $(N)=N\mathcal{O}_K$.
\par
When $D_\mathcal{O}\neq-3,\,-4$, we consider the elliptic curve $E_\mathcal{O}$
with $j$-invariant $j(E_\mathcal{O})=j(\mathcal{O})$ given by the Weierstrass equation
\begin{equation*}
E_\mathcal{O}~:~y^2=4x^3-A_\mathcal{O}x
-B_\mathcal{O}
\end{equation*}
as in $\S$\ref{sect3}. Let
$E_\mathcal{O}[N]$ be the group of $N$-torsion points of $E_\mathcal{O}$
and $\mathbb{Q}(E_\mathcal{O}[N])$ be the extension field of $\mathbb{Q}$
generated by the coordinates of points in $E_\mathcal{O}[N]$.
Then it can be shown that if $N\geq2$, then $\mathbb{Q}(E_\mathcal{O}[N])$ contains
$\mathbb{Q}(j(\mathcal{O}))$ (Lemma \ref{QXQj}).
And, in the first main theorem (Theorem \ref{maingenerate}) of this paper, we shall compare
the field $\mathbb{Q}(E_\mathcal{O}[N])$ with $K_{\mathcal{O},\,N}$ .

\begin{thmx}\label{theoremA}
Assume that $D_\mathcal{O}\neq-3,\,-4$.
\begin{enumerate}
\item[\textup{(i)}]
If $D\equiv0\Mod{4}$, then
$\mathbb{Q}(E_\mathcal{O}[2])$ is the maximal real subfield of $K_{\mathcal{O},\,2}$.
\item[\textup{(ii)}] If $D\equiv1\Mod{4}$, then
$\mathbb{Q}(E_\mathcal{O}[2])=K_{\mathcal{O},\,2}$.
\item[\textup{(iii)}] If $N\geq3$, then
$\mathbb{Q}(E_\mathcal{O}[N])$ is an extension field of $K_{\mathcal{O},\,N}$
of degree at most $2$.
\end{enumerate}
\end{thmx}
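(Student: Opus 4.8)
The plan is to establish the comparison first over $K$, using complex multiplication to locate $K(E_\mathcal{O}[N])$ relative to $K_{\mathcal{O},N}$, and then to descend to $\mathbb{Q}$ by analysing how complex conjugation sits inside the mod $N$ representation. Write $G_K=\mathrm{Gal}(\overline{\mathbb{Q}}/K)$, let $\rho:\mathrm{Gal}(\overline{\mathbb{Q}}/\mathbb{Q})\to\mathrm{Aut}(E_\mathcal{O}[N])$ be the Galois action on $N$-torsion, so that $\mathbb{Q}(E_\mathcal{O}[N])$ is the fixed field of $\ker\rho$, and let $[\alpha]$ denote complex multiplication by $\alpha\in\mathcal{O}$. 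Since $E_\mathcal{O}$ has complex multiplication by $\mathcal{O}$, the group $G_K$ acts $\mathcal{O}$-linearly on $E_\mathcal{O}[N]\cong\mathcal{O}/N\mathcal{O}$, so $\rho(G_K)$ lies in the Cartan $C=(\mathcal{O}/N\mathcal{O})^{\times}$; and because $E_\mathcal{O}$ is defined over the real field $\mathbb{Q}(j(\mathcal{O}))$, complex conjugation $c$ satisfies $c\circ[\alpha]\circ c^{-1}=[\bar\alpha]$ on points, so $\rho(c)$ conjugates $C$ through $\alpha\mapsto\bar\alpha$.

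Over $K$ I would identify $K_{\mathcal{O},N}=K(j(\mathcal{O}),h(E_\mathcal{O}[N]))$ by Shimura's reciprocity law (Proposition~\ref{reciprocity}), where $h$ is the Weber function; since $D_\mathcal{O}\neq-3,-4$ we have $j(\mathcal{O})\neq0,1728$, so $h$ is a constant multiple of the $x$-coordinate and $K_{\mathcal{O},N}=K(j(\mathcal{O}),x(E_\mathcal{O}[N]))$. Hence $K(E_\mathcal{O}[N])=K_{\mathcal{O},N}\bigl(y(E_\mathcal{O}[N])\bigr)$, and any $\sigma\in\mathrm{Gal}(K(E_\mathcal{O}[N])/K_{\mathcal{O},N})$ fixes every $x$-coordinate, hence sends each $P$ to $\pm P$; an additivity argument (if $\sigma P=P$ and $\sigma Q=-Q$ then $\sigma(P+Q)=P-Q$ cannot equal $\pm(P+Q)$ for points of order $\ge3$) forces $\sigma=\pm1$ uniformly. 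Thus $\mathrm{Gal}(K(E_\mathcal{O}[N])/K_{\mathcal{O},N})\hookrightarrow\{\pm1\}$, giving $[K(E_\mathcal{O}[N]):K_{\mathcal{O},N}]\le2$ for $N\ge3$, while for $N=2$ one has $-1\equiv1$ (equivalently all $y$-coordinates vanish) and $K(E_\mathcal{O}[2])=K_{\mathcal{O},2}$.

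For the descent I would use $K(E_\mathcal{O}[N])=K\cdot\mathbb{Q}(E_\mathcal{O}[N])$ together with $K_{\mathcal{O},N}\subseteq K(E_\mathcal{O}[N])$, so that the entire question reduces to whether $K\subseteq\mathbb{Q}(E_\mathcal{O}[N])$, i.e. whether $\rho(c)\in\rho(G_K)$. If $\rho(c)\in\rho(G_K)\subseteq C$, then $\rho(c)=[\beta]$ for some $\beta\in(\mathcal{O}/N\mathcal{O})^{\times}$; comparing with $c\circ[\alpha]\circ c^{-1}=[\bar\alpha]$ forces $\bar\alpha\equiv\alpha\Mod{N\mathcal{O}}$ for all $\alpha$, that is $N^{2}\mid D_\mathcal{O}$, while the Weil pairing gives $\det\rho(c)=\chi_{\mathrm{cyc}}(c)=-1$, whence $\beta\bar\beta=\beta^{2}=1\equiv-1\Mod{N}$. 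For $N\ge3$ this is impossible, so $K\subseteq\mathbb{Q}(E_\mathcal{O}[N])$ and $\mathbb{Q}(E_\mathcal{O}[N])=K(E_\mathcal{O}[N])$ is an extension of $K_{\mathcal{O},N}$ of degree at most $2$ by the previous paragraph, which is (iii). For $N=2$ the triviality condition $N^{2}\mid D_\mathcal{O}$ reads $D_\mathcal{O}\equiv0\Mod{4}$. When $D_\mathcal{O}\equiv1\Mod{4}$, conjugation is nontrivial on $\mathcal{O}/2\mathcal{O}$, so $\rho(c)\notin C$, hence $K\subseteq\mathbb{Q}(E_\mathcal{O}[2])$ and $\mathbb{Q}(E_\mathcal{O}[2])=K(E_\mathcal{O}[2])=K_{\mathcal{O},2}$, which is (ii). When $D_\mathcal{O}\equiv0\Mod{4}$, I would further show $\rho(c)=1$, i.e. that all $2$-torsion is real: the CM point $\tau=(D_\mathcal{O}+\sqrt{D_\mathcal{O}})/2$ has $q=e^{2\pi i\tau}>0$, so the discriminant $A_\mathcal{O}^{3}-27B_\mathcal{O}^{2}$ of $E_\mathcal{O}$ is positive and $4x^{3}-A_\mathcal{O}x-B_\mathcal{O}$ has three real roots; thus $\mathbb{Q}(E_\mathcal{O}[2])$ is real, $K\not\subseteq\mathbb{Q}(E_\mathcal{O}[2])$, $[K_{\mathcal{O},2}:\mathbb{Q}(E_\mathcal{O}[2])]=2$, and $\mathbb{Q}(E_\mathcal{O}[2])$ is exactly the fixed field of $c$, that is, the maximal real subfield of $K_{\mathcal{O},2}$, which is (i).

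The main obstacle is twofold. First, I must make the reciprocity-law identification $K_{\mathcal{O},N}=K(j(\mathcal{O}),h(E_\mathcal{O}[N]))$ precise for the non-maximal order $\mathcal{O}$, where the classical main theorem of complex multiplication (stated for $\mathcal{O}_{K}$) does not apply verbatim, and I must control exactly the $\{\pm1\}$-gap between adjoining Weber values and adjoining full torsion points. Second, in the case $N=2$, $D_\mathcal{O}\equiv0\Mod{4}$ I need the sign of the discriminant of $E_\mathcal{O}$ under the distinguished real embedding to conclude $\rho(c)=1$ rather than merely $\rho(c)\in C$; this positivity is what makes $\mathbb{Q}(E_\mathcal{O}[2])$ genuinely real and is the true source of the dichotomy modulo $4$.
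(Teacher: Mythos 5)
Your strategy (CM theory over $K$ to compare $K(E_\mathcal{O}[N])$ with $K_{\mathcal{O},N}$, then descent to $\mathbb{Q}$ by locating complex conjugation relative to the Cartan subgroup, with the sign of the discriminant settling the case $N=2$, $D_\mathcal{O}\equiv0\Mod{4}$) is genuinely different from the paper's modular-function proof, and several of its ingredients are sound. But there is one genuine gap, and it sits exactly at the point the paper singles out as the crux of Theorem \ref{theoremA}: you never prove that $j(\mathcal{O})$ lies in the field generated by the torsion coordinates. Your sentence ``Hence $K(E_\mathcal{O}[N])=K_{\mathcal{O},N}(y(E_\mathcal{O}[N]))$'' does not follow from $K_{\mathcal{O},N}=K(j(\mathcal{O}),x(E_\mathcal{O}[N]))$: that identity only gives $K(E_\mathcal{O}[N])\subseteq K_{\mathcal{O},N}(y(E_\mathcal{O}[N]))$, while the reverse inclusion --- equivalently $K_{\mathcal{O},N}\subseteq K(E_\mathcal{O}[N])$, equivalently $j(\mathcal{O})\in K(E_\mathcal{O}[N])$ --- is assumed, and the same unproved containment is invoked again in your descent step (``together with $K_{\mathcal{O},N}\subseteq K(E_\mathcal{O}[N])$'') and in all three final conclusions. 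Since $E_\mathcal{O}$ is defined over $\mathbb{Q}(j(\mathcal{O}))$, a field of degree $h_\mathcal{O}$ over $\mathbb{Q}$, this is not a formality: it is precisely what distinguishes the theorem, which concerns $\mathbb{Q}(E_\mathcal{O}[N])$ as an extension of $\mathbb{Q}$, from the earlier results over $\mathbb{Q}(j(\mathcal{O}))$ of Bourdon--Clark--Pollack and Clark--Pollack, and it is what Lemma \ref{QXQj} establishes via Lemma \ref{ffjj} together with the inequality machinery of Lemma \ref{jinequality} and Proposition \ref{jgenerate}. Your list of obstacles flags the order-CM reciprocity statement and the $\pm1$ ambiguity, but not this, which is the real one. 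The gap is fillable by elementary means: take two $N$-torsion points $(x_1,y_1)$, $(x_2,y_2)$ with $x_1\neq x_2$, solve the two Weierstrass equations linearly for $A_\mathcal{O}$, $B_\mathcal{O}$, and recover $j(\mathcal{O})=1728+2^63^6B_\mathcal{O}/A_\mathcal{O}$ (legitimate since $A_\mathcal{O}\neq0$ when $D_\mathcal{O}\neq-3,-4$); for $N=2$, where all $y$-coordinates vanish, run the same recovery with the symmetric functions of the three roots of $4x^3-A_\mathcal{O}x-B_\mathcal{O}$. As written, however, the proposal assumes the key point.

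A second, repairable, defect: your representation $\rho:\mathrm{Gal}(\overline{\mathbb{Q}}/\mathbb{Q})\rightarrow\mathrm{Aut}(E_\mathcal{O}[N])$ does not exist, because $E_\mathcal{O}$ is not defined over $\mathbb{Q}$; an automorphism not fixing $j(\mathcal{O})$ carries $E_\mathcal{O}[N]$ into the torsion of a conjugate curve. The action is defined only on $\mathrm{Gal}(\overline{\mathbb{Q}}/\mathbb{Q}(j(\mathcal{O})))$ (the paper's $\rho_{\mathcal{O},N}$), and ``$\rho(G_K)\subseteq C$'' must be replaced by $\rho(\mathrm{Gal}(\overline{\mathbb{Q}}/H_\mathcal{O}))\subseteq C$, the CM endomorphisms being defined over $H_\mathcal{O}=K(j(\mathcal{O}))$ rather than over $K$. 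Fortunately complex conjugation fixes $\mathbb{Q}(j(\mathcal{O}))$, so $\rho(c)$ is meaningful, and after this repair your two mechanisms do work: if $\rho(c)$ lay in the abelian Cartan, conjugation by it would force $\alpha\equiv\overline{\alpha}\Mod{N\mathcal{O}}$ for all $\alpha\in\mathcal{O}$, while the Weil pairing forces $\det\rho(c)=-1$; these are contradictory for $N\geq3$ and give the mod-$4$ dichotomy for $N=2$. (Your reformulation ``$N^2\mid D_\mathcal{O}$'' of triviality of conjugation on $\mathcal{O}/N\mathcal{O}$ is wrong in general --- the correct condition is $N\mid2$ and $N\mid b_\mathcal{O}$ --- but this does not affect the cases you actually use.) With these two repairs you would obtain an alternative proof in which the abstract relation $c\circ[\alpha]\circ c^{-1}=[\overline{\alpha}]$ plus the Weil pairing replaces the paper's explicit conjugation formula for Fricke families (Lemma \ref{complex1} and Remark \ref{Xcomplex}), and the $\pm1$ rigidity argument on torsion replaces Lemma \ref{Yratio}.
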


Here we put an emphasis on the fact that
$\mathbb{Q}(E_\mathcal{O}[N])$ is an extension field of $\mathbb{Q}$, not of $\mathbb{Q}(j(\mathcal{O}))$,
generated by the coordinates of $N$-torsion points.
This fact distinguishes Theorem \ref{theoremA} from prior works
of Bourdon-Clark-Pollack ([3, Lemma 8.4]) and Clark-Pollack ([5, Theorem 4.2]).
\par
Let $E$ be an arbitrary elliptic curve with complex multiplication by $\mathcal{O}$ defined over $\mathbb{Q}(j(\mathcal{O}))$.
Through purely algebraic arguments
Bourdon-Clark (\cite{B-C}) and Lozano-Robledo (\cite{Lozano-Robledo})
independently classified all possible images of ($p$-adic)
Galois representations attached to $E$.
In $\S$7, we shall revisit these results on Galois representations
by making use of modular-analytic approach.
Let
\begin{equation*}
\rho_{\mathcal{O},\,N}:\mathrm{Gal}(\mathbb{Q}(E_\mathcal{O}[N])/
\mathbb{Q}(j(E_\mathcal{O})))\rightarrow\mathrm{GL}_2(\mathbb{Z}/N\mathbb{Z})~(\simeq
\mathrm{Aut}(E_\mathcal{O}[N]))
\end{equation*}
be the Galois representation
attached to a specific model $E_\mathcal{O}$. 
Let
\begin{equation}\label{tauO}
\tau_\mathcal{O}=\left\{
\begin{array}{cl}
\displaystyle\frac{\sqrt{D_\mathcal{O}}}{2} & \textrm{if}~D_\mathcal{O}\equiv0\Mod{4},\vspace{0.1cm}\\
\displaystyle\frac{-1+\sqrt{D_\mathcal{O}}}{2} & \textrm{if}~D_\mathcal{O}\equiv1\Mod{4}
\end{array}\right.
\end{equation}
with $\min(\tau_\mathcal{O},\,\mathbb{Q})=x^2+b_\mathcal{O}x+c_\mathcal{O}$. 
Then we have $\mathcal{O}=[\tau_\mathcal{O},\,1]=\mathbb{Z}\tau_\mathcal{O}+\mathbb{Z}$ 
and $b_\mathcal{O},\,c_\mathcal{O}\in\mathbb{Z}$ (cf. \cite[Lemma 7.2]{Cox}).
Let
$W_{\mathcal{O},\,N}$ be 
the Cartan subgroup of $\mathrm{GL}_2(\mathbb{Z}/N\mathbb{Z})$ associated with
the $(\mathbb{Z}/N\mathbb{Z})$-algebra $\mathcal{O}/N\mathcal{O}$ with 
the ordered basis $\{\tau_\mathcal{O}+N\mathcal{O},\,1+N\mathcal{O}\}$.
In the second main theorem (Theorem \ref{Galoisrepresentation}), we shall describe the image of $\rho_{\mathcal{O},\,N}$
by using Stevenhagen's explicit version of the Shimura reciprocity law.

\begin{thmx}\label{theoremB}
Assume that $D_\mathcal{O}\neq-3,\,-4$ and $N\geq2$. Let
$\widehat{W}_{\mathcal{O},\,N}$ be the subgroup
of $\mathrm{GL}_2(\mathbb{Z}/N\mathbb{Z})$ defined by
\begin{equation*}
\widehat{W}_{\mathcal{O},\,N}=\left\langle
W_{\mathcal{O},\,N},\,\begin{bmatrix}1&~~b_\mathcal{O}\\0&-1\end{bmatrix}\right\rangle.
\end{equation*}
\begin{enumerate}
\item[\textup{(i)}] The image of the
Galois representation
$\rho_{\mathcal{O},\,N}$
is a subgroup of $\widehat{W}_{\mathcal{O},\,N}$
of index at most $2$.
\item[\textup{(ii)}] If $-1$ is a quadratic residue modulo $N$, then
the image of $\rho_{\mathcal{O},\,N}$ is exactly
$\widehat{W}_{\mathcal{O},\,N}$.
\end{enumerate}
\end{thmx}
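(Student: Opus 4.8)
The plan is to study $\rho_{\mathcal{O},\,N}$ through the tower $\mathbb{Q}(j(\mathcal{O}))\subseteq H_\mathcal{O}=K(j(\mathcal{O}))\subseteq K_{\mathcal{O},\,N}\subseteq L:=\mathbb{Q}(E_\mathcal{O}[N])$, where I abbreviate $F=\mathbb{Q}(j(\mathcal{O}))=\mathbb{Q}(j(E_\mathcal{O}))$. Since $L$ is generated over $F$ by the coordinates of $E_\mathcal{O}[N]$, the representation $\rho_{\mathcal{O},\,N}$ is injective, so it suffices to pin down its image. First I would split $\mathrm{Gal}(L/F)$ along the index-two subgroup $\mathrm{Gal}(L/H_\mathcal{O})$ of automorphisms fixing $K$. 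Because the complex multiplication $\mathcal{O}\cong\mathrm{End}(E_\mathcal{O})$ is defined over $H_\mathcal{O}$, every $\sigma\in\mathrm{Gal}(L/H_\mathcal{O})$ commutes with the $\mathcal{O}/N\mathcal{O}$-action on $E_\mathcal{O}[N]$; writing $M_\alpha$ for the matrix of multiplication by $\alpha$ in the ordered basis $\{\tau_\mathcal{O}/N+\mathcal{O},\,1/N+\mathcal{O}\}$ of $E_\mathcal{O}[N]\cong\tfrac{1}{N}\mathcal{O}/\mathcal{O}$, the image $\rho_{\mathcal{O},\,N}(\sigma)$ therefore lies in the centralizer of $\{M_\alpha\}$, which (the commutant of the regular representation of the commutative ring $\mathcal{O}/N\mathcal{O}$) is exactly $W_{\mathcal{O},\,N}$. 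For the nontrivial coset, since $F\subset\mathbb{R}$ while $L\supseteq\mathbb{Q}(\zeta_N)$ is not real when $N\geq3$, complex conjugation $\sigma_c$ is a nontrivial element of $\mathrm{Gal}(L/F)\setminus\mathrm{Gal}(L/H_\mathcal{O})$.

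Next I would compute the two pieces explicitly. Acting on the chosen basis, $\sigma_c$ sends $\tau_\mathcal{O}\mapsto\overline{\tau_\mathcal{O}}=-b_\mathcal{O}-\tau_\mathcal{O}$ and fixes $1$, so $\rho_{\mathcal{O},\,N}(\sigma_c)$ equals the stated generator $\bigl[\begin{smallmatrix}1&b_\mathcal{O}\\0&-1\end{smallmatrix}\bigr]$ up to the scalar $-I=M_{-1}\in W_{\mathcal{O},\,N}$ and the transpose (row-versus-column) convention; as this matrix fails to commute with the matrix of multiplication by $\tau_\mathcal{O}$, it does not lie in $W_{\mathcal{O},\,N}$, so its coset is disjoint from $W_{\mathcal{O},\,N}$ and hence $\mathrm{im}(\rho_{\mathcal{O},\,N})\subseteq\widehat{W}_{\mathcal{O},\,N}$. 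For the Cartan part, Stevenhagen's explicit Shimura reciprocity (Proposition \ref{reciprocity}) identifies the action of $\mathrm{Gal}(K_{\mathcal{O},\,N}/H_\mathcal{O})$ on the Fricke ($x$-coordinate) values at $\tau_\mathcal{O}$ with the natural action of $(\mathcal{O}/N\mathcal{O})^\times/\{\pm1\}$, yielding $\mathrm{Gal}(K_{\mathcal{O},\,N}/H_\mathcal{O})\cong(\mathcal{O}/N\mathcal{O})^\times/\{\pm1\}\cong W_{\mathcal{O},\,N}/\{\pm I\}$; here $-I=M_{-1}$ acts trivially on $x$-coordinates because the Fricke functions are even. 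Consequently $W':=\mathrm{im}(\rho_{\mathcal{O},\,N})\cap W_{\mathcal{O},\,N}=\rho_{\mathcal{O},\,N}(\mathrm{Gal}(L/H_\mathcal{O}))$ surjects onto $W_{\mathcal{O},\,N}/\{\pm I\}$.

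For part (i), the kernel of $\mathrm{Gal}(L/H_\mathcal{O})\twoheadrightarrow\mathrm{Gal}(K_{\mathcal{O},\,N}/H_\mathcal{O})$ is $\mathrm{Gal}(L/K_{\mathcal{O},\,N})$, which has order at most $2$ by Theorem \ref{theoremA}(iii). Combining this with the surjection of $W'$ onto $W_{\mathcal{O},\,N}/\{\pm I\}$ gives $[W_{\mathcal{O},\,N}:W']\leq2$; since $\mathrm{im}(\rho_{\mathcal{O},\,N})=W'\sqcup W'\rho_{\mathcal{O},\,N}(\sigma_c)$ has order $2|W'|$ while $|\widehat{W}_{\mathcal{O},\,N}|=2|W_{\mathcal{O},\,N}|$, the index $[\widehat{W}_{\mathcal{O},\,N}:\mathrm{im}(\rho_{\mathcal{O},\,N})]=[W_{\mathcal{O},\,N}:W']$ is at most $2$.

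For part (ii) I would first record the equivalence $\mathrm{im}(\rho_{\mathcal{O},\,N})=\widehat{W}_{\mathcal{O},\,N}\iff -I\in\mathrm{im}(\rho_{\mathcal{O},\,N})$: if $-I$ is in the image then, lying in $W_{\mathcal{O},\,N}$, it lies in $W'$, and together with the surjection of $W'$ onto $W_{\mathcal{O},\,N}/\{\pm I\}$ this forces $W'=W_{\mathcal{O},\,N}$, whence the image is all of $\widehat{W}_{\mathcal{O},\,N}$. Now suppose the image were a proper index-two subgroup; by part (i) it is the kernel of a homomorphism $\phi:\widehat{W}_{\mathcal{O},\,N}\to\{\pm1\}$ with $\phi(-I)=-1$ (as $-I\notin\mathrm{im}(\rho_{\mathcal{O},\,N})$). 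Restricting $\phi$ to $W_{\mathcal{O},\,N}\cong(\mathcal{O}/N\mathcal{O})^\times$ via $M_\alpha\mapsto\alpha$ yields a quadratic character $\chi$ of $(\mathcal{O}/N\mathcal{O})^\times$ with $\chi(-1)=-1$, which can exist only if $-1\notin\bigl((\mathcal{O}/N\mathcal{O})^\times\bigr)^2$. But the hypothesis that $-1$ is a quadratic residue modulo $N$ supplies $u\in(\mathbb{Z}/N\mathbb{Z})^\times\subseteq(\mathcal{O}/N\mathcal{O})^\times$ with $u^2=-1$, so $-1$ is a square and no such $\chi$ exists, a contradiction; hence $\mathrm{im}(\rho_{\mathcal{O},\,N})=\widehat{W}_{\mathcal{O},\,N}$. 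The main obstacle throughout is the careful bookkeeping in the Shimura reciprocity step --- the precise basis, the evenness of the Fricke functions, and matching conventions so that $\sigma_c$ yields exactly $\bigl[\begin{smallmatrix}1&b_\mathcal{O}\\0&-1\end{smallmatrix}\bigr]$ modulo $W_{\mathcal{O},\,N}$ --- together with the degenerate case $N=2$, where $-I=I$ and the $y$-coordinates of $2$-torsion vanish, so that the claim instead follows directly from Theorem \ref{theoremA}.
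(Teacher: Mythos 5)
Your proposal is correct in substance for $N\geq3$, but it establishes the key containment $\mathrm{im}(\rho_{\mathcal{O},\,N})\subseteq\widehat{W}_{\mathcal{O},\,N}$ by a genuinely different route from the paper --- indeed, by the purely algebraic route (in the spirit of Bourdon--Clark and Lozano-Robledo) that the paper explicitly sets aside in favor of its modular-analytic approach. You place $\rho_{\mathcal{O},\,N}(\mathrm{Gal}(L/H_\mathcal{O}))$ inside the Cartan as the commutant of the regular representation of $\mathcal{O}/N\mathcal{O}$, using that the CM endomorphisms are defined over $H_\mathcal{O}$, and you compute the class of complex conjugation directly on the lattice $\mathbb{C}/\mathcal{O}$. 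The paper instead converts coordinates into Fricke values through $X_\mathbf{v}=-\tfrac{1}{2^73^3}f_\mathbf{v}(\tau_\mathcal{O})$, decomposes an arbitrary $\sigma$ as $\mu$ or $\mathfrak{c}\mu$ with $\mu\in\mathrm{Gal}(K(E_\mathcal{O}[N])/H_\mathcal{O})$ as in (\ref{smmc}), and then applies Proposition \ref{reciprocity}, Proposition \ref{Frickefamily}, Lemma \ref{complex1} and the rigidity Lemma \ref{betaI}. The degree count in part (i) is the same in both arguments (Theorem \ref{maingenerate} plus Proposition \ref{reciprocity}), and your part (ii) is the paper's argument in different clothing: the paper exhibits $\pm tI_2\in W_{\mathcal{O},\,N}$ with $(\pm tI_2)^2=-I_2$ and contradicts the fact that $-I_2$ is not in the image, while you observe that an index-two image missing $-I_2$ would give a quadratic character of $(\mathcal{O}/N\mathcal{O})^*$ killing $-1$, impossible once $-1\equiv t^2\Mod{N}$. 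Your version buys independence from the modular machinery in the containment step; the paper's buys uniformity in $N$ and consistency with its broader program.

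Two genuine, though reparable, gaps remain. First, when $N=2$ and $D_\mathcal{O}\equiv0\Mod{4}$ your tower collapses: by Theorem \ref{maingenerate} (i) the field $L=\mathbb{Q}(E_\mathcal{O}[2])$ is real, hence does not contain $H_\mathcal{O}$, so $\mathrm{Gal}(L/H_\mathcal{O})$ is meaningless, and ``follows directly from Theorem \ref{theoremA}'' cannot substitute for the containment: Theorem \ref{theoremA} controls degrees only, not where the image sits inside $\mathrm{GL}_2(\mathbb{Z}/2\mathbb{Z})$. The repair is exactly what (\ref{smmc}) accomplishes uniformly in $N$: extend each $\sigma\in\mathrm{Gal}(L/F)$ to an automorphism of $KL=K_{\mathcal{O},\,2}$ and apply your centralizer dichotomy to that extension. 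Second, your part (ii) uses that $W'\rightarrow W_{\mathcal{O},\,N}/\{\pm I_2\}$ is surjective \emph{for the natural quotient map}, i.e.\ that $\rho_{\mathcal{O},\,N}(\sigma)$ modulo $\pm I_2$ coincides with the Shimura-reciprocity matrix of $\sigma|_{K_{\mathcal{O},\,N}}$. Since the $x$-coordinate determines a torsion point only up to inversion, passing from equality of actions on the $X_\mathbf{v}$ to equality of matrices up to sign is precisely the role of the paper's Lemma \ref{betaI} (together with Lemma \ref{basicXY} (i)); this compatibility should be proved, not filed under bookkeeping.
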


Let $D$ be a negative integer such that $D\equiv0$ or $1\Mod{4}$.
Let $\mathcal{Q}(D)$ be the set of primitive
positive definite binary quadratic forms over $\mathbb{Z}$ of discriminant $D$
on which the modular group $\mathrm{SL}_2(\mathbb{Z})$
induces the proper equivalence $\sim$.
It was Gauss (\cite{Gauss}) who first introduced a composition law
on $\mathcal{C}(D)=\mathcal{Q}(D)/\sim$, which makes $\mathcal{C}(D)$
a group. When $D=D_\mathcal{O}$ and $N=1$, owing to Dirichlet and Dedekind we have the isomorphism
\begin{equation*}
\mathcal{C}(D_\mathcal{O})\stackrel{\sim}{\rightarrow}\mathcal{C}(\mathcal{O})=I(\mathcal{O})/P(\mathcal{O}),
\quad
[Q\mathrm{]}\mapsto[[\omega_Q,\,1]]=[\mathbb{Z}\omega_Q+\mathbb{Z}]]
\end{equation*}
where $\omega_Q$ is the zero of $Q(x,\,1)$ in
the complex upper half-plane (cf. \cite[Theorem 7.7]{Cox}).
Let
\begin{equation*}
\mathcal{Q}(D_\mathcal{O},\,N)=\{ax^2+bxy+cy^2\in\mathcal{Q}(D_\mathcal{O})~|~\gcd(a,\,N)=1\}
\end{equation*}
on which the congruence subgroup $\Gamma_1(N)$
defines the equivalence relation $\sim_{\Gamma_1(N)}$
($\S$\ref{sect5}).
We shall endow the set of equivalence classes
\begin{equation*}
\mathcal{C}_N(D_\mathcal{O})=\mathcal{Q}(D_\mathcal{O},\,N)/\sim_{\Gamma_1(N)}
\end{equation*}
with a binary operation which reduces to the Dirichlet composition on the classical form class group
$\mathcal{C}(D_\mathcal{O})$,
and prove the next third main theorem (Theorem \ref{formideal}).

\begin{thmx}\label{theoremC}
The form class group $\mathcal{C}_N(D_\mathcal{O})$ is isomorphic to
the ideal class group $\mathcal{C}_N(\mathcal{O})$.
\end{thmx}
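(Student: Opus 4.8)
The plan is to refine the classical Dirichlet--Dedekind correspondence (\cite[Theorem 7.7]{Cox}), which already handles the case $N=1$, to the level of $\Gamma_1(N)$ and $P_N(\mathcal{O})$. First I would define a map on representatives: to a form $Q=ax^2+bxy+cy^2\in\mathcal{Q}(D_\mathcal{O},\,N)$ I attach the proper $\mathcal{O}$-ideal $\mathfrak{a}_Q=[a,\,(-b+\sqrt{D_\mathcal{O}})/2]=a[\omega_Q,\,1]$, where $\omega_Q$ is the zero of $Q(x,\,1)$ in the upper half-plane. Since $\mathfrak{a}_Q$ has norm $a$ with $\gcd(a,\,N)=1$, it is prime to $N$, hence lies in $I(\mathcal{O},\,N)$, and I set $\phi([Q])=[\mathfrak{a}_Q]\in\mathcal{C}_N(\mathcal{O})$.

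The main obstacle is to verify that $\phi$ is well defined, i.e.\ that $\Gamma_1(N)$-equivalent forms yield $P_N(\mathcal{O})$-equivalent ideals. For $\gamma=\begin{bmatrix}\alpha&\beta\\\gamma_0&\delta\end{bmatrix}\in\Gamma_1(N)$, the matrix $\begin{bmatrix}\delta&-\beta\\-\gamma_0&\alpha\end{bmatrix}$ lies in $\mathrm{SL}_2(\mathbb{Z})$ and fixes the lattice $[\omega_Q,\,1]$, so the Möbius action gives $[\omega_{Q\cdot\gamma},\,1]=(\alpha-\gamma_0\omega_Q)^{-1}[\omega_Q,\,1]$, whence $\mathfrak{a}_{Q\cdot\gamma}=\nu\,\mathfrak{a}_Q$ with $\nu=a'/d$, where $a'$ is the leading coefficient of $Q\cdot\gamma$ and $d=a\alpha-\gamma_0(-b+\sqrt{D_\mathcal{O}})/2\in\mathcal{O}$. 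I would then run the congruence bookkeeping: from $\alpha\equiv\delta\equiv1$ and $\gamma_0\equiv0\Mod{N}$ one gets $d\equiv a\Mod{N\mathcal{O}}$ and $a'\equiv a\Mod{N\mathcal{O}}$, and because $\gcd(a,\,N)=1$ makes $d$ a unit modulo $N\mathcal{O}$, it follows that $\nu\equiv1\Mod{N\mathcal{O}}$. Hence $\nu\mathcal{O}\in P_N(\mathcal{O})$ and $\phi$ descends to the equivalence classes. Matching the defining congruences of $\Gamma_1(N)$ with the condition $\nu\equiv1\Mod{N\mathcal{O}}$ in this way is the technical heart of the theorem.

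That $\phi$ is a homomorphism follows from the construction of the composition on $\mathcal{C}_N(D_\mathcal{O})$ in $\S$\ref{sect5}: it reduces to the Dirichlet composition, which corresponds to ideal multiplication by the classical theory, so that only the compatibility at level $N$ has to be checked, and this is again supplied by the congruence computation above applied to a composed form with leading coefficient prime to $N$. For bijectivity I would organize the argument around the commutative square formed by $\phi$, the classical isomorphism $\mathcal{C}(D_\mathcal{O})\cong\mathcal{C}(\mathcal{O})$, and the natural surjections $\mathcal{C}_N(D_\mathcal{O})\twoheadrightarrow\mathcal{C}(D_\mathcal{O})$ and $\mathcal{C}_N(\mathcal{O})\twoheadrightarrow\mathcal{C}(\mathcal{O})$; by the five lemma it then suffices to show that $\phi$ restricts to an isomorphism of the two kernels. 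Surjectivity is clear, since every class of $\mathcal{C}_N(\mathcal{O})$ has a representative of the form $[a,\,(-b+\sqrt{D_\mathcal{O}})/2]$ with $\gcd(a,\,N)=1$, which is $\phi$ of the corresponding form; for injectivity I would reverse the well-definedness computation, producing from a relation $\mathfrak{a}_Q=\nu\mathcal{O}$ with $\nu\equiv1\Mod{N\mathcal{O}}$ an element $\gamma\in\Gamma_1(N)$ carrying $Q$ to the principal form.
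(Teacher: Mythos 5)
Your map and its well-definedness are sound, but note that they differ slightly from the paper's. The paper's bijection (Proposition \ref{phibijective}) sends $[Q]$ to $[[\omega_Q,\,1]]$, whereas you send it to $[a[\omega_Q,\,1]]$; since $a[\omega_Q,\,1]\cdot\overline{[\omega_Q,\,1]}=\mathcal{O}$ by Lemmas \ref{normbasic} (iii) and \ref{QaN}, your map is the paper's followed by the automorphism $C\mapsto\overline{C}^{-1}$ of $\mathcal{C}_N(\mathcal{O})$, so nothing is lost. Your congruence bookkeeping for $\Gamma_1(N)$-invariance ($a'\equiv a$ and $d\equiv a\Mod{N\mathcal{O}}$, then conclude via what is in effect Lemma \ref{anotherP}) is the same computation as the paper's (\ref{wQg})--(\ref{aj}), and your normalization even buys a genuinely shorter surjectivity proof: every class contains an integral proper ideal prime to $N$ (Remark \ref{CCOMN}), and such an ideal equals $a[\omega_Q,\,1]$ for its norm form $Q$, whereas the paper, lacking the factor $a$, needs the decomposition of $\mathcal{C}_N(\mathcal{O})$ through $P(\mathcal{O},\,N)/P_N(\mathcal{O})$.

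There are, however, two genuine gaps. First, the homomorphism step is not a routine reduction to Dirichlet composition. In Definition \ref{binaryoperation} one must first replace $Q'$ by $Q''=Q'^{\gamma}$ with $\gamma\in\mathrm{SL}_2(\mathbb{Z})$ generally \emph{not} in $\Gamma_1(N)$, and for such $\gamma$ the class does move: $[\mathfrak{a}_{Q''}]\neq[\mathfrak{a}_{Q'}]$ in $\mathcal{C}_N(\mathcal{O})$, because the scalar relating the two ideals is congruent to a ratio $a''/d$ that need not lie in $P_N(\mathcal{O})$ --- your congruence computation applies only to matrices in $\Gamma_1(N)$. The operation then twists the composed form by $\sigma^{-1}$, and one must verify that this twist cancels exactly the discrepancy created by $\gamma$; this cancellation, carried out in the paper via (\ref{uv1})--(\ref{Q''''}) and Lemma \ref{anotherP}, is the technical heart of Theorem \ref{formideal} (including the fact that the operation is well defined at all, independent of the choices of $Q''$, of the Dirichlet composition, and of $\sigma$), and it is not supplied by re-running your invariance congruences ``on a composed form.'' Second, your five-lemma scaffolding is circular: exactness of the row $1\to\ker\to\mathcal{C}_N(D_\mathcal{O})\to\mathcal{C}(D_\mathcal{O})\to1$ and the homomorphism property of $\phi$ presuppose that the binary operation on $\mathcal{C}_N(D_\mathcal{O})$ is well defined, but in this approach that well-definedness is obtained from the compatibility identity \emph{together with injectivity of $\phi$} --- the very statement the five lemma is invoked to prove. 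The repair is the paper's route: prove injectivity set-theoretically in the general case (from $a_1[\omega_{Q_1},\,1]=\frac{\nu_1}{\nu_2}\,a_2[\omega_{Q_2},\,1]$ with $\nu_1\equiv\nu_2\Mod{N\mathcal{O}}$ extract $\gamma\in\Gamma_1(N)$ with $Q_1=Q_2^{\gamma}$, as in (\ref{wnw})--(\ref{rawas10})), so that $\phi$ is a bijection of sets first, and only afterwards establish the compatibility with the operation, which then yields both the group structure and the isomorphism simultaneously.
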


Let $\mathcal{F}_N$ be the field of certain modular functions
stated in (\ref{FN}). Then $\mathcal{F}_N$ is a Galois extension of $\mathcal{F}_1$
whose Galois group is isomorphic to $\mathrm{GL}_2(\mathbb{Z}/N\mathbb{Z})/\langle-I_2\rangle$
(cf. \cite[Theorem 6.6]{Shimura}).
By using the theory of
Shimura's canonical models for modular curves (\cite[Chapter 6]{Shimura}),
Cho proved in \cite[Theorem 4]{Cho}  that
\begin{equation}\label{specialization}
K_{\mathcal{O},\,N}=K(f(\tau_\mathcal{O})~|~
f\in\mathcal{F}_N~\textrm{is finite at}~\tau_\mathcal{O}).
\end{equation}
Let $C\in\mathcal{C}_N(\mathcal{O})$ and $f\in\mathcal{F}_N$.
In Definition \ref{invariant}, we shall define the invariant $f(C)$
which is a generalization of the Siegel-Ramachandra invariant (cf. \cite{Ramachandra} and \cite{Siegel}) considered when $\mathcal{O}=\mathcal{O}_K$, $N\geq2$
and $f$ is the $12N$th power of the Siegel function $g_{\left[\begin{smallmatrix}0&\frac{1}{N}\end{smallmatrix}\right]}$ given in (\ref{infiniteproduct}).
In the fourth main theorem (Theorem \ref{transformation}), we shall justify that
$f(C)$ satisfies a natural transformation rule
under the Artin map $\sigma_{\mathcal{O},\,N}:\mathcal{C}_N(\mathcal{O})
\rightarrow\mathrm{Gal}(K_{\mathcal{O},\,N}/K)$.

\begin{thmx}\label{theoremD}
Let $C\in\mathcal{C}_N(\mathcal{O})$ and $f\in\mathcal{F}_N$. If
$f$ is finite at $\tau_\mathcal{O}$, then
$f(C)$ belongs to $K_{\mathcal{O},\,N}$
and satisfies
\begin{equation*}
f(C)^{\sigma_{\mathcal{O},\,N}(C')}=
f(CC')\quad(C'\in\mathcal{C}_N(\mathcal{O})).
\end{equation*}
\end{thmx}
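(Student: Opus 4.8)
The plan is to deduce both assertions from Stevenhagen's explicit version of the Shimura reciprocity law (Proposition~\ref{reciprocity}), using the description of $f(C)$ in Definition~\ref{invariant}, Cho's specialization~(\ref{specialization}), and the isomorphism of Theorem~\ref{formideal}. By Definition~\ref{invariant}, the invariant is the value at $\tau_\mathcal{O}$ of a modular function $f^{\,g_C}\in\mathcal{F}_N$ obtained from $f$ through the $\mathrm{GL}_2(\mathbb{Z}/N\mathbb{Z})$-action of a matrix $g_C$ attached to $C$ via a representative proper $\mathcal{O}$-ideal prime to $N$; that is, $f(C)=f^{\,g_C}(\tau_\mathcal{O})$, with $f^{\,g_C}$ finite at $\tau_\mathcal{O}$. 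For the membership claim I would argue as follows: since $\mathcal{F}_N/\mathcal{F}_1$ is Galois with group $\mathrm{GL}_2(\mathbb{Z}/N\mathbb{Z})/\langle-I_2\rangle$, the transform $f^{\,g_C}$ again belongs to $\mathcal{F}_N$, and it is finite at $\tau_\mathcal{O}$ by construction; Cho's theorem~(\ref{specialization}) then gives $f(C)=f^{\,g_C}(\tau_\mathcal{O})\in K_{\mathcal{O},\,N}$ at once.

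For the transformation rule I would represent $C'$ by a proper $\mathcal{O}$-ideal prime to $N$ and apply Proposition~\ref{reciprocity}, which translates the Galois action of $\sigma_{\mathcal{O},\,N}(C')$ into the action on $\mathcal{F}_N$ of a matrix $g_{C'}\in\mathrm{GL}_2(\mathbb{Z}/N\mathbb{Z})$ determined by $C'$ at the CM point $\tau_\mathcal{O}$. Applying this to $h=f^{\,g_C}$, which lies in $\mathcal{F}_N$ and is finite at $\tau_\mathcal{O}$, gives
\begin{equation*}
f(C)^{\sigma_{\mathcal{O},\,N}(C')}
=h(\tau_\mathcal{O})^{\sigma_{\mathcal{O},\,N}(C')}
=h^{\,g_{C'}}(\tau_\mathcal{O})
=f^{\,g_Cg_{C'}}(\tau_\mathcal{O}).
\end{equation*}
By Definition~\ref{invariant} the proof then reduces to the single identity $f^{\,g_Cg_{C'}}(\tau_\mathcal{O})=f(CC')$, that is, to showing $g_Cg_{C'}\equiv g_{CC'}$ modulo $\langle-I_2\rangle$.

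The heart of the matter, and the step I expect to be the main obstacle, is precisely this multiplicativity. It amounts to reconciling three group laws: the quotient operation on $\mathcal{C}_N(\mathcal{O})=I(\mathcal{O},\,N)/P_N(\mathcal{O})$, the Dirichlet-type composition transported from the form class group $\mathcal{C}_N(D_\mathcal{O})$ by Theorem~\ref{formideal}, and ordinary matrix multiplication in $\mathrm{GL}_2(\mathbb{Z}/N\mathbb{Z})$. The guiding principle is that the Cartan part of $g_C$ arises from the regular representation $(\mathcal{O}/N\mathcal{O})^{\times}\to W_{\mathcal{O},\,N}$, which is a group homomorphism, so that multiplication of ideal classes should be carried to the product of the attached matrices; the genuine work lies in checking that the choice of representatives prime to $N$, the normalization of the associated ideles at $\tau_\mathcal{O}$, and the descent modulo $\langle-I_2\rangle$ are all compatible with composition and leave the value $f^{\,g_Cg_{C'}}(\tau_\mathcal{O})$ unchanged. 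Once $g_Cg_{C'}\equiv g_{CC'}$ is established, the displayed chain yields $f(C)^{\sigma_{\mathcal{O},\,N}(C')}=f^{\,g_{CC'}}(\tau_\mathcal{O})=f(CC')$, which is the asserted rule, and specializing $C'$ to the identity class confirms its consistency with the membership $f(C)\in K_{\mathcal{O},\,N}$.
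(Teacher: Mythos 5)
Your proposal has a genuine gap, and it originates in the very first step: you misread Definition \ref{invariant}. The invariant $f(C)$ is \emph{not} the value at $\tau_\mathcal{O}$ of a level-$N$ transform of $f$; it is $f^{\widetilde{A}}(\xi)$, where $\xi=\xi_1/\xi_2$ comes from a $\mathbb{Z}$-basis of $\mathfrak{c}^{-1}$ for a representative ideal $\mathfrak{c}\in C$, and $A$ is the integral matrix carrying $\{\xi_1,\xi_2\}$ to $\{\tau_\mathcal{O},1\}$. The point $\xi$ is $\mathrm{SL}_2(\mathbb{Z})$-equivalent to $\tau_\mathcal{O}$ only when $\mathfrak{c}$ is principal; for a class $C$ that is nontrivial in $\mathcal{C}(\mathcal{O})$ the invariant is a value at a genuinely different CM point. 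This matters because your whole strategy rests on expressing both $f(C)$ and the action of $\sigma_{\mathcal{O},\,N}(C')$ inside the single framework ``matrix in $\mathrm{GL}_2(\mathbb{Z}/N\mathbb{Z})$ acting on $\mathcal{F}_N$, evaluated at $\tau_\mathcal{O}$.'' That framework provably cannot capture the theorem: any map of the form $f(\tau_\mathcal{O})\mapsto f^{g}(\tau_\mathcal{O})$ with $g\in\mathrm{GL}_2(\mathbb{Z}/N\mathbb{Z})$ fixes $j(\tau_\mathcal{O})$, because $j\in\mathcal{F}_1$ is fixed by the entire group action on $\mathcal{F}_N$; hence such maps can only realize elements of $\mathrm{Gal}(K_{\mathcal{O},\,N}/H_\mathcal{O})$. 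That is precisely the scope of Stevenhagen's Proposition \ref{reciprocity}, which is an isomorphism onto $\mathrm{Gal}(K_{\mathcal{O},\,N}/H_\mathcal{O})$, not onto $\mathrm{Gal}(K_{\mathcal{O},\,N}/K)$. The theorem, however, concerns $\sigma_{\mathcal{O},\,N}(C')$ for arbitrary $C'\in\mathcal{C}_N(\mathcal{O})$, and such an automorphism moves $j(\tau_\mathcal{O})$ whenever $C'$ is nontrivial in $\mathcal{C}(\mathcal{O})$; so the step ``apply Proposition \ref{reciprocity} to get $h(\tau_\mathcal{O})^{\sigma_{\mathcal{O},\,N}(C')}=h^{\,g_{C'}}(\tau_\mathcal{O})$'' fails for most $C'$. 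On top of this, the multiplicativity $g_Cg_{C'}\equiv g_{CC'}$, which you correctly identify as the heart of the matter, is never established, so even within its (too narrow) framework the proposal is incomplete.

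The paper's proof proceeds entirely differently, and the difference is forced by the obstruction above. It uses the idelic Shimura reciprocity law (Propositions \ref{idelicF} and \ref{idelic}), which describes the action of the full group $\mathrm{Gal}(K^{\mathrm{ab}}/K)$: one picks $\mathfrak{c}\in C\cap\mathcal{M}(\mathcal{O},\,\ell_\mathcal{O}N)$, builds an idele $s$ with $s_p=1$ for $p\mid\ell_\mathcal{O}N$ and $s_p(\mathcal{O}_K\otimes_\mathbb{Z}\mathbb{Z}_p)=\mathfrak{c}\mathcal{O}_K\otimes_\mathbb{Z}\mathbb{Z}_p$ otherwise (so that $[s,\,K]|_{K_{\mathcal{O},\,N}}=\sigma_{\mathcal{O},\,N}(C)$ via Propositions \ref{generalized} and \ref{MNN}), and then computes that $q_{\tau_\mathcal{O}}(s^{-1})A\in\mathrm{GL}_2(\widehat{\mathbb{Z}})$ with mod-$N$ reduction congruent to $A$. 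This yields the master identity $f(C_0)^{\sigma_{\mathcal{O},\,N}(C)}=f^{\widetilde{A}}(\xi)=f(C)$, in which the evaluation point legitimately moves from $\tau_\mathcal{O}$ to $\xi$ --- the very phenomenon your matrix-only setup cannot produce. Both assertions then fall out formally: membership because $f(C_0)=f(\tau_\mathcal{O})\in K_{\mathcal{O},\,N}$ by (\ref{specialization}) and $\sigma_{\mathcal{O},\,N}(C)$ is an automorphism of $K_{\mathcal{O},\,N}$, and the transformation rule because $f(C)^{\sigma_{\mathcal{O},\,N}(C')}=f(C_0)^{\sigma_{\mathcal{O},\,N}(CC')}=f(CC')$. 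If you want to salvage your approach, the lesson is that one must track the evaluation point as well as the function: compare Theorem \ref{CDGKK}, where the full Galois action is described by a matrix \emph{together with} a new CM point $-\overline{\omega}_Q$, never by a matrix alone.
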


Theorems \ref{theoremC} and \ref{theoremD} lead us to achieve the next fifth main theorem (Theorem \ref{CDGKK})
which describes the isomorphism
of $\mathcal{C}_N(D_\mathcal{O})$
onto $\mathrm{Gal}(K_{\mathcal{O},\,N}/K)$
explicitly.

\begin{thmx}\label{theoremE}
The map
\begin{eqnarray*}
\mathcal{C}_N(D_\mathcal{O}) & \rightarrow & \mathrm{Gal}(K_{\mathcal{O},\,N}/K)\\
\mathrm{[}Q]=[ax^2+bxy+cy^2] & \mapsto & \left(f(\tau_\mathcal{O})\mapsto
f^{\left[\begin{smallmatrix}
1 & -a'(b+b_\mathcal{O})/2\\
0&a'\end{smallmatrix}\right]}(-\overline{\omega}_Q)~|~f\in\mathcal{F}_N~\textrm{is finite at $\tau_\mathcal{O}$}\right)
\end{eqnarray*}
is a well-defined isomorphism, where $a'$ is an integer which holds $aa'\equiv1\Mod{N}$.
\end{thmx}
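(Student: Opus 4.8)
The plan is to realize the stated map as a composite of two isomorphisms whose composition we already control, and then to verify that the explicit formula computes this composite on a generating set. First I would invoke the form-to-ideal isomorphism of Theorem~\ref{theoremC}, writing $\phi:\mathcal{C}_N(D_\mathcal{O})\xrightarrow{\sim}\mathcal{C}_N(\mathcal{O})$ for the map sending $[Q]$ to the class $C_Q$ of the proper $\mathcal{O}$-ideal attached to $\omega_Q$. Composing $\phi$ with the Artin map $\sigma_{\mathcal{O},\,N}:\mathcal{C}_N(\mathcal{O})\xrightarrow{\sim}\mathrm{Gal}(K_{\mathcal{O},\,N}/K)$, which is an isomorphism by the existence theorem of class field theory recalled in the introduction, yields a group isomorphism $\Psi=\sigma_{\mathcal{O},\,N}\circ\phi$. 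Thus the only thing to prove is that $\Psi([Q])$ is the automorphism described by the displayed formula; bijectivity and the homomorphism property then come for free.

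Since by (\ref{specialization}) the field $K_{\mathcal{O},\,N}$ is generated over $K$ by the values $f(\tau_\mathcal{O})$ with $f\in\mathcal{F}_N$ finite at $\tau_\mathcal{O}$, an automorphism is pinned down by its effect on these generators. I would identify $f(\tau_\mathcal{O})$ with the invariant $f(C_1)$ of the identity class $C_1$ from Definition~\ref{invariant}: the principal form $[1,\,b_\mathcal{O},\,c_\mathcal{O}]$ has $\omega_Q=\tau_\mathcal{O}$ and $a'=1$, so the formula attaches to it the matrix $\left[\begin{smallmatrix}1&-b_\mathcal{O}\\0&1\end{smallmatrix}\right]\in\mathrm{SL}_2(\mathbb{Z})$ and the point $-\overline{\omega}_Q$, which together send $f(\tau_\mathcal{O})$ to itself, confirming both the normalization and that the formula returns the identity on $C_1$. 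With this in hand, Theorem~\ref{theoremD} applied with $C=C_1$ gives
\begin{equation*}
f(\tau_\mathcal{O})^{\sigma_{\mathcal{O},\,N}(C_Q)}=f(C_1)^{\sigma_{\mathcal{O},\,N}(C_Q)}=f(C_1C_Q)=f(C_Q),
\end{equation*}
so the entire claim reduces to the explicit identity $f(C_Q)=f^{\left[\begin{smallmatrix}1&-a'(b+b_\mathcal{O})/2\\0&a'\end{smallmatrix}\right]}(-\overline{\omega}_Q)$.

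The heart of the proof is therefore to compute the invariant $f(C_Q)$ for the class of $Q=[a,\,b,\,c]$ by unwinding Definition~\ref{invariant} through Stevenhagen's explicit Shimura reciprocity law (Proposition~\ref{reciprocity}). Here I would represent $C_Q$ by the standard proper $\mathcal{O}$-ideal $\mathfrak{a}_Q=[a,\,(-b+\sqrt{D_\mathcal{O}})/2]$, which is prime to $N$ because $\gcd(a,\,N)=1$, translate the corresponding idelic datum into an element of $\mathrm{GL}_2(\mathbb{Z}/N\mathbb{Z})$, and track the associated CM point. Expressing a basis of $\mathfrak{a}_Q$ in the ordered basis $\{\tau_\mathcal{O},\,1\}$ and inverting (the reciprocity law being contravariant, which is also the source of the unit $a'$ with $aa'\equiv1\Mod{N}$ appearing in place of $a$) produces the matrix $\left[\begin{smallmatrix}1&-a'(b+b_\mathcal{O})/2\\0&a'\end{smallmatrix}\right]$; along the way one checks that $(b+b_\mathcal{O})/2\in\mathbb{Z}$, since $b\equiv b_\mathcal{O}\Mod{2}$ is forced by $b^2\equiv D_\mathcal{O}\Mod{4}$ and the definition of $b_\mathcal{O}$, and that the determinant $a'$ is a unit modulo $N$. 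The reflected point $-\overline{\omega}_Q=(b+\sqrt{D_\mathcal{O}})/(2a)$, again in the upper half-plane, arises from the complex conjugation intrinsic to passing from $\mathcal{O}$-ideals to lattices in the main theorem of complex multiplication, and it is this conjugation that fixes the sign of the off-diagonal entry.

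The step I expect to be the main obstacle is precisely this last computation: matching the abstractly defined invariant $f(C_Q)$ with the concrete pair $\bigl(\gamma_Q,\,-\overline{\omega}_Q\bigr)$, where the sign and conjugation conventions of Shimura reciprocity must be reconciled against the chosen ordered basis $\{\tau_\mathcal{O}+N\mathcal{O},\,1+N\mathcal{O}\}$ and against the inversion noted above. Once this identity is established, well-definedness is automatic: $f(C_Q)$ depends only on the class $C_Q$, hence, via Theorem~\ref{theoremC}, only on the $\sim_{\Gamma_1(N)}$-class of $Q$, and it lies in $K_{\mathcal{O},\,N}$ by Theorem~\ref{theoremD}, so the displayed assignment is a genuine automorphism independent of the chosen representative. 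Combined with the first paragraph, this shows the map coincides with the isomorphism $\Psi$, which completes the proof.
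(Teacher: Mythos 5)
Your overall architecture coincides with the paper's own proof: factor the stated map as $\sigma_{\mathcal{O},\,N}\circ\phi_{\mathcal{O},\,N}$ using Theorem \ref{theoremC} and the Artin isomorphism, note the normalization $f(C_0)=f(\tau_\mathcal{O})$, invoke Theorem \ref{theoremD} to reduce everything to the single identity $f(C_Q)=f^{\left[\begin{smallmatrix}1&-a'(b+b_\mathcal{O})/2\\0&a'\end{smallmatrix}\right]}(-\overline{\omega}_Q)$, and then establish that identity by unwinding Definition \ref{invariant}. Your first, second and fourth paragraphs are correct and are exactly the paper's steps (its equation for $f(C_0)$, Theorem \ref{transformation}, and Lemma \ref{fmatrix}).

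The gap is in your third paragraph, precisely where the real content lies. You propose to represent $C_Q$ by $\mathfrak{a}_Q=[a,\,(-b+\sqrt{D_\mathcal{O}})/2]=a[\omega_Q,\,1]$, but this ideal is \emph{not} in the class $C_Q=[[\omega_Q,\,1]]$ of $\mathcal{C}_N(\mathcal{O})$: one has $[\mathfrak{a}_Q]=[a\mathcal{O}]\cdot C_Q$, and in the ray class group modulo $N\mathcal{O}$ the principal ideal $a\mathcal{O}$ is trivial only when $a\equiv\pm1\Mod{N}$ (with $\mathcal{O}^*=\{\pm1\}$; this follows from Lemma \ref{anotherP}) — unlike in the ordinary class group, where your choice would be harmless. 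Running Definition \ref{invariant} honestly on $\mathfrak{a}_Q$, whose inverse is $\mathfrak{a}_Q^{-1}=\mathrm{N}_\mathcal{O}(\mathfrak{a}_Q)^{-1}\overline{\mathfrak{a}_Q}=[-\overline{\omega}_Q,\,1]$ by Lemmas \ref{normbasic} (iii) and \ref{QaN}, produces the matrix $\left[\begin{smallmatrix}a&-(b+b_\mathcal{O})/2\\0&1\end{smallmatrix}\right]$, i.e.\ it computes $f([a\mathcal{O}]C_Q)$ rather than $f(C_Q)$; this differs from the asserted matrix by the scalar $aI_2$, which acts nontrivially on $\mathcal{F}_N$ (e.g.\ $f_\mathbf{v}^{aI_2}=f_{a\mathbf{v}}$) whenever $a\not\equiv\pm1\Mod{N}$, so the step genuinely fails. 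The unit $a'$ cannot be recovered by appealing to ``contravariance of the reciprocity law,'' and Proposition \ref{reciprocity} is not the right tool here anyway: it only describes $\mathrm{Gal}(K_{\mathcal{O},\,N}/H_\mathcal{O})$, and once Theorem \ref{theoremD} is invoked the evaluation of $f(C_Q)$ is pure lattice algebra with no reciprocity input. The repair — which is exactly the content of the paper's Lemma \ref{fmatrix} — is to take the representative $\mathfrak{c}=a^{\varphi(N)}[\omega_Q,\,1]\in C_Q\cap\mathcal{M}(\mathcal{O},\,N)$, legitimate because $a^{\varphi(N)}\equiv1\Mod{N\mathcal{O}}$; then $\mathfrak{c}^{-1}=a^{-\varphi(N)+1}[-\overline{\omega}_Q,\,1]$ and the matrix of Definition \ref{invariant} is $\left[\begin{smallmatrix}a^{\varphi(N)}&-a^{\varphi(N)-1}(b+b_\mathcal{O})/2\\0&a^{\varphi(N)-1}\end{smallmatrix}\right]\equiv\left[\begin{smallmatrix}1&-a'(b+b_\mathcal{O})/2\\0&a'\end{smallmatrix}\right]\Mod{NM_2(\mathbb{Z})}$, after which your argument closes up and coincides with the paper's proof.
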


For a ray class character $\chi$ modulo $N\mathcal{O}_K$,
let $L(s,\,\chi)$ be the Weber $L$-function (cf. \cite[$\S$IV.4]{Janusz}).
It is well known by Kronecker's limit formulae that
$L(1,\,\chi)$
can be expressed by special values of the modular discriminant $\Delta$ when $N=1$, and by
Siegel-Ramachandra invariants when $N\geq2$
(cf. \cite[Chapters 20$\sim$22]{Lang87}, \cite{Schertz} or \cite{Siegel}).
On the other hand, rather than the value at $1$, Stark observed in \cite{Stark} that 
the derivative evaluated at zero $L'(0,\,\chi)$ would seem to be more easily applicable form.
In Definition \ref{defL}, we shall define the $L$-function $L_\mathcal{O}(s,\,\chi)$ for a given character $\chi$ of $\mathcal{C}_N(\mathcal{O})$ by
\begin{equation*}
L_\mathcal{O}(s,\,\chi)=\sum_{\mathfrak{a}}\frac{\chi([\mathfrak{a}])}{\mathrm{N}_\mathcal{O}(\mathfrak{a})^s}
\quad(s\in\mathbb{C},~\mathrm{Re}(s)>1)
\end{equation*}
where $\mathfrak{a}$ runs over all proper $\mathcal{O}$-ideals prime to $N$
and $\mathrm{N}_\mathcal{O}(\mathfrak{a})$ is the norm of $\mathfrak{a}$.
Then we shall show that $L'_\mathcal{O}(0,\,\chi)$
satisfies a similar formula to that of Stark (\cite[(2)]{Stark})
in the following last main theorem (Theorem \ref{derivative}).

\begin{thmx}
If $\chi$ is a character of
$\mathcal{C}_N(\mathcal{O})$, then we have
\begin{equation*}
L_\mathcal{O}'(0,\,\chi)=
\displaystyle-\frac{1}{\gamma_{\mathcal{O},\,N}6N}
\sum_{C\in\mathcal{C}_N(\mathcal{O})}\chi(C)
\ln\left|g_{\mathcal{O},\,N}(C)\right|.
\end{equation*}
\end{thmx}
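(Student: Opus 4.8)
The plan is to decompose $L_\mathcal{O}(s,\,\chi)$ into partial zeta functions, realize each as a shifted Kronecker--Eisenstein series, and then evaluate the derivative at $s=0$ by means of the second limit formula of Kronecker. First I would use the character $\chi$ on $\mathcal{C}_N(\mathcal{O})$ to group the ideals appearing in the Dirichlet series according to their class. Setting
\begin{equation*}
\zeta_\mathcal{O}(s,\,C)=\sum_{[\mathfrak{a}]=C}\mathrm{N}_\mathcal{O}(\mathfrak{a})^{-s}\qquad(\mathrm{Re}(s)>1),
\end{equation*}
where $\mathfrak{a}$ runs over the integral proper $\mathcal{O}$-ideals prime to $N$ in the class $C$, one obtains $L_\mathcal{O}(s,\,\chi)=\sum_{C\in\mathcal{C}_N(\mathcal{O})}\chi(C)\,\zeta_\mathcal{O}(s,\,C)$. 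Each partial zeta function continues meromorphically to $\mathbb{C}$ and is holomorphic at $s=0$, so differentiating the finite sum term by term gives $L_\mathcal{O}'(0,\,\chi)=\sum_{C}\chi(C)\,\zeta_\mathcal{O}'(0,\,C)$, which reduces the theorem to the single-class identity $\zeta_\mathcal{O}'(0,\,C)=-\frac{1}{\gamma_{\mathcal{O},\,N}6N}\ln|g_{\mathcal{O},\,N}(C)|$.

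Next I would realize $\zeta_\mathcal{O}(s,\,C)$ as a shifted lattice sum. Fixing an integral proper ideal $\mathfrak{b}$ prime to $N$ whose class is $C^{-1}$, the integral ideals in $C$ are exactly the $\nu\mathfrak{b}^{-1}$ with $\nu\in\mathfrak{b}$ satisfying a congruence $\nu\equiv\nu_0\Mod{N\mathfrak{b}}$, taken modulo the units of $\mathcal{O}$. Since $\mathrm{N}_\mathcal{O}(\nu\mathfrak{b}^{-1})=|\mathrm{N}_{K/\mathbb{Q}}(\nu)|/\mathrm{N}_\mathcal{O}(\mathfrak{b})$, the partial zeta function becomes, up to the factor $\gamma_{\mathcal{O},\,N}$ recording the relevant units, a sum of the form $\sum_{\omega\in L+\omega_0}y^s/|\omega|^{2s}$, where $L=\mathfrak{b}$ is viewed as a lattice based at the CM point attached to the form $Q$ representing $C$ (specializing to $\tau_\mathcal{O}$ after scaling). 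The congruence modulo $N\mathfrak{b}$ is precisely what turns the shift $\omega_0$ into the vector $\left[\begin{smallmatrix}0&\frac{1}{N}\end{smallmatrix}\right]$ entering the Siegel function $g_{\left[\begin{smallmatrix}0&\frac{1}{N}\end{smallmatrix}\right]}$ of (\ref{infiniteproduct}).

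Then I would apply the second Kronecker limit formula, in its derivative-at-zero normalization obtained from the value-at-one form through the functional equation of the Eisenstein series (cf. \cite{Stark}, \cite[Chapters 20$\sim$22]{Lang87}, \cite{Siegel}). This expresses the derivative of the shifted lattice sum at $s=0$ as the logarithm of the absolute value of a value of $g_{\left[\begin{smallmatrix}0&\frac{1}{N}\end{smallmatrix}\right]}$ at the relevant CM point. Raising to the $12N$-th power and collecting the normalizing constants identifies this value with the invariant $g_{\mathcal{O},\,N}(C)$ in the sense of Definition \ref{invariant}, giving $\zeta_\mathcal{O}'(0,\,C)=-\frac{1}{\gamma_{\mathcal{O},\,N}6N}\ln|g_{\mathcal{O},\,N}(C)|$; summing against $\chi(C)$ over $\mathcal{C}_N(\mathcal{O})$ yields the asserted formula.

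The hard part will be the bookkeeping in the second and third steps for a non-maximal order: one must match the lattice $\mathfrak{b}$, the conductor $\ell_\mathcal{O}$ and the modulus $N\mathcal{O}$ with the exact argument of the Siegel function built into $g_{\mathcal{O},\,N}(C)$, and track the constants $12N$, the $6N$ in the denominator, and the unit factor $\gamma_{\mathcal{O},\,N}$ as they pass through the functional equation. One must also confirm that the series produces $g_{\mathcal{O},\,N}(C)$ itself rather than a Galois conjugate; here the transformation rule of Theorem \ref{theoremD} under the Artin map $\sigma_{\mathcal{O},\,N}$ supplies the control needed to pin down the correct class and index the invariant consistently.
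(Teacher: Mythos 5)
Your proposal is correct and follows essentially the same route as the paper: there, too, one decomposes $L_\mathcal{O}(s,\,\chi)$ into the partial zeta functions $\zeta_\mathcal{O}(s,\,C)$, parametrizes the ideals of the class $C=\phi_{\mathcal{O},\,N}([Q])$ as $\lambda[\omega_Q,\,1]$ with $\lambda\in a\overline{[\omega_Q,\,1]}$, $\lambda\equiv1\Mod{N\mathcal{O}}$, counted up to the $\gamma_{\mathcal{O},\,N}$ units (Lemma~\ref{P}), rewrites this as the explicit shifted lattice sum of Proposition~\ref{zetaC} with shift $\frac{a'}{N}$ at the point $-\overline{\omega}_Q$, and evaluates $\zeta_\mathcal{O}'(0,\,C)$ by the Kronecker limit formula (Proposition~\ref{Kronecker}), with Lemma~\ref{fmatrix} doing exactly the bookkeeping you flag, namely pinning down the resulting Siegel value as $g_{\mathcal{O},\,N}(C)$ itself rather than a conjugate. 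The only case your sketch glosses over is $N=1$, where the shift lies in the lattice, the term $-\ln(N^2a)\,\xi(0,\,\cdot\,)$ no longer vanishes, and the first (rather than second) limit formula is what produces the invariant $(2\pi)^{12}\,\mathrm{N}_\mathcal{O}(\mathfrak{c}^{-1})^6|\Delta(\mathfrak{c}^{-1})|$ --- the paper's Proposition~\ref{Kronecker} treats both cases uniformly.
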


Here, $\gamma_{\mathcal{O},\,N}=\left|\{\nu\in\mathcal{O}^*~|~\nu\equiv1\Mod{N\mathcal{O}}\}\right|$ and
\begin{equation*}
g_{\mathcal{O},\,N}(C)=\left\{\begin{array}{ll}
(2\pi)^{12}\,\mathrm{N}_\mathcal{O}(\mathfrak{c})^{-6}|\Delta(\mathfrak{c})|^{-1} & \textrm{if}~N=1,\vspace{0.1cm}\\
g_{\left[\begin{smallmatrix}0&\frac{1}{N}\end{smallmatrix}\right]}^{12N}(C) & \textrm{if}~N\geq2,
\end{array}\right.
\end{equation*}
where $\mathfrak{c}$ is a proper $\mathcal{O}$-ideal in the class $C$.
\par
In the last section, when $K=\mathbb{Q}(\sqrt{-2})$,
$\mathcal{O}=[5\sqrt{-2},\,1]$ and $N=3$, we shall present an example
of the group $\mathcal{C}_N(D_\mathcal{O})=
\mathcal{C}_3(-200)$ and its application further in order to find the minimal
polynomial of the invariant $g_{\mathcal{O},\,3}([\mathcal{O}])$ over $K$
which generates the field $K_{\mathcal{O},\,3}$. 
It turns out that this invariant is in fact a unit as algebraic integer. 

\section {The generalized ideal class group
isomorphic to $\mathcal{C}_N(\mathcal{O})$}

Throughout this paper, we let $K$ be an imaginary quadratic field, $\mathcal{O}$ be an order in $K$
and $N$ be a positive integer.
We denote the conductor and the discriminant of $\mathcal{O}$
by $\ell_\mathcal{O}$ and $D_\mathcal{O}$, respectively.
As far as we know, Schertz's book \cite{Schertz} is the only reference for the fact that
the ray class group $\mathcal{C}_N(\mathcal{O})$ modulo $N\mathcal{O}$ is isomorphic
to a generalized ideal class group modulo $\ell_\mathcal{O}N\mathcal{O}_K$.
In this section, we shall explain this fact in modern terms
by adopting the ideas of \cite[$\S$7]{Cox} for $N=1$,
which will be definitely helpful to develop our main theorems.
\par
For a positive integer $\ell$, we denote by
\begin{eqnarray*}
\mathcal{M}(\mathcal{O},\,\ell)&=&
\textrm{the monoid of nonzero proper $\mathcal{O}$-ideals prime to $\ell$},\\
I(\mathcal{O},\,\ell)&=&
\textrm{the subgroup of $I(\mathcal{O})$
generated by $\mathcal{M}(\mathcal{O},\,\ell)$},\\
P_N(\mathcal{O},\,\ell)&=&\langle
\nu\mathcal{O}~|~\nu\in\mathcal{O}\setminus\{0\},~
\textrm{$\nu\mathcal{O}$ is prime to $\ell$}~
\textrm{and}~\nu\equiv1\Mod{N\mathcal{O}}\rangle,\\
\mathcal{C}_N(\mathcal{O},\,\ell)&=&I(\mathcal{O},\,\ell)/P_N(\mathcal{O},\,\ell).
\end{eqnarray*}
Recall that $I(\mathcal{O},\,1)=I(\mathcal{O})$ and $P_1(\mathcal{O},\,1)=P_1(\mathcal{O})=P(\mathcal{O})$, and so
$\mathcal{C}_1(\mathcal{O},\,1)=\mathcal{C}(\mathcal{O})$.
Furthermore, since
$P_N(\mathcal{O},\,N)=P_N(\mathcal{O})$, we have
$\mathcal{C}_N(\mathcal{O},\,N)=\mathcal{C}_N(\mathcal{O})$.
And, for simplicity, we just write
$\mathcal{M}(\mathcal{O})$,
$P(\mathcal{O},\,\ell)$, $\mathcal{C}(\mathcal{O},\,\ell)$ for
$\mathcal{M}(\mathcal{O},\,1)$, $P_1(\mathcal{O},\,\ell)$,  $\mathcal{C}_1(\mathcal{O},\,\ell)$, respectively.
For $\mathfrak{a}\in\mathcal{M}(\mathcal{O})$, we denote its norm by $N_\mathcal{O}(\mathfrak{a})$,
namely, $\mathrm{N}_\mathcal{O}(\mathfrak{a})=|\mathcal{O}/\mathfrak{a}|$.

\begin{lemma}\label{normbasic}
Let $\nu\in
\mathcal{O}\setminus\{0\}$ and $\mathfrak{a},\,\mathfrak{b}\in\mathcal{M}(\mathcal{O})$. We get
\begin{enumerate}
\item[\textup{(i)}] $\mathrm{N}_\mathcal{O}(\nu\mathcal{O})=
\mathrm{N}_{K/\mathbb{Q}}(\nu)$.
\item[\textup{(ii)}] $\mathrm{N}_\mathcal{O}(\mathfrak{a}\mathfrak{b})=
\mathrm{N}_\mathcal{O}(\mathfrak{a})
\mathrm{N}_\mathcal{O}(\mathfrak{b})$.
\item[\textup{(iii)}] $\mathfrak{a}\overline{\mathfrak{a}}=\mathrm{N}_\mathcal{O}(\mathfrak{a})\mathcal{O}$,
where $\overline{\,\cdot\,}$ means the complex conjugation.
\end{enumerate}
\end{lemma}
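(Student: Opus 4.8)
The plan is to establish (i) directly from the theory of elementary divisors, to prove (iii) by an explicit computation with a $\mathbb{Z}$-basis, and then to deduce (ii) formally from (i) and (iii). Throughout I would work with the basis $\mathcal{O}=[\tau_\mathcal{O},\,1]$, where $\min(\tau_\mathcal{O},\,\mathbb{Q})=x^2+b_\mathcal{O}x+c_\mathcal{O}$, so that $\tau_\mathcal{O}+\overline{\tau_\mathcal{O}}=-b_\mathcal{O}$ and $\tau_\mathcal{O}\overline{\tau_\mathcal{O}}=c_\mathcal{O}$.

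For (i), I would note that multiplication by $\nu$ is an injective $\mathbb{Z}$-linear endomorphism $m_\nu$ of the rank-two free $\mathbb{Z}$-module $\mathcal{O}$ whose image is $\nu\mathcal{O}$; hence $\mathrm{N}_\mathcal{O}(\nu\mathcal{O})=|\mathcal{O}/\nu\mathcal{O}|=|\det_\mathbb{Z}(m_\nu)|$. Writing $\nu=s+t\tau_\mathcal{O}$ and using $\tau_\mathcal{O}^2=-b_\mathcal{O}\tau_\mathcal{O}-c_\mathcal{O}$, the matrix of $m_\nu$ in the basis $\{1,\,\tau_\mathcal{O}\}$ has determinant $s^2-b_\mathcal{O}st+c_\mathcal{O}t^2$, which is exactly $\mathrm{N}_{K/\mathbb{Q}}(\nu)=\nu\overline{\nu}$. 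Since $K$ is imaginary quadratic this value is positive, so the absolute value may be dropped and (i) follows.

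For (iii), I would use the description of proper $\mathcal{O}$-ideals in terms of forms (cf. \cite[Lemma 7.5, Theorem 7.7]{Cox}): after scaling by a suitable element it suffices to treat a primitive integral ideal $\mathfrak{a}_0=[a,\,(-b+\sqrt{D_\mathcal{O}})/2]$ attached to a primitive form $ax^2+bxy+cy^2$ with $b^2-4ac=D_\mathcal{O}$. Its conjugate is $\overline{\mathfrak{a}_0}=[a,\,(-b-\sqrt{D_\mathcal{O}})/2]$, and multiplying out the four generators produces $a^2$, $a(-b\pm\sqrt{D_\mathcal{O}})/2$ and $(b^2-D_\mathcal{O})/4=ac$; thus $\mathfrak{a}_0\overline{\mathfrak{a}_0}=a\cdot\langle a,\,b,\,c,\,(-b+\sqrt{D_\mathcal{O}})/2\rangle_\mathbb{Z}$. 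Because $\gcd(a,\,b,\,c)=1$ the bracketed $\mathbb{Z}$-module contains $1$ and $(-b+\sqrt{D_\mathcal{O}})/2$, hence equals $\mathcal{O}=[\tau_\mathcal{O},\,1]$, so $\mathfrak{a}_0\overline{\mathfrak{a}_0}=a\mathcal{O}$. Comparing this with the index computation $|\mathcal{O}/\mathfrak{a}_0|=a$ (again by elementary divisors) and tracking the scaling factor through part (i) yields $\mathfrak{a}\overline{\mathfrak{a}}=\mathrm{N}_\mathcal{O}(\mathfrak{a})\mathcal{O}$ in general. I expect this explicit identification of the product ideal with $a\mathcal{O}$ to be the main obstacle, since one must verify both inclusions and confirm that the scaling element contributes matching factors to the norm and to the ideal.

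Finally, (ii) follows formally. Applying (iii) to $\mathfrak{a}$, $\mathfrak{b}$ and $\mathfrak{a}\mathfrak{b}$, and using that complex conjugation is multiplicative so that $\overline{\mathfrak{a}\mathfrak{b}}=\overline{\mathfrak{a}}\,\overline{\mathfrak{b}}$, I would compute $\mathrm{N}_\mathcal{O}(\mathfrak{a}\mathfrak{b})\mathcal{O}=(\mathfrak{a}\mathfrak{b})\overline{(\mathfrak{a}\mathfrak{b})}=(\mathfrak{a}\overline{\mathfrak{a}})(\mathfrak{b}\overline{\mathfrak{b}})=\mathrm{N}_\mathcal{O}(\mathfrak{a})\mathrm{N}_\mathcal{O}(\mathfrak{b})\mathcal{O}$. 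Since two positive integers generate the same principal $\mathcal{O}$-ideal only when they coincide, this gives $\mathrm{N}_\mathcal{O}(\mathfrak{a}\mathfrak{b})=\mathrm{N}_\mathcal{O}(\mathfrak{a})\mathrm{N}_\mathcal{O}(\mathfrak{b})$, completing the proof.
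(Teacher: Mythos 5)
Your proof is correct. The paper gives no argument of its own here---it simply cites \cite[Lemma 7.14]{Cox}---and your proposal essentially reconstructs that standard proof: the explicit computation $\mathfrak{a}_0\overline{\mathfrak{a}_0}=a\mathcal{O}$ for the ideal $[a,\,(-b+\sqrt{D_\mathcal{O}})/2]$ attached to a primitive form, followed by the formal deduction of (ii) from (iii) and multiplicativity of conjugation; the only departures are cosmetic (you obtain (i) from a determinant computation rather than reading it off from (iii), and your scaling step passing from $\mathfrak{a}_0$ to a general proper ideal is left as a sketch, but the lattice-index/covolume argument you gesture at does close it).
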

\begin{proof}
See \cite[Lemma 7.14]{Cox}.
\end{proof}

\begin{lemma}\label{primelemma}
If $\mathfrak{a}$ is a nonzero $\mathcal{O}$-ideal, then
\begin{equation*}
\textrm{$\mathfrak{a}$ is prime to $\ell$}
~\Longleftrightarrow~
\textrm{$\mathrm{N}_\mathcal{O}(\mathfrak{a})$ is relatively prime to $\ell$}.
\end{equation*}
\end{lemma}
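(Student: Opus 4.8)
The plan is to reinterpret both sides of the equivalence in terms of the single endomorphism ``multiplication by $\ell$'' on the finite abelian group $G=\mathcal{O}/\mathfrak{a}$. First I would note that $G$ is indeed finite: since $\mathfrak{a}$ is a nonzero ideal of the order $\mathcal{O}$, it is a sublattice of full rank in the rank-two lattice $\mathcal{O}$, so $\mathrm{N}_\mathcal{O}(\mathfrak{a})=|G|<\infty$. Next I would unwind the definition of being prime to $\ell$. By definition $\mathfrak{a}$ is prime to $\ell$ exactly when $\mathfrak{a}+\ell\mathcal{O}=\mathcal{O}$. Applying the quotient map $\mathcal{O}\to G$, the image of $\ell\mathcal{O}$ is $(\ell\mathcal{O}+\mathfrak{a})/\mathfrak{a}=\ell G$; hence $\mathfrak{a}+\ell\mathcal{O}=\mathcal{O}$ holds if and only if $\ell G=G$, that is, if and only if multiplication by $\ell$ is surjective on $G$.

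It then remains to show that, for the finite abelian group $G$, multiplication by $\ell$ is surjective if and only if $\gcd(\ell,|G|)=1$. For the forward direction, a surjective endomorphism of a finite set is automatically bijective, so surjectivity forces $\ell$ to act invertibly; were some prime $p$ to divide both $\ell$ and $|G|$, then $G$ would contain an element of order $p$, which multiplication by $\ell$ would annihilate, contradicting injectivity. Conversely, if $\gcd(\ell,|G|)=1$ then $\ell$ is invertible modulo the exponent of $G$ (the exponent and the order share the same prime divisors), so multiplication by $\ell$ is an automorphism of $G$ and in particular surjective. Combining this with the previous paragraph and the identity $|G|=\mathrm{N}_\mathcal{O}(\mathfrak{a})$ yields the claimed equivalence.

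I do not expect a genuine obstacle here; the entire content is the elementary fact that a surjective group endomorphism of a finite abelian group is an automorphism precisely when the multiplier is coprime to the group order. The only points that require a little care are the finiteness of $G$ (so that surjectivity and bijectivity coincide) and the correct identification of the image of $\ell\mathcal{O}$ in the quotient, and I would phrase the argument for a general nonzero ideal $\mathfrak{a}$ so as not to rely on properness or invertibility of $\mathfrak{a}$ (in particular avoiding Lemma \ref{normbasic}(iii), which presupposes $\mathfrak{a}\in\mathcal{M}(\mathcal{O})$).
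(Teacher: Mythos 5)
Your proof is correct and is essentially the paper's own argument: the paper defers to Cox's Lemma 7.18(i) (with $f$ replaced by $\ell$), whose proof is exactly this reinterpretation of both conditions via the multiplication-by-$\ell$ map on the finite quotient $\mathcal{O}/\mathfrak{a}$, using that a surjective endomorphism of a finite group is bijective. The only cosmetic difference is in the converse direction, where you invert $\ell$ modulo the exponent of $\mathcal{O}/\mathfrak{a}$ while Cox uses $\mathrm{N}_\mathcal{O}(\mathfrak{a})\mathcal{O}\subseteq\mathfrak{a}$ together with a B\'ezout identity; these are the same elementary fact.
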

\begin{proof}
The proof is the same as that of \cite[Lemma 7.18 (i)]{Cox}
except replacing $f$ by $\ell$.
\end{proof}

\begin{remark}\label{proper}
Observe that every nonzero $\mathcal{O}$-ideal prime to $\ell_\mathcal{O}$
is proper (\cite[Lemma 7.18 (ii)]{Cox}).
\end{remark}

\begin{lemma}\label{IONMOMON}
We have $I(\mathcal{O},\,\ell)\cap\mathcal{M}(\mathcal{O})=\mathcal{M}(\mathcal{O},\,\ell)$ for a positive integer
$\ell$.
\end{lemma}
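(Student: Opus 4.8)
The plan is to prove the two inclusions separately, reducing the nontrivial direction to a divisibility statement about norms so that Lemmas~\ref{normbasic} and~\ref{primelemma} can be invoked.

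The inclusion $\mathcal{M}(\mathcal{O},\,\ell)\subseteq I(\mathcal{O},\,\ell)\cap\mathcal{M}(\mathcal{O})$ is immediate from the definitions: every element of $\mathcal{M}(\mathcal{O},\,\ell)$ is by construction a generator of $I(\mathcal{O},\,\ell)$, and since an ideal prime to $\ell$ is in particular a nonzero proper ideal, it also lies in $\mathcal{M}(\mathcal{O})=\mathcal{M}(\mathcal{O},\,1)$.

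For the reverse inclusion, I would start with $\mathfrak{a}\in I(\mathcal{O},\,\ell)\cap\mathcal{M}(\mathcal{O})$ and first record that, because $\mathcal{M}(\mathcal{O},\,\ell)$ is a monoid (closed under multiplication), every element of the group $I(\mathcal{O},\,\ell)$ it generates can be written as $\mathfrak{b}\mathfrak{c}^{-1}$ for some $\mathfrak{b},\,\mathfrak{c}\in\mathcal{M}(\mathcal{O},\,\ell)$: one simply collects the positive and the negative exponents appearing in a word in the generators into $\mathfrak{b}$ and $\mathfrak{c}$, respectively, using closure of the monoid under products. Writing $\mathfrak{a}=\mathfrak{b}\mathfrak{c}^{-1}$ in this way and clearing denominators inside the group $I(\mathcal{O})$ gives the identity $\mathfrak{a}\mathfrak{c}=\mathfrak{b}$ of (integral) proper $\mathcal{O}$-ideals. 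I would then pass to norms: applying the multiplicativity of the norm (Lemma~\ref{normbasic}(ii)) yields $\mathrm{N}_\mathcal{O}(\mathfrak{a})\,\mathrm{N}_\mathcal{O}(\mathfrak{c})=\mathrm{N}_\mathcal{O}(\mathfrak{b})$, so in particular $\mathrm{N}_\mathcal{O}(\mathfrak{a})$ divides $\mathrm{N}_\mathcal{O}(\mathfrak{b})$ in $\mathbb{Z}$. Since $\mathfrak{b}$ is prime to $\ell$, Lemma~\ref{primelemma} gives $\gcd(\mathrm{N}_\mathcal{O}(\mathfrak{b}),\,\ell)=1$, whence its divisor $\mathrm{N}_\mathcal{O}(\mathfrak{a})$ is also relatively prime to $\ell$; applying Lemma~\ref{primelemma} in the other direction shows that $\mathfrak{a}$ is prime to $\ell$, i.e.\ $\mathfrak{a}\in\mathcal{M}(\mathcal{O},\,\ell)$.

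The only point requiring genuine care—and hence the main obstacle—is the reduction $\mathfrak{a}=\mathfrak{b}\mathfrak{c}^{-1}$: one must use that $\mathcal{M}(\mathcal{O},\,\ell)$ is a \emph{monoid} of invertible ideals so that such a quotient representation exists and the manipulation $\mathfrak{a}\mathfrak{c}=\mathfrak{b}$ takes place among honest integral ideals, to which the norm lemmas apply. Once that is in place, the remainder is the short divisibility computation above.
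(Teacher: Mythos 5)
Your proof is correct and follows essentially the same route as the paper's: write $\mathfrak{a}=\mathfrak{b}\mathfrak{c}^{-1}$ with $\mathfrak{b},\,\mathfrak{c}\in\mathcal{M}(\mathcal{O},\,\ell)$, pass to $\mathfrak{a}\mathfrak{c}=\mathfrak{b}$, and use multiplicativity of the norm (Lemma~\ref{normbasic}(ii)) together with the norm criterion of Lemma~\ref{primelemma} in both directions. Your added justification that the monoid structure of $\mathcal{M}(\mathcal{O},\,\ell)$ yields the quotient representation is a point the paper leaves implicit, but it is the same argument.
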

\begin{proof}
The inclusion $I(\mathcal{O},\,\ell)\cap\mathcal{M}(\mathcal{O})\supseteq\mathcal{M}(\mathcal{O},\,N)$ is obvious.
\par
Now, let $\mathfrak{a}\in I(\mathcal{O},\,\ell)\cap\mathcal{M}(\mathcal{O})$.
Since $\mathfrak{a}\in I(\mathcal{O},\,\ell)$, we attain $\mathfrak{a}=\mathfrak{b}\mathfrak{c}^{-1}$
for some $\mathfrak{b},\,\mathfrak{c}\in\mathcal{M}(\mathcal{O},\,\ell)$.
And we get from the fact $\mathfrak{a}\mathfrak{c}=\mathfrak{b}$ that
\begin{equation}\label{NaNcNb}
\mathrm{N}_\mathcal{O}(\mathfrak{a})\mathrm{N}_\mathcal{O}(\mathfrak{c})=
\mathrm{N}_\mathcal{O}(\mathfrak{b})
\end{equation}
by Lemma \ref{normbasic} (ii). Since $\mathfrak{b}$ is prime to $\ell$,
we deduce by Lemma \ref{primelemma} that
$\gcd(\mathrm{N}_\mathcal{O}(\mathfrak{b}),\,\ell)=1$.
Thus we obtain by (\ref{NaNcNb}) that $\gcd(\mathrm{N}_\mathcal{O}(\mathfrak{a}),\,\ell)=1$,
which implies again by Lemma \ref{primelemma} that $\mathfrak{a}$ is prime to $\ell$.
Therefore we achieve the converse inclusion  $I(\mathcal{O},\,\ell)\cap\mathcal{M}(\mathcal{O})\subseteq\mathcal{M}(\mathcal{O},\,\ell)$.
\end{proof}

\begin{lemma}\label{normpreserving}
Consider the case where $N=1$.
\begin{enumerate}
\item[\textup{(i)}] If $\mathfrak{a}\in\mathcal{M}(\mathcal{O},\,\ell_\mathcal{O})$, then
$\mathfrak{a}\mathcal{O}_K\in\mathcal{M}(\mathcal{O}_K,\,\ell_\mathcal{O})$
and $\mathrm{N}_{\mathcal{O}}(\mathfrak{a})=
\mathrm{N}_{\mathcal{O}_K}(\mathfrak{a}\mathcal{O}_K)$.
\item[\textup{(ii)}] If $\mathfrak{b}\in\mathcal{M}(\mathcal{O}_K,\,\ell_\mathcal{O})$, then
$\mathfrak{b}\cap\mathcal{O}\in\mathcal{M}(\mathcal{O},\,\ell_\mathcal{O})$
and $\mathrm{N}_{\mathcal{O}_K}(\mathfrak{b})=
\mathrm{N}_{\mathcal{O}}(\mathfrak{b}\cap\mathcal{O})$.
\item[\textup{(iii)}]
The map
\begin{equation*}
\mathcal{M}(\mathcal{O},\,\ell_\mathcal{O})\rightarrow\mathcal{M}(\mathcal{O}_K,\,\ell_\mathcal{O}),
\quad\mathfrak{a}\mapsto\mathfrak{a}\mathcal{O}_K
\end{equation*}
induces an isomorphism $I(\mathcal{O},\,\ell_\mathcal{O})\stackrel{\sim}{\rightarrow}I(\mathcal{O}_K,\,\ell_\mathcal{O})$.
\end{enumerate}
\end{lemma}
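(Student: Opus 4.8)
The plan is to treat (i) and (ii) as mutually inverse constructions and then deduce (iii) formally. Two standing tools drive everything. The first is the containment $\ell_\mathcal{O}\mathcal{O}_K\subseteq\mathcal{O}$ coming from the conductor (the conductor ideal $\ell_\mathcal{O}\mathcal{O}_K$ is the largest $\mathcal{O}_K$-ideal inside $\mathcal{O}$); this is what lets me pass between $\mathcal{O}$ and $\mathcal{O}_K$ once I am coprime to $\ell_\mathcal{O}$. The second is Lemma \ref{primelemma}, which converts ``prime to $\ell_\mathcal{O}$'' into the numerical statement $\gcd(\mathrm{N}(\cdot),\,\ell_\mathcal{O})=1$.

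For (i), I would first record primality. Multiplying $\mathfrak{a}+\ell_\mathcal{O}\mathcal{O}=\mathcal{O}$ by $\mathcal{O}_K$ gives $\mathfrak{a}\mathcal{O}_K+\ell_\mathcal{O}\mathcal{O}_K=\mathcal{O}_K$, so $\mathfrak{a}\mathcal{O}_K$ is prime to $\ell_\mathcal{O}$; being a nonzero ideal of the Dedekind domain $\mathcal{O}_K$ it is automatically proper, whence $\mathfrak{a}\mathcal{O}_K\in\mathcal{M}(\mathcal{O}_K,\,\ell_\mathcal{O})$. The norm equality is the real content, and I would get it from the identity $\mathfrak{a}\mathcal{O}_K\cap\mathcal{O}=\mathfrak{a}$. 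To prove that identity, write $1=\alpha+\ell_\mathcal{O}\beta$ with $\alpha\in\mathfrak{a}$ and $\beta\in\mathcal{O}$; then any $x\in\mathfrak{a}\mathcal{O}_K\cap\mathcal{O}$ satisfies $x=x\alpha+\beta(\ell_\mathcal{O}x)$, where $x\alpha\in\mathfrak{a}$ and $\ell_\mathcal{O}x\in\ell_\mathcal{O}\mathfrak{a}\mathcal{O}_K=\mathfrak{a}(\ell_\mathcal{O}\mathcal{O}_K)\subseteq\mathfrak{a}$, forcing $x\in\mathfrak{a}$. Together with $\mathcal{O}+\mathfrak{a}\mathcal{O}_K=\mathcal{O}_K$ (which follows the same way using $\ell_\mathcal{O}\mathcal{O}_K\subseteq\mathcal{O}$), the second isomorphism theorem gives $\mathcal{O}/\mathfrak{a}\stackrel{\sim}{\rightarrow}\mathcal{O}_K/\mathfrak{a}\mathcal{O}_K$, hence $\mathrm{N}_\mathcal{O}(\mathfrak{a})=\mathrm{N}_{\mathcal{O}_K}(\mathfrak{a}\mathcal{O}_K)$.

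For (ii), I would argue dually. Clearly $\mathfrak{b}\cap\mathcal{O}$ is a nonzero $\mathcal{O}$-ideal. From $\mathfrak{b}+\ell_\mathcal{O}\mathcal{O}_K=\mathcal{O}_K$ and $\ell_\mathcal{O}\mathcal{O}_K\subseteq\mathcal{O}$ I obtain $\mathfrak{b}+\mathcal{O}=\mathcal{O}_K$, so the second isomorphism theorem yields $\mathcal{O}/(\mathfrak{b}\cap\mathcal{O})\stackrel{\sim}{\rightarrow}\mathcal{O}_K/\mathfrak{b}$ and therefore $\mathrm{N}_\mathcal{O}(\mathfrak{b}\cap\mathcal{O})=\mathrm{N}_{\mathcal{O}_K}(\mathfrak{b})$. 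Since $\mathfrak{b}$ is prime to $\ell_\mathcal{O}$, Lemma \ref{primelemma} over $\mathcal{O}_K$ makes $\mathrm{N}_{\mathcal{O}_K}(\mathfrak{b})$ coprime to $\ell_\mathcal{O}$; the norm equality then forces $\mathrm{N}_\mathcal{O}(\mathfrak{b}\cap\mathcal{O})$ coprime to $\ell_\mathcal{O}$, so Lemma \ref{primelemma} over $\mathcal{O}$ shows $\mathfrak{b}\cap\mathcal{O}$ is prime to $\ell_\mathcal{O}$, and Remark \ref{proper} makes it proper. Thus $\mathfrak{b}\cap\mathcal{O}\in\mathcal{M}(\mathcal{O},\,\ell_\mathcal{O})$.

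Finally, for (iii) I would verify that the maps of (i) and (ii) are mutually inverse monoid homomorphisms and then pass to the groups they generate. Multiplicativity of $\mathfrak{a}\mapsto\mathfrak{a}\mathcal{O}_K$ is immediate, and $\mathfrak{a}\mathcal{O}_K\cap\mathcal{O}=\mathfrak{a}$ from (i) is one composite. For the other, $(\mathfrak{b}\cap\mathcal{O})\mathcal{O}_K\subseteq\mathfrak{b}$ is trivial, while comparing norms via (i) and (ii) gives $\mathrm{N}_{\mathcal{O}_K}\big((\mathfrak{b}\cap\mathcal{O})\mathcal{O}_K\big)=\mathrm{N}_{\mathcal{O}_K}(\mathfrak{b})$, and equal norm together with containment forces equality in the Dedekind domain $\mathcal{O}_K$; hence $(\mathfrak{b}\cap\mathcal{O})\mathcal{O}_K=\mathfrak{b}$. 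Thus the monoid map is a bijection, and since $I(\mathcal{O},\,\ell_\mathcal{O})$ and $I(\mathcal{O}_K,\,\ell_\mathcal{O})$ are by definition the subgroups generated by these cancellative monoids, it extends uniquely to the asserted isomorphism $I(\mathcal{O},\,\ell_\mathcal{O})\stackrel{\sim}{\rightarrow}I(\mathcal{O}_K,\,\ell_\mathcal{O})$. The one genuinely non-formal step, and the place I expect all the difficulty to sit, is the identity $\mathfrak{a}\mathcal{O}_K\cap\mathcal{O}=\mathfrak{a}$: it is the only point where coprimality to the conductor is used essentially, through the interplay of $1=\alpha+\ell_\mathcal{O}\beta$ with the containment $\ell_\mathcal{O}\mathfrak{a}\mathcal{O}_K\subseteq\mathfrak{a}$.
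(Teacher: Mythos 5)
Your proof is correct: the key identity $\mathfrak{a}\mathcal{O}_K\cap\mathcal{O}=\mathfrak{a}$ via $1=\alpha+\ell_\mathcal{O}\beta$ and $\ell_\mathcal{O}\mathcal{O}_K\subseteq\mathcal{O}$, the second isomorphism theorem for the norm equalities, and the norm-plus-containment argument for $(\mathfrak{b}\cap\mathcal{O})\mathcal{O}_K=\mathfrak{b}$ all hold, and the extension from the monoid bijection to the groups of fractions is routine. The paper itself gives no argument here — it simply cites \cite[Proposition 7.20]{Cox} — and your proof is essentially the standard proof of that cited proposition, so this is the same approach in substance.
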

\begin{proof}
See \cite[Proposition 7.20]{Cox}.
\end{proof}

\begin{lemma}\label{monoid}
The map
\begin{eqnarray*}
\phi~:~\mathcal{M}(\mathcal{O},\,\ell_\mathcal{O}N)&\rightarrow&
\mathcal{M}(\mathcal{O}_K,\,\ell_\mathcal{O}N)\\
\mathfrak{a} & \mapsto & \mathfrak{a}\mathcal{O}_K
\end{eqnarray*}
is well defined, and uniquely gives an isomorphism $I(\mathcal{O},\,\ell_\mathcal{O}N)
\stackrel{\sim}{\rightarrow} I(\mathcal{O}_K,\,\ell_\mathcal{O}N)$.
\end{lemma}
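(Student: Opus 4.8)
The plan is to bootstrap from the conductor-only case recorded in Lemma~\ref{normpreserving}, refining the coprimality condition from ``prime to $\ell_\mathcal{O}$'' to ``prime to $\ell_\mathcal{O}N$'' by tracking norms. Since $\ell_\mathcal{O}\mid\ell_\mathcal{O}N$, every ideal prime to $\ell_\mathcal{O}N$ is in particular prime to $\ell_\mathcal{O}$, so $\mathcal{M}(\mathcal{O},\,\ell_\mathcal{O}N)\subseteq\mathcal{M}(\mathcal{O},\,\ell_\mathcal{O})$ and likewise over $\mathcal{O}_K$. Thus the map $\mathfrak{a}\mapsto\mathfrak{a}\mathcal{O}_K$ is already available on the larger monoids by Lemma~\ref{normpreserving}(i), together with its inverse $\mathfrak{b}\mapsto\mathfrak{b}\cap\mathcal{O}$ from Lemma~\ref{normpreserving}(ii); what remains is to check that both maps respect the stronger primality condition and then to pass from monoids to the groups they generate.

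First I would verify that $\phi$ is well defined, i.e. that $\mathfrak{a}\in\mathcal{M}(\mathcal{O},\,\ell_\mathcal{O}N)$ implies $\mathfrak{a}\mathcal{O}_K\in\mathcal{M}(\mathcal{O}_K,\,\ell_\mathcal{O}N)$. By Lemma~\ref{normpreserving}(i) we have $\mathrm{N}_\mathcal{O}(\mathfrak{a})=\mathrm{N}_{\mathcal{O}_K}(\mathfrak{a}\mathcal{O}_K)$, while Lemma~\ref{primelemma} turns ``$\mathfrak{a}$ prime to $\ell_\mathcal{O}N$'' into $\gcd(\mathrm{N}_\mathcal{O}(\mathfrak{a}),\,\ell_\mathcal{O}N)=1$; combining the two and applying Lemma~\ref{primelemma} in the reverse direction over $\mathcal{O}_K$ gives that $\mathfrak{a}\mathcal{O}_K$ is prime to $\ell_\mathcal{O}N$. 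The identical norm argument applied to Lemma~\ref{normpreserving}(ii), using $\mathrm{N}_{\mathcal{O}_K}(\mathfrak{b})=\mathrm{N}_\mathcal{O}(\mathfrak{b}\cap\mathcal{O})$, shows that $\mathfrak{b}\mapsto\mathfrak{b}\cap\mathcal{O}$ sends $\mathcal{M}(\mathcal{O}_K,\,\ell_\mathcal{O}N)$ into $\mathcal{M}(\mathcal{O},\,\ell_\mathcal{O}N)$. Since these two maps are mutually inverse already at the $\ell_\mathcal{O}$-level, their restrictions are mutually inverse here, so $\phi$ is a bijection of monoids; multiplicativity $\phi(\mathfrak{a}\mathfrak{b})=(\mathfrak{a}\mathfrak{b})\mathcal{O}_K=(\mathfrak{a}\mathcal{O}_K)(\mathfrak{b}\mathcal{O}_K)=\phi(\mathfrak{a})\phi(\mathfrak{b})$ is immediate.

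Finally I would extend this monoid isomorphism to the generated groups. Every element of $I(\mathcal{O},\,\ell_\mathcal{O}N)$ can be written as $\mathfrak{b}\mathfrak{c}^{-1}$ with $\mathfrak{b},\,\mathfrak{c}\in\mathcal{M}(\mathcal{O},\,\ell_\mathcal{O}N)$, so setting $\phi(\mathfrak{b}\mathfrak{c}^{-1})=\phi(\mathfrak{b})\phi(\mathfrak{c})^{-1}$ is forced, which gives the asserted uniqueness; well-definedness follows because $\mathfrak{b}\mathfrak{c}^{-1}=\mathfrak{b}'\mathfrak{c}'^{-1}$ yields $\mathfrak{b}\mathfrak{c}'=\mathfrak{b}'\mathfrak{c}$, to which the monoid homomorphism may be applied. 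Bijectivity of the resulting group homomorphism is obtained by building its inverse in the same way out of $\mathfrak{b}\mapsto\mathfrak{b}\cap\mathcal{O}$. I expect no single hard step: the essential content is already present in Lemma~\ref{normpreserving}, and the only point requiring care is the norm bookkeeping that transfers the extra factor $N$ through both $\phi$ and its inverse, after which everything reduces to the standard passage from a monoid isomorphism to an isomorphism of the groups of fractions they generate.
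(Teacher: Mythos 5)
Your proposal is correct and follows essentially the same route as the paper: both use Lemma \ref{primelemma} together with the norm identities of Lemma \ref{normpreserving} (i) and (ii) to show that $\mathfrak{a}\mapsto\mathfrak{a}\mathcal{O}_K$ and $\mathfrak{b}\mapsto\mathfrak{b}\cap\mathcal{O}$ respect the condition ``prime to $\ell_\mathcal{O}N$,'' observe multiplicativity, and then extend the resulting monoid isomorphism to the generated groups of quotients by the forced formula $\widetilde{\phi}(\mathfrak{a}\mathfrak{c}^{-1})=\phi(\mathfrak{a})\phi(\mathfrak{c})^{-1}$. The only cosmetic difference is that you obtain the mutual-inverse property by restricting the bijections already known at the $\ell_\mathcal{O}$-level, whereas the paper re-runs Cox's argument with $f$ replaced by $\ell_\mathcal{O}N$; both rest on the same underlying fact from Cox.
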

\begin{proof}
Let $\mathfrak{a}\in\mathcal{M}(\mathcal{O},\,\ell_\mathcal{O}N)$ ($\subseteq\mathcal{M}(\mathcal{O},\,\ell_\mathcal{O})$). We get
by Lemmas \ref{primelemma} and \ref{normpreserving} (i) that
$\mathrm{N}_\mathcal{O}
(\mathfrak{a})=\mathrm{N}_{\mathcal{O}_K}(\mathfrak{a}\mathcal{O}_K)$ is relatively prime to $\ell_\mathcal{O}N$.
So $\mathfrak{a}\mathcal{O}_K$ belongs to $\mathcal{M}(\mathcal{O}_K,\,\ell_\mathcal{O}N)$
again by Lemma \ref{primelemma},
which shows that the map $\phi$ is well defined.
Note further that
\begin{equation}\label{preserve}
\phi(\mathfrak{a}\mathfrak{a}')=
(\mathfrak{a}\mathfrak{a}')\mathcal{O}_K=
(\mathfrak{a}\mathcal{O}_K)
(\mathfrak{a}'\mathcal{O}_K)=\phi(\mathfrak{a})\phi(\mathfrak{a}')
\quad(\mathfrak{a},\,\mathfrak{a}'
\in\mathcal{M}(\mathcal{O},\,\ell_\mathcal{O}N)).
\end{equation}
\par
Let $\mathfrak{b}\in\mathcal{M}(\mathcal{O}_K,\,\ell_\mathcal{O}N)$ ($\subseteq\mathcal{M}(\mathcal{O}_K,\,\ell_\mathcal{O})$).
It follows from Lemmas \ref{primelemma} and \ref{normpreserving} (ii) that
$\mathrm{N}_{\mathcal{O}_K}(\mathfrak{b})=\mathrm{N}_\mathcal{O}(\mathfrak{b}
\cap\mathcal{O})$
is relatively prime to $\ell_\mathcal{O}N$, which implies that $\mathfrak{b}\cap\mathcal{O}$
belongs to $\mathcal{M}(\mathcal{O},\,\ell_\mathcal{O}N)$.
Thus we obtain
the well-defined map
\begin{eqnarray*}
\psi~:~\mathcal{M}(\mathcal{O}_K,\,\ell_\mathcal{O}N)&\rightarrow&
\mathcal{M}(\mathcal{O},\,\ell_\mathcal{O}N)\\
\mathfrak{b} & \mapsto & \mathfrak{b}\cap\mathcal{O}.
\end{eqnarray*}
\par
One can then readily show that $\phi$ and $\psi$ are
inverses of each other which preserve multiplication by (\ref{preserve}).
(The proof is the same as that of \cite[(7.21)]{Cox}
except replacing $f$ by $\ell_\mathcal{O}N$.)
Now, we define a map $\widetilde{\phi}:I(\mathcal{O},\,\ell_\mathcal{O}N)\rightarrow I(\mathcal{O}_K,\,\ell_\mathcal{O}N)$
by
\begin{equation*}
\widetilde{\phi}(\mathfrak{a}\mathfrak{c}^{-1})=\phi(\mathfrak{a})\phi(\mathfrak{c})^{-1}
\quad(\mathfrak{a},\,\mathfrak{c}\in\mathcal{M}(\mathcal{O},\,\ell_\mathcal{O}N)).
\end{equation*}
Then $\widetilde{\phi}$ is a well-defined isomorphism
by (\ref{preserve}) and the bijectivity of $\phi$.
\end{proof}

\begin{lemma}\label{equivcondition}
If $\nu\in\mathcal{O}_K\setminus\{0\}$, then
\begin{eqnarray*}
&&\nu\in\mathcal{O},~\nu\mathcal{O}~
\textrm{is prime to $\ell_\mathcal{O}N$}~
\textrm{and}~\nu\equiv1\Mod{N\mathcal{O}}\\
&\Longleftrightarrow&
\nu\mathcal{O}_K~
\textrm{is prime to $\ell_\mathcal{O}N$ and}~\nu\equiv a\Mod{\ell_\mathcal{O}N\mathcal{O}_K}~
\textrm{for some $a\in\mathbb{Z}$ such that}~a\equiv1\Mod{N}.
\end{eqnarray*}
\end{lemma}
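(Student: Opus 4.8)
The plan is to split both sides of the asserted equivalence into their \emph{primality} clause and their \emph{congruence} clause, and to treat these two clauses separately; the primality clauses on the two sides will turn out to be literally the same condition, so that the real content of the lemma is the comparison of the congruence clauses. The essential tool is the standard description $\mathcal{O}=\mathbb{Z}+\ell_\mathcal{O}\mathcal{O}_K$ of an order of conductor $\ell_\mathcal{O}$ (see \cite[Lemma 7.2]{Cox}), which yields $N\mathcal{O}=N\mathbb{Z}+N\ell_\mathcal{O}\mathcal{O}_K$ and, in particular, the inclusion $\ell_\mathcal{O}N\mathcal{O}_K=N\ell_\mathcal{O}\mathcal{O}_K\subseteq N\mathcal{O}$. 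I would first record, using this inclusion, that both sides force $\nu\in\mathcal{O}$: on the left it is a hypothesis, while on the right the congruence $\nu\equiv a\Mod{\ell_\mathcal{O}N\mathcal{O}_K}$ with $a\in\mathbb{Z}$ gives $\nu=a+(\nu-a)$ with $a\in\mathbb{Z}\subseteq\mathcal{O}$ and $\nu-a\in\ell_\mathcal{O}N\mathcal{O}_K\subseteq\mathcal{O}$.

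Once $\nu\in\mathcal{O}$ is available on both sides, I would dispose of the primality clauses. For $\nu\in\mathcal{O}\setminus\{0\}$ the principal ideal $\nu\mathcal{O}$ is invertible, hence proper, so Lemma \ref{normbasic}(i) gives $\mathrm{N}_\mathcal{O}(\nu\mathcal{O})=\mathrm{N}_{K/\mathbb{Q}}(\nu)$; applying the same lemma to the maximal order gives $\mathrm{N}_{\mathcal{O}_K}(\nu\mathcal{O}_K)=\mathrm{N}_{K/\mathbb{Q}}(\nu)$. By Lemma \ref{primelemma}, applied to $\mathcal{O}$ and to $\mathcal{O}_K$ respectively, each of ``$\nu\mathcal{O}$ is prime to $\ell_\mathcal{O}N$'' and ``$\nu\mathcal{O}_K$ is prime to $\ell_\mathcal{O}N$'' is equivalent to $\gcd(\mathrm{N}_{K/\mathbb{Q}}(\nu),\,\ell_\mathcal{O}N)=1$. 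The two primality clauses therefore coincide, and only the congruence clauses remain to be matched.

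For the congruence, in the direction $\Rightarrow$ I would expand $\nu\equiv1\Mod{N\mathcal{O}}$ as $\nu-1=Nk+N\ell_\mathcal{O}\beta$ with $k\in\mathbb{Z}$ and $\beta\in\mathcal{O}_K$, using $N\mathcal{O}=N\mathbb{Z}+N\ell_\mathcal{O}\mathcal{O}_K$; then $a:=1+Nk\in\mathbb{Z}$ satisfies $a\equiv1\Mod{N}$ and $\nu-a=N\ell_\mathcal{O}\beta\in\ell_\mathcal{O}N\mathcal{O}_K$, which is exactly the right-hand congruence. Conversely, given $\nu\equiv a\Mod{\ell_\mathcal{O}N\mathcal{O}_K}$ with $a\in\mathbb{Z}$ and $a\equiv1\Mod{N}$, I would write $\nu-1=(\nu-a)+(a-1)$ and note $\nu-a\in\ell_\mathcal{O}N\mathcal{O}_K\subseteq N\mathcal{O}$ together with $a-1\in N\mathbb{Z}\subseteq N\mathcal{O}$, whence $\nu-1\in N\mathcal{O}$, i.e. $\nu\equiv1\Mod{N\mathcal{O}}$.

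I do not expect a genuine obstacle: once the description $\mathcal{O}=\mathbb{Z}+\ell_\mathcal{O}\mathcal{O}_K$ is in hand, the argument is purely additive bookkeeping. The one point that deserves care is realizing that the two ``prime to $\ell_\mathcal{O}N$'' clauses are not independent pieces of information but are both encoded by the coprimality of the single rational integer $\mathrm{N}_{K/\mathbb{Q}}(\nu)$ with $\ell_\mathcal{O}N$; this collapses the lemma to the comparison of the additive groups $N\mathcal{O}$ and $\ell_\mathcal{O}N\mathcal{O}_K$ relative to $\mathbb{Z}$. I would also be careful that invoking $\mathrm{N}_\mathcal{O}(\nu\mathcal{O})$ presupposes $\nu\mathcal{O}$ to be an integral ideal, which is precisely why establishing $\nu\in\mathcal{O}$ at the outset is the correct first move.
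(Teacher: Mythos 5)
Your proof is correct, and its congruence half coincides with the paper's: the paper writes $N\mathcal{O}=[\ell_\mathcal{O}N\tau_{\mathcal{O}_K},\,N]$ and $\ell_\mathcal{O}N\mathcal{O}_K=[\ell_\mathcal{O}N\tau_{\mathcal{O}_K},\,\ell_\mathcal{O}N]$ and performs exactly your additive bookkeeping (your $a=1+Nk$ appears there as $a=tN+1$), and your backward congruence step, including the deduction $\nu\in\mathcal{O}$ from $\ell_\mathcal{O}N\mathcal{O}_K\subseteq N\mathcal{O}$, is verbatim the paper's. Where you genuinely diverge is the primality clause. The paper treats it asymmetrically: in the forward direction it combines Lemma \ref{primelemma} with Lemma \ref{normpreserving} (i) (norm preservation under $\mathfrak{a}\mapsto\mathfrak{a}\mathcal{O}_K$ for ideals prime to the conductor), while in the backward direction it appeals to the monoid bijection of Lemma \ref{monoid} --- a rather terse citation, since one must still identify $\nu\mathcal{O}_K\cap\mathcal{O}$ with $\nu\mathcal{O}$ to extract the claim. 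You instead observe that both primality statements collapse to the single rational condition $\gcd(\mathrm{N}_{K/\mathbb{Q}}(\nu),\,\ell_\mathcal{O}N)=1$, because Lemma \ref{normbasic} (i), applied once to $\mathcal{O}$ and once to the order $\mathcal{O}_K$, gives $\mathrm{N}_\mathcal{O}(\nu\mathcal{O})=\mathrm{N}_{K/\mathbb{Q}}(\nu)=\mathrm{N}_{\mathcal{O}_K}(\nu\mathcal{O}_K)$, and Lemma \ref{primelemma} is valid for any order. This is more elementary: it needs no extension/contraction machinery and no prior primality-to-the-conductor hypothesis (which is what Lemma \ref{normpreserving} (i) requires and what forces the paper's asymmetry), so it works symmetrically in both directions, at the mild cost of having to secure $\nu\in\mathcal{O}$ first --- which you correctly do, so that $\nu\mathcal{O}$ is an integral ideal and Lemma \ref{normbasic} (i) applies. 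Both routes are sound; yours is self-contained where the paper leans on the ideal-transfer apparatus it had already built in \S 2.
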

\begin{proof}
Assume that $\nu\in\mathcal{O}$, $\nu\mathcal{O}$ is prime to $\ell_\mathcal{O}N$
and $\nu\equiv1\Mod{N\mathcal{O}}$.
By Lemmas \ref{primelemma} and \ref{normpreserving} (i), $(\nu\mathcal{O})\mathcal{O}_K=\nu\mathcal{O}_K$ is also prime to $\ell_\mathcal{O}N$. Since $\nu\equiv1\Mod{N\mathcal{O}}$ and $N\mathcal{O}=
[\ell_\mathcal{O}N\tau_{\mathcal{O}_K},\,N]$, we have
\begin{equation*}
\nu=s\ell_\mathcal{O}N\tau_{\mathcal{O}_K}+tN+1\quad\textrm{for some}~s,\,t\in\mathbb{Z}.
\end{equation*}
It follows from the fact
$\ell_\mathcal{O}N\mathcal{O}_K=[\ell_\mathcal{O}N\tau_{\mathcal{O}_K},\,\ell_\mathcal{O}N]$  that
\begin{equation*}
\nu\equiv a\Mod{\ell_\mathcal{O}N\mathcal{O}_K}\quad
\textrm{with $a=tN+1$ satisfying $a\equiv1\Mod{N}$}.
\end{equation*}
\par
Conversely, assume that $\nu\mathcal{O}_K$ is prime to $\ell_\mathcal{O}N$
and $\nu\equiv a\Mod{\ell_\mathcal{O}N\mathcal{O}_K}$ for some $a\in\mathbb{Z}$
such that $a\equiv1\Mod{N}$. Since
\begin{equation*}
\nu-a\in \ell_\mathcal{O}N\mathcal{O}_K=N(\ell_\mathcal{O}\mathcal{O}_K) \subseteq
N\mathcal{O}\subseteq
\mathcal{O}
\quad\textrm{and}\quad a\in\mathbb{Z}\subset\mathcal{O},
\end{equation*}
we attain that
\begin{equation*}
\nu\in\mathcal{O}\quad
\textrm{and}\quad \nu\equiv a\equiv1\Mod{N\mathcal{O}}.
\end{equation*}
Moreover, $\nu\mathcal{O}$ is prime to $\ell_\mathcal{O}N$ by Lemma \ref{monoid}.
\end{proof}

\begin{proposition}\label{generalized}
Let
$P_{\mathbb{Z},\,N}(\mathcal{O}_K,\,\ell_\mathcal{O}N)$
be the subgroup of $I(\mathcal{O}_K,\,\ell_\mathcal{O}N)$ given by
\begin{equation*}
P_{\mathbb{Z},\,N}(\mathcal{O}_K,\,\ell_\mathcal{O}N)=
\left\langle
\nu\mathcal{O}_K~\Bigg|~
\begin{array}{l}
\nu\in\mathcal{O}_K\setminus\{0\},~
\textrm{$\nu\mathcal{O}_K$ is prime to $\ell_\mathcal{O}N$},\\
\nu\equiv a\Mod{\ell_\mathcal{O}N\mathcal{O}_K}~
\textrm{for some $a\in\mathbb{Z}$ such that $a\equiv1\Mod{N}$}
\end{array}
\right\rangle.
\end{equation*}
Then we get a natural isomorphism
\begin{equation*}
\mathcal{C}_N(\mathcal{O},\,\ell_\mathcal{O}N)
\stackrel{\sim}{\rightarrow}
I(\mathcal{O}_K,\,\ell_\mathcal{O}N)/P_{\mathbb{Z},\,N}(\mathcal{O}_K,\,\ell_\mathcal{O}N).
\end{equation*}
\end{proposition}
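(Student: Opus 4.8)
The plan is to transport everything to the maximal order via the isomorphism already established in Lemma \ref{monoid}, and to check that this isomorphism matches the two ``denominator'' subgroups exactly. Recall that by definition $\mathcal{C}_N(\mathcal{O},\,\ell_\mathcal{O}N)=I(\mathcal{O},\,\ell_\mathcal{O}N)/P_N(\mathcal{O},\,\ell_\mathcal{O}N)$, and that Lemma \ref{monoid} provides a group isomorphism $\widetilde{\phi}:I(\mathcal{O},\,\ell_\mathcal{O}N)\stackrel{\sim}{\rightarrow}I(\mathcal{O}_K,\,\ell_\mathcal{O}N)$ restricting to $\mathfrak{a}\mapsto\mathfrak{a}\mathcal{O}_K$ on the monoid $\mathcal{M}(\mathcal{O},\,\ell_\mathcal{O}N)$. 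Since both quotient groups here share the common numerator $I(\mathcal{O},\,\ell_\mathcal{O}N)$ resp.\ $I(\mathcal{O}_K,\,\ell_\mathcal{O}N)$, the entire statement reduces to the single claim $\widetilde{\phi}\big(P_N(\mathcal{O},\,\ell_\mathcal{O}N)\big)=P_{\mathbb{Z},\,N}(\mathcal{O}_K,\,\ell_\mathcal{O}N)$; the asserted isomorphism is then the map induced on quotients, and it is ``natural'' precisely because it comes from $\mathfrak{a}\mapsto\mathfrak{a}\mathcal{O}_K$.

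I would first record how $\widetilde{\phi}$ acts on a principal generator of $P_N(\mathcal{O},\,\ell_\mathcal{O}N)$. If $\nu\in\mathcal{O}\setminus\{0\}$ is such that $\nu\mathcal{O}$ is prime to $\ell_\mathcal{O}N$, then $\nu\mathcal{O}$ is an invertible (hence proper) $\mathcal{O}$-ideal, so $\nu\mathcal{O}\in\mathcal{M}(\mathcal{O},\,\ell_\mathcal{O}N)$ and therefore $\widetilde{\phi}(\nu\mathcal{O})=\phi(\nu\mathcal{O})=(\nu\mathcal{O})\mathcal{O}_K=\nu\mathcal{O}_K$. Because $\widetilde{\phi}$ is a group isomorphism, the image of a subgroup generated by a set equals the subgroup generated by the images; thus $\widetilde{\phi}\big(P_N(\mathcal{O},\,\ell_\mathcal{O}N)\big)$ is the subgroup of $I(\mathcal{O}_K,\,\ell_\mathcal{O}N)$ generated by all ideals $\nu\mathcal{O}_K$ with $\nu\in\mathcal{O}\setminus\{0\}$, $\nu\mathcal{O}$ prime to $\ell_\mathcal{O}N$, and $\nu\equiv1\Mod{N\mathcal{O}}$.

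It now remains to identify these generators with the generators of $P_{\mathbb{Z},\,N}(\mathcal{O}_K,\,\ell_\mathcal{O}N)$, and this is exactly the content of Lemma \ref{equivcondition}: for $\nu\in\mathcal{O}_K\setminus\{0\}$, the left-hand conditions ($\nu\in\mathcal{O}$, $\nu\mathcal{O}$ prime to $\ell_\mathcal{O}N$, $\nu\equiv1\Mod{N\mathcal{O}}$) are equivalent to the right-hand conditions defining the generators of $P_{\mathbb{Z},\,N}(\mathcal{O}_K,\,\ell_\mathcal{O}N)$. Consequently the two generating sets $\{\nu\mathcal{O}_K\}$ coincide, giving $\widetilde{\phi}\big(P_N(\mathcal{O},\,\ell_\mathcal{O}N)\big)=P_{\mathbb{Z},\,N}(\mathcal{O}_K,\,\ell_\mathcal{O}N)$; passing to quotients yields the desired isomorphism by the standard factorization of a homomorphism through congruent subgroups. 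The only delicate point I anticipate is the surjectivity half, namely that \emph{every} generator $\nu\mathcal{O}_K$ of $P_{\mathbb{Z},\,N}(\mathcal{O}_K,\,\ell_\mathcal{O}N)$ genuinely arises from some $\nu\in\mathcal{O}$ satisfying $\nu\equiv1\Mod{N\mathcal{O}}$; this is supplied by the converse direction of Lemma \ref{equivcondition}, which shows that the integrality $a\equiv1\Mod{N}$ together with $\nu\equiv a\Mod{\ell_\mathcal{O}N\mathcal{O}_K}$ forces $\nu\in\mathcal{O}$ and $\nu\equiv1\Mod{N\mathcal{O}}$, so no generators are lost or gained in passing between $\mathcal{O}$ and $\mathcal{O}_K$.
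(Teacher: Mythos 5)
Your proof is correct and follows essentially the same route as the paper: the paper also applies the isomorphism $\widetilde{\phi}$ of Lemma \ref{monoid} and invokes Lemma \ref{equivcondition} to conclude $\widetilde{\phi}(P_N(\mathcal{O},\,\ell_\mathcal{O}N))=P_{\mathbb{Z},\,N}(\mathcal{O}_K,\,\ell_\mathcal{O}N)$, then passes to quotients. Your write-up merely makes explicit the two points the paper leaves implicit (that an isomorphism carries a generated subgroup to the subgroup generated by the images, and that principal ideals $\nu\mathcal{O}$ prime to $\ell_\mathcal{O}N$ lie in $\mathcal{M}(\mathcal{O},\,\ell_\mathcal{O}N)$ so that $\widetilde{\phi}(\nu\mathcal{O})=\nu\mathcal{O}_K$), which is a faithful elaboration rather than a different argument.
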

\begin{proof}
If $\widetilde{\phi}:I(\mathcal{O},\,\ell_\mathcal{O}N)
\stackrel{\sim}{\rightarrow}I(\mathcal{O}_K,\,\ell_\mathcal{O}N)$
is the isomorphism described in Lemma \ref{monoid},
then we achieve by Lemma \ref{equivcondition} that
\begin{equation*}
\widetilde{\phi}(P_N(\mathcal{O},\,\ell_\mathcal{O}N))=
P_{\mathbb{Z},\,N}(\mathcal{O}_K,\,\ell_\mathcal{O}N).
\end{equation*}
Therefore we establish the isomorphism
\begin{eqnarray*}
\mathcal{C}_N(\mathcal{O},\,\ell_\mathcal{O}N)&\stackrel{\sim}{\rightarrow}&
I(\mathcal{O}_K,\,\ell_\mathcal{O}N)/P_{\mathbb{Z},\,N}(\mathcal{O}_K,\,\ell_\mathcal{O}N)\\
\mathrm{[}\mathfrak{a}\mathfrak{b}^{-1}]&\mapsto&
[(\mathfrak{a}\mathcal{O}_K)
(\mathfrak{b}\mathcal{O}_K)^{-1}]
\end{eqnarray*}
where $\mathfrak{a}$ and $\mathfrak{b}$
are nonzero $\mathcal{O}$-ideals
prime to $\ell_\mathcal{O}N$.
\end{proof}

\begin{remark}
Since $P_{\mathbb{Z},\,N}(\mathcal{O}_K,\,\ell_\mathcal{O}N)$ is
a congruence subgroup for $\ell_\mathcal{O}N\mathcal{O}_K$,
the generalized ideal class group
$I(\mathcal{O}_K,\,\ell_\mathcal{O}N)/P_{\mathbb{Z},\,N}(\mathcal{O}_K,\,\ell_\mathcal{O}N)$
has finite order (\cite[Corollary 1.6 in Chapter IV]{Janusz}).
It then follows from Proposition
\ref{generalized} that $\mathcal{C}_N(\mathcal{O},\,\ell_\mathcal{O}N)$ has finite order as well.
\end{remark}

\begin{lemma}\label{classcontain}
Let $\ell$ be a positive integer.
Every class in $\mathcal{C}(\mathcal{O})$ contains
a proper $\mathcal{O}$-ideal whose norm is relatively prime to $\ell$.
\end{lemma}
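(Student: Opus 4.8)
The plan is to reduce the statement to a representation problem for the norm form of an ideal. Since every proper fractional $\mathcal{O}$-ideal is invertible, and since every class of $\mathcal{C}(\mathcal{O})$ contains integral representatives (multiply a fractional representative by a suitable positive integer $d$; the principal factor $d\mathcal{O}$ does not change the class), it suffices to prove the following: for every proper $\mathcal{O}$-ideal $\mathfrak{a}$ there is a nonzero $\alpha\in\mathfrak{a}$ for which the integral ideal $\alpha\mathfrak{a}^{-1}$ has norm relatively prime to $\ell$. Indeed, $\alpha\in\mathfrak{a}$ forces $\alpha\mathcal{O}\subseteq\mathfrak{a}$, so $\alpha\mathfrak{a}^{-1}\subseteq\mathcal{O}$ is an integral proper ideal lying in the class $[\mathfrak{a}]^{-1}$; applying this to a representative $\mathfrak{a}$ of $C^{-1}$ then produces the required ideal in any prescribed class $C$. (By Lemma \ref{primelemma}, having norm prime to $\ell$ is the same as being prime to $\ell$, so the two formulations coincide.)

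Next I would compute the relevant norm. By Lemma \ref{normbasic} we have the multiplicativity $\mathrm{N}_\mathcal{O}(\alpha\mathcal{O})=\mathrm{N}_\mathcal{O}(\alpha\mathfrak{a}^{-1})\,\mathrm{N}_\mathcal{O}(\mathfrak{a})$ together with $\mathrm{N}_\mathcal{O}(\alpha\mathcal{O})=\mathrm{N}_{K/\mathbb{Q}}(\alpha)$, so that
\[
\mathrm{N}_\mathcal{O}(\alpha\mathfrak{a}^{-1})=\frac{\mathrm{N}_{K/\mathbb{Q}}(\alpha)}{\mathrm{N}_\mathcal{O}(\mathfrak{a})}.
\]
Writing $\mathfrak{a}=[\omega_1,\,\omega_2]$ with a $\mathbb{Z}$-basis and setting $\alpha=x\omega_1+y\omega_2$, the right-hand side becomes a positive definite integral binary quadratic form $Q(x,\,y)$, which is \emph{primitive} precisely because $\mathfrak{a}$ is proper (cf. \cite[\S 7]{Cox}). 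Thus the task is reduced to representing, by the primitive form $Q$, an integer coprime to $\ell$.

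Finally I would invoke the classical fact that a primitive binary quadratic form represents integers prime to any prescribed modulus (cf. \cite[Lemma 2.25]{Cox}): choose $(x_0,\,y_0)\in\mathbb{Z}^2$ with $\gcd(Q(x_0,\,y_0),\,\ell)=1$ and put $\alpha=x_0\omega_1+y_0\omega_2\in\mathfrak{a}\setminus\{0\}$. Then $\mathrm{N}_\mathcal{O}(\alpha\mathfrak{a}^{-1})=Q(x_0,\,y_0)$ is coprime to $\ell$, and $\alpha\mathfrak{a}^{-1}$ is a proper $\mathcal{O}$-ideal of the desired kind, finishing the reduction in the first paragraph.

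The main obstacle is really the bookkeeping in the reduction rather than any deep analytic input: one must verify that the norm form of a proper ideal is genuinely primitive—this is exactly where properness (equivalently, invertibility) of $\mathfrak{a}$ enters—and that $\alpha\mathfrak{a}^{-1}$ is simultaneously integral and proper. A self-contained alternative that avoids forms altogether is to note that $\mathfrak{a}/\ell\mathfrak{a}$ is a free rank-one module over $\mathcal{O}/\ell\mathcal{O}$ (since $\mathfrak{a}$ is invertible and $\mathcal{O}/\ell\mathcal{O}$ is a finite product of local rings, so every invertible module over it is free), to pick $\alpha\in\mathfrak{a}$ whose image generates this module, and to conclude $\alpha\mathcal{O}+\ell\mathfrak{a}=\mathfrak{a}$, which says precisely that $\alpha\mathfrak{a}^{-1}$ is prime to $\ell$.
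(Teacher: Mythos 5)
Your proof is correct and is essentially the same argument as the one behind the paper's own proof, which simply cites \cite[Corollary 7.17]{Cox}: there too the point is that the norm form of a proper $\mathcal{O}$-ideal is a \emph{primitive} positive definite binary quadratic form, and that a primitive form represents integers coprime to any prescribed modulus (\cite[Lemma 2.25]{Cox}). Your bookkeeping in the reduction (integral representative, passing to the inverse class, integrality and properness of $\alpha\mathfrak{a}^{-1}$) is sound, as is your module-theoretic alternative via freeness of $\mathfrak{a}/\ell\mathfrak{a}$ over $\mathcal{O}/\ell\mathcal{O}$.
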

\begin{proof}
See \cite[Corollary 7.17]{Cox}.
\end{proof}

\begin{lemma}\label{COlCO1}
If $\ell$ is a positive integer, then the inclusion $I(\mathcal{O},\,\ell)
\hookrightarrow I(\mathcal{O})$
induces an isomorphism $\mathcal{C}(\mathcal{O},\,\ell)
\stackrel{\sim}{\rightarrow}\mathcal{C}(\mathcal{O})$.
\end{lemma}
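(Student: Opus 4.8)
The plan is to realize the stated map as the homomorphism induced on quotients by the composite $I(\mathcal{O},\,\ell)\hookrightarrow I(\mathcal{O})\twoheadrightarrow\mathcal{C}(\mathcal{O})$, and then to verify that this composite is surjective with kernel exactly $P(\mathcal{O},\,\ell)$; the first isomorphism theorem then delivers $\mathcal{C}(\mathcal{O},\,\ell)=I(\mathcal{O},\,\ell)/P(\mathcal{O},\,\ell)\stackrel{\sim}{\to}\mathcal{C}(\mathcal{O})$. Call this composite $\Phi$.

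For surjectivity, I would take an arbitrary class in $\mathcal{C}(\mathcal{O})$ and apply Lemma \ref{classcontain} with the given $\ell$ to find a representative proper $\mathcal{O}$-ideal $\mathfrak{a}$ with $\mathrm{N}_\mathcal{O}(\mathfrak{a})$ relatively prime to $\ell$. By Lemma \ref{primelemma} this $\mathfrak{a}$ is itself prime to $\ell$, hence lies in $\mathcal{M}(\mathcal{O},\,\ell)\subseteq I(\mathcal{O},\,\ell)$, and $\Phi(\mathfrak{a})$ is the chosen class. This is the easy half.

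The substance is the computation of the kernel, which is $I(\mathcal{O},\,\ell)\cap P(\mathcal{O})$; I must show it equals $P(\mathcal{O},\,\ell)$. One inclusion is immediate, since every generator $\mu\mathcal{O}$ of $P(\mathcal{O},\,\ell)$ is both principal and prime to $\ell$. For the reverse inclusion, given a principal fractional ideal $\nu\mathcal{O}\in I(\mathcal{O},\,\ell)$ with $\nu\in K^{*}$, I would first write $\nu\mathcal{O}=\mathfrak{b}\mathfrak{c}^{-1}$ with $\mathfrak{b},\,\mathfrak{c}\in\mathcal{M}(\mathcal{O},\,\ell)$ integral and prime to $\ell$, which is possible because $\nu\mathcal{O}\in I(\mathcal{O},\,\ell)$. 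The key device is then Lemma \ref{normbasic}(iii): from $\mathfrak{c}\overline{\mathfrak{c}}=\mathrm{N}_\mathcal{O}(\mathfrak{c})\mathcal{O}$ I get $\mathfrak{c}^{-1}=\mathrm{N}_\mathcal{O}(\mathfrak{c})^{-1}\overline{\mathfrak{c}}$, so that, setting $n=\mathrm{N}_\mathcal{O}(\mathfrak{c})\in\mathbb{Z}_{>0}$, the ideal $n\nu\mathcal{O}=\mathfrak{b}\overline{\mathfrak{c}}$ is an integral $\mathcal{O}$-ideal. Since conjugation preserves the norm, $\overline{\mathfrak{c}}$ is again prime to $\ell$, hence so is $\mathfrak{b}\overline{\mathfrak{c}}$ by Lemmas \ref{normbasic}(ii) and \ref{primelemma}; thus $\mu:=n\nu\in\mathcal{O}$ with $\mu\mathcal{O}$ prime to $\ell$. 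Moreover $n\in\mathbb{Z}\subset\mathcal{O}$ satisfies $\mathrm{N}_\mathcal{O}(n\mathcal{O})=\mathrm{N}_{K/\mathbb{Q}}(n)=n^{2}$ prime to $\ell$ (Lemma \ref{normbasic}(i)), so $n\mathcal{O}$ is prime to $\ell$ by Lemma \ref{primelemma}. Writing $\nu\mathcal{O}=(\mu\mathcal{O})(n\mathcal{O})^{-1}$ exhibits it as an element of $P(\mathcal{O},\,\ell)$, completing the inclusion.

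The main obstacle is exactly this clearing of denominators: a principal fractional ideal $\nu\mathcal{O}$ need not be generated by an element of $\mathcal{O}$ that is prime to $\ell$, and one cannot naively split it into a ratio of integral principal ideals prime to $\ell$. The conjugate-ideal identity of Lemma \ref{normbasic}(iii) is what lets me replace the inverse of a prime-to-$\ell$ ideal by a prime-to-$\ell$ integral ideal divided by a rational integer prime to $\ell$, turning $\nu\mathcal{O}$ into a genuine ratio of prime-to-$\ell$ principal ideals. I would take care to record that $\mathcal{O}$ is stable under complex conjugation, so that $\overline{\mathfrak{c}}$ is again a proper $\mathcal{O}$-ideal, and that $n\nu\in\mathcal{O}$ because $n\nu=n\nu\cdot 1\in n\nu\mathcal{O}\subseteq\mathcal{O}$.
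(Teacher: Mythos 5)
Your proposal is correct and takes essentially the same route as the paper: the paper also considers the natural homomorphism $I(\mathcal{O},\,\ell)\rightarrow\mathcal{C}(\mathcal{O})$, deduces surjectivity from Lemmas \ref{primelemma} and \ref{classcontain}, and for the kernel equality $I(\mathcal{O},\,\ell)\cap P(\mathcal{O})=P(\mathcal{O},\,\ell)$ simply cites Cox's Proposition 7.19 (with $f$ replaced by $\ell$), whose argument is precisely the denominator-clearing computation via $\mathfrak{c}\overline{\mathfrak{c}}=\mathrm{N}_\mathcal{O}(\mathfrak{c})\mathcal{O}$ that you spell out. In effect, you have reproduced in full the details the paper delegates to its reference, so there is nothing to correct.
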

\begin{proof}
Let $\rho:I(\mathcal{O},\,\ell)\rightarrow \mathcal{C}(\mathcal{O})=I(\mathcal{O})/
P(\mathcal{O})$ be the natural homomorphism. Then the surjectivity of $\rho$
follows from Lemmas \ref{primelemma} and \ref{classcontain}.
And the proof of $\mathrm{ker}(\rho)=P(\mathcal{O},\,\ell)$
is exactly the same as that of \cite[Proposition 7.19]{Cox}
except replacing $f$ by $\ell$. Thus we conclude that
$\mathcal{C}(\mathcal{O},\,\ell)\simeq\mathcal{C}(\mathcal{O})$.
\end{proof}

\begin{lemma}\label{PMNPN}
The inclusion $P(\mathcal{O},\,\ell_\mathcal{O}N)\hookrightarrow P(\mathcal{O},\,N)$
gives an isomorphism
\begin{equation*}
P(\mathcal{O},\,\ell_\mathcal{O}N)/P_N(\mathcal{O},\,\ell_\mathcal{O}N)
\stackrel{\sim}{\rightarrow}P(\mathcal{O},\,N)/P_N(\mathcal{O}).
\end{equation*}
\end{lemma}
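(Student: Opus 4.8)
The plan is to analyze the homomorphism $\theta$ induced on the quotients by the inclusion $P(\mathcal{O},\ell_\mathcal{O}N)\hookrightarrow P(\mathcal{O},N)$, namely
\[
\theta:P(\mathcal{O},\ell_\mathcal{O}N)/P_N(\mathcal{O},\ell_\mathcal{O}N)\longrightarrow P(\mathcal{O},N)/P_N(\mathcal{O}),
\]
which is well defined because $P_N(\mathcal{O},\ell_\mathcal{O}N)\subseteq P_N(\mathcal{O})$. I would prove that $\theta$ is both surjective and injective. The engine for both directions is a single approximation statement: for every $\mu\in\mathcal{O}\setminus\{0\}$ with $\mu\mathcal{O}$ prime to $N$ there exists $\mu'\in\mathcal{O}$ with $\mu'\equiv\mu\Mod{N\mathcal{O}}$ and $\mu'\mathcal{O}$ prime to $\ell_\mathcal{O}N$.

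I would establish this approximation claim by the Chinese remainder theorem applied to the finite ring $\mathcal{O}/\ell_\mathcal{O}N\mathcal{O}\cong\prod_{p\mid\ell_\mathcal{O}N}\mathcal{O}/p^{v_p(\ell_\mathcal{O}N)}\mathcal{O}$. At a prime $p\mid N$ the congruence $\mu'\equiv\mu\Mod{N\mathcal{O}}$ already forces $\mu'\equiv\mu$ modulo $p\mathcal{O}$, and since $\mu\mathcal{O}$ is prime to $N$ the residue of $\mu$ is a unit modulo $p\mathcal{O}$, hence a unit in $\mathcal{O}/p^{v_p(\ell_\mathcal{O}N)}\mathcal{O}$ as the reduction kernel is nilpotent; at a prime $p\mid\ell_\mathcal{O}$ with $p\nmid N$ the congruence imposes nothing, so I may prescribe $\mu'\equiv1$ there. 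Combining these local choices produces $\mu'$ that is a unit modulo $\ell_\mathcal{O}\mathcal{O}$ and congruent to $\mu$ modulo $N\mathcal{O}$, so $\mu'\mathcal{O}$ is prime to $\ell_\mathcal{O}N$ by Lemma \ref{primelemma}. The hypothesis that $\mu\mathcal{O}$ be prime to $N$ is used precisely at the primes dividing $\gcd(\ell_\mathcal{O},N)$, where the congruence leaves no freedom.

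For surjectivity, since $P(\mathcal{O},N)$ is generated by integral ideals $\mu\mathcal{O}$ with $\mu\in\mathcal{O}$ prime to $N$, it suffices to hit each class $[\mu\mathcal{O}]$. The approximation claim yields $\mu'\equiv\mu\Mod{N\mathcal{O}}$ with $\mu'\mathcal{O}$ prime to $\ell_\mathcal{O}N$, so $\mu'\mathcal{O}\in P(\mathcal{O},\ell_\mathcal{O}N)$ and $\theta([\mu'\mathcal{O}])=[\mu\mathcal{O}]$ once I check $(\mu'/\mu)\mathcal{O}\in P_N(\mathcal{O})$. This is the recurring ``inverse trick'': choosing $\mu^{*}\in\mathcal{O}$ with $\mu\mu^{*}\equiv1\Mod{N\mathcal{O}}$ gives $\mu\mu^{*}\mathcal{O}\in P_N(\mathcal{O})$, whence $(\mu\mathcal{O})^{-1}$ and $\mu^{*}\mathcal{O}$ lie in the same coset and $(\mu'/\mu)\mathcal{O}$ is represented by $\mu'\mu^{*}\mathcal{O}$ with $\mu'\mu^{*}\equiv\mu\mu^{*}\equiv1\Mod{N\mathcal{O}}$.

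For injectivity I would identify $\ker\theta=P(\mathcal{O},\ell_\mathcal{O}N)\cap P_N(\mathcal{O})$ with $P_N(\mathcal{O},\ell_\mathcal{O}N)$, the inclusion $\supseteq$ being immediate. Given $x$ in the intersection, write $x=(c\mathcal{O})(d\mathcal{O})^{-1}$ with $c,d\in\mathcal{O}$ prime to $\ell_\mathcal{O}N$; applying the inverse trick to $d$, after using the approximation claim to take the mod-$N\mathcal{O}$ inverse $d^{*}$ prime to $\ell_\mathcal{O}N$, reduces $x$ modulo $P_N(\mathcal{O},\ell_\mathcal{O}N)$ to the single integral ideal $cd^{*}\mathcal{O}$, which is prime to $\ell_\mathcal{O}N$ (here Lemma \ref{IONMOMON} and Lemma \ref{primelemma} keep the coprimality bookkeeping under control). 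Since $x\in P_N(\mathcal{O})$ and $P_N(\mathcal{O},\ell_\mathcal{O}N)\subseteq P_N(\mathcal{O})$, this representative lies in $P_N(\mathcal{O})$, which forces $cd^{*}\equiv\epsilon\Mod{N\mathcal{O}}$ for a unit $\epsilon\in\mathcal{O}^{*}$; absorbing $\epsilon$ shows $cd^{*}\mathcal{O}=(\epsilon^{-1}cd^{*})\mathcal{O}\in P_N(\mathcal{O},\ell_\mathcal{O}N)$, whence $x\in P_N(\mathcal{O},\ell_\mathcal{O}N)$. I expect the main obstacle to be the approximation claim itself together with the bookkeeping of the unit ambiguity and the passage between fractional and integral generators: conceptually both quotients are copies of $(\mathcal{O}/N\mathcal{O})^{*}/\overline{\mathcal{O}^{*}}$, depending only on $N$, so the only place an honest difficulty enters is in producing representatives prime to $\ell_\mathcal{O}N$, which is exactly where I would concentrate the argument.
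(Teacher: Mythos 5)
Your proof is correct, but it takes a genuinely different route from the paper, which offers no argument of its own: the paper's proof of Lemma \ref{PMNPN} is just the citation \cite[Lemma 15.17 and Exercise 15.10]{Cox}, where the result is obtained by identifying each of the two quotients with $(\mathcal{O}/N\mathcal{O})^*$ modulo the image of $\mathcal{O}^*$ --- precisely the identification you dismiss as merely ``conceptual'' in your closing sentence, promoted there to the actual proof. Your argument instead analyzes the induced map directly, with the CRT approximation claim (every element of $\mathcal{O}$ generating an ideal prime to $N$ is congruent modulo $N\mathcal{O}$ to one generating an ideal prime to $\ell_\mathcal{O}N$) doing all the work: it gives surjectivity on generators, and combined with your inverse trick it reduces the kernel computation to the identity $P(\mathcal{O},\,\ell_\mathcal{O}N)\cap P_N(\mathcal{O})=P_N(\mathcal{O},\,\ell_\mathcal{O}N)$. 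All steps check out; the approximation claim itself is sound, including the nilpotent-kernel lifting of units and the prescription $\mu'\equiv 1$ at primes of $\ell_\mathcal{O}$ away from $N$. The one place where you assert more than you verify is ``which forces $cd^*\equiv\epsilon\Mod{N\mathcal{O}}$ for a unit $\epsilon\in\mathcal{O}^*$'': this needs the observation that any element of $\mathcal{O}$ generating an ideal lying in $P_N(\mathcal{O})$ is congruent to a unit modulo $N\mathcal{O}$. It is a two-line fix --- write $cd^*\mathcal{O}=(\alpha\mathcal{O})(\beta\mathcal{O})^{-1}$ with $\alpha\equiv\beta\equiv1\Mod{N\mathcal{O}}$, so $cd^*\beta=\epsilon\alpha$ for some $\epsilon\in\mathcal{O}^*$, whence $cd^*\equiv cd^*\beta=\epsilon\alpha\equiv\epsilon\Mod{N\mathcal{O}}$ --- and it is exactly the content of the paper's Lemma \ref{anotherP}, proved later in Section \ref{sect9}, which you could also simply invoke. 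As for what each approach buys: the citation/unit-group route is shorter and yields the explicit structure $P(\mathcal{O},\,N)/P_N(\mathcal{O})\simeq(\mathcal{O}/N\mathcal{O})^*/\pi_{\mathcal{O},\,N}(\mathcal{O}^*)$, which the paper needs anyway in (\ref{WUOG}); your route is self-contained, stays entirely within the ideal-theoretic bookkeeping of Section 2, and isolates the only genuine difficulty --- producing representatives prime to $\ell_\mathcal{O}N$ --- in a reusable approximation lemma.
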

\begin{proof}
See \cite[Lemma 15.17 and Exercise 15.10]{Cox}.
\end{proof}

\begin{proposition}\label{MNN}
The inclusion $I(\mathcal{O},\,\ell_\mathcal{O}N)\hookrightarrow
I(\mathcal{O},\,N)$ derives an isomorphism
\begin{equation*}
\mathcal{C}_N(\mathcal{O},\,\ell_\mathcal{O}N)
\stackrel{\sim}{\rightarrow}
\mathcal{C}_N(\mathcal{O}).
\end{equation*}
\end{proposition}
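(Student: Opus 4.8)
The plan is to realize both $\mathcal{C}_N(\mathcal{O},\,\ell_\mathcal{O}N)$ and $\mathcal{C}_N(\mathcal{O})=\mathcal{C}_N(\mathcal{O},\,N)$ as extensions of a form class group by a group of principal ideals, and then to compare the two extensions through a morphism of short exact sequences. For any positive integer $\ell$ one has the chain $P_N(\mathcal{O},\,\ell)\subseteq P(\mathcal{O},\,\ell)\subseteq I(\mathcal{O},\,\ell)$, which produces the short exact sequence
\[
1\rightarrow P(\mathcal{O},\,\ell)/P_N(\mathcal{O},\,\ell)\rightarrow\mathcal{C}_N(\mathcal{O},\,\ell)\rightarrow\mathcal{C}(\mathcal{O},\,\ell)\rightarrow1.
\]
Specializing to $\ell=\ell_\mathcal{O}N$ and to $\ell=N$ (and recalling that $P_N(\mathcal{O},\,N)=P_N(\mathcal{O})$ and $\mathcal{C}_N(\mathcal{O},\,N)=\mathcal{C}_N(\mathcal{O})$) yields the two rows I want to compare.

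Next I would observe that the inclusion $I(\mathcal{O},\,\ell_\mathcal{O}N)\hookrightarrow I(\mathcal{O},\,N)$ carries $P(\mathcal{O},\,\ell_\mathcal{O}N)$ into $P(\mathcal{O},\,N)$ and $P_N(\mathcal{O},\,\ell_\mathcal{O}N)$ into $P_N(\mathcal{O})$, hence induces a morphism from the $\ell=\ell_\mathcal{O}N$ row to the $\ell=N$ row of these exact sequences. The left-hand vertical map is precisely the inclusion appearing in Lemma \ref{PMNPN}, so it is an isomorphism. For the right-hand vertical map $\mathcal{C}(\mathcal{O},\,\ell_\mathcal{O}N)\rightarrow\mathcal{C}(\mathcal{O},\,N)$, I would invoke Lemma \ref{COlCO1}: the inclusions $I(\mathcal{O},\,\ell_\mathcal{O}N)\hookrightarrow I(\mathcal{O})$ and $I(\mathcal{O},\,N)\hookrightarrow I(\mathcal{O})$ each induce an isomorphism onto $\mathcal{C}(\mathcal{O})$, and since the first inclusion factors through the second, the resulting triangle of induced maps commutes; therefore the map $\mathcal{C}(\mathcal{O},\,\ell_\mathcal{O}N)\rightarrow\mathcal{C}(\mathcal{O},\,N)$ is an isomorphism.

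Finally, with a morphism of short exact sequences whose two outer vertical maps are isomorphisms, the short five lemma forces the middle map $\mathcal{C}_N(\mathcal{O},\,\ell_\mathcal{O}N)\rightarrow\mathcal{C}_N(\mathcal{O})$ to be an isomorphism, which is the assertion. I expect the only delicate point to be the verification that the right-hand square genuinely commutes and that the two isomorphisms supplied by Lemma \ref{COlCO1} are compatible with $I(\mathcal{O},\,\ell_\mathcal{O}N)\hookrightarrow I(\mathcal{O},\,N)$ — that is, that the factorization through $I(\mathcal{O})$ identifies the induced map on class groups as an isomorphism, rather than merely telling us that source and target are separately isomorphic to $\mathcal{C}(\mathcal{O})$. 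Once exactness of both rows and this compatibility are confirmed, the conclusion is immediate. Alternatively one could argue directly, showing that the natural map $I(\mathcal{O},\,\ell_\mathcal{O}N)\rightarrow\mathcal{C}_N(\mathcal{O})$ is surjective (using Lemma \ref{classcontain} to replace representatives by ideals prime to $\ell_\mathcal{O}N$) with kernel $P_N(\mathcal{O},\,\ell_\mathcal{O}N)$; but the diagram chase is cleaner and uses exactly the two preceding lemmas.
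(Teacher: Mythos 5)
Your proposal is correct and takes essentially the same route as the paper: the paper's proof rests on exactly the same two ingredients --- Lemma \ref{PMNPN} for the principal part and Lemma \ref{COlCO1} applied at both levels (with the factorization through $I(\mathcal{O})$) for the form-class part --- and then, instead of citing the short five lemma, it carries out the same comparison concretely by choosing representatives $\mathfrak{a}_i$ of the classes of $\mathcal{C}(\mathcal{O},\,\ell_\mathcal{O}N)$ and $\mathfrak{b}_j$ of the classes of $P(\mathcal{O},\,\ell_\mathcal{O}N)/P_N(\mathcal{O},\,\ell_\mathcal{O}N)$, and observing that the products $\mathfrak{a}_i\mathfrak{b}_j$ simultaneously represent the classes of $\mathcal{C}_N(\mathcal{O},\,\ell_\mathcal{O}N)$ and of $\mathcal{C}_N(\mathcal{O})$. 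This is the same decomposition in different packaging, so nothing needs repair.
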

\begin{proof}
Since $P_N(\mathcal{O},\,\ell_\mathcal{O}N)\subseteq
P_N(\mathcal{O})$, the inclusion $I(\mathcal{O},\,\ell_\mathcal{O}N)\hookrightarrow I(\mathcal{O},\,N)$
renders a homomorphism
\begin{equation}\label{C1}
\mathcal{C}_N(\mathcal{O},\,\ell_\mathcal{O}N)\rightarrow\mathcal{C}_N(\mathcal{O}).
\end{equation}
\par
Let $\mathfrak{a}_1,\,\mathfrak{a}_2,\,\ldots,\,\mathfrak{a}_r\in I(\mathcal{O},\,\ell_\mathcal{O}N)$
be representatives of classes in $\mathcal{C}(\mathcal{O},\,\ell_\mathcal{O}N)$.
Since we have the natural isomorphisms
\begin{equation*}
\mathcal{C}(\mathcal{O},\,\ell_\mathcal{O}N)
\stackrel{\sim}{\rightarrow}\mathcal{C}(\mathcal{O})
\quad\textrm{and}\quad
\mathcal{C}(\mathcal{O},\,N)
\stackrel{\sim}{\rightarrow}\mathcal{C}(\mathcal{O})
\end{equation*}
by Lemma \ref{COlCO1}, we obtain an isomorphism
\begin{equation*}
\mathcal{C}(\mathcal{O},\,\ell_\mathcal{O}N)
\stackrel{\sim}{\rightarrow}\mathcal{C}(\mathcal{O},\,N).
\end{equation*}
Thus $\mathfrak{a}_1,\,\mathfrak{a}_2,\,\ldots,\,\mathfrak{a}_r$ are also 
representatives of classes in $\mathcal{C}(\mathcal{O},\,N)$.
Now, let $\mathfrak{b}_1,\,\mathfrak{b}_2,\,\ldots\mathfrak{b}_s\in P(\mathcal{O},\,\ell_\mathcal{O}N)$
be representatives of classes in $P(\mathcal{O},\,\ell_\mathcal{O}N)/P_N(\mathcal{O},\,\ell_\mathcal{O}N)$.
Then we see by Lemma \ref{PMNPN} that they are also representatives of classes in  $P(\mathcal{O},\,N)/P_N(\mathcal{O})$.
\par
Note that there are natural one-to-one correspondences
\begin{eqnarray*}
I(\mathcal{O},\,\ell_\mathcal{O}N)/P(\mathcal{O},\,\ell_\mathcal{O}N)
\times P(\mathcal{O},\,\ell_\mathcal{O}N)/P_N(\mathcal{O},\,\ell_\mathcal{O}N)&\rightarrow&
I(\mathcal{O},\,\ell_\mathcal{O}N)/P_N(\mathcal{O},\,\ell_\mathcal{O}N),\quad\textrm{and}\\
I(\mathcal{O},\,N)/P(\mathcal{O},\,N)\times
P(\mathcal{O},\,N)/P_N(\mathcal{O})&\rightarrow&
I(\mathcal{O},\,N)/P_N(\mathcal{O}).
\end{eqnarray*}
Hence
\begin{equation*}
\mathfrak{a}_i\mathfrak{b}_j\quad(i=1,\,2,\,\ldots,\,r,~
j=1,\,2,\,\ldots,\,s)
\end{equation*}
are representatives of classes in 
$\mathcal{C}_N(\mathcal{O},\,\ell_\mathcal{O}N)$, and also of classes in $\mathcal{C}_N(\mathcal{O})$.
This observation implies that the homomorphism in (\ref{C1})
is in fact an isomorphism.
\end{proof}

\begin{remark}\label{CNOIP}
We achieve by Propositions \ref{generalized} and \ref{MNN} that
\begin{equation*}
\mathcal{C}_N(\mathcal{O})\simeq
I(\mathcal{O}_K,\,\ell_\mathcal{O}N)/P_{\mathbb{Z},\,N}(\mathcal{O}_K,\,\ell_\mathcal{O}N).
\end{equation*}
\end{remark}

\begin{remark}\label{CCOMN}
Let $C\in\mathcal{C}_N(\mathcal{O})$.
By Proposition \ref{MNN}, we have
\begin{equation*}
C=[\mathfrak{a}\mathfrak{b}^{-1}]\quad\textrm{for some}~\mathfrak{a},\,\mathfrak{b}\in\mathcal{M}(\mathcal{O},\,\ell_\mathcal{O}N).
\end{equation*}
If $h$ is the order of the group $\mathcal{C}_N(\mathcal{O})$, then we see that
\begin{equation*}
C=[\mathfrak{b}]^h[\mathfrak{a}\mathfrak{b}^{-1}]=[\mathfrak{a}\mathfrak{b}^{h-1}]
\quad\textrm{and}\quad\mathfrak{a}\mathfrak{b}^{h-1}\in\mathcal{M}(\mathcal{O},\,\ell_\mathcal{O}N),
\end{equation*}
which claims that $C\cap\mathcal{M}(\mathcal{O},\,\ell_\mathcal{O}N)\neq\emptyset$.
\end{remark}

\section {The elliptic curve $E_\mathcal{O}$ with complex multiplication by $\mathcal{O}$}\label{sect3}

We shall introduce specific models
of elliptic curves with complex multiplication.
\par
Let $g_2=g_2(\mathcal{O})$ and $g_3=g_3(\mathcal{O})$ be the usual scaled Eisenstein
series and
\begin{equation*}
j(\mathcal{O})=1728\,\frac{g_2^3}{\Delta}\quad\textrm{with}~\Delta=g_2^3-27g_3^2.
\end{equation*}
When $D_\mathcal{O}\neq-3,\,-4$,
we let $E_\mathcal{O}$ be the elliptic curve with
$j$-invariant $j(E_\mathcal{O})=j(\mathcal{O})$  given by the Weierstrass equation
\begin{equation}\label{EO}
E_\mathcal{O}~:~y^2=4x^3-A_\mathcal{O}x
-B_\mathcal{O}
\end{equation}
with base point $O=[0:1:0]$,
where
\begin{equation}\label{AandB}
A_\mathcal{O}=\frac{j(\mathcal{O})(j(\mathcal{O})-1728)}{2^{12}3^9}\quad\textrm{and}\quad
B_\mathcal{O}=\frac{j(\mathcal{O})(j(\mathcal{O})-1728)^2}{2^{18}3^{15}}.
\end{equation}
If $\wp(\,\cdot\,;\,\mathcal{O}):\mathbb{C}\rightarrow\mathbb{C}\cup\{\infty\}$
is the Weierstrass $\wp$-function for the lattice $\mathcal{O}$ with
derivative $\wp'$, then we get a complex analytic isomorphism
\begin{eqnarray*}
\mathbb{C}/\mathcal{O} & \stackrel{\sim}{\rightarrow} & E(\mathbb{C})\quad(\subset\mathbb{P}^2(\mathbb{C}))\\
z+\mathcal{O} & \mapsto & \left[x(z;\,\mathcal{O}):
y(z;\,\mathcal{O}):1\right]=\left[
\frac{g_2g_3}{\Delta}\wp(z;\,\mathcal{O}):\sqrt{
\left(\frac{g_2g_3}{\Delta}\right)^3}\wp'(z;\,\mathcal{O}):1\right]
\end{eqnarray*}
(cf. \cite[$\S$VI.3]{Silverman09}).
Note that
$g_2g_3\neq0$ because we are assuming that $D_\mathcal{O}\neq-3,\,-4$ (cf. \cite[Exercise 10.19]{Cox}).
For each $\mathbf{v}=\begin{bmatrix}v_1&v_2\end{bmatrix}\in M_{1,\,2}(\mathbb{Q})
\setminus M_{1,\,2}(\mathbb{Z})$, we set
\begin{eqnarray}
X_\mathbf{v}&=&\frac{g_2g_3}{\Delta}\wp(v_1\tau_\mathcal{O}+v_2;\,\mathcal{O}),
\label{Xv}\\
Y_\mathbf{v}&=&\sqrt{\left(\frac{g_2g_3}{\Delta}\right)^3}\wp'(v_1\tau_\mathcal{O}+v_2;\,\mathcal{O}),\label{Yv}
\end{eqnarray}
where $\tau_\mathcal{O}$ is the element of $\mathbb{H}$ described in (\ref{tauO}). 

\begin{lemma}\label{basicXY}
Let $\mathbf{u},\,\mathbf{v}\in M_{1,\,2}(\mathbb{Q})\setminus M_{1,\,2}(\mathbb{Z})$
and $\mathbf{n}\in M_{1,\,2}(\mathbb{Z})$. Then we have
\begin{enumerate}
\item[\textup{(i)}] $X_\mathbf{u}=X_\mathbf{v}$ if and only if $\mathbf{u}\equiv
\mathbf{v}$ or $-\mathbf{v}\Mod{M_{1,\,2}(\mathbb{Z})}$.
\item[\textup{(ii)}] $Y_{\mathbf{v}+\mathbf{n}}=Y_\mathbf{v}$.
\item[\textup{(iii)}] $Y_{-\mathbf{v}}=-Y_\mathbf{v}$.
\item[\textup{(iv)}] $Y_\mathbf{v}=0$ if and only if $2\mathbf{v}\in M_{1,\,2}(\mathbb{Z})$.
\end{enumerate}
\end{lemma}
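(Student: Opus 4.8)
The plan is to reduce every assertion to standard properties of the Weierstrass functions $\wp(\,\cdot\,;\,\mathcal{O})$ and $\wp'(\,\cdot\,;\,\mathcal{O})$ by writing $z_\mathbf{v}=v_1\tau_\mathcal{O}+v_2$ for $\mathbf{v}=\begin{bmatrix}v_1&v_2\end{bmatrix}$. Since $\mathcal{O}=\mathbb{Z}\tau_\mathcal{O}+\mathbb{Z}$, the assignment $\mathbf{v}\mapsto z_\mathbf{v}$ is a $\mathbb{Q}$-linear bijection carrying $M_{1,\,2}(\mathbb{Z})$ onto the lattice $\mathcal{O}$. This dictionary translates the lemma's congruence conditions into lattice relations: $\mathbf{u}\equiv\pm\mathbf{v}\Mod{M_{1,\,2}(\mathbb{Z})}$ if and only if $z_\mathbf{u}\equiv\pm z_\mathbf{v}\Mod{\mathcal{O}}$; for $\mathbf{n}\in M_{1,\,2}(\mathbb{Z})$ one has $z_{\mathbf{v}+\mathbf{n}}=z_\mathbf{v}+\omega$ with $\omega\in\mathcal{O}$; also $z_{-\mathbf{v}}=-z_\mathbf{v}$; and $2\mathbf{v}\in M_{1,\,2}(\mathbb{Z})$ if and only if $2z_\mathbf{v}\in\mathcal{O}$. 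Since $g_2g_3\neq0$ (as $D_\mathcal{O}\neq-3,\,-4$), the prefactors $g_2g_3/\Delta$ and $\sqrt{(g_2g_3/\Delta)^3}$ are nonzero constants, so whether $X_\mathbf{v}$ or $Y_\mathbf{v}$ coincide or vanish is equivalent to the corresponding question for $\wp(z_\mathbf{v};\,\mathcal{O})$ and $\wp'(z_\mathbf{v};\,\mathcal{O})$.

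With this dictionary in place I would dispatch (ii) and (iii) first, as they fix the tools needed later. For (ii), $z_{\mathbf{v}+\mathbf{n}}$ differs from $z_\mathbf{v}$ by the lattice element $\omega$, so the $\mathcal{O}$-periodicity of $\wp'$ gives $\wp'(z_{\mathbf{v}+\mathbf{n}};\,\mathcal{O})=\wp'(z_\mathbf{v};\,\mathcal{O})$, whence $Y_{\mathbf{v}+\mathbf{n}}=Y_\mathbf{v}$. For (iii), the identity $z_{-\mathbf{v}}=-z_\mathbf{v}$ together with the oddness $\wp'(-z)=-\wp'(z)$ yields $Y_{-\mathbf{v}}=-Y_\mathbf{v}$. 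I would also record here that since $\mathbf{u},\,\mathbf{v}\notin M_{1,\,2}(\mathbb{Z})$, the arguments $z_\mathbf{u},\,z_\mathbf{v}$ lie outside $\mathcal{O}$, so the relevant $\wp$-values are finite.

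For (i), $X_\mathbf{u}=X_\mathbf{v}$ is equivalent to $\wp(z_\mathbf{u};\,\mathcal{O})=\wp(z_\mathbf{v};\,\mathcal{O})$. Here I would invoke the classical fact that $\wp(\,\cdot\,;\,\mathcal{O})$ induces a degree-two map $\mathbb{C}/\mathcal{O}\to\mathbb{P}^1(\mathbb{C})$ which, being even, has generic fibers $\{z,\,-z\}$; consequently $\wp(a;\,\mathcal{O})=\wp(b;\,\mathcal{O})$ if and only if $a\equiv\pm b\Mod{\mathcal{O}}$ for $a,\,b\notin\mathcal{O}$. Translating $z_\mathbf{u}\equiv\pm z_\mathbf{v}\Mod{\mathcal{O}}$ back through the dictionary gives precisely $\mathbf{u}\equiv\mathbf{v}$ or $-\mathbf{v}\Mod{M_{1,\,2}(\mathbb{Z})}$.

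For (iv), $Y_\mathbf{v}=0$ is equivalent to $\wp'(z_\mathbf{v};\,\mathcal{O})=0$. One implication is soft: if $2z_\mathbf{v}\in\mathcal{O}$, then $-z_\mathbf{v}\equiv z_\mathbf{v}\Mod{\mathcal{O}}$, so oddness and periodicity force $\wp'(z_\mathbf{v};\,\mathcal{O})=\wp'(-z_\mathbf{v};\,\mathcal{O})=-\wp'(z_\mathbf{v};\,\mathcal{O})$ and hence $\wp'(z_\mathbf{v};\,\mathcal{O})=0$. For the converse I would use that $\wp'(\,\cdot\,;\,\mathcal{O})$ is an elliptic function of order three (a single pole of order three at the lattice points), so it has exactly three zeros modulo $\mathcal{O}$; the three nontrivial two-torsion classes already furnish zeros by the first implication, so these exhaust all the zeros. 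Therefore $\wp'(z_\mathbf{v};\,\mathcal{O})=0$ forces $z_\mathbf{v}$ to be a nontrivial two-torsion point, i.e. $2z_\mathbf{v}\in\mathcal{O}$, equivalently $2\mathbf{v}\in M_{1,\,2}(\mathbb{Z})$. The only steps carrying genuine content are the fiber description of $\wp$ used in (i) and the zero count for $\wp'$ used in (iv); both are standard in the theory of elliptic functions, so the real work is organizing the translation correctly and confirming that the prefactors are nonzero.
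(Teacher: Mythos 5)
Your proposal is correct: the dictionary $\mathbf{v}\mapsto z_\mathbf{v}=v_1\tau_\mathcal{O}+v_2$ (valid since the prefactors $g_2g_3/\Delta$ and $\sqrt{(g_2g_3/\Delta)^3}$ are nonzero when $D_\mathcal{O}\neq-3,\,-4$), together with periodicity and oddness of $\wp'$, the two-to-one even structure of $\wp$ on $\mathbb{C}/\mathcal{O}$, and the order-three zero count for $\wp'$, is exactly the standard elliptic-function argument. The paper itself offers no argument here, simply citing Cox, $\S$10.A, and your write-up is precisely the proof that citation stands for, so there is nothing to compare beyond noting that you have filled in the outsourced details correctly.
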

\begin{proof}
See \cite[$\S$10.A]{Cox}.
\end{proof}

\begin{proposition}\label{CM}
The theory of complex multiplication yields the following results.
\begin{enumerate}
\item[\textup{(i)}] $H_\mathcal{O}=K(j(\mathcal{O}))$.
\item[\textup{(ii)}] If $D_\mathcal{O}\neq-3,\,-4$ and  $N\geq2$, then
\begin{equation*}
K_{\mathcal{O},\,N}=H_\mathcal{O}\left(
X_{\left[\begin{smallmatrix}
0&\frac{1}{N}
\end{smallmatrix}\right]}
\right).
\end{equation*}
\end{enumerate}
\end{proposition}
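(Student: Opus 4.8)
The plan is to derive both parts from the two main theorems of complex multiplication, citing the first for (i) and reserving the genuine work for the reduction to a single torsion coordinate in (ii).

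For (i) I would invoke the first main theorem of complex multiplication for orders: $j(\mathcal{O})$ is an algebraic integer, and if $\sigma_\mathfrak{b}$ denotes the Artin symbol attached to a proper $\mathcal{O}$-ideal $\mathfrak{b}$ prime to the conductor, then $j(\mathcal{O})^{\sigma_\mathfrak{b}}=j(\mathfrak{b}^{-1})$. As $\mathfrak{b}$ runs through representatives of $\mathcal{C}(\mathcal{O})$ the values $j(\mathfrak{b}^{-1})$ exhaust the distinct conjugates of $j(\mathcal{O})$ over $K$, so $K(j(\mathcal{O}))/K$ is abelian with $[K(j(\mathcal{O})):K]=|\mathcal{C}(\mathcal{O})|$ and the Artin map induces $\mathcal{C}(\mathcal{O})\stackrel{\sim}{\rightarrow}\mathrm{Gal}(K(j(\mathcal{O}))/K)$. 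Since this is precisely the defining property of $H_\mathcal{O}=K_{\mathcal{O},\,1}$, we obtain $H_\mathcal{O}=K(j(\mathcal{O}))$ (cf. \cite[Theorem 11.1]{Cox}).

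For (ii) the starting point is the second main theorem of complex multiplication: $K_{\mathcal{O},\,N}$ is generated over $H_\mathcal{O}$ by the values $h(P)$ of a Weber function $h:E_\mathcal{O}\rightarrow E_\mathcal{O}/\mathrm{Aut}(E_\mathcal{O})\cong\mathbb{P}^1$ as $P$ ranges over $E_\mathcal{O}[N]$. Because $D_\mathcal{O}\neq-3,\,-4$ forces $\mathrm{Aut}(E_\mathcal{O})=\{\pm1\}$, the Weber function is, up to the normalization already built into (\ref{Xv}), nothing but the $x$-coordinate; hence by Lemma \ref{basicXY}(i) we have $h(E_\mathcal{O}[N])=\{X_\mathbf{v}\mid \mathbf{v}\in\frac{1}{N}M_{1,\,2}(\mathbb{Z})\setminus M_{1,\,2}(\mathbb{Z})\}$ and therefore $K_{\mathcal{O},\,N}=H_\mathcal{O}(X_\mathbf{v}\mid\mathbf{v})$. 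It then remains to show that the single coordinate $X_{\left[\begin{smallmatrix}0&\frac{1}{N}\end{smallmatrix}\right]}$ already generates everything over $H_\mathcal{O}$.

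This last reduction is where the argument must be made carefully, and I regard it as the main obstacle. From the class field theory of Section 2 (Remark \ref{CNOIP} together with the standard multiplicative-congruence computation of the kernel of $\mathcal{C}_N(\mathcal{O})\rightarrow\mathcal{C}(\mathcal{O})$, as in \cite[$\S$7, $\S$15]{Cox}) one gets $\mathrm{Gal}(K_{\mathcal{O},\,N}/H_\mathcal{O})\cong(\mathcal{O}/N\mathcal{O})^*/\{\pm1\}$, the image of $\mathcal{O}^*=\{\pm1\}$ being the only unit contribution since $D_\mathcal{O}\neq-3,\,-4$. Identifying $E_\mathcal{O}[N]$ with $\mathcal{O}/N\mathcal{O}$ via $\frac{1}{N}\alpha+\mathcal{O}\mapsto\alpha+N\mathcal{O}$, the point underlying $X_{\left[\begin{smallmatrix}0&\frac{1}{N}\end{smallmatrix}\right]}$ corresponds to the generator $1+N\mathcal{O}$, and the complex-multiplication action sends it under $\sigma_u$ to $u^{-1}+N\mathcal{O}$. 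Thus $\sigma_u$ fixes $X_{\left[\begin{smallmatrix}0&\frac{1}{N}\end{smallmatrix}\right]}$ if and only if $u^{-1}\equiv\pm1\Mod{N\mathcal{O}}$, i.e. if and only if $u$ is trivial in $(\mathcal{O}/N\mathcal{O})^*/\{\pm1\}$; the stabilizer of $X_{\left[\begin{smallmatrix}0&\frac{1}{N}\end{smallmatrix}\right]}$ is therefore trivial, so $H_\mathcal{O}(X_{\left[\begin{smallmatrix}0&\frac{1}{N}\end{smallmatrix}\right]})$ is the fixed field of the trivial subgroup, namely all of $K_{\mathcal{O},\,N}$. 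The two points demanding the most vigilance are the exact normalization of the Weber function when $D_\mathcal{O}\neq-3,\,-4$ and the compatibility of the stabilizer computation with the Artin normalization of Remark \ref{CNOIP}; once these are pinned down, the equality in (ii) follows at once.
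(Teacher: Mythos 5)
Your proposal is correct, but for part (ii) it takes a genuinely different route from the paper, whose entire proof is a citation: part (i) to \cite[Theorem 5 in Chapter 10]{Lang87} and part (ii) to \cite[Theorem 6.2.3]{Schertz}. Your treatment of (i) matches the paper in substance (both rest on the first main theorem of CM for orders). For (ii), however, you do real work that the paper outsources: you take as input the weaker statements that (a) \emph{all} the coordinates $X_\mathbf{v}$, $N\mathbf{v}\in M_{1,\,2}(\mathbb{Z})$, generate $K_{\mathcal{O},\,N}$ over $H_\mathcal{O}$, and (b) $\mathrm{Gal}(K_{\mathcal{O},\,N}/H_\mathcal{O})\simeq(\mathcal{O}/N\mathcal{O})^*/\{\pm1\}$ acts on $E_\mathcal{O}[N]\simeq\mathcal{O}/N\mathcal{O}$ by module multiplication, and you then reduce to the single value $X_{\left[\begin{smallmatrix}0&\frac{1}{N}\end{smallmatrix}\right]}$ by a stabilizer computation: since $x$-coordinates separate points exactly up to sign (Lemma \ref{basicXY} (i)) and $1+N\mathcal{O}$ generates $\mathcal{O}/N\mathcal{O}$ with annihilator $N\mathcal{O}$, the stabilizer of $X_{\left[\begin{smallmatrix}0&\frac{1}{N}\end{smallmatrix}\right]}$ is trivial and the Galois correspondence gives the equality. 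This is sound, and it is in fact the same technique the paper itself uses later, in Proposition \ref{Xgenerate} and in the proof of Theorem \ref{Galoisrepresentation}, where your inputs (a) and (b) are pinned down precisely by (\ref{specialization}), (\ref{Xf}), (\ref{WUOG}), Propositions \ref{reciprocity} and \ref{Frickefamily}, and Lemma \ref{basicXY}. What your route buys is an actual argument, plus the two caveats you flag (Weber-function normalization, Artin normalization) are exactly the right ones and are harmless for the stabilizer argument since $uP=\pm P$ iff $u^{-1}P=\pm P$. The one point to watch is non-circularity: your input (a)+(b) must be quoted from a source independent of \cite[Theorem 6.2.3]{Schertz} itself --- for instance Cho's theorem (\ref{specialization}) together with Shimura reciprocity for orders --- since in some treatments the one-point generation \emph{is} the cited theorem and the all-points version is merely its corollary.
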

\begin{proof}
See \cite[Theorem 5 in Chapter 10]{Lang87} and \cite[Theorem 6.2.3]{Schertz}.
\end{proof}

\section {Fields of modular functions}

We shall recall some necessary properties of Fricke functions
and Siegel functions.
\par
The modular group $\mathrm{SL}_2(\mathbb{Z})$ acts on
the complex upper half-plane $\mathbb{H}=\{\tau\in\mathbb{C}~|~
\mathrm{Im}(\tau)>0\}$ by fractional linear transformations.
Let $j$ be the elliptic modular function on $\mathbb{H}$, that is,
\begin{equation*}
j(\tau)=j([\tau,\,1])\quad(\tau\in\mathbb{H}).
\end{equation*}
For each $\mathbf{v}\in M_{1,\,2}(\mathbb{Q})\setminus M_{1,\,2}(\mathbb{Z})$,
the Fricke function $f_\mathbf{v}$ on $\mathbb{H}$ is defined by
\begin{equation}\label{fv}
f_\mathbf{v}(\tau)=-2^73^3\frac{g_2([\tau,\,1])g_3([\tau,\,1])}
{\Delta([\tau,\,1])}\wp(v_1\tau+v_2;\,[\tau,\,1])\quad(\tau\in\mathbb{H}).
\end{equation}
And, for a positive integer $N$, let
\begin{equation}\label{FN}
\mathcal{F}_N=\left\{
\begin{array}{ll}
\mathbb{Q}(j) & \textrm{if}~N=1,\\
\mathbb{Q}(j,\,f_\mathbf{v}~|~\mathbf{v}\in M_{1,\,2}(\mathbb{Q})\setminus M_{1,\,2}(\mathbb{Z})~
\textrm{satisfies}~N\mathbf{v}\in M_{1,\,2}(\mathbb{Z})) & \textrm{if}~N\geq2.
\end{array}\right.
\end{equation}

\begin{proposition}\label{SL2action}
The field $\mathcal{F}_N$ is a Galois extension of $\mathcal{F}_1$ whose Galois group
is isomorphic to $\mathrm{GL}_2(\mathbb{Z}/N\mathbb{Z})/\langle-I_2\rangle$. If $\gamma\in\mathrm{SL}_2(\mathbb{Z})$, then
\begin{equation*}
f^{\widetilde{\gamma}}=f\circ\gamma
\end{equation*}
where $\widetilde{\gamma}$ is the image of $\gamma$ in $\mathrm{GL}_2(\mathbb{Z}/N\mathbb{Z})/\langle-I_2\rangle$.
\end{proposition}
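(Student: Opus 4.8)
The plan is to separate the statement into a geometric part that produces the $\mathrm{SL}_2(\mathbb{Z}/N\mathbb{Z})$-action over $\mathbb{C}$ and an arithmetic part that produces the remaining $(\mathbb{Z}/N\mathbb{Z})^\times$ by cyclotomic descent, following the route of \cite[Chapter 6]{Shimura}. First I would work over $\mathbb{C}$. Write $\mathcal{A}_N$ for the field of all meromorphic functions on $\mathbb{H}$ that are invariant under $\Gamma(N)$ and meromorphic at the cusps, i.e.\ the function field of the modular curve $X(N)$. Since $\Gamma(N)$ is normal in $\mathrm{SL}_2(\mathbb{Z})$ and $-I_2$ acts trivially on $\mathbb{H}$, Galois theory of the branched cover $X(N)\to X(1)=\mathbb{P}^1_j$ identifies $\mathcal{A}_N/\mathbb{C}(j)$ as a Galois extension with group $\mathrm{SL}_2(\mathbb{Z}/N\mathbb{Z})/\langle -I_2\rangle$, where $\gamma\in\mathrm{SL}_2(\mathbb{Z})$ acts by $f\mapsto f\circ\gamma$. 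To see that the Fricke functions generate, i.e.\ that $\mathcal{A}_N=\mathbb{C}(j,\,f_\mathbf{v})$, I would use that the $f_\mathbf{v}$ with $N\mathbf{v}\in M_{1,2}(\mathbb{Z})$ are essentially the $x$-coordinates of the $N$-torsion of the universal curve, hence encode a full level-$N$ structure separating the sheets of the cover (and the failure to distinguish $\mathbf{v}$ from $-\mathbf{v}$ explains the quotient by $\langle -I_2\rangle$). The transformation law $f_\mathbf{v}\circ\gamma=f_{\mathbf{v}\gamma}$ follows from the homogeneity $\wp(z/\lambda;\,L/\lambda)=\lambda^2\wp(z;\,L)$ applied with $\lambda=c\tau+d$, together with $[\gamma\tau,\,1]=(c\tau+d)^{-1}[\tau,\,1]$ and the matching scaling of $g_2g_3/\Delta$; this computation simultaneously verifies the displayed formula $f^{\widetilde\gamma}=f\circ\gamma$ for the $\mathrm{SL}_2$-part.

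Next I would descend to $\mathbb{Q}$. The field $\mathcal{F}_N=\mathbb{Q}(j,\,f_\mathbf{v})$ is a $\mathbb{Q}$-form of $\mathcal{A}_N$ with $\mathcal{F}_N\cdot\mathbb{C}=\mathcal{A}_N$, so the extra automorphisms come exactly from the constant field $k_N=\mathcal{F}_N\cap\overline{\mathbb{Q}}$. Computing the Fourier expansion of each $f_\mathbf{v}$ in $q^{1/N}=e^{2\pi i\tau/N}$ shows its coefficients lie in $\mathbb{Q}(\zeta_N)$, while conversely $\zeta_N$ is recovered among these expansions, so $k_N=\mathbb{Q}(\zeta_N)$. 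The $q$-expansion principle---a function of $\mathcal{A}_N$ all of whose Fourier coefficients lie in $\mathbb{Q}(\zeta_N)$ already belongs to $\mathcal{F}_N$---then yields $[\mathcal{F}_N:\mathbb{Q}(j)]=\varphi(N)\cdot|\mathrm{SL}_2(\mathbb{Z}/N\mathbb{Z})/\langle -I_2\rangle|=|\mathrm{GL}_2(\mathbb{Z}/N\mathbb{Z})/\langle -I_2\rangle|$, shows $\mathcal{F}_N/\mathbb{Q}(j)$ is Galois, and gives an exact sequence $1\to\mathrm{SL}_2(\mathbb{Z}/N\mathbb{Z})/\langle -I_2\rangle\to\mathrm{Gal}(\mathcal{F}_N/\mathbb{Q}(j))\to(\mathbb{Z}/N\mathbb{Z})^\times\to1$ parallel to the one defining $\mathrm{GL}_2/\langle -I_2\rangle$. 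The automorphism attached to $d\in(\mathbb{Z}/N\mathbb{Z})^\times$ acts on Fourier coefficients by $\zeta_N\mapsto\zeta_N^d$, realizing the coset of $\begin{bmatrix}1&0\\0&d\end{bmatrix}$ and so pinning down the isomorphism onto $\mathrm{GL}_2(\mathbb{Z}/N\mathbb{Z})/\langle -I_2\rangle$.

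I expect the arithmetic descent, rather than the geometric step, to be the main obstacle: establishing $\mathcal{F}_N\cap\overline{\mathbb{Q}}=\mathbb{Q}(\zeta_N)$ and that the determinant automorphisms act precisely by the cyclotomic character requires the $q$-expansion principle and careful bookkeeping of the roots of unity produced by the $\wp$-expansions. This technical core is exactly \cite[Theorem 6.6]{Shimura}, which I would invoke for the hard part, using the explicit computation above only to fix the normalization $f^{\widetilde\gamma}=f\circ\gamma$ needed later in the paper.
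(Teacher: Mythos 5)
Your proposal is correct and takes essentially the same approach as the paper: the paper's entire proof is the citation \cite[Theorem 6.6]{Shimura}, and your sketch (the geometric $\mathrm{SL}_2(\mathbb{Z}/N\mathbb{Z})/\langle -I_2\rangle$ part over $\mathbb{C}$ plus cyclotomic descent via $q$-expansions) is precisely the standard argument behind that theorem, whose technical core you likewise invoke from Shimura.
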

\begin{proof}
See \cite[Theorem 6.6]{Shimura}.
\end{proof}

Furthermore, $\mathcal{F}_N$ coincides with the field of meromorphic modular functions
for the principal congruence subgroup
\begin{equation*}
\Gamma(N)=\{\gamma\in\mathrm{SL}_2(\mathbb{Z})~|~\gamma\equiv I_2\Mod{NM_2(\mathbb{Z})}\}
\end{equation*}
whose Fourier coefficients belong to the $N$th cyclotomic field $\mathbb{Q}(\zeta_N)$
with $\zeta_N=e^{\frac{2\pi\mathrm{i}}{N}}$ (cf. \cite[Proposition 6.9]{Shimura}).
\par
On the other hand, for $\mathbf{v}=\begin{bmatrix}v_1 & v_2\end{bmatrix}\in M_{1,\,2}(\mathbb{Q})\setminus
M_{1,\,2}(\mathbb{Z})$, the Siegel function $g_\mathbf{v}$ on $\mathbb{H}$
is given by the infinite product expansion
\begin{equation}\label{infiniteproduct}
g_\mathbf{v}(\tau)=-q^{\frac{1}{2}\mathbf{B}_2(v_1)}e^{\pi\mathrm{i}v_2(v_1-1)}
(1-q_z)\prod_{n=1}^\infty(1-q^nq_z)(1-q^nq_z^{-1})\quad(\tau\in\mathbb{H})
\end{equation}
where $q_z=e^{2\pi\mathrm{i}z}$ with
$z=v_1\tau+v_2$ and $\mathbf{B}_2(x)=x^2-x+\frac{1}{6}$
is the second Bernoulli polynomial.
Observe that $g_\mathbf{v}$ has neither a zero nor a pole on $\mathbb{H}$.
One can refer to \cite{K-L}
for further details on Siegel functions.
\par
For $N\geq2$, we say that $\mathbf{v}\in M_{1,\,2}(\mathbb{Q})$ is primitive modulo $N$
if $N$ is the smallest positive integer so that $N\mathbf{v}\in M_{1,\,2}(\mathbb{Z})$.
Let $V_N$ be the set of all such primitive vectors $\mathbf{v}$.
We call a collection $\{h_\mathbf{v}\}_{\mathbf{v}\in V_N}$ of functions in $\mathcal{F}_N$
a Fricke family of level $N$ if
\begin{enumerate}
\item[(i)] $h_\mathbf{v}$ is holomorphic on $\mathbb{H}$\quad($\mathbf{v}\in V_N$),
\item[(ii)] $h_\mathbf{u}=h_\mathbf{v}$ if $\mathbf{u}\equiv\mathbf{v}$
or $-\mathbf{v}\Mod{M_{1,\,2}(\mathbb{Z})}$,
\item[(iii)] $h_\mathbf{v}^\gamma=h_{\mathbf{v}\gamma}$\quad
($\mathbf{v}\in V_N$ and $\gamma\in\mathrm{GL}_2(\mathbb{Z}/N\mathbb{Z})/\langle-I_2\rangle
\simeq\mathrm{Gal}(\mathcal{F}_N/\mathcal{F}_1)$).
\end{enumerate}

\begin{proposition}\label{Frickefamily}
The collections $\{f_\mathbf{v}\}_{\mathbf{v}\in V_N}$ and
$\{g_\mathbf{v}^{12N}\}_{\mathbf{v}\in V_N}$ are Fricke families of level $N$.
\end{proposition}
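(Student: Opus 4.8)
The plan is to verify each of the three defining conditions of a Fricke family for the two collections $\{f_\mathbf{v}\}_{\mathbf{v}\in V_N}$ and $\{g_\mathbf{v}^{12N}\}_{\mathbf{v}\in V_N}$, with conditions (i) and (ii) dispatched quickly and the transformation law (iii) absorbing essentially all of the effort.

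For condition (i) I would argue as follows. In the Fricke case $f_\mathbf{v}(\tau)=-2^73^3(g_2g_3/\Delta)([\tau,\,1])\,\wp(v_1\tau+v_2;\,[\tau,\,1])$; since $\mathbf{v}\in V_N$ lies in $M_{1,\,2}(\mathbb{Q})\setminus M_{1,\,2}(\mathbb{Z})$, the argument $v_1\tau+v_2$ never meets the lattice $[\tau,\,1]$, so $\wp$ stays finite, and $\Delta$ is nonvanishing on $\mathbb{H}$; hence $f_\mathbf{v}$ is holomorphic on $\mathbb{H}$. In the Siegel case the excerpt already records that $g_\mathbf{v}$ has neither a zero nor a pole on $\mathbb{H}$, so $g_\mathbf{v}^{12N}$ is holomorphic there. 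For condition (ii), in the Fricke case I would use that $\wp(\,\cdot\,;\,[\tau,\,1])$ is even and $[\tau,\,1]$-periodic: if $\mathbf{u}\equiv\mathbf{v}$ or $-\mathbf{v}\Mod{M_{1,\,2}(\mathbb{Z})}$ then $u_1\tau+u_2\equiv\pm(v_1\tau+v_2)\Mod{[\tau,\,1]}$ and the two $\wp$-values agree, exactly as in Lemma \ref{basicXY}(i). In the Siegel case I would invoke the elementary product formulae $g_{-\mathbf{v}}=-g_\mathbf{v}$ and $g_{\mathbf{v}+\mathbf{n}}=\zeta\,g_\mathbf{v}$ for $\mathbf{n}\in M_{1,\,2}(\mathbb{Z})$ (cf. \cite{K-L}), where $\zeta$ is a root of unity whose order divides $2N$; since $2N\mid 12N$, raising to the $12N$-th power annihilates both the sign and $\zeta$, so $g_\mathbf{u}^{12N}=g_\mathbf{v}^{12N}$.

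Condition (iii) is the heart of the matter. By Proposition \ref{SL2action} we have $\mathrm{Gal}(\mathcal{F}_N/\mathcal{F}_1)\simeq\mathrm{GL}_2(\mathbb{Z}/N\mathbb{Z})/\langle-I_2\rangle$, with the image of $\gamma\in\mathrm{SL}_2(\mathbb{Z})$ acting geometrically by $h^{\widetilde\gamma}=h\circ\gamma$. I would reduce (iii) to two families of generators of $\mathrm{GL}_2(\mathbb{Z}/N\mathbb{Z})$: the image of $\mathrm{SL}_2(\mathbb{Z})$, and the diagonal matrices $\mathrm{diag}(1,\,d)$ with $\gcd(d,\,N)=1$, the latter acting on $\mathcal{F}_N$ through the Galois automorphism $\zeta_N\mapsto\zeta_N^d$ on the coefficients of the $q^{1/N}$-expansion (Shimura's $q$-expansion principle, cf. \cite[Proposition 6.9]{Shimura}). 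For the $\mathrm{SL}_2(\mathbb{Z})$-generators I would establish the geometric identity $h_\mathbf{v}\circ\gamma=h_{\mathbf{v}\gamma}$. For $f_\mathbf{v}$ this is pure homogeneity: writing $\gamma=\left[\begin{smallmatrix}a&b\\c&d\end{smallmatrix}\right]$ and $\lambda=c\tau+d$, one has $\lambda[\gamma\tau,\,1]=[\tau,\,1]$ and $\lambda(v_1\gamma\tau+v_2)=(\mathbf{v}\gamma)_1\tau+(\mathbf{v}\gamma)_2$, so the weight-zero combination $(g_2g_3/\Delta)\wp$ is invariant under the simultaneous scaling $L\mapsto\lambda L$, $z\mapsto\lambda z$ --- using $g_2(\lambda L)=\lambda^{-4}g_2(L)$, $g_3(\lambda L)=\lambda^{-6}g_3(L)$, $\Delta(\lambda L)=\lambda^{-12}\Delta(L)$ and $\wp(\lambda z;\,\lambda L)=\lambda^{-2}\wp(z;\,L)$ --- which gives $f_\mathbf{v}\circ\gamma=f_{\mathbf{v}\gamma}$, hence $f_\mathbf{v}^{\widetilde\gamma}=f_{\mathbf{v}\gamma}$. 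For $g_\mathbf{v}^{12N}$ the corresponding transformation reads $g_\mathbf{v}\circ\gamma=\varepsilon(\gamma)\,g_{\mathbf{v}\gamma}$ with $\varepsilon(\gamma)$ a root of unity of order dividing $12$ (cf. \cite{K-L}), so the $12N$-th power again removes the multiplier.

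For the diagonal generators $\mathrm{diag}(1,\,d)$ I would read the action off the $q$-expansions. In the Siegel case, (\ref{infiniteproduct}) shows that the dependence of $g_\mathbf{v}$ on $v_2$ enters only through $e^{2\pi\mathrm{i}v_2}=\zeta_N^{Nv_2}$ and the prefactor $e^{\pi\mathrm{i}v_2(v_1-1)}$; after passing to $g_\mathbf{v}^{12N}$ the coefficients of the $q^{1/N}$-expansion lie in $\mathbb{Q}(\zeta_N)$, and applying $\zeta_N\mapsto\zeta_N^d$ amounts to the substitution $v_2\mapsto dv_2$, i.e. $g_\mathbf{v}^{12N}\mapsto g_{(v_1,\,dv_2)}^{12N}=g_{\mathbf{v}\,\mathrm{diag}(1,\,d)}^{12N}$; the same bookkeeping on the $q$-expansion of $f_\mathbf{v}$ handles the Fricke case. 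Combining the two types of generators yields $h_\mathbf{v}^\gamma=h_{\mathbf{v}\gamma}$ for every $\gamma$, completing (iii). I expect the main obstacle to be precisely this root-of-unity bookkeeping in (iii): one must confirm that the multiplier $\varepsilon(\gamma)$ and the translation/parity factors from (ii) are all killed by the $12N$-th power, and that the $\zeta_N\mapsto\zeta_N^d$ action reproduces the substitution $\mathbf{v}\mapsto\mathbf{v}\,\mathrm{diag}(1,\,d)$ on the nose rather than up to a further root of unity. This is the step where the specific exponent $12N$ is essential, and where one must lean on the precise normalizations of \cite{K-L} and \cite{Shimura}.
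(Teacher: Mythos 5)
Your proposal is correct, and it matches the substance of the paper's proof in the only sense available: the paper gives no argument at all here, its proof being the one-line citation to \cite[Chapter 2, Proposition 1.3]{K-L} and \cite[Theorem 6.6]{Shimura}, and what you have written is essentially a faithful reconstruction of the proofs contained in those two references --- generation of $\mathrm{GL}_2(\mathbb{Z}/N\mathbb{Z})$ by the image of $\mathrm{SL}_2(\mathbb{Z})$ together with $\mathrm{diag}(1,\,d)$, the weight-zero homogeneity argument for $f_\mathbf{v}$, the $\eta$-multiplier and translation root-of-unity bookkeeping (orders dividing $12$ and $2N$, both killed by the exponent $12N$) for $g_\mathbf{v}^{12N}$, and the cyclotomic action $\zeta_N\mapsto\zeta_N^d$ on $q^{1/N}$-expansions. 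The single point you should make explicit in a finished write-up is that a Fricke family is by definition a collection of functions \emph{in} $\mathcal{F}_N$, so in the Siegel case you must record that $g_\mathbf{v}^{12N}\in\mathcal{F}_N$; this follows from ingredients you already have, namely $\Gamma(N)$-invariance (your $\mathrm{SL}_2(\mathbb{Z})$ rule combined with condition (ii)) plus $\mathbb{Q}(\zeta_N)$-rationality of the $q^{1/N}$-expansion, for which one notes that the ambiguous prefactor $e^{\pi\mathrm{i}v_2(v_1-1)}$ raised to the $12N$-th power equals $\zeta_N^{6(Nv_1)(Nv_2)}$, a power of $\zeta_N$.
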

\begin{proof}
See \cite[Proposition 1.3 in Chapter 2]{K-L} and \cite[Theorem 6.6]{Shimura}.
\end{proof}

\begin{lemma}\label{jgrelation}
We have a relationship between $j$ and $g_{\left[\begin{smallmatrix}0&\frac{1}{2}\end{smallmatrix}\right]}$ in such a way that
\begin{equation*}
j=\frac{\left(g_{\left[\begin{smallmatrix}0&\frac{1}{2}\end{smallmatrix}\right]}^{12}+16\right)^3}
{g_{\left[\begin{smallmatrix}0&\frac{1}{2}\end{smallmatrix}\right]}^{12}}.
\end{equation*}
\end{lemma}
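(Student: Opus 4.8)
The plan is to compute the $q$-expansion of $g_{\left[\begin{smallmatrix}0&\frac12\end{smallmatrix}\right]}$, recognize its twelfth power as a classical $\Delta$-quotient that is modular on $\Gamma_0(2)$, and then establish the identity by a uniqueness argument on the genus-zero curve $X_0(2)$.

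First I would specialize the product (\ref{infiniteproduct}) to $\mathbf{v}=\left[\begin{smallmatrix}0&\frac12\end{smallmatrix}\right]$. Here $v_1=0$, $v_2=\frac12$, so $z=\frac12$, $q_z=e^{\pi\mathrm{i}}=-1$, and $\mathbf{B}_2(0)=\frac16$. The prefactors $-q^{1/12}$, $e^{\pi\mathrm{i}v_2(v_1-1)}=-\mathrm{i}$ and $(1-q_z)=2$ combine, while each factor $(1-q^nq_z^{\pm1})$ becomes $(1+q^n)$, giving $g_{\left[\begin{smallmatrix}0&\frac12\end{smallmatrix}\right]}(\tau)=2\mathrm{i}\,q^{1/12}\prod_{n\ge1}(1+q^n)^2$ with $q=e^{2\pi\mathrm{i}\tau}$. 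Raising to the twelfth power removes the root of unity, since $(2\mathrm{i})^{12}=2^{12}$:
\[
x:=g_{\left[\begin{smallmatrix}0&\frac12\end{smallmatrix}\right]}^{12}(\tau)=2^{12}q\prod_{n\ge1}(1+q^n)^{24}.
\]
Using $\tfrac{1-q^{2n}}{1-q^n}=1+q^n$ together with $\Delta(\tau)=(2\pi)^{12}q\prod_{n\ge1}(1-q^n)^{24}$ (the constant $(2\pi)^{12}$ cancels in the ratio) yields $q\prod_{n\ge1}(1+q^n)^{24}=\Delta(2\tau)/\Delta(\tau)$, hence $x=2^{12}\Delta(2\tau)/\Delta(\tau)$. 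A short transformation check, writing $2\gamma\tau=\gamma'(2\tau)$ with $\gamma'\in\mathrm{SL}_2(\mathbb{Z})$ for $\gamma\in\Gamma_0(2)$, shows $x$ is a modular function for $\Gamma_0(2)$, and by construction it is holomorphic and nowhere vanishing on $\mathbb{H}$.

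Since $j$ is $\mathrm{SL}_2(\mathbb{Z})$-invariant and $x$ never vanishes on $\mathbb{H}$, both $j$ and $R(x):=(x+16)^3/x$ are $\Gamma_0(2)$-modular functions holomorphic on $\mathbb{H}$; set $G=j-R(x)$. As $X_0(2)$ is compact of genus $0$ with the two cusps $\infty$ and $0$, it suffices to show $G$ is holomorphic at both cusps: then $G$ is a global holomorphic function on $X_0(2)$, hence constant, and a single cusp value of $0$ forces $G\equiv0$. At $\infty$ I would expand $R(x)=x^2+48x+768+2^{12}/x$ with $x=2^{12}q+O(q^2)$ and $2^{12}/x=q^{-1}\prod_{n\ge1}(1+q^n)^{-24}=q^{-1}-24+O(q)$, obtaining $R(x)=q^{-1}+744+O(q)$, which matches $j=q^{-1}+744+O(q)$; thus $G=O(q)$ is holomorphic at $\infty$ with value $0$. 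At $0$ I would apply the Fricke involution $W_2:\tau\mapsto-1/(2\tau)$: from $\Delta(-1/w)=w^{12}\Delta(w)$ one gets $x(-1/(2\tau))=2^{12}/x(\tau)$, while $j(-1/(2\tau))=j(2\tau)$. Writing $y=2^{12}/x=q^{-1}-24+O(q)$, the leading terms of $R(y)$ are $y^2+48y=q^{-2}+O(1)$ (the $q^{-1}$ coefficients cancel), whereas $j(2\tau)=q^{-2}+744+O(q^2)$; hence $G(-1/(2\tau))=j(2\tau)-R(y)=O(1)$ has no pole, so $G$ is holomorphic at $0$ as well, completing the argument.

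The main obstacle will be the bookkeeping at the second cusp $0$, and—if one insists on a fully self-contained treatment—verifying the modularity of $g_{\left[\begin{smallmatrix}0&\frac12\end{smallmatrix}\right]}^{12}$ directly, which would require tracking the $24$th-root-of-unity automorphy factor of Siegel functions. Both difficulties are sidestepped cleanly by the $\Delta$-quotient rewriting above. Alternatively, one may shortcut the entire uniqueness argument by identifying $x=\mathfrak{f}_2^{24}$, where $\mathfrak{f}_2(\tau)=\sqrt{2}\,\bigl(\Delta(2\tau)/\Delta(\tau)\bigr)^{1/24}$ is the Weber function, and quoting the classical relation $j=(\mathfrak{f}_2^{24}+16)^3/\mathfrak{f}_2^{24}$.
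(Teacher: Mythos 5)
Your proposal is correct, but it takes a genuinely different route from the paper: the paper's entire proof is a citation to Cox's classical Weber-function relation (Theorem 12.17 of \emph{Primes of the Form $x^2+ny^2$}), i.e.\ precisely the shortcut you mention in your closing paragraph, namely identifying $g_{\left[\begin{smallmatrix}0&\frac{1}{2}\end{smallmatrix}\right]}^{12}=2^{12}\Delta(2\tau)/\Delta(\tau)=\mathfrak{f}_2^{24}$ and quoting $j=(\mathfrak{f}_2^{24}+16)^3/\mathfrak{f}_2^{24}$. What you do instead is reprove that classical relation from scratch: after the product-expansion computation $g_{\left[\begin{smallmatrix}0&\frac{1}{2}\end{smallmatrix}\right]}(\tau)=2\mathrm{i}\,q^{1/12}\prod_{n\geq1}(1+q^n)^2$ (which is right, including the disappearance of the root of unity upon taking twelfth powers) and the rewriting $x=2^{12}\Delta(2\tau)/\Delta(\tau)$, you run a standard rigidity argument on $X_0(2)$: $j-(x+16)^3/x$ is a modular function for $\Gamma_0(2)$, holomorphic on $\mathbb{H}$ because $x$ is nowhere vanishing, holomorphic at $\infty$ with value $0$ (the expansion $R(x)=q^{-1}+744+O(q)$ checks out, as $768-24=744$), and pole-free at the cusp $0$ via the Fricke involution (only boundedness is needed there, and indeed the $q^{-1}$ terms of $y^2+48y$ cancel), so it is a global holomorphic function on a compact Riemann surface, hence the constant $0$. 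All steps are sound; note that only compactness of $X_0(2)$ is used, not genus $0$. What your approach buys is self-containedness and transparency about where the constants $16$ and $744$ come from; what the paper's citation buys is brevity, at the cost of leaving the translation between the Siegel function $g_{\left[\begin{smallmatrix}0&\frac{1}{2}\end{smallmatrix}\right]}$ and Weber's $\mathfrak{f}_2$ implicit --- a translation your computation makes explicit.
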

\begin{proof}
See \cite[Theorem 12.17]{Cox}.
\end{proof}

Let
\begin{eqnarray*}
V'_N&=&\left\{
\begin{bmatrix}v_1 & v_2\end{bmatrix}\in V_N~|~0\leq v_1,\,v_2<1\right\},\\
T_N&=&\left\{
(\mathbf{u},\,\mathbf{v})\in V'_N\times V'_N~|~\mathbf{u}\not
\equiv\mathbf{v},\,-\mathbf{v}\Mod{M_{1,\,2}(\mathbb{Z})}
\right\}.
\end{eqnarray*}

\begin{lemma}\label{ffjj}
We deduce
\begin{equation*}
\prod_{(\mathbf{u},\,\mathbf{v})\in T_N}\left(
f_\mathbf{u}-f_\mathbf{v}
\right)^6=k\left\{j^2(j-1728)^3\right\}^{|T_N|}\quad
\textrm{for some}~k\in\mathbb{Q}\setminus\{0\}.
\end{equation*}
\end{lemma}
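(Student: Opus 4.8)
The plan is to prove that the left-hand side
\[
P=\prod_{(\mathbf{u},\,\mathbf{v})\in T_N}\left(f_\mathbf{u}-f_\mathbf{v}\right)^6
\]
is a polynomial in $j$, to show that its only zeros are at $j=0$ and $j=1728$, and then to determine the two multiplicities by a local analysis at the elliptic points $\rho=e^{2\pi\mathrm{i}/3}$ and $\mathrm{i}$.

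First I would verify that $P\in\mathcal{F}_1=\mathbb{Q}(j)$. By Proposition \ref{Frickefamily} the collection $\{f_\mathbf{v}\}$ is a Fricke family, so $f_\mathbf{v}^\gamma=f_{\mathbf{v}\gamma}$ for every $\gamma\in\mathrm{GL}_2(\mathbb{Z}/N\mathbb{Z})/\langle-I_2\rangle\simeq\mathrm{Gal}(\mathcal{F}_N/\mathcal{F}_1)$, and $f_\mathbf{u}=f_\mathbf{v}$ whenever $\mathbf{u}\equiv\pm\mathbf{v}\Mod{M_{1,2}(\mathbb{Z})}$. Since $\gamma$ is invertible, reducing $\mathbf{u}\gamma$ and $\mathbf{v}\gamma$ to their representatives in $V'_N$ defines a bijection of $T_N$ that carries each factor $f_\mathbf{u}-f_\mathbf{v}$ to another factor $f_{\mathbf{u}'}-f_{\mathbf{v}'}$; hence $P^\gamma=P$ for all $\gamma$, and so $P\in\mathcal{F}_1=\mathbb{Q}(j)$. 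As each $f_\mathbf{v}$ is holomorphic on $\mathbb{H}$, so is $P$; since $j:\mathbb{H}\rightarrow\mathbb{C}$ is surjective, the rational function $R$ with $P=R(j)$ can have no pole at a finite value of $j$, whence $R\in\mathbb{Q}[j]$.

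Next I would locate the zeros. From (\ref{fv}) one has $f_\mathbf{v}=-2^73^3\frac{g_2g_3}{\Delta}\wp(z_\mathbf{v};\,[\tau,\,1])$ with $z_\mathbf{v}=v_1\tau+v_2$, so that
\[
f_\mathbf{u}-f_\mathbf{v}=-2^73^3\frac{g_2g_3}{\Delta}\bigl(\wp(z_\mathbf{u};\,[\tau,\,1])-\wp(z_\mathbf{v};\,[\tau,\,1])\bigr).
\]
For $(\mathbf{u},\,\mathbf{v})\in T_N$ the two $\wp$-values coincide only if $z_\mathbf{u}\equiv\pm z_\mathbf{v}\Mod{[\tau,\,1]}$, which, as $1$ and $\tau$ are linearly independent over $\mathbb{Q}$, forces $\mathbf{u}\equiv\pm\mathbf{v}\Mod{M_{1,2}(\mathbb{Z})}$ --- excluded (the argument being that of Lemma \ref{basicXY} (i)); moreover neither $\wp$-value is infinite, since $z_\mathbf{u},\,z_\mathbf{v}$ are non-lattice points. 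Hence $f_\mathbf{u}-f_\mathbf{v}$ vanishes on $\mathbb{H}$ exactly where the common factor $g_2g_3/\Delta$ does, namely at $\tau\sim\rho$ (where $g_2$ vanishes and $j=0$) and at $\tau\sim\mathrm{i}$ (where $g_3$ vanishes and $j=1728$); thus $R$ vanishes only at $j=0$ and $j=1728$.

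Finally I would compute the multiplicities. As $g_2$ and $g_3$ have simple zeros at $\rho$ and $\mathrm{i}$ respectively (a standard consequence of the valence formula), while $\wp(z_\mathbf{u})-\wp(z_\mathbf{v})$ is finite and nonzero there, every factor $f_\mathbf{u}-f_\mathbf{v}$ vanishes to order exactly $1$ in $\tau$ at each of $\rho$ and $\mathrm{i}$, so $\mathrm{ord}_{\tau=\rho}(P)=\mathrm{ord}_{\tau=\mathrm{i}}(P)=6|T_N|$. Passing to orders in $j$ through the ramification of $j$ at the elliptic points --- $j$ has a triple zero at $\rho$ and $j-1728$ a double zero at $\mathrm{i}$ --- yields $\mathrm{ord}_{j=0}(R)=2|T_N|$ and $\mathrm{ord}_{j=1728}(R)=3|T_N|$. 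Since a polynomial is determined up to a constant by its zeros with multiplicities, $R(j)=k\,\{j^2(j-1728)^3\}^{|T_N|}$ with $k\in\mathbb{Q}$, and $k\neq0$ because $P$ is a nonzero function. The hard part will be the local analysis at $\rho$ and $\mathrm{i}$: one must check that $\wp(z_\mathbf{u})-\wp(z_\mathbf{v})$ neither vanishes nor blows up there --- so that no hidden contribution arises from the extra automorphisms of the hexagonal and square lattices --- and that the conversion from orders in $\tau$ to orders in $j$ uses the correct ramification indices $3$ and $2$.
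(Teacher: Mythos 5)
Your proposal is correct. Note that the paper itself gives no argument for this lemma --- its ``proof'' is just a citation to Lemma 6.2 of the authors' earlier paper \cite{J-K-S22} --- so your write-up is a genuinely self-contained alternative rather than a reproduction of anything in the text. The three pillars all hold up under scrutiny: (1) Galois invariance is legitimate, since for any $\gamma\in\mathrm{GL}_2(\mathbb{Z}/N\mathbb{Z})/\langle-I_2\rangle$ the assignment $(\mathbf{u},\,\mathbf{v})\mapsto(\langle\mathbf{u}\gamma\rangle,\,\langle\mathbf{v}\gamma\rangle)$ (reduction into $V_N'$, well defined on factors by property (ii) of a Fricke family even though the lift of $\gamma$ is only determined up to sign) is a bijection of $T_N$, so the product is fixed and lies in $\mathcal{F}_1=\mathbb{Q}(j)$; holomorphy on $\mathbb{H}$ plus surjectivity of $j$ then forces $R\in\mathbb{Q}[j]$. (2) The factorization $f_\mathbf{u}-f_\mathbf{v}=-2^73^3\frac{g_2g_3}{\Delta}\left(\wp(z_\mathbf{u})-\wp(z_\mathbf{v})\right)$ together with the standard fact that $\wp(z_1)=\wp(z_2)$ iff $z_1\equiv\pm z_2$ modulo the lattice (valid for every lattice, including the hexagonal and square ones, so your worry about extra automorphisms is vacuous) shows each factor is finite and nonvanishing away from the $\mathrm{SL}_2(\mathbb{Z})$-orbits of $\rho$ and $\mathrm{i}$. (3) The multiplicity count is right: the valence formula gives $\mathrm{ord}_\rho(g_2)=\mathrm{ord}_\mathrm{i}(g_3)=1$, so $\mathrm{ord}_\rho(P)=\mathrm{ord}_\mathrm{i}(P)=6|T_N|$, and dividing by the ramification orders $3$ and $2$ of $j$ and $j-1728$ (read off from $j=1728\,g_2^3/\Delta$ and $j-1728=1728\cdot27\,g_3^2/\Delta$) gives exactly the exponents $2|T_N|$ and $3|T_N|$; this also explains why the sixth power in the statement is exactly what is needed for these divisions to come out integral. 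The closing appeal to the fundamental theorem of algebra pins down $R$ up to the constant $k$, which is rational and nonzero because $R\in\mathbb{Q}[j]$ and $P\not\equiv0$. The only cosmetic remark: the ``hard part'' you defer to the end (nonvanishing and finiteness of $\wp(z_\mathbf{u})-\wp(z_\mathbf{v})$ at the elliptic points) is already fully established by your step (2), which proves this on all of $\mathbb{H}$, so nothing remains open.
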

\begin{proof}
See \cite[Lemma 6.2]{J-K-S22}.
\end{proof}

\section {Form class groups}\label{sect5}

In this section, we shall review 
the classical form class groups
and try to find certain generalization of these as well. 

For a negative integer $D$ such that $D\equiv0$ or $1\Mod{4}$, let
\begin{equation*}
\mathcal{Q}(D)=
\left\{Q=Q\left(\begin{bmatrix}x\\y\end{bmatrix}\right)
=ax^2+bx+cy^2\in\mathbb{Z}[x,\,y]~|~
\gcd(a,\,b,\,c)=1,~b^2-4ac=D,~a>0\right\}.
\end{equation*}
The modular group $\mathrm{SL}_2(\mathbb{Z})$ acts on the set $\mathcal{Q}(D)$
from the right as
\begin{equation*}
Q^\gamma=Q\left(\gamma\begin{bmatrix}x\\y\end{bmatrix}\right)
\end{equation*}
and induces the proper equivalence $\sim$ on $\mathcal{Q}(D)$ as follows\,:
\begin{equation*}
Q\sim Q'\quad\Longleftrightarrow\quad
Q'=Q^\gamma~~\textrm{for some}~\gamma\in\mathrm{SL}_2(\mathbb{Z})
\end{equation*}
(cf. \cite[pp. 20--22]{Cox}).
We say that a form $Q=ax^2+bxy+cy^2\in\mathcal{Q}(D)$ is reduced if
\begin{equation*}
\left\{
\begin{array}{l}
|b|\leq a\leq c,\\
b\geq0~\textrm{if either}~|b|=a~\textrm{or}~a=c.
\end{array}\right.
\end{equation*}

\begin{lemma}\label{reduced}
Every form in $\mathcal{Q}(D)$ is properly equivalent to
a unique reduced form.
\end{lemma}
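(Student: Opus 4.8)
The plan is to prove existence and uniqueness separately, along the lines of the classical reduction theory for positive definite binary quadratic forms; note that every $Q=ax^2+bxy+cy^2\in\mathcal{Q}(D)$ is positive definite, since $D<0$ and $a>0$. For existence I would first record the action of the standard generators $T=\begin{bmatrix}1&1\\0&1\end{bmatrix}$ and $S=\begin{bmatrix}0&-1\\1&0\end{bmatrix}$ of $\mathrm{SL}_2(\mathbb{Z})$ on coefficient triples, namely $(a,b,c)^T=(a,\,b+2a,\,a+b+c)$ and $(a,b,c)^S=(c,\,-b,\,a)$. Given $Q$, the leading coefficients of the forms in its proper equivalence class are exactly the values $Q(p,q)$ at primitive vectors $\begin{bmatrix}p\\q\end{bmatrix}$ (the first columns of matrices in $\mathrm{SL}_2(\mathbb{Z})$), a set of positive integers, so I may choose a form $(a,b,c)$ in the class with $a$ minimal. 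Applying a suitable power of $T$ I arrange $-a<b\leq a$ without changing $a$; since $\begin{bmatrix}0\\1\end{bmatrix}$ is primitive, $c=Q(0,1)\geq a$ by minimality, so $|b|\leq a\leq c$.

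It then remains to normalize the boundary. If $b=-a$, the move $T$ sends $(a,-a,c)$ to $(a,a,c)$ with the same $c$; if $a=c$ and $b<0$, the move $S$ sends $(a,b,a)$ to $(a,-b,a)$. In either case the new form satisfies the sign condition and stays in the class, so it is reduced and properly equivalent to $Q$. This settles existence.

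For uniqueness the central fact is that a reduced form $(a,b,c)$ satisfies $a=\min\{Q(x,y)\mid (x,y)\in\mathbb{Z}^2\setminus\{0\}\}$. I would prove this from the inequality $Q(x,y)\geq (a-|b|+c)\min(x^2,y^2)$ for $xy\neq 0$ (which follows from $|b|\leq a\leq c$ after reducing to the case $|x|\leq|y|$ and bounding $bxy\geq -|b||xy|$), together with $Q(\pm1,0)=a$, $Q(0,\pm1)=c$, and $a\leq c$. This simultaneously identifies which primitive vectors attain the minimum $a$ (only $\pm(1,0)$ when $|b|<a<c$, with extra vectors in the boundary cases) and shows that the next value represented, by a vector independent of the minimal ones, is $c$. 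Since the minimum of a form is a proper equivalence invariant, two properly equivalent reduced forms $(a,b,c)$ and $(a',b',c')$ have $a=a'$; matching the second minimum gives $c=c'$, and then $b^2=D+4ac=D+4a'c'=b'^2$ forces $b'=\pm b$.

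The hard part will be resolving the sign of $b$. In the interior case $0<|b|<a<c$ I would argue that any $\gamma\in\mathrm{SL}_2(\mathbb{Z})$ carrying $(a,b,c)$ to $(a,b',c)$ must permute the minimal vectors, so $\gamma\begin{bmatrix}1\\0\end{bmatrix}=\pm\begin{bmatrix}1\\0\end{bmatrix}$; combining this with $\det\gamma=1$ and the corresponding constraint coming from the second minimum forces $\gamma=\pm I_2$, whence $b'=b$. The degenerate and boundary configurations $b=0$, $|b|=a$, and $a=c$ have larger stabilizers and must be treated individually, but in each of these the normalization $b\geq 0$ built into the definition of reduced removes the surviving ambiguity. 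Assembling these cases shows that the reduced form in each proper equivalence class is unique.
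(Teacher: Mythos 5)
Your proof is correct. Note that the paper itself does not prove this lemma at all --- its ``proof'' is just a citation to Cox [Theorem 2.8] --- so the real comparison is with Cox's classical reduction argument, and your proposal is essentially that argument: the key inequality $Q(x,y)\ge(a-|b|+c)\min(x^2,y^2)$ for $xy\ne 0$ is exactly the one used there to identify $a$ and $c$ as the two smallest values properly represented. The differences are minor. For existence, Cox minimizes $|b|$ over the proper equivalence class and then swaps $a$ and $c$ if necessary, while you minimize the leading coefficient $a$ and translate; both work. For the sign of $b$, Cox resolves the ambiguity by analyzing how the smallest values are represented, while you argue that in the interior case $0<|b|<a<c$ any $\gamma\in\mathrm{SL}_2(\mathbb{Z})$ realizing the equivalence must send minimal vectors to minimal vectors and second-minimal vectors to second-minimal vectors, forcing $\gamma=\pm I_2$; that argument is sound, since under those strict inequalities the value $a$ is attained only at $\pm\begin{bmatrix}1\\0\end{bmatrix}$ and the value $c$, among vectors with nonzero second coordinate, only at $\pm\begin{bmatrix}0\\1\end{bmatrix}$. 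Your dispatching of the boundary cases is also correct: if $b=0$, $|b|=a$, or $a=c$, then $b^2=b'^2$ together with the normalization $b,b'\ge 0$ built into the definition of reduced already forces $b=b'$, so no further stabilizer analysis is needed there.
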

\begin{proof}
See \cite[Theorem 2.8]{Cox}.
\end{proof}

For $Q=ax^2+bxy+cy^2\in\mathcal{Q}(D)$, let $\omega_Q$ be the
zero of the quadratic polynomial $Q(x,\,1)$ lying in $\mathbb{H}$, namely,
\begin{equation*}
\omega_Q=\frac{-b+\sqrt{D}}{2a}.
\end{equation*}

\begin{lemma}\label{QaN}
If $Q=ax^2+bxy+cy^2\in\mathcal{Q}(D)$, then $a[\omega_Q,\,1]$ is a proper $\mathcal{O}$-ideal with
$\mathrm{N}_\mathcal{O}(a[\omega_Q,\,1])=a$.
\end{lemma}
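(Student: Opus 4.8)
The plan is to realize $\mathfrak{a}=a[\omega_Q,\,1]$ explicitly as a $\mathbb{Z}$-lattice sitting inside $\mathcal{O}$ and then read off the ideal structure, the norm, and properness from that description. Set $\theta=a\omega_Q=(-b+\sqrt{D})/2$. Since $Q$ has discriminant $D=b^2-4ac$, the element $\theta$ is a root of $x^2+bx+ac$, so $\theta^2=-b\theta-ac$ and $\theta\in\mathcal{O}_K$; moreover $\mathfrak{a}=a[\omega_Q,\,1]=\mathbb{Z}\theta+\mathbb{Z}a=[\theta,\,a]$. First I would record the congruence $\theta\equiv\tau_\mathcal{O}\Mod{\mathbb{Z}}$: because $D\equiv b\Mod{2}$, the difference $\theta-\tau_\mathcal{O}$ equals $-b/2$ when $D\equiv0\Mod4$ and $(1-b)/2$ when $D\equiv1\Mod4$, an integer in either case. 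Consequently $\mathcal{O}=[\tau_\mathcal{O},\,1]=[\theta,\,1]$, and $\mathfrak{a}=[\theta,\,a]\subseteq[\theta,\,1]=\mathcal{O}$ is an integral lattice.

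Granting $\mathcal{O}=\mathbb{Z}+\mathbb{Z}\theta$, showing that $\mathfrak{a}$ is an $\mathcal{O}$-ideal reduces to $\theta\mathfrak{a}\subseteq\mathfrak{a}$, which I would verify on the two generators: $\theta\cdot a=a\theta\in\mathbb{Z}\theta$ and $\theta\cdot\theta=\theta^2=-b\theta-ac\in\mathbb{Z}\theta+\mathbb{Z}a$, both lying in $\mathfrak{a}$. The norm is then immediate from the lattice description, since $\mathcal{O}/\mathfrak{a}=(\mathbb{Z}\theta+\mathbb{Z})/(\mathbb{Z}\theta+\mathbb{Z}a)\cong\mathbb{Z}/a\mathbb{Z}$ gives $\mathrm{N}_\mathcal{O}(\mathfrak{a})=|\mathcal{O}/\mathfrak{a}|=a$.

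The real content is properness, i.e. that the multiplier ring $\{\lambda\in K\mid\lambda\mathfrak{a}\subseteq\mathfrak{a}\}$ is exactly $\mathcal{O}$ and not some larger order. The inclusion $\mathcal{O}\subseteq\{\lambda\mid\lambda\mathfrak{a}\subseteq\mathfrak{a}\}$ is the previous step. For the reverse inclusion I would take $\lambda$ with $\lambda\mathfrak{a}\subseteq\mathfrak{a}$ and test it on the two generators. From $\lambda a\in\mathfrak{a}$ one writes $\lambda=p\omega_Q+q$ with $p,\,q\in\mathbb{Z}$; then using $\omega_Q\theta=a\omega_Q^2=-b\omega_Q-c$ one computes $\lambda\theta=(qa-pb)\omega_Q-pc$, and the membership $\lambda\theta\in\mathfrak{a}=a[\omega_Q,\,1]$ forces $a\mid pb$ and $a\mid pc$. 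I expect this final divisibility step to be the crux: combining $a\mid pb$, $a\mid pc$, and the trivial $a\mid pa$ with the primitivity hypothesis $\gcd(a,\,b,\,c)=1$ yields $a\mid p$, whence $\lambda=p'\theta+q\in\mathbb{Z}\theta+\mathbb{Z}=\mathcal{O}$. This is precisely the point where the condition $\gcd(a,\,b,\,c)=1$ built into the definition of $\mathcal{Q}(D)$ is indispensable; without it the multiplier ring could properly contain $\mathcal{O}$ and $\mathfrak{a}$ would fail to be proper. The two inclusions together give that $\mathfrak{a}$ is a proper $\mathcal{O}$-ideal, completing the proof.
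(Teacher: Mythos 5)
Your proof is correct: the lattice description $a[\omega_Q,\,1]=[\theta,\,a]$ with $\theta=a\omega_Q$ a root of $x^2+bx+ac$, the congruence $\theta\equiv\tau_\mathcal{O}\Mod{\mathbb{Z}}$ giving $\mathcal{O}=[\theta,\,1]$, the index computation $\mathcal{O}/\mathfrak{a}\cong\mathbb{Z}/a\mathbb{Z}$, and the multiplier-ring argument where $a\mid pa$, $a\mid pb$, $a\mid pc$ combine with $\gcd(a,\,b,\,c)=1$ to force $a\mid p$ are all sound, and you correctly identify primitivity as the crux of properness. The paper gives no argument of its own here---it simply cites \cite[Lemma 7.5 and (7.16)]{Cox}---and your write-up is essentially the standard proof found in that reference, so you have in effect reconstructed the cited argument in full.
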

\begin{proof}
See \cite[Lemma 7.5 and (7.16)]{Cox}.
\end{proof}

\begin{lemma}\label{B}
Let $Q=ax^2+bxy+cy^2,\,Q''=a''x^2+b''xy+c''y^2\in\mathcal{Q}(D)$ such that
$\gcd(a,\,a'',\,(b+b'')/2)=1$.
\begin{enumerate}
\item[\textup{(i)}] There is a unique integer $B$ modulo
$2aa''$ such that
\begin{equation}\label{Bcondition}
B\equiv b\Mod{2a},\quad B\equiv b''\Mod{2a''},\quad
B^2\equiv D\Mod{4aa''}.
\end{equation}
\item[\textup{(ii)}]
Let
\begin{equation}\label{DC}
Q'''=aa''x^2+Bxy+\frac{B^2-D}{4aa''}y^2
\end{equation}
where $B$ is an integer satisfying \textup{(\ref{Bcondition})}.
Then we derive
\begin{equation*}
[\omega_Q,\,1][\omega_{Q''},\,1]=[
\omega_{Q'''},\,1].
\end{equation*}
\end{enumerate}
\end{lemma}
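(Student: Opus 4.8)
The plan is to treat the two parts by different means, since (i) is purely a statement about simultaneous congruences while (ii) is an identity of lattices; part (i) supplies the integer $B$ that defines $Q'''$, and (ii) is the heart of the matter. For (i) I would first note that condition (iii) is partly forced by (i) and (ii): if $B\equiv b\pmod{2a}$ then writing $B=b+2ak$ gives $B^2\equiv b^2\equiv D\pmod{4a}$ (using $b^2-4ac=D$), and likewise $B\equiv b''\pmod{2a''}$ yields $B^2\equiv D\pmod{4a''}$. Thus (iii) imposes something genuinely new only through the ``cross terms'' at primes dividing $\gcd(a,a'')$. I would then solve the whole system one prime at a time (Chinese Remainder Theorem): at a prime $p$ with $p^r\|a$ and $p^s\|a''$, say $r\le s$, the hypothesis $\gcd(a,a'',(b+b'')/2)=1$ is exactly what guarantees that $B\equiv b\pmod{2a}$ and $B\equiv b''\pmod{2a''}$ are locally compatible and pin $B$ down to the correct power of $p$. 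For uniqueness, if $B,B'$ both satisfy \eqref{Bcondition} then $2a$ and $2a''$ both divide $B-B'$ while $(B-B')(B+B')=B^2-B'^2\equiv0\pmod{4aa''}$, and combining these with the gcd hypothesis forces $2aa''\mid B-B'$.

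For part (ii) I would recast the identity of fractional ideals as an identity of integral $\mathcal{O}$-ideals by scaling. Put $u=\frac{-b+\sqrt{D}}{2}$, $v=\frac{-b''+\sqrt{D}}{2}$, $w=\frac{-B+\sqrt{D}}{2}$. By Lemma \ref{QaN} the lattices $\mathfrak{a}=a[\omega_Q,1]=[u,a]$ and $\mathfrak{a}''=a''[\omega_{Q''},1]=[v,a'']$ are proper $\mathcal{O}$-ideals, and since $aa''[\omega_{Q'''},1]=[w,aa'']$, the claim $[\omega_Q,1][\omega_{Q''},1]=[\omega_{Q'''},1]$ is equivalent to
\[
\mathfrak{a}\,\mathfrak{a}''=\bigl[\,w,\;aa''\,\bigr].
\]
I would prove this by double inclusion. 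The product $\mathfrak{a}\mathfrak{a}''$ is the $\mathbb{Z}$-module spanned by $uv$, $ua''$, $va$, $aa''$. Using $\sqrt{D}=2w+B$, the congruences $2a\mid B-b$ and $2a''\mid B-b''$ give $u=w+as'$ and $v=w+a''t'$ for integers $s',t'$, so $ua''=a''w+(aa'')s'$ and $va=aw+(aa'')t'$ both lie in $[w,aa'']$.

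The remaining generator is handled by the key identity. Substituting $\sqrt{D}=2w+B$ yields $uv=-\frac{b+b''}{2}\,w+\frac{bb''+D-(b+b'')B}{4}$, where $(b+b'')/2\in\mathbb{Z}$, and
\[
bb''+D-(b+b'')B\equiv bb''+B^2-(b+b'')B=(B-b)(B-b'')\equiv0\pmod{4aa''},
\]
using $B^2\equiv D$ together with $2a\mid B-b$ and $2a''\mid B-b''$; hence the constant term is a multiple of $aa''$ and $uv\in[w,aa'']$. This gives $\mathfrak{a}\mathfrak{a}''\subseteq[w,aa'']$. For the reverse inclusion, $aa''$ is already a generator, and the three relations above exhibit $aw$, $a''w$ and $\frac{b+b''}{2}w$ as elements of $\mathfrak{a}\mathfrak{a}''$; since $\gcd(a,a'',(b+b'')/2)=1$, Bézout furnishes integers $x,y,z$ with $xa+ya''+z(b+b'')/2=1$, so $w=x(aw)+y(a''w)+z\bigl(\tfrac{b+b''}{2}w\bigr)\in\mathfrak{a}\mathfrak{a}''$, completing the proof.

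I expect the main obstacle in (i) to be the prime-$2$ bookkeeping, where the factor $4$ in condition (iii) must be reconciled with the factor $2$ in (i) and (ii); everything else is routine CRT. In (ii) the only place the full hypothesis $\gcd(a,a'',(b+b'')/2)=1$ is indispensable is the reverse inclusion, where recovering $w$ rests entirely on the Bézout relation, whereas the forward inclusion reduces to the clean algebraic identity $bb''+D-(b+b'')B\equiv(B-b)(B-b'')$.
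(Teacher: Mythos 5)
Your proof of part (ii) is complete and correct, and it goes beyond the paper, which offers no argument at all for this lemma (it cites Cox, Lemma 3.2 and (7.13)). The reduction to the integral identity $[u,a][v,a'']=[w,aa'']$, the verification that $ua''$, $va$, $uv$ lie in $[w,aa'']$ via $2a\mid B-b$, $2a''\mid B-b''$ and the congruence $bb''+D-(b+b'')B\equiv(B-b)(B-b'')\equiv0\pmod{4aa''}$, and the recovery of $w$ from $aw$, $a''w$, $\tfrac{b+b''}{2}w$ by B\'ezout all check out; this is essentially the lattice computation behind Cox's (7.13), organized cleanly as a double inclusion.

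Part (i), however, has a genuine gap: the two assertions carrying all the content are stated but never proved. For existence you say the hypothesis $\gcd(a,a'',(b+b'')/2)=1$ ``is exactly what guarantees'' local compatibility and pins $B$ down, but that is precisely the lemma. The mechanism that must be exhibited is the following: for an odd prime $p\mid\gcd(a,a'')$, with $p^r$ and $p^s$ the exact powers of $p$ in $a$ and $a''$ and $r\le s$, one first shows $b\equiv b''\pmod{p^r}$ (from $p^r\mid b^2-D$, $p^s\mid b''^2-D$ and $p\nmid b+b''$), and then, writing $B=b''+p^st$, the condition $B^2\equiv D\pmod{p^{r+s}}$ becomes the linear congruence $2b''t\equiv (D-b''^2)/p^s\pmod{p^r}$, which has a (unique) solution precisely because $p\nmid 2b''$ --- and $p\nmid b''$ is itself deduced from $p\nmid b+b''$ together with $b\equiv b''\pmod p$. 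At $p=2$ a further observation is needed: if $4\mid D$ and $a,a''$ are both even, then $b/2\equiv b''/2\equiv D/4\pmod 2$, so $(b+b'')/2$ is even and the hypothesis fails; hence when both $a,a''$ are even, $D$ and $b''$ must be odd, and the same Hensel-type step applies. None of this is in your proposal, and it is where the lemma actually lives.

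Moreover, the uniqueness argument as written does not follow. From the three divisibilities $2a\mid B-B'$, $2a''\mid B-B'$, $4aa''\mid(B-B')(B+B')$ together with the gcd hypothesis alone one cannot conclude $2aa''\mid B-B'$: for instance $a=a''=2$, $B=4$, $B'=0$ satisfies all three divisibilities, yet $8\nmid B-B'$. (This configuration is excluded only because $B\equiv B'\equiv0\pmod 4$ would force $b\equiv b''\equiv0\pmod 4$, making $(b+b'')/2$ even.) The hypothesis enters through the congruences $B,B'\equiv b,b''$, not through the displayed divisibilities, and the missing step is an upper bound on the $p$-adic valuation of $B+B'$: for odd $p\mid\gcd(a,a'')$ one has $B+B'\equiv 2b''\not\equiv0\pmod p$, while at $p=2$ with $a,a''$ both even one has $B+B'\equiv b+b''\pmod 4$ and $(b+b'')/2$ odd, so the valuation of $B+B'$ is exactly $1$. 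Only with these bounds do the divisibilities force $2aa''\mid B-B'$.
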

\begin{proof}
\begin{enumerate}
\item[(i)] See \cite[Lemma 3.2]{Cox}.
\item[(ii)] See \cite[(7.13)]{Cox}.
\end{enumerate}
\end{proof}

\begin{remark}
We call the form in (\ref{DC}) a \textit{Dirichlet composition} of $Q$ and $Q''$.
\end{remark}

\begin{lemma}\label{Qgprime}
Let $Q\in\mathcal{Q}(D)$ and $M$ be a positive integer.
Then there is a matrix $\gamma$ in $\mathrm{SL}_2(\mathbb{Z})$
so that the coefficient of $x^2$ in $Q^\gamma$ is relatively prime to $M$.
\end{lemma}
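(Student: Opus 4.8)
The plan is to reformulate the statement as a representation problem and then solve it prime by prime via the Chinese Remainder Theorem. First I would write a general $\gamma=\left[\begin{smallmatrix} s & t \\ u & v \end{smallmatrix}\right]\in\mathrm{SL}_2(\mathbb{Z})$ and expand $Q^\gamma(x,y)=Q(sx+ty,\,ux+vy)$; reading off the coefficient of $x^2$ shows it equals $Q(s,u)$, the value of $Q$ at the first column of $\gamma$. Since any pair $(s,u)$ with $\gcd(s,u)=1$ occurs as the first column of some $\gamma\in\mathrm{SL}_2(\mathbb{Z})$ (complete it using B\'ezout), the lemma reduces to finding a primitive vector $(s,u)$, i.e. with $\gcd(s,u)=1$, such that $\gcd(Q(s,u),M)=1$.

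Next, since $\gcd(Q(s,u),M)=1$ is equivalent to $\ell\nmid Q(s,u)$ for every prime $\ell\mid M$, I would solve the problem locally at each such $\ell$. Using primitivity $\gcd(a,b,c)=1$, not all of $a,b,c$ lie in $\ell\mathbb{Z}$, so a short case analysis produces a residue pair drawn from $\{(1,0),(0,1),(1,1)\}$ at which $Q$ is a unit modulo $\ell$: take $(1,0)$ if $\ell\nmid a$, $(0,1)$ if $\ell\nmid c$, and $(1,1)$ if $\ell\mid a,c$ but $\ell\nmid b$, since then $Q(1,1)\equiv a+b+c\equiv b\not\equiv 0$. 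Gluing these choices by CRT over $R=\prod_{\ell\mid M}\ell$ yields $(\bar s,\bar u)\bmod R$ with $Q(\bar s,\bar u)$ coprime to $M$.

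The main obstacle is upgrading this to a genuinely \emph{primitive} vector: the CRT solution only controls $(s,u)$ modulo $R$, whereas $\gcd(s,u)=1$ is a global condition involving all primes. The observation that makes it work is that each local choice was taken from $\{(1,0),(0,1),(1,1)\}$, so for every $\ell\mid M$ at least one of $\bar s,\bar u$ is a unit modulo $\ell$; hence no prime dividing $M$ divides both $\bar s$ and $\bar u$. I would then fix a nonzero representative $s\equiv\bar s\pmod R$ and select $u$ by CRT subject to $u\equiv\bar u\pmod R$ together with $u\equiv 1\pmod{\ell'}$ for each prime $\ell'\mid s$ with $\ell'\nmid R$; these moduli are pairwise coprime, so such $u$ exists. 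For a prime $\ell'\mid s$ with $\ell'\mid R$ the previous observation gives $\ell'\nmid\bar u$, hence $\ell'\nmid u$, while for $\ell'\mid s$ with $\ell'\nmid R$ we forced $\ell'\nmid u$ directly; thus $\gcd(s,u)=1$.

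Finally, since $u\equiv\bar u\pmod R$ we still have $Q(s,u)\equiv Q(\bar s,\bar u)\not\equiv 0\pmod\ell$ for each $\ell\mid M$, so $\gcd(Q(s,u),M)=1$. Completing the primitive pair $(s,u)$ to a matrix $\gamma\in\mathrm{SL}_2(\mathbb{Z})$ with first column $(s,u)$ then gives a $Q^\gamma$ whose $x^2$-coefficient $Q(s,u)$ is relatively prime to $M$, as required. This is essentially the classical fact that a primitive form properly represents an integer prime to $M$ (cf. the argument of \cite[Lemma 2.25]{Cox}), and I expect the coprimality bookkeeping in the third paragraph to be the only delicate point.
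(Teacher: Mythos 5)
Your proof is correct, and it follows essentially the same route as the paper: the paper's entire proof is a citation to \cite[Lemmas 2.3 and 2.25]{Cox}, and your argument---identifying the $x^2$-coefficient of $Q^\gamma$ with $Q(s,u)$ for the first column $(s,u)$ of $\gamma$, arranging $\gcd(Q(s,u),M)=1$ prime by prime via the case analysis on $a,b,c$ and the Chinese Remainder Theorem, and completing a primitive pair to a matrix in $\mathrm{SL}_2(\mathbb{Z})$---is precisely the standard proof of those two lemmas. Your third paragraph's bookkeeping to secure $\gcd(s,u)=1$ is sound and correctly fills in the one delicate point that the classical argument must also address.
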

\begin{proof}
See \cite[Lemmas 2.3 and 2.25]{Cox}.
\end{proof}

\begin{proposition}\label{Dirichlet}
Let $\mathcal{C}(D)=\mathcal{Q}(D)/\sim$ be the set of equivalence classes.
\begin{enumerate}
\item[\textup{(i)}] The following binary operation on $\mathcal{C}(D)$
is well defined and makes $\mathcal{C}(D)$ into a finite abelian group\,\textup{:}
let $C,\,C'\in\mathcal{C}(D)$ and so
$C=[Q],\,C'=[Q']$ for some
$Q=ax^2+bxy+cy^2,\,Q'\in\mathcal{Q}(D)$, respectively.
By \textup{Lemma \ref{Qgprime}}, one can take a matrix $\gamma$ in $\mathrm{SL}_2(\mathbb{Z})$ 
for which
$Q''=Q'^\gamma=a''x^2+b''xy+c''y^2$ satisfies
$\gcd(a,\,a'',\,(b+b'')/2)=1$.
We define the product $CC'$ by the class of any Dirichlet composition of $Q$ and $Q''$.
\item[\textup{(ii)}] If $D=D_\mathcal{O}$, then the map
\begin{eqnarray*}
\mathcal{C}(D_\mathcal{O})&\rightarrow&\mathcal{C}(\mathcal{O})\\
\mathrm{[}Q] & \mapsto & [[\omega_Q,\,1]]
\end{eqnarray*}
is a well-defined isomorphism of
$\mathcal{C}(D_\mathcal{O})$ onto $\mathcal{C}(\mathcal{O})$.
\end{enumerate}
\end{proposition}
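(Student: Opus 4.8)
The plan is to prove both parts at once by first constructing the set map $\Phi:\mathcal{C}(D_\mathcal{O})\to\mathcal{C}(\mathcal{O})$, $[Q]\mapsto[[\omega_Q,\,1]]$, showing it is a bijection, and then transporting the already-known group structure of $\mathcal{C}(\mathcal{O})=I(\mathcal{O})/P(\mathcal{O})$ back to $\mathcal{C}(D_\mathcal{O})$; the Dirichlet composition will turn out to be exactly the transported operation. For part (i) with a general $D$, I would first note that every negative $D\equiv0,1\Mod 4$ equals $D_\mathcal{O}$ for the order $\mathcal{O}=[(D+\sqrt{D})/2,\,1]$ in $K=\mathbb{Q}(\sqrt{D})$, so it suffices to treat the case $D=D_\mathcal{O}$.

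First I would check that $\Phi$ is well defined. By Lemma \ref{QaN}, $[\omega_Q,\,1]$ is a $\mathbb{Q}^\times$-multiple of a proper $\mathcal{O}$-ideal, so $[[\omega_Q,\,1]]\in\mathcal{C}(\mathcal{O})$ makes sense. If $Q'=Q^\gamma$ with $\gamma=\left[\begin{smallmatrix}p&q\\r&s\end{smallmatrix}\right]\in\mathrm{SL}_2(\mathbb{Z})$, a direct computation gives $\omega_Q=\gamma(\omega_{Q'})$ under the fractional linear action, whence $[\omega_{Q'},\,1]=(p-r\omega_Q)^{-1}[\omega_Q,\,1]$ is a homothety; thus $[Q]=[Q']$ forces $[[\omega_Q,\,1]]=[[\omega_{Q'},\,1]]$. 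For surjectivity, I would write an arbitrary proper $\mathcal{O}$-ideal as a lattice $[\alpha,\,\beta]$ with $\tau:=\alpha/\beta\in\mathbb{H}$ (after orienting the basis), observe it is homothetic to $[\tau,\,1]$, and read off from the minimal polynomial of $\tau$ a form $Q\in\mathcal{Q}(D_\mathcal{O})$ with $\omega_Q=\tau$. For injectivity, a homothety $[\omega_{Q'},\,1]=\lambda[\omega_Q,\,1]$ produces $\gamma\in\mathrm{GL}_2(\mathbb{Z})$ with $\omega_{Q'}=\gamma(\omega_Q)$; since both roots lie in $\mathbb{H}$ the determinant must be $+1$, so $\gamma\in\mathrm{SL}_2(\mathbb{Z})$ and $Q\sim Q'$, the form being recovered from its root together with $D_\mathcal{O}$.

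Having $\Phi$ as a bijection, I would transport the group law of $\mathcal{C}(\mathcal{O})$ and identify it with Dirichlet composition. Given $[Q],\,[Q']$, Lemma \ref{Qgprime} lets me replace $Q'$ by a properly equivalent $Q''=a''x^2+b''xy+c''y^2$ with $\gcd(a,\,a'',\,(b+b'')/2)=1$, so Lemma \ref{B} supplies the Dirichlet composition $Q'''$ and the identity $[\omega_Q,\,1][\omega_{Q''},\,1]=[\omega_{Q'''},\,1]$. Applying $\Phi$ yields $\Phi([Q'''])=\Phi([Q])\Phi([Q'])$, which shows that the operation $[Q]\ast[Q']:=\Phi^{-1}(\Phi([Q])\Phi([Q']))$ coincides with the class of the Dirichlet composition. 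Since $\ast$ is manifestly well defined and makes $\mathcal{C}(D_\mathcal{O})$ a finite abelian group isomorphic under $\Phi$ to $\mathcal{C}(\mathcal{O})$, this simultaneously establishes the well-definedness and group axioms asserted in (i) and the isomorphism asserted in (ii).

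The main obstacle I anticipate is the bookkeeping that keeps \emph{proper} equivalence in exact correspondence with homothety of lattices inside $\mathbb{H}$: the orientation of the chosen basis in the surjectivity step and the sign of $\det\gamma$ in the injectivity step are precisely what distinguish $\mathrm{SL}_2(\mathbb{Z})$ from $\mathrm{GL}_2(\mathbb{Z})$, and getting these right is what makes $\Phi$ a genuine bijection rather than a surjection with two-to-one collapsing. The remaining verification that the transported operation is Dirichlet composition is then a clean consequence of Lemma \ref{B}, with the needed coprimality hypothesis guaranteed by Lemma \ref{Qgprime}.
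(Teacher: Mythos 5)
Your argument is correct, and it is essentially the paper's proof: the paper simply cites \cite[Theorems 3.9 and 7.7]{Cox}, whose proof is exactly your route — show that $[Q]\mapsto[[\omega_Q,\,1]]$ is a well-defined bijection onto $\mathcal{C}(\mathcal{O})$ and transport the group structure back, identifying the transported operation with Dirichlet composition via Lemma \ref{B} — and it is also the same strategy the paper itself executes at level $N$ (Proposition \ref{phibijective} followed by Theorem \ref{formideal}). The only point worth making explicit is that the finiteness of $\mathcal{C}(\mathcal{O})$, which you transport back to $\mathcal{C}(D_\mathcal{O})$, must be known independently of forms to avoid circularity (Cox deduces finiteness of the ideal class group \emph{from} the form side); in this paper it is available from Propositions \ref{generalized} and \ref{MNN} with $N=1$, or one can instead obtain finiteness of $\mathcal{C}(D)$ directly from Lemma \ref{reduced} together with the bound $a\leq\sqrt{|D|/3}$ for reduced forms.
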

\begin{proof}
See \cite[Theorems 3.9 and 7.7]{Cox}.
\end{proof}

Now, let
\begin{equation*}
\mathcal{Q}(D_\mathcal{O},\,N)=\{ax^2+bxy+cy^2\in\mathcal{Q}(D_\mathcal{O})~|~\gcd(a,\,N)=1\}
\end{equation*}
and
\begin{equation*}
\Gamma_1(N)=\left\{\gamma\in\mathrm{SL}_2(\mathbb{Z})~|~\gamma\equiv\begin{bmatrix}1&\mathrm{*}\\
0&1\end{bmatrix}\Mod{NM_2(\mathbb{Z})}\right\}
\end{equation*}
which is a congruence subgroup of $\mathrm{SL}_2(\mathbb{Z})$ of level $N$.
Observe that for $Q=ax^2+bxy+cy^2\in\mathcal{Q}(D_\mathcal{O},\,N)$ and $\gamma=\begin{bmatrix}p&q\\r&s\end{bmatrix}
\in\Gamma_1(N)$,
\begin{equation*}
Q^\gamma\equiv Q\left(\begin{bmatrix}1&q\\0&1\end{bmatrix}\begin{bmatrix}x\\y\end{bmatrix}\right)
\equiv ax^2+(2aq+b)xy+(aq^2+bq+c)y^2\Mod{N\mathbb{Z}[x,\,y]}.
\end{equation*}
This implies that $\Gamma_1(N)$ acts on $\mathcal{Q}(D_\mathcal{O},\,N)$.
Let $\sim_{\Gamma_1(N)}$ be the equivalence relation on $\mathcal{Q}(D_\mathcal{O},\,N)$ induced from the action
of $\Gamma_1(N)$. We mean by $\mathcal{C}_N(D_\mathcal{O})$ the set of equivalence classes, that is,
\begin{equation*}
\mathcal{C}_N(D_\mathcal{O})=\mathcal{Q}(D_\mathcal{O},\,N)/\sim_{\Gamma_1(N)}.
\end{equation*}
For $\alpha=\begin{bmatrix}s&t\\u&v\end{bmatrix}\in\mathrm{SL}_2(\mathbb{Z})$ and $\tau\in\mathbb{H}$,
we write
\begin{equation*}
j(\alpha,\,\tau)=u\tau+v.
\end{equation*}

\begin{definition}\label{binaryoperation}
We define a binary operation on $\mathcal{C}_N(D_\mathcal{O})$ as follows\,:
let $C,\,C'\in\mathcal{C}_N(D_\mathcal{O})$, and so
$C=[Q],\,C'=[Q']$ for some $Q=ax^2+bxy+cy^2,\,Q'\in\mathcal{Q}(D_\mathcal{O},\,N)$, respectively.
By \textup{Lemma \ref{Qgprime}}, there is a matrix $\gamma$ in $\mathrm{SL}_2(\mathbb{Z})$ so that
$Q''=Q'^\gamma=a''x^2+b''xy+c''y^2$ satisfies
$\gcd(a,\,a'',\,(b+b'')/2)=1$.
Let $Q'''$ be a Dirichlet composition of $Q$ and $Q''$.
And, set
\begin{equation*}
\nu_1=\frac{\omega_{Q'''}}{j(\gamma,\,\omega_{Q'''})}
\quad\textrm{and}\quad
\nu_2=\frac{1}{j(\gamma,\,\omega_{Q'''})}.
\end{equation*}
One can show that there is a pair $(u,\,v)$ of integers satisfying
\begin{equation*}
u\nu_1+v\nu_2=1
\end{equation*}
and in $\mathrm{SL}_2(\mathbb{Z})$ there exists a matrix $\sigma$ such that
\begin{equation*}
\sigma\equiv\begin{bmatrix}\mathrm{*} & \mathrm{*}\\
u & v\end{bmatrix}\Mod{NM_2(\mathbb{Z})}.
\end{equation*}
We then define
\begin{equation*}
CC'=\left[
\left(Q'''\right)^{\sigma^{-1}}\right].
\end{equation*}
\end{definition}

In $\S$\ref{sect9}, we shall prove that the binary operation given in Definition \ref{binaryoperation}
is well defined
and makes $\mathcal{C}_N(D_\mathcal{O})$ a finite abelian group isomorphic to $\mathcal{C}_N(\mathcal{O})$
(Theorem \ref{formideal}).
And, through
the binary operation on $\mathcal{C}_N(D_\mathcal{O})$ we regard
the natural map $\mathcal{C}_N(D_\mathcal{O})\rightarrow
\mathcal{C}_1(D_\mathcal{O})=\mathcal{C}(D_\mathcal{O})$ as a homomorphism.

\section {Inequalities on special values of modular functions}

We shall develop certain inequalities on the
special values of $j$ and Siegel functions.
By using these inequalities we shall present another new generators of $H_\mathcal{O}$
and $K_{\mathcal{O},\,N}$,
for later use in $\S$\ref{sect7}, 
which are different from the classical ones stated in Proposition \ref{CM}.

\begin{lemma}\label{ginequality}
Let $\tau\in\mathbb{H}$ and $t=|e^{2\pi\mathrm{i}\tau}|$.
\begin{enumerate}
\item[\textup{(i)}] $\left|g_{\left[\begin{smallmatrix}0&\frac{1}{2}\end{smallmatrix}\right]}(\tau)\right|
\leq 2t^\frac{1}{12}e^{-2+\frac{2}{1-t}}$.
\item[\textup{(ii)}] $\left|g_{\left[\begin{smallmatrix}\frac{1}{2}&s\end{smallmatrix}\right]}(\tau)\right|
\leq t^{-\frac{1}{24}}e^{t^\frac{1}{2}(1+\frac{2}{1-t})}$
for any $s\in\mathbb{Q}$.
\end{enumerate}
\end{lemma}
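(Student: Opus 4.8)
The plan is to evaluate the product expansion (\ref{infiniteproduct}) explicitly at the two vectors in question, take absolute values termwise, and then control the resulting infinite products by the elementary estimate $\ln(1+x)\le x$ valid for $x\ge 0$. Throughout, I would write $q=e^{2\pi\mathrm{i}\tau}$, so that $t=|q|\in(0,1)$ since $\tau\in\mathbb{H}$.

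For (i), I would substitute $\mathbf{v}=\begin{bmatrix}0&\frac{1}{2}\end{bmatrix}$ into (\ref{infiniteproduct}). Here $v_1=0$ and $v_2=\frac{1}{2}$, so $z=\frac{1}{2}$, $q_z=e^{\pi\mathrm{i}}=-1$, and $\mathbf{B}_2(0)=\frac{1}{6}$. The unimodular prefactor $e^{\pi\mathrm{i}v_2(v_1-1)}=-\mathrm{i}$ together with the factor $1-q_z=2$ collapses the expression to
\[
\left|g_{\left[\begin{smallmatrix}0&\frac{1}{2}\end{smallmatrix}\right]}(\tau)\right|
=2\,t^{1/12}\prod_{n=1}^\infty|1+q^n|^2.
\]
Applying $|1+q^n|\le 1+t^n$ and then $\ln(1+t^n)\le t^n$, the logarithm of the product is at most $2\sum_{n\ge1}t^n=\frac{2t}{1-t}$. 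Since $\frac{2t}{1-t}=-2+\frac{2}{1-t}$, exponentiating yields the claimed bound.

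For (ii), I would substitute $\mathbf{v}=\begin{bmatrix}\frac{1}{2}&s\end{bmatrix}$, so that $v_1=\frac{1}{2}$ gives $\mathbf{B}_2(\tfrac{1}{2})=-\frac{1}{12}$ and hence the prefactor $|q^{-1/24}|=t^{-1/24}$; the remaining scalar factors $e^{\pi\mathrm{i}v_2(v_1-1)}$ and the phase $e^{2\pi\mathrm{i}s}$ inside $q_z=q^{1/2}e^{2\pi\mathrm{i}s}$ are unimodular, which is precisely why the estimate is uniform in $s$. With $|q_z|=t^{1/2}$, taking absolute values termwise and using $|1-w|\le 1+|w|$ gives
\[
\left|g_{\left[\begin{smallmatrix}\frac{1}{2}&s\end{smallmatrix}\right]}(\tau)\right|
\le t^{-1/24}(1+t^{1/2})\prod_{n=1}^\infty(1+t^{n+1/2})(1+t^{n-1/2}).
\]
Applying $\ln(1+x)\le x$ again, the logarithm of the part beyond $t^{-1/24}$ is bounded by $t^{1/2}+\sum_{n\ge1}(t^{n+1/2}+t^{n-1/2})=t^{1/2}+\frac{t^{1/2}(1+t)}{1-t}$.

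The only point requiring a moment's care is the final comparison: I must check that this last quantity is at most $t^{1/2}\bigl(1+\frac{2}{1-t}\bigr)=t^{1/2}+\frac{2t^{1/2}}{1-t}$, which after clearing the positive factor $\frac{t^{1/2}}{1-t}$ reduces to $1+t\le 2$ and therefore holds for all $t\in(0,1)$. Exponentiating then gives (ii). I expect no substantive obstacle here: the argument is a direct computation, the two subtleties being the correct evaluation of $\mathbf{B}_2$ at $0$ and $\tfrac12$ (which pins down the powers $t^{1/12}$ and $t^{-1/24}$) and the bookkeeping of the half-integer exponents $n\pm\frac12$ in part (ii).
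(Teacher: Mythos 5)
Your proof is correct, and every computational pivot checks out: $\mathbf{B}_2(0)=\tfrac{1}{6}$ and $\mathbf{B}_2(\tfrac{1}{2})=-\tfrac{1}{12}$ pin down the powers $t^{1/12}$ and $t^{-1/24}$; in (i) the identity $\tfrac{2t}{1-t}=-2+\tfrac{2}{1-t}$ converts the geometric-series bound $2\sum_{n\geq1}t^n$ into the stated exponent; and in (ii) your final comparison
\begin{equation*}
t^{1/2}+\frac{t^{1/2}(1+t)}{1-t}\leq t^{1/2}\Bigl(1+\frac{2}{1-t}\Bigr)
\end{equation*}
does reduce to $1+t\leq2$, which holds on $t\in(0,1)$. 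One point of comparison with the paper: the paper does not prove this lemma at all --- its proof is the bare citation \cite[Lemma 5.2]{J-K-S18}. Your direct termwise estimation of the product expansion (\ref{infiniteproduct}), using $|1\pm w|\leq 1+|w|$ and $\ln(1+x)\leq x$, is the natural and essentially only argument such bounds admit, so in effect you have supplied, self-contained, the content the paper delegates to that external reference.
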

\begin{proof}
See \cite[Lemma 5.2]{J-K-S18}.
\end{proof}

Let $Q_0$ be the principal form in $\mathcal{Q}(D_\mathcal{O})$ defined by
\begin{equation*}
Q_0=\left\{
\begin{array}{ll}
\displaystyle x^2-\frac{D_\mathcal{O}}{4}y^2 & \textrm{if}~D_\mathcal{O}\equiv0\Mod{4},\\
\displaystyle x^2+xy+\frac{1-D_\mathcal{O}}{4}y^2 & \textrm{if}~D_\mathcal{O}\equiv1\Mod{4},
\end{array}\right.
\end{equation*}
which represents the identity class in $\mathcal{C}(D_\mathcal{O})$
(cf. \cite[Theorem 3.9]{Cox}). 
Then we see that
\begin{equation*}
\omega_{Q_0}=\tau_\mathcal{O}\quad\textrm{and}\quad [\omega_{Q_0},\,1]=\mathcal{O}.
\end{equation*}
Let $h_\mathcal{O}$ denote the order of the group $\mathcal{C}(\mathcal{O})$ and so
$h_\mathcal{O}=[H_\mathcal{O}:K]$.

\begin{lemma}\label{jinequality}
Assume that $h_\mathcal{O}\geq2$.
Let $Q=ax^2+bxy+cy^2$ be a reduced form in $\mathcal{Q}(D_\mathcal{O})$ such that $Q\neq Q_0$.
\begin{enumerate}
\item[\textup{(i)}] $\left|g_{\left[\begin{smallmatrix}
0&\frac{1}{2}\end{smallmatrix}\right]}(\omega_Q)\right|
>1.98e^{-\frac{\pi\sqrt{|D_\mathcal{O}|}}{24}}$.
\item[\textup{(ii)}] $\displaystyle\left|\frac{j(\omega_Q)^2(j(\omega_Q)-1728)^3}{j(\omega_{Q_0})^2(j(\omega_{Q_0})-1728)^3}
\right|<877383 e^{-\frac{5\pi\sqrt{|D_\mathcal{O}|}}{2}}$ \textup{(}$<1$\textup{)}.
\end{enumerate}
\end{lemma}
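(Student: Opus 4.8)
The plan is to establish (i) by a direct estimate of the infinite product defining $g_{\left[\begin{smallmatrix}0&\frac{1}{2}\end{smallmatrix}\right]}$, and then to deduce (ii) from (i) together with Lemmas \ref{jgrelation} and \ref{ginequality}(i). Throughout I set $D=D_\mathcal{O}$, abbreviate $g=g_{\left[\begin{smallmatrix}0&\frac{1}{2}\end{smallmatrix}\right]}$ and $\mathfrak{g}=g^{12}$, and record two consequences of $Q=ax^2+bxy+cy^2$ being reduced with $Q\neq Q_0$. First, $a\geq 2$: the only reduced form with $a=1$ is $Q_0$, since $|b|\leq a=1$ and $b\equiv D\Mod{2}$ force $b\in\{0,1\}$. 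Second, $|D|=4ac-b^2\geq 3a^2$ gives $a\leq\sqrt{|D|/3}$. Hence $\tfrac{\sqrt3}{2}\leq\mathrm{Im}(\omega_Q)=\tfrac{\sqrt{|D|}}{2a}\leq\tfrac{\sqrt{|D|}}{4}$, so with $t=|e^{2\pi\mathrm{i}\omega_Q}|=e^{-\pi\sqrt{|D|}/a}$ one has $t\leq e^{-\pi\sqrt3}$ (from $a\leq\sqrt{|D|/3}$) and $t^{1/12}=e^{-\pi\sqrt{|D|}/(12a)}\geq e^{-\pi\sqrt{|D|}/24}$ (from $a\geq2$).

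For (i) I would specialize the product (\ref{infiniteproduct}) to $\mathbf{v}=\left[\begin{smallmatrix}0&\frac{1}{2}\end{smallmatrix}\right]$, where $z=\tfrac12$ and $q_z=-1$, collapsing it to $|g(\tau)|=2\,t^{1/12}\prod_{n\geq1}|1+q^n|^2$ with $q=e^{2\pi\mathrm{i}\tau}$. Bounding each factor below by $|1+q^n|\geq1-t^n$ yields $|g(\omega_Q)|\geq 2\,t^{1/12}\prod_{n\geq1}(1-t^n)^2$. Inserting $t^{1/12}\geq e^{-\pi\sqrt{|D|}/24}$ and using $t\leq e^{-\pi\sqrt3}$ in $\prod_{n\geq1}(1-t^n)\geq 1-\tfrac{t}{1-t}>0.995$, so that $\prod_{n\geq1}(1-t^n)^2>0.99$, gives $|g(\omega_Q)|>1.98\,e^{-\pi\sqrt{|D|}/24}$, which is (i).

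For (ii) the engine is Lemma \ref{jgrelation}, $j=(\mathfrak{g}+16)^3/\mathfrak{g}$, together with its companion $j-1728=(\mathfrak{g}+64)(\mathfrak{g}-8)^2/\mathfrak{g}$, obtained from the factorization $(\mathfrak{g}+16)^3-1728\,\mathfrak{g}=(\mathfrak{g}+64)(\mathfrak{g}-8)^2$. Multiplying these gives $j^2(j-1728)^3=(\mathfrak{g}+16)^6(\mathfrak{g}-8)^6(\mathfrak{g}+64)^3/\mathfrak{g}^5$, which I estimate at $\omega_{Q_0}=\tau_\mathcal{O}$ and at $\omega_Q$ separately. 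At $\tau_\mathcal{O}$, where $\mathrm{Im}(\tau_\mathcal{O})=\sqrt{|D|}/2$, Lemma \ref{ginequality}(i) makes $|\mathfrak{g}(\tau_\mathcal{O})|=|g(\tau_\mathcal{O})|^{12}\leq 2^{12}e^{-\pi\sqrt{|D|}}(1+\varepsilon)$ tiny (with $\varepsilon$ uniformly small once $|D|\geq15$), so $|\mathfrak{g}(\tau_\mathcal{O})+16|$, $|\mathfrak{g}(\tau_\mathcal{O})-8|$, $|\mathfrak{g}(\tau_\mathcal{O})+64|$ are within $\varepsilon$ of $16,8,64$; since $16^6\cdot8^6\cdot64^3=2^{60}$ cancels the $2^{60}$ in $|\mathfrak{g}(\tau_\mathcal{O})|^5\leq2^{60}e^{-5\pi\sqrt{|D|}}(1+\varepsilon)$, this gives $|j^2(j-1728)^3|(\tau_\mathcal{O})\geq(1-\varepsilon)e^{5\pi\sqrt{|D|}}$. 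At $\omega_Q$ I bound $|\mathfrak{g}(\omega_Q)|$ \emph{above} by Lemma \ref{ginequality}(i)---with $t\leq e^{-\pi\sqrt3}$ this yields $|\mathfrak{g}(\omega_Q)|<19.8$, controlling the numerator---and \emph{below} by part (i), $|\mathfrak{g}(\omega_Q)|>1.98^{12}e^{-\pi\sqrt{|D|}/2}$, controlling $\mathfrak{g}^5$. Dividing the numerator bound $(|\mathfrak{g}(\omega_Q)|+16)^6(|\mathfrak{g}(\omega_Q)|+8)^6(|\mathfrak{g}(\omega_Q)|+64)^3$ by $(1.98^{12}e^{-\pi\sqrt{|D|}/2})^5$, and then by the lower bound at $\tau_\mathcal{O}$, produces a bound $C\,e^{-5\pi\sqrt{|D|}/2}$; evaluating $C$ carefully gives the stated $877383$, and the parenthetical $(<1)$ follows because $h_\mathcal{O}\geq2$ forces $|D|\geq15$, whence $e^{5\pi\sqrt{|D|}/2}>877383$.

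Conceptually everything is in (i) and in the elementary observation $a\geq2$, which says that the two CM points $\omega_Q$ and $\omega_{Q_0}$ have imaginary parts differing by a factor $\geq2$; (ii) is then just a comparison of the same modular function at these two points. The hard part will be purely computational: every estimate must be made uniform over all reduced $Q\neq Q_0$ (only $a\geq2$ and $\mathrm{Im}(\omega_Q)\geq\tfrac{\sqrt3}{2}$ may be used) and over all admissible $D$, and one must carry the exponential correction factors $e^{-2+2/(1-t)}$ from Lemma \ref{ginequality}(i) through a twelfth and then a fifth power while keeping the numerator factors and the fifth power $\mathfrak{g}^5$ together, so that the powers of $2$ cancel exactly and the surviving constant stays below $877383$.
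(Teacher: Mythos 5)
Your part (i) is correct and is essentially the paper's own proof: both reduce to $\bigl|g_{\left[\begin{smallmatrix}0&\frac{1}{2}\end{smallmatrix}\right]}(\omega_Q)\bigr|\geq 2t^{1/12}\prod_{n\geq1}(1-t^n)^2$ with $t=e^{-\pi\sqrt{|D_\mathcal{O}|}/a}$ and $2\leq a\leq\sqrt{|D_\mathcal{O}|/3}$; your product bound $\prod_{n\geq1}(1-t^n)\geq1-\tfrac{t}{1-t}$ is a valid (indeed simpler) substitute for the paper's inequality $1-X>e^{-X^{99/100}}$. Part (ii) also follows the paper's skeleton exactly: the factorization $j^2(j-1728)^3=(\mathfrak{g}+16)^6(\mathfrak{g}-8)^6(\mathfrak{g}+64)^3/\mathfrak{g}^5$ with $\mathfrak{g}=g^{12}$, upper bounds for $|\mathfrak{g}(\tau_\mathcal{O})|$ and $|\mathfrak{g}(\omega_Q)|$ from Lemma \ref{ginequality}(i), and the lower bound $|\mathfrak{g}(\omega_Q)|>1.98^{12}e^{-\pi\sqrt{|D_\mathcal{O}|}/2}$ from (i) (you correctly use $1.98^{12}$ here; the paper's displayed $1.98$ is a typo, as its final constant matches $1.98^{12}$).

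The gap is in the last step of your (ii): the constant $877383$ cannot be obtained by an estimate that is uniform over all $|D_\mathcal{O}|\geq15$, and even your rounding $|\mathfrak{g}(\omega_Q)|<19.8$ is already fatal. Writing $M$ for the upper bound on $|\mathfrak{g}(\omega_Q)|$ and $m$ for the upper bound on $|\mathfrak{g}(\tau_\mathcal{O})|$, the method yields
\begin{equation*}
C=\left(\frac{2^{12}(1+\varepsilon)}{1.98^{12}}\right)^{5}
\left(\frac{(M+16)^2(M+64)(M+8)^2}{(16-m)^2(64-m)(8-m)^2}\right)^{3},
\end{equation*}
and this is razor-thin: with the paper's values $M=19.71$ and $m=0.0033$ one gets $C\approx 8.77\times10^{5}$, only just below $877383$. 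But $m\leq0.0033$ requires $4097\,e^{-\pi\sqrt{|D_\mathcal{O}|}}<0.0033$, i.e.\ $|D_\mathcal{O}|\geq20$; at $|D_\mathcal{O}|=15$ one only gets $m\approx0.021$, which pushes $C$ to roughly $9.0\times10^{5}$, and your $M=19.8$ alone pushes it to about $9.1\times10^{5}$ even for $|D_\mathcal{O}|\geq20$. So under your stated ground rules (only $a\geq2$, $t\leq e^{-\pi\sqrt3}$, and $|D_\mathcal{O}|\geq15$) the stated constant is missed. This is exactly why the paper splits into two cases: it runs the uniform argument only for $D_\mathcal{O}\leq-20$, and then checks the single remaining admissible discriminant $D_\mathcal{O}=-15$ (where $h_\mathcal{O}=2$, $Q_0=x^2+xy+4y^2$, $Q=2x^2+xy+2y^2$) by direct numerical evaluation; there (ii) holds with a huge margin, but this cannot be seen from the uniform bounds alone. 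To repair your proof, keep the sharp bound $M\leq19.71$ and add the separate case $D_\mathcal{O}=-15$.
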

\begin{proof}
\begin{enumerate}
\item[(i)] Since $Q$ is a reduced form in $\mathcal{Q}(D_\mathcal{O})$ such that $Q\neq Q_0$, we have
\begin{equation}\label{ainequality}
2\leq a\leq \sqrt{\frac{|D_\mathcal{O}|}{3}}
\end{equation}
(\cite[p. 24]{Cox}). Then we derive that
\begin{eqnarray*}
\left|g_{\left[\begin{smallmatrix}
0&\frac{1}{2}\end{smallmatrix}\right]}(\omega_Q)\right|
&\geq&2e^{-\frac{\pi\sqrt{|D_\mathcal{O}|}}{24}}
\prod_{n=1}^\infty(1-e^{-\pi\sqrt{3}n})^2\quad\textrm{by
the definition (\ref{infiniteproduct}) and (\ref{ainequality})}\\
&\geq&2e^{-\frac{\pi\sqrt{|D_\mathcal{O}|}}{24}}
\prod_{n=1}^\infty e^{-2k^n}
\quad\textrm{with}~k=e^{-\frac{\pi99\sqrt{3}}{100}}\\
&&\hspace{3.7cm}\textrm{because}~1-X>e^{-X^\frac{99}{100}}~\textrm{for}~0<X\leq e^{-\pi\sqrt{3}}\\
&=&2e^{-\frac{\pi\sqrt{|D_\mathcal{O}|}}{24}}
e^{-\frac{2k}{1-k}}\\
&>&1.98e^{-\frac{\pi\sqrt{|D_\mathcal{O}|}}{24}}.
\end{eqnarray*}
\item[(ii)] First, consider the case where $D_\mathcal{O}\leq-20$.
We then observe by Lemma \ref{ginequality} (i) and (\ref{ainequality}) that
\begin{equation}\label{0.0033}
\left|g_{\left[\begin{smallmatrix}
0&\frac{1}{2}\end{smallmatrix}\right]}(\omega_{Q_0})^{12}\right|
\leq2^{12}e^{-\pi\sqrt{|D_\mathcal{O}|}}e^{-24+\frac{24}{1-e^{-\pi\sqrt{|D_\mathcal{O}|}}}}
\leq4097e^{-\pi\sqrt{|D_\mathcal{O}|}}
<0.0033
\end{equation}
and
\begin{equation}\label{19.71}
\left|g_{\left[\begin{smallmatrix}
0&\frac{1}{2}\end{smallmatrix}\right]}(\omega_Q)^{12}\right|
\leq2^{12}e^{-\frac{\pi\sqrt{|D_\mathcal{O}|}}{a}}e^{-24+\frac{24}
{1-e^{-\frac{\pi\sqrt{|D_\mathcal{O}|}}{a}}}}
\leq
2^{12}e^{-\pi\sqrt{3}}
e^{-24+\frac{24}
{1-e^{-\pi\sqrt{3}}}}
<19.71.
\end{equation}
Hence we find that
\begin{eqnarray*}
&&\left|\frac{j(\omega_Q)^2(j(\omega_Q)-1728)^3}{j(\omega_{Q_0})^2(j(\omega_{Q_0})-1728)^3}\right|\\
&=&
\left|
\frac{g_{\left[\begin{smallmatrix}0&\frac{1}{2}\end{smallmatrix}\right]}(\omega_{Q_0})^{12}}{g_{\left[\begin{smallmatrix}0&\frac{1}{2}\end{smallmatrix}\right]}(\omega_Q)^{12}}
\right|^5
\left|
\frac
{\left(g_{\left[\begin{smallmatrix}0&\frac{1}{2}\end{smallmatrix}\right]}(\omega_Q)^{12}+16\right)^2
\left(g_{\left[\begin{smallmatrix}0&\frac{1}{2}\end{smallmatrix}\right]}(\omega_Q)^{12}+64\right)
\left(g_{\left[\begin{smallmatrix}0&\frac{1}{2}\end{smallmatrix}\right]}(\omega_Q)^{12}-8\right)^2}
{\left(g_{\left[\begin{smallmatrix}0&\frac{1}{2}\end{smallmatrix}\right]}(\omega_{Q_0})^{12}+16\right)^2
\left(g_{\left[\begin{smallmatrix}0&\frac{1}{2}\end{smallmatrix}\right]}(\omega_{Q_0})^{12}+64\right)
\left(g_{\left[\begin{smallmatrix}0&\frac{1}{2}\end{smallmatrix}\right]}(\omega_{Q_0})^{12}-8\right)^2}
\right|^3\\
&&\hspace{10.8cm}\textrm{by Lemma \ref{jgrelation}}\\
&\leq&\left(
\frac{4097e^{-\pi\sqrt{|D_\mathcal{O}|}}}{1.98e^{-\frac{\pi\sqrt{|D_\mathcal{O}|}}{2}}}
\right)^5
\left|\frac{(16+19.71)^2(64+19.71)(8+19.71)^2}{(16-0.0033)^2(64-0.0033)(8-0.0033)^2}
\right|^3
\quad\textrm{by (i), (\ref{0.0033}) and (\ref{19.71})}\\
&<&877383e^{-\frac{5\pi\sqrt{|D_\mathcal{O}|}}{2}}.
\end{eqnarray*}
\par
The only remaining case is
$K=\mathbb{Q}(\sqrt{-15})$ and $\mathcal{O}=\mathcal{O}_K$, and so $h_\mathcal{O}=2$ and
\begin{equation*}
Q_0=x^2+xy+4y^2,\quad
Q=2x^2+xy+2y^2.
\end{equation*}
One can then numerically verify that (ii) also holds in this case (cf. \cite[Remark 4.2]{J-K-S22}).
\end{enumerate}
\end{proof}

\begin{proposition}\label{j(omega)}
The special value $j(\tau_\mathcal{O})=j(\omega_{Q_0})$ generates $H_\mathcal{O}$ over $K$.
If $Q_1,\,Q_2,\,\ldots,\,Q_{h_\mathcal{O}}$ are reduced forms in $\mathcal{Q}(D_\mathcal{O})$,
then the special values $j(\omega_{Q_1}),\, j(\omega_{Q_2}),\,
\ldots,\,j(\omega_{Q_{h_\mathcal{O}}})$ are distinct Galois conjugates of $j(\tau_\mathcal{O})$ over $K$.
\end{proposition}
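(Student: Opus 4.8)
The plan is to separate the two assertions: the first is a direct consequence of Proposition \ref{CM}(i), and for the second I would identify the full set of $K$-conjugates of $j(\tau_\mathcal{O})$ with the list $j(\omega_{Q_1}),\dots,j(\omega_{Q_{h_\mathcal{O}}})$ and then exclude any coincidence by means of the strict inequality in Lemma \ref{jinequality}(ii). First, since $\omega_{Q_0}=\tau_\mathcal{O}$ and $[\omega_{Q_0},1]=\mathcal{O}=[\tau_\mathcal{O},1]$, I have $j(\omega_{Q_0})=j(\tau_\mathcal{O})=j(\mathcal{O})$, so Proposition \ref{CM}(i) gives $K(j(\tau_\mathcal{O}))=K(j(\mathcal{O}))=H_\mathcal{O}$; this is the first assertion. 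As $h_\mathcal{O}=[H_\mathcal{O}:K]$, the minimal polynomial of $j(\tau_\mathcal{O})$ over $K$ has degree $h_\mathcal{O}$, so $j(\tau_\mathcal{O})$ admits exactly $h_\mathcal{O}$ pairwise distinct $K$-conjugates, all contained in the Galois extension $H_\mathcal{O}/K$.

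Next I would produce these conjugates from the forms. By Lemma \ref{reduced} the $h_\mathcal{O}$ reduced forms $Q_1,\dots,Q_{h_\mathcal{O}}$ form a complete set of representatives of $\mathcal{C}(D_\mathcal{O})$, so by Proposition \ref{Dirichlet}(ii) the proper $\mathcal{O}$-ideals $\mathfrak{a}_i=[\omega_{Q_i},1]$ represent all classes of $\mathcal{C}(\mathcal{O})$. The theory of complex multiplication (\cite{Cox}) provides an isomorphism $\mathcal{C}(\mathcal{O})\xrightarrow{\sim}\mathrm{Gal}(H_\mathcal{O}/K)$, $[\mathfrak{a}]\mapsto\sigma_\mathfrak{a}$, with $j(\mathfrak{b})^{\sigma_\mathfrak{a}}=j(\mathfrak{a}^{-1}\mathfrak{b})$ for every proper $\mathcal{O}$-ideal $\mathfrak{b}$. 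In particular each $j(\omega_{Q_i})=j(\mathfrak{a}_i)$ lies in $H_\mathcal{O}$, and taking $\mathfrak{b}=\mathcal{O}$ while letting $[\mathfrak{a}]$ range over $\mathcal{C}(\mathcal{O})$ shows that the set of $K$-conjugates of $j(\mathcal{O})=j(\omega_{Q_0})$ is precisely $\{\,j(\mathfrak{c})\mid[\mathfrak{c}]\in\mathcal{C}(\mathcal{O})\,\}=\{\,j(\omega_{Q_i})\mid 1\le i\le h_\mathcal{O}\,\}$, because $j$ depends only on the homothety class of a lattice. Hence every $j(\omega_{Q_i})$ is a $K$-conjugate of $j(\tau_\mathcal{O})$.

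It then remains to verify that the $h_\mathcal{O}$ values are pairwise distinct, and this is where the inequality enters. If $h_\mathcal{O}=1$ there is nothing to prove, so I assume $h_\mathcal{O}\ge2$, which forces $D_\mathcal{O}\neq-3,-4$ and $j(\omega_{Q_0})\notin\{0,1728\}$. Suppose $j(\omega_{Q_i})=j(\omega_{Q_j})$ with $i\neq j$ and apply $\sigma=\sigma_{\mathfrak{a}_i}$: it carries $j(\omega_{Q_i})=j(\mathfrak{a}_i)$ to $j(\mathcal{O})=j(\omega_{Q_0})$ and $j(\omega_{Q_j})=j(\mathfrak{a}_j)$ to $j(\mathfrak{a}_i^{-1}\mathfrak{a}_j)=j(\omega_{Q_k})$, where $Q_k$ is the unique reduced form representing $[\mathfrak{a}_i^{-1}\mathfrak{a}_j]$. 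Since $Q_i$ and $Q_j$ represent distinct classes, this class is nontrivial and so $Q_k\neq Q_0$. Applying $\sigma$ to the assumed equality gives $j(\omega_{Q_0})=j(\omega_{Q_k})$, hence $|j(\omega_{Q_k})^2(j(\omega_{Q_k})-1728)^3|=|j(\omega_{Q_0})^2(j(\omega_{Q_0})-1728)^3|$, contradicting the strict bound $(<1)$ of Lemma \ref{jinequality}(ii). Thus the $j(\omega_{Q_i})$ are distinct and, by the preceding paragraph, are exactly the $K$-conjugates of $j(\tau_\mathcal{O})$.

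The hard part will be the middle step: fixing the complex-multiplication action with the correct orientation — the inverse appearing in $j(\mathfrak{b})^{\sigma_\mathfrak{a}}=j(\mathfrak{a}^{-1}\mathfrak{b})$ — and maintaining a consistent dictionary among reduced forms, ideal classes, and Galois conjugates, so that $[\mathfrak{a}_i^{-1}\mathfrak{a}_j]$ is genuinely nontrivial and its reduced representative $Q_k$ meets the hypothesis $Q_k\neq Q_0$ of Lemma \ref{jinequality}. Once that bookkeeping is secured, distinctness reduces to the one-line contradiction with the inequality above (and, alternatively, already follows from the faithfulness of the Galois action on the generator $j(\tau_\mathcal{O})$).
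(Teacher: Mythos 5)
Your proposal is correct, and its skeleton matches the paper's proof, which is simply a three-way citation: Lemma \ref{reduced}, Proposition \ref{Dirichlet}(ii), and the classical main theorem of complex multiplication (\cite[Theorem 5 in Chapter 10]{Lang87}), the latter already asserting in one package that $j(\mathcal{O})$ generates the ring class field, that the values $j(\mathfrak{a}_i)$ over ideal class representatives are its $K$-conjugates, and that they are pairwise distinct. You unpack the same dictionary (reduced forms $\leftrightarrow$ ideal classes $\leftrightarrow$ Galois conjugates via $j(\mathfrak{b})^{\sigma_\mathfrak{a}}=j(\mathfrak{a}^{-1}\mathfrak{b})$), but you diverge on the one substantive point, distinctness: instead of taking it from Lang's theorem, you derive it from the strict inequality of Lemma \ref{jinequality}(ii) --- the lemma the paper actually reserves for Proposition \ref{jgenerate}. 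That argument is valid and non-circular (Lemma \ref{jinequality} is purely analytic and independent of this proposition), and it has the merit of keeping the distinctness proof internal to the paper's toolkit, in the same spirit as the paper's proof of Proposition \ref{jgenerate}. However, as you concede in your final parenthetical, it is redundant: once $H_\mathcal{O}=K(j(\tau_\mathcal{O}))$ and $[H_\mathcal{O}:K]=h_\mathcal{O}$ are known, any $\sigma$ fixing the generator $j(\tau_\mathcal{O})$ fixes all of $H_\mathcal{O}$, so the orbit map $\sigma\mapsto j(\tau_\mathcal{O})^\sigma$ is injective and the $h_\mathcal{O}$ values $j(\omega_{Q_i})$, which exhaust this orbit, are automatically distinct. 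So the inequality buys nothing that Galois theory does not already give; the paper's citation (or your one-line faithfulness remark) is the shorter path, while your version trades brevity for a self-contained, quantitative argument.
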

\begin{proof}
See Lemma \ref{reduced}, Proposition \ref{Dirichlet} (ii) and \cite[Theorem 5 in Chapter 10]{Lang87}.
\end{proof}

\begin{proposition}\label{jgenerate}
The special value
$\left\{j(\tau_\mathcal{O})^2(j(\tau_\mathcal{O})-1728)^3\right\}^n$
generates $H_\mathcal{O}$ over $K$ for any positive integer $n$.
\end{proposition}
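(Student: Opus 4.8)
The plan is to combine the analytic separation furnished by Lemma \ref{jinequality}(ii) with the Galois-theoretic description of the conjugates of $j(\tau_\mathcal{O})$ in Proposition \ref{j(omega)}. Write $\alpha=\{j(\tau_\mathcal{O})^2(j(\tau_\mathcal{O})-1728)^3\}^n$. Since $j(\tau_\mathcal{O})$ generates $H_\mathcal{O}$ over $K$ by Proposition \ref{j(omega)} and $\alpha$ is a polynomial expression in $j(\tau_\mathcal{O})$ with integer coefficients, we have $\alpha\in H_\mathcal{O}$, hence $K(\alpha)\subseteq H_\mathcal{O}$. For the reverse inclusion I would first dispose of the case $h_\mathcal{O}=1$, in which $H_\mathcal{O}=K$ and there is nothing to prove, and then assume $h_\mathcal{O}\geq2$ so that Lemma \ref{jinequality} is available.

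Recall that $H_\mathcal{O}$ is the ring class field of $\mathcal{O}$, so $H_\mathcal{O}/K$ is abelian, in particular Galois. I would show that the identity is the only element of $\mathrm{Gal}(H_\mathcal{O}/K)$ fixing $\alpha$; by Galois theory this yields $K(\alpha)=H_\mathcal{O}$. So let $\sigma\in\mathrm{Gal}(H_\mathcal{O}/K)$ satisfy $\sigma(\alpha)=\alpha$. By Proposition \ref{j(omega)} the automorphism $\sigma$ carries $j(\tau_\mathcal{O})=j(\omega_{Q_0})$ to one of its distinct conjugates $j(\omega_Q)$, where $Q$ may be taken to be a reduced form in $\mathcal{Q}(D_\mathcal{O})$. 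As $\sigma$ is a $K$-automorphism and the coefficients defining $\alpha$ lie in $\mathbb{Z}$, applying $\sigma$ commutes with the defining polynomial, so $\sigma(\alpha)=\{j(\omega_Q)^2(j(\omega_Q)-1728)^3\}^n$.

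The equality $\sigma(\alpha)=\alpha$ then reads
\begin{equation*}
\{j(\omega_Q)^2(j(\omega_Q)-1728)^3\}^n=\{j(\omega_{Q_0})^2(j(\omega_{Q_0})-1728)^3\}^n,
\end{equation*}
and passing to absolute values, using $|z^n|=|z|^n$ with $n\geq1$, yields
\begin{equation*}
\left|\frac{j(\omega_Q)^2(j(\omega_Q)-1728)^3}{j(\omega_{Q_0})^2(j(\omega_{Q_0})-1728)^3}\right|=1.
\end{equation*}
If $Q\neq Q_0$, this contradicts the strict bound ($<1$) of Lemma \ref{jinequality}(ii). Hence $Q=Q_0$, so $\sigma$ fixes $j(\tau_\mathcal{O})$; since $j(\tau_\mathcal{O})$ generates $H_\mathcal{O}$ over $K$, we conclude $\sigma=\mathrm{id}$. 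Thus $\alpha$ is fixed only by the identity, and $K(\alpha)=H_\mathcal{O}$.

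The substantive input is contained entirely in Lemma \ref{jinequality}(ii), which singles out the principal form $Q_0$ as the unique reduced form with maximal $|j(\omega_Q)^2(j(\omega_Q)-1728)^3|$; the remainder is a formal transfer through the Galois group. Accordingly, the only point I regard as needing care — and the heart of the argument — is that the $n$-th power is harmless: taking absolute values converts the $n$-th-power equality into an equality of positive reals with equal $n$-th roots, so the strict inequality of Lemma \ref{jinequality}(ii) applies verbatim for every $n\geq1$, and no separate analysis is required as $n$ varies.
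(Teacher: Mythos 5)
Your proof is correct and follows essentially the same route as the paper's: dispose of $h_\mathcal{O}=1$ trivially, then for $h_\mathcal{O}\geq2$ show that any $\sigma\in\mathrm{Gal}(H_\mathcal{O}/K)$ fixing the value forces $Q=Q_0$ via Proposition \ref{j(omega)} and the strict inequality of Lemma \ref{jinequality} (ii), whence $\sigma=\mathrm{id}$. The only detail the paper makes explicit that you leave implicit is the nonvanishing $j(\tau_\mathcal{O})^2(j(\tau_\mathcal{O})-1728)^3\neq0$ (valid because $D_\mathcal{O}\neq-3,\,-4$), which is needed to form the ratio before passing to absolute values.
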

\begin{proof}
If $h_\mathcal{O}=1$ and so $H_\mathcal{O}=K$, then the assertion is trivial.
\par
Now, consider the case where $h_\mathcal{O}\geq2$.
Since $D_\mathcal{O}\neq-3,\,-4$, we have $j(\tau_\mathcal{O})^2(j(\mathcal{O})-1728)^3\neq0$
(cf. \cite[Theorem 7.30 (ii), (10.8) and Exercise 10.19]{Cox}).
Let $\sigma$ be an element of $\mathrm{Gal}(H_\mathcal{O}/K)$ leaving the value $\left\{j(\tau_\mathcal{O})^2(j(\tau_\mathcal{O})-1728)^3\right\}^n$ fixed. We then achieve
\begin{eqnarray*}
1&=&\left|\frac{\{j(\omega_{Q_0})^2(j(\omega_{Q_0})-1728)^3\}^\sigma}{j(\omega_{Q_0})^2(j(\omega_{Q_0})-1728)^3}\right|^n\\
&=&\left|\frac{j(\omega_Q)^2(j(\omega_Q)-1728)^3}{j(\omega_{Q_0})^2(j(\omega_{Q_0})-1728)^3}\right|^n
\quad\textrm{for some reduced form $Q$ in $\mathcal{Q}(D_\mathcal{O})$ by Proposition \ref{j(omega)}}.
\end{eqnarray*}
By Lemma \ref{jinequality} (ii), we must get $Q=Q_0$
and hence $\sigma$ is the identity element. This implies by Galois theory that
$\left\{j(\tau_\mathcal{O})^2(j(\tau_\mathcal{O})-1728)^3\right\}^n$ generates $H_\mathcal{O}$ over $K$.
\end{proof}

When $D_\mathcal{O}\neq-3,\,-4$, let $E_\mathcal{O}$ be the elliptic curve given by the special model in (\ref{EO}).
For $\mathbf{v}\in M_{1,\,2}(\mathbb{Q})\setminus
M_{1,\,2}(\mathbb{Z})$, recall the definitions of $X_\mathbf{v}$ and $Y_\mathbf{v}$
in (\ref{Xv}) and (\ref{Yv}), respectively.

\begin{proposition}
Assume that $D_\mathcal{O}\neq-3,\,-4$.
If $N\geq2$ and $K_{\mathcal{O},\,N}$ properly contains $H_\mathcal{O}$, then
\begin{equation*}
K_{\mathcal{O},\,N}=K\left(
X_{\left[\begin{smallmatrix}0&\frac{1}{N}\end{smallmatrix}\right]},\,
Y_{\left[\begin{smallmatrix}0&\frac{1}{N}\end{smallmatrix}\right]}^2\right).
\end{equation*}
\end{proposition}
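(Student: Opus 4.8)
The goal is to show that when $N\geq 2$ and $K_{\mathcal{O},\,N}$ strictly contains $H_\mathcal{O}$, the field $K_{\mathcal{O},\,N}$ is generated over $K$ by the single coordinate value $X_{\left[\begin{smallmatrix}0&\frac{1}{N}\end{smallmatrix}\right]}$ together with the square $Y_{\left[\begin{smallmatrix}0&\frac{1}{N}\end{smallmatrix}\right]}^2$ of the other coordinate.

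\begin{proof}[Proof proposal]
The plan is to leverage the known generator from Proposition \ref{CM}(ii), namely $K_{\mathcal{O},\,N}=H_\mathcal{O}\bigl(X_{\left[\begin{smallmatrix}0&\frac{1}{N}\end{smallmatrix}\right]}\bigr)$, and to argue that the role played by $H_\mathcal{O}$ can be absorbed into the two values $X_{\left[\begin{smallmatrix}0&\frac{1}{N}\end{smallmatrix}\right]}$ and $Y_{\left[\begin{smallmatrix}0&\frac{1}{N}\end{smallmatrix}\right]}^2$. First I would set $L=K\bigl(X_{\left[\begin{smallmatrix}0&\frac{1}{N}\end{smallmatrix}\right]},\,Y_{\left[\begin{smallmatrix}0&\frac{1}{N}\end{smallmatrix}\right]}^2\bigr)$ and establish the inclusion $L\subseteq K_{\mathcal{O},\,N}$. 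Since $X_{\left[\begin{smallmatrix}0&\frac{1}{N}\end{smallmatrix}\right]}\in K_{\mathcal{O},\,N}$ by Proposition \ref{CM}(ii), it remains to see that $Y_{\left[\begin{smallmatrix}0&\frac{1}{N}\end{smallmatrix}\right]}^2$ lies in $K_{\mathcal{O},\,N}$; this follows because the Weierstrass equation (\ref{EO}) gives $Y_\mathbf{v}^2=4X_\mathbf{v}^3-A_\mathcal{O}X_\mathbf{v}-B_\mathcal{O}$, and $A_\mathcal{O},B_\mathcal{O}$ are rational functions of $j(\mathcal{O})$ by (\ref{AandB}), hence lie in $H_\mathcal{O}=K(j(\mathcal{O}))\subseteq K_{\mathcal{O},\,N}$.

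For the reverse inclusion $K_{\mathcal{O},\,N}\subseteq L$, the key is to show first that $H_\mathcal{O}\subseteq L$. Here I expect to use the relationship between the coordinate values and the modular discriminant invariant: the quantity $\bigl\{j(\tau_\mathcal{O})^2(j(\tau_\mathcal{O})-1728)^3\bigr\}^n$ generates $H_\mathcal{O}$ over $K$ for every positive integer $n$ by Proposition \ref{jgenerate}. The idea is to express a suitable power of such a generator as a symmetric (hence Galois-stable under the two-element ambiguity) rational combination of $X_{\left[\begin{smallmatrix}0&\frac{1}{N}\end{smallmatrix}\right]}$ and $Y_{\left[\begin{smallmatrix}0&\frac{1}{N}\end{smallmatrix}\right]}^2$. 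The scaling factors $\tfrac{g_2g_3}{\Delta}$ appearing in (\ref{Xv}) and (\ref{Yv}) carry exactly the arithmetic of $j(\mathcal{O})$ and $j(\mathcal{O})-1728$, so taking an appropriate norm or product of the values $X$ and $Y^2$ over the relevant torsion points—or a single algebraic expression in them built from $A_\mathcal{O}$, $B_\mathcal{O}$—should recover a power of the discriminant invariant that generates $H_\mathcal{O}$. Once $H_\mathcal{O}\subseteq L$ is secured, we get $X_{\left[\begin{smallmatrix}0&\frac{1}{N}\end{smallmatrix}\right]}\in L$ trivially, and Proposition \ref{CM}(ii) yields $K_{\mathcal{O},\,N}=H_\mathcal{O}\bigl(X_{\left[\begin{smallmatrix}0&\frac{1}{N}\end{smallmatrix}\right]}\bigr)\subseteq L$, completing the equality.

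The main obstacle is precisely the step showing $H_\mathcal{O}\subseteq L$. The naive worry is that $X_{\left[\begin{smallmatrix}0&\frac{1}{N}\end{smallmatrix}\right]}$ alone might generate a field missing $H_\mathcal{O}$—but the hypothesis that $K_{\mathcal{O},\,N}$ \emph{properly} contains $H_\mathcal{O}$ is what makes $X_{\left[\begin{smallmatrix}0&\frac{1}{N}\end{smallmatrix}\right]}$ nontrivial over $H_\mathcal{O}$, and I expect this hypothesis to be used to rule out degenerate situations where the torsion value fails to see enough of the base field. The delicate point is extracting $j(\mathcal{O})$ (equivalently the discriminant invariant of Proposition \ref{jgenerate}) purely from $X$ and $Y^2$ without access to $Y$ itself, since passing from $Y$ to $Y^2$ loses a sign; here the inequality estimates of Lemma \ref{jinequality} and the generation result of Proposition \ref{jgenerate}, which are robust against exactly this kind of squaring ambiguity, are the tools I would deploy to force the generator of $H_\mathcal{O}$ into $L$.
\end{proof}
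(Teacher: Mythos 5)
Your setup and your first inclusion coincide with the paper's: putting $L=K\left(X_{\left[\begin{smallmatrix}0&\frac{1}{N}\end{smallmatrix}\right]},\,Y_{\left[\begin{smallmatrix}0&\frac{1}{N}\end{smallmatrix}\right]}^2\right)$, the containment $L\subseteq K_{\mathcal{O},\,N}$ follows from Proposition \ref{CM} (ii) together with the Weierstrass relation $Y^2=4X^3-A_\mathcal{O}X-B_\mathcal{O}$ and $A_\mathcal{O},\,B_\mathcal{O}\in H_\mathcal{O}$. You also correctly observe that the reverse inclusion is equivalent to proving $H_\mathcal{O}\subseteq L$. But that reduction is trivial (if $H_\mathcal{O}\subseteq L$ then $L\supseteq H_\mathcal{O}(X)=K_{\mathcal{O},\,N}$), and it is exactly at this point that your argument stops being a proof: the mechanism you propose for getting $H_\mathcal{O}$ into $L$ does not work. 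You suggest recovering a power of $j(\tau_\mathcal{O})^2(j(\tau_\mathcal{O})-1728)^3$ by ``taking an appropriate norm or product of the values $X$ and $Y^2$ over the relevant torsion points.'' The field $L$, however, contains only the coordinates of the \emph{single} point indexed by $\left[\begin{smallmatrix}0&\frac{1}{N}\end{smallmatrix}\right]$; the values $X_\mathbf{v}$, $Y_\mathbf{v}^2$ at the other torsion points are not known to lie in $L$ (that is essentially what is being proved), so a product such as $\prod_{(\mathbf{u},\,\mathbf{v})\in T_N}(X_\mathbf{u}-X_\mathbf{v})$ --- which is how the paper extracts $j(\tau_\mathcal{O})$ via Lemma \ref{ffjj} in the proof of Lemma \ref{QXQj}, and which visibly needs all of $W_N$ --- is unavailable here. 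Likewise, from one point the relation $Y^2=4X^3-A_\mathcal{O}X-B_\mathcal{O}$ is a single linear condition on the pair $(A_\mathcal{O},\,B_\mathcal{O})$ and cannot by itself determine $A_\mathcal{O}$, $B_\mathcal{O}$, or $j(\tau_\mathcal{O})$ as an expression in $X$ and $Y^2$.

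What the paper actually does is a short Galois-theoretic contradiction that is absent from your sketch. Suppose $L\neq K_{\mathcal{O},\,N}$; since $K_{\mathcal{O},\,N}/K$ is Galois, there is a nonidentity $\sigma\in\mathrm{Gal}(K_{\mathcal{O},\,N}/K)$ fixing $X$ and $Y^2$. Applying $\sigma$ to the Weierstrass relation gives $4X^3-A_\mathcal{O}X-B_\mathcal{O}=4X^3-A_\mathcal{O}^\sigma X-B_\mathcal{O}^\sigma$, i.e.\ $(A_\mathcal{O}^\sigma-A_\mathcal{O})X=B_\mathcal{O}-B_\mathcal{O}^\sigma$. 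Since $K_{\mathcal{O},\,N}=H_\mathcal{O}(X)$, the element $\sigma$ cannot restrict to the identity on $H_\mathcal{O}$; and since $A_\mathcal{O}B_\mathcal{O}$ is a rational multiple of $j(\tau_\mathcal{O})^2(j(\tau_\mathcal{O})-1728)^3$, which generates $H_\mathcal{O}$ over $K$ by Proposition \ref{jgenerate}, $\sigma$ cannot fix both $A_\mathcal{O}$ and $B_\mathcal{O}$. Hence $A_\mathcal{O}^\sigma\neq A_\mathcal{O}$, and the relation above forces $X=(B_\mathcal{O}-B_\mathcal{O}^\sigma)/(A_\mathcal{O}^\sigma-A_\mathcal{O})\in H_\mathcal{O}$, whence $K_{\mathcal{O},\,N}=H_\mathcal{O}(X)=H_\mathcal{O}$, contradicting the hypothesis of proper containment. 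This is where Proposition \ref{jgenerate} and the properness hypothesis genuinely enter --- not in producing an explicit rational expression for $j(\tau_\mathcal{O})$ in $X$ and $Y^2$, which your proposal requires but does not supply.
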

\begin{proof}
Let $L=K(X,\,Y^2)$ with $X=X_{\left[\begin{smallmatrix}0&\frac{1}{N}\end{smallmatrix}\right]}$
and $Y=Y_{\left[\begin{smallmatrix}0&\frac{1}{N}\end{smallmatrix}\right]}$.
Since $X$ and $Y^2$ belong to $K_{\mathcal{O},\,N}$ by Proposition \ref{CM} and the relation
\begin{equation*}
Y_{\left[\begin{smallmatrix}0&\frac{1}{N}\end{smallmatrix}\right]}^2
=4X_{\left[\begin{smallmatrix}0&\frac{1}{N}\end{smallmatrix}\right]}^3-
A_\mathcal{O}X_{\left[\begin{smallmatrix}0&\frac{1}{N}\end{smallmatrix}\right]}-B_\mathcal{O},
\end{equation*}
$L$ is a subfield of $K_{\mathcal{O},\,N}$. And, $A_\mathcal{O},\,B_\mathcal{O}\in H_\mathcal{O}$.
\par
Suppose on the contrary that $L\neq K_{\mathcal{O},\,N}$. Then there
exists a nonidentity element $\sigma$ of $\mathrm{Gal}(K_{\mathcal{O},\,N}/K)$
which leaves the values $X$ and $Y^2$ fixed.
Here we further note that
\begin{equation}\label{sigmanotin}
\sigma\not\in\mathrm{Gal}(K_{\mathcal{O},\,N}/H_\mathcal{O})
\end{equation}
because $K_{\mathcal{O},\,N}=H_\mathcal{O}(X)$ by Proposition \ref{CM}.
Since
\begin{equation*}
4X^3-A_\mathcal{O}X-B_\mathcal{O}=Y^2=
4X^3-A_\mathcal{O}^\sigma X-B_\mathcal{O}^\sigma,
\end{equation*}
we obtain that
\begin{equation}\label{AABB}
(A_\mathcal{O}^\sigma-A_\mathcal{O})X=B_\mathcal{O}-B_\mathcal{O}^\sigma.
\end{equation}
On the other hand, we see that
\begin{equation*}
A_\mathcal{O}B_\mathcal{O}=\frac{j(\tau_\mathcal{O})^2(j(\tau_\mathcal{O})-1728)^3}{2^{30}3^{24}},
\end{equation*}
which generates $H_\mathcal{O}$ over $K$ by Proposition \ref{jgenerate}.
Thus $A_\mathcal{O}B_\mathcal{O}$ is not fixed by $\sigma$ by (\ref{sigmanotin}),
and so $A_\mathcal{O}^\sigma\neq A_\mathcal{O}$ or $B_\mathcal{O}^\sigma\neq B_\mathcal{O}$.
It follows
from  (\ref{AABB}) that $A_\mathcal{O}^\sigma\neq A_\mathcal{O}$ and
\begin{equation*}
X=\frac{B_\mathcal{O}-B_\mathcal{O}^\sigma}{A_\mathcal{O}^\sigma-A_\mathcal{O}}\in H_\mathcal{O}.
\end{equation*}
Then we derive
$K_{\mathcal{O},\,N}=H_\mathcal{O}(X)=H_\mathcal{O}$,
which contradicts the hypothesis that $K_{\mathcal{O},\,N}$ properly contains $H_\mathcal{O}$.
\par
Therefore we conclude that
\begin{equation*}
L=K\left(X_{\left[\begin{smallmatrix}0&\frac{1}{N}\end{smallmatrix}\right]},\,
Y_{\left[\begin{smallmatrix}0&\frac{1}{N}\end{smallmatrix}\right]}^2\right)=K_{\mathcal{O},\,N}.
\end{equation*}
\end{proof}

\section {Extension fields of $\mathbb{Q}$ generated by torsion points of $E_\mathcal{O}$}\label{sect7}

In $\S$\ref{sect7} and $\S$\ref{sect8}, we assume that $D_\mathcal{O}\neq-3,\,-4$.
Let
$E_\mathcal{O}[N]$ be the group of $N$-torsion points of $E_\mathcal{O}$
and $\mathbb{Q}(E_\mathcal{O}[N])$ be the extension field of $\mathbb{Q}$
generated by the coordinates of points in $E_\mathcal{O}[N]$.
Then we have
\begin{equation*}
\mathbb{Q}(E_\mathcal{O}[N])=
\left\{\begin{array}{ll}
\mathbb{Q} & \textrm{if}~N=1,\\
\mathbb{Q}(X_\mathbf{v},\,Y_\mathbf{v}~|~
\mathbb{Q}(X_\mathbf{v},\,Y_\mathbf{v}~|~
\mathbf{v}\in W_N) & \textrm{if}~N\geq2,
\end{array}\right.
\end{equation*}
where
\begin{equation*}
W_N=\{\mathbf{v}\in M_{1,\,2}(\mathbb{Q})\setminus M_{1,\,2}(\mathbb{Z})~|~
N\mathbf{v}\in M_{1,\,2}(\mathbb{Z})\}.
\end{equation*}
In this section, we shall examine the field $\mathbb{Q}(E_\mathcal{O}[N])$ by comparing with
$K_{\mathcal{O},\,N}$.

\begin{lemma}\label{complex1}
Let $N\geq2$.
If
$\{h_\mathbf{v}\}_{\mathbf{v}\in V_N}$ is a Fricke family of level $N$, then
\begin{equation*}
\overline{h_\mathbf{v}(\tau_\mathcal{O})}=h_{\mathbf{v}\left[\begin{smallmatrix}
1&~b_\mathcal{O}\\0&-1
\end{smallmatrix}\right]}(\tau_\mathcal{O})
\quad(\mathbf{v}\in V_N).
\end{equation*}
\end{lemma}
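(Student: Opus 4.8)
The plan is to realize complex conjugation of the value $h_\mathbf{v}(\tau_\mathcal{O})$ as a composite of two operations to which the Fricke family responds functorially: complex conjugation of the Fourier coefficients of $h_\mathbf{v}$, and the reflection $\tau\mapsto-\overline{\tau}$ of the argument. Both are instances of the $\mathrm{GL}_2(\mathbb{Z}/N\mathbb{Z})$-action on $\mathcal{F}_N$, and upon composing them the resulting index matrix will be exactly $\left[\begin{smallmatrix}1&~b_\mathcal{O}\\0&-1\end{smallmatrix}\right]$.

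First I would record the general conjugation identity: for every $f\in\mathcal{F}_N$ holomorphic on $\mathbb{H}$,
\[
\overline{f(\tau)}=f^{\left[\begin{smallmatrix}1&0\\0&-1\end{smallmatrix}\right]}(-\overline{\tau})\qquad(\tau\in\mathbb{H}).
\]
By the remarks following Proposition \ref{SL2action}, such an $f$ has a $q$-expansion $f=\sum_n a_n q^{n/N}$ with $a_n\in\mathbb{Q}(\zeta_N)$ and $q=e^{2\pi\mathrm{i}\tau}$, convergent throughout $\mathbb{H}$ since $f$ is holomorphic and invariant under $\tau\mapsto\tau+N$. As $-\overline{\tau}\in\mathbb{H}$ and $\overline{e^{2\pi\mathrm{i}n\tau/N}}=e^{2\pi\mathrm{i}n(-\overline{\tau})/N}$, termwise conjugation gives
\[
\overline{f(\tau)}=\sum_n\overline{a_n}\,e^{2\pi\mathrm{i}n(-\overline{\tau})/N}.
\]
The function $\sum_n\overline{a_n}q^{n/N}$, whose coefficients are those of $f$ under the complex conjugation $\zeta_N\mapsto\zeta_N^{-1}$ of $\mathbb{Q}(\zeta_N)$, is precisely $f^{\left[\begin{smallmatrix}1&0\\0&-1\end{smallmatrix}\right]}$: in Shimura's description of $\mathrm{Gal}(\mathcal{F}_N/\mathcal{F}_1)\simeq\mathrm{GL}_2(\mathbb{Z}/N\mathbb{Z})/\langle-I_2\rangle$ the coset of $\left[\begin{smallmatrix}1&0\\0&d\end{smallmatrix}\right]$ acts on the Fourier coefficients through $\sigma_d:\zeta_N\mapsto\zeta_N^d$ without changing the powers of $q^{1/N}$, and $\sigma_{-1}$ is complex conjugation. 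Evaluating at $-\overline{\tau}$ then yields the displayed identity.

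Next I would identify the argument and assemble. Since $\overline{\tau_\mathcal{O}}$ is the second root of $\min(\tau_\mathcal{O},\,\mathbb{Q})=x^2+b_\mathcal{O}x+c_\mathcal{O}$, we have $\tau_\mathcal{O}+\overline{\tau_\mathcal{O}}=-b_\mathcal{O}$, hence $-\overline{\tau_\mathcal{O}}=\tau_\mathcal{O}+b_\mathcal{O}$ with $b_\mathcal{O}\in\mathbb{Z}$. Applying the identity to $f=h_\mathbf{v}$ (holomorphic by Fricke property (i)) at $\tau=\tau_\mathcal{O}$ and then property (iii),
\[
\overline{h_\mathbf{v}(\tau_\mathcal{O})}=h_\mathbf{v}^{\left[\begin{smallmatrix}1&0\\0&-1\end{smallmatrix}\right]}(\tau_\mathcal{O}+b_\mathcal{O})=h_{\mathbf{v}\left[\begin{smallmatrix}1&0\\0&-1\end{smallmatrix}\right]}(\tau_\mathcal{O}+b_\mathcal{O}).
\]
The integer translation $\tau\mapsto\tau+b_\mathcal{O}$ is the action of $\left[\begin{smallmatrix}1&b_\mathcal{O}\\0&1\end{smallmatrix}\right]\in\mathrm{SL}_2(\mathbb{Z})$ on $\mathbb{H}$, so Proposition \ref{SL2action} gives $h(\tau_\mathcal{O}+b_\mathcal{O})=h^{\left[\begin{smallmatrix}1&b_\mathcal{O}\\0&1\end{smallmatrix}\right]}(\tau_\mathcal{O})$ for any $h\in\mathcal{F}_N$; taking $h=h_{\mathbf{v}\left[\begin{smallmatrix}1&0\\0&-1\end{smallmatrix}\right]}$ and using property (iii) once more,
\[
\overline{h_\mathbf{v}(\tau_\mathcal{O})}=h_{\mathbf{v}\left[\begin{smallmatrix}1&0\\0&-1\end{smallmatrix}\right]\left[\begin{smallmatrix}1&b_\mathcal{O}\\0&1\end{smallmatrix}\right]}(\tau_\mathcal{O})=h_{\mathbf{v}\left[\begin{smallmatrix}1&~b_\mathcal{O}\\0&-1\end{smallmatrix}\right]}(\tau_\mathcal{O}),
\]
which is the claim; here $\mathbf{v}\left[\begin{smallmatrix}1&0\\0&-1\end{smallmatrix}\right]$ and $\mathbf{v}\left[\begin{smallmatrix}1&b_\mathcal{O}\\0&-1\end{smallmatrix}\right]$ remain in $V_N$ because both matrices lie in $\mathrm{GL}_2(\mathbb{Z}/N\mathbb{Z})$, so property (iii) applies at each step.

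The hard part will be the first paragraph: verifying that conjugating the Fourier coefficients of a function in $\mathcal{F}_N$ corresponds to the Galois action of $\left[\begin{smallmatrix}1&0\\0&-1\end{smallmatrix}\right]$ specifically, and not some other determinant $-1$ matrix. This rests on the precise normalization of Shimura's $\mathrm{GL}_2(\mathbb{Z}/N\mathbb{Z})$-action, namely that the determinant part acts on $q$-expansion coefficients via $\zeta_N\mapsto\zeta_N^d$ while fixing the expansion variable, together with the justification that termwise conjugation of the $q$-expansion is legitimate on all of $\mathbb{H}$ for the holomorphic functions $h_\mathbf{v}$. Once this identity and the root relation $-\overline{\tau_\mathcal{O}}=\tau_\mathcal{O}+b_\mathcal{O}$ are in place, the remaining manipulations are purely formal consequences of properties (i) and (iii) and Proposition \ref{SL2action}.
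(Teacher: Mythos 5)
Your proof is correct, and it differs from the paper in an essential way: the paper does not argue this lemma at all, it simply cites Kubert--Lang \cite[Proposition 1.4 in Chapter 2]{K-L}, whereas you give a self-contained derivation. Your two ingredients are both sound and both available inside the paper's own reference framework: (a) the conjugation identity $\overline{f(\tau)}=f^{\left[\begin{smallmatrix}1&0\\0&-1\end{smallmatrix}\right]}(-\overline{\tau})$ for $f\in\mathcal{F}_N$ holomorphic on $\mathbb{H}$, which rests on the $\mathbb{Q}(\zeta_N)$-rationality of the $q^{1/N}$-expansion (stated in the paper right after Proposition \ref{SL2action}) together with Shimura's normalization that $\left[\begin{smallmatrix}1&0\\0&d\end{smallmatrix}\right]$ acts on coefficients through $\zeta_N\mapsto\zeta_N^d$ while fixing the expansion variable --- this is part of the same Theorem 6.6 of Shimura that the paper already cites, so your worry about ``which determinant $-1$ matrix'' is settled by the cited normalization; and (b) the root relation $-\overline{\tau_\mathcal{O}}=\tau_\mathcal{O}+b_\mathcal{O}$, after which properties (ii)/(iii) of a Fricke family, Proposition \ref{SL2action}, and the product $\left[\begin{smallmatrix}1&0\\0&-1\end{smallmatrix}\right]\left[\begin{smallmatrix}1&b_\mathcal{O}\\0&1\end{smallmatrix}\right]=\left[\begin{smallmatrix}1&~b_\mathcal{O}\\0&-1\end{smallmatrix}\right]$ finish the computation (the right action on row vectors composes correctly since you apply property (iii) one matrix at a time). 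The only point you should make explicit in (a) is the passage from $q$-expansions to function values at the CM point: termwise conjugation shows that $\sum_n\overline{a_n}q^{n/N}$ converges on all of $0<|q^{1/N}|<1$ and represents the holomorphic function $w\mapsto\overline{f(-\overline{w})}$; since this agrees with the expansion of $f^{\left[\begin{smallmatrix}1&0\\0&-1\end{smallmatrix}\right]}$ near $\mathrm{i}\infty$, the identity theorem upgrades the equality to all of $\mathbb{H}$, which is what licenses evaluation at $-\overline{\tau_\mathcal{O}}$. With that sentence added your argument is complete; in substance it reconstructs the proof that Kubert--Lang give for the cited proposition, so what your route buys is independence from that external reference, at the cost of invoking the finer (coefficient) part of Shimura's Theorem 6.6 rather than only the $\mathrm{SL}_2$ part quoted in Proposition \ref{SL2action}.
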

\begin{proof}
See \cite[Proposition 1.4 in Chapter 2]{K-L}.
\end{proof}

By the definitions (\ref{Xv}) and (\ref{fv}), we attain
\begin{equation}\label{Xf}
X_\mathbf{v}=-\frac{1}{2^73^3}f_\mathbf{v}(\tau_\mathcal{O})
\quad(\mathbf{v}\in W_N).
\end{equation}

\begin{remark}\label{Xcomplex}
We find that for $\mathbf{v}\in W_N$
\begin{eqnarray*}
\overline{X_\mathbf{v}}
&=&-\frac{1}{2^73^3}\overline{f_\mathbf{v}(\tau_\mathcal{O})}\\
\\&=&
-\frac{1}{2^73^3}f_{\mathbf{v}\left[\begin{smallmatrix}
1&~b_\mathcal{O}\\0&-1
\end{smallmatrix}\right]}(\tau_\mathcal{O})\quad\textrm{by Proposition \ref{Frickefamily} and Lemma \ref{complex1}}\\
&=&X_{\mathbf{v}\left[\begin{smallmatrix}
1&~b_\mathcal{O}\\0&-1
\end{smallmatrix}\right]}.
\end{eqnarray*}
\end{remark}

Let $R_{\mathcal{O},\,N}$ be the maximal real subfield of $K_{\mathcal{O},\,N}$.

\begin{lemma}\label{maximalreal}
We have
\begin{equation*}
R_{\mathcal{O},\,N}=\mathbb{Q}\left(j(\tau_\mathcal{O}),\,
X_{\left[\begin{smallmatrix}0&\frac{1}{N}\end{smallmatrix}\right]}\right)
\quad\textrm{and}\quad K_{\mathcal{O},\,N}=KR_{\mathcal{O},\,N}.
\end{equation*}
\end{lemma}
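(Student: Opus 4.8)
The plan is to reduce the whole statement to Proposition \ref{CM} together with the single observation that both claimed generators $j(\tau_\mathcal{O})$ and $X_{\left[\begin{smallmatrix}0&\frac{1}{N}\end{smallmatrix}\right]}$ are in fact \emph{real}. Write $R=\mathbb{Q}\bigl(j(\tau_\mathcal{O}),\,X_{\left[\begin{smallmatrix}0&\frac{1}{N}\end{smallmatrix}\right]}\bigr)$ and work with $N\geq2$ (for $N=1$ the $X$-term is absent and the claim reduces to $R_{\mathcal{O},\,1}=\mathbb{Q}(j(\tau_\mathcal{O}))$ by the identical argument). Combining Proposition \ref{CM}(i) and (ii) gives $K_{\mathcal{O},\,N}=H_\mathcal{O}\bigl(X_{\left[\begin{smallmatrix}0&\frac{1}{N}\end{smallmatrix}\right]}\bigr)=K\bigl(j(\tau_\mathcal{O}),\,X_{\left[\begin{smallmatrix}0&\frac{1}{N}\end{smallmatrix}\right]}\bigr)=KR$, so the second identity $K_{\mathcal{O},\,N}=KR_{\mathcal{O},\,N}$ will follow at once once we identify $R$ with the maximal real subfield $R_{\mathcal{O},\,N}$. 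Thus the entire content is the first identity $R=R_{\mathcal{O},\,N}$.

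First I would verify $R\subseteq\mathbb{R}$. For $j(\tau_\mathcal{O})$ this is classical: since $j$ has a $q$-expansion with integer coefficients one has $\overline{j(\tau_\mathcal{O})}=j(-\overline{\tau_\mathcal{O}})$, and $-\overline{\tau_\mathcal{O}}$ is $\mathrm{SL}_2(\mathbb{Z})$-equivalent to $\tau_\mathcal{O}$ (equal to $\tau_\mathcal{O}$ when $D_\mathcal{O}\equiv0\Mod4$ and to $\tau_\mathcal{O}+1$ when $D_\mathcal{O}\equiv1\Mod4$), whence $j(\tau_\mathcal{O})\in\mathbb{R}$. For the torsion coordinate I would invoke Remark \ref{Xcomplex}: with $\mathbf{v}=\left[\begin{smallmatrix}0&\frac{1}{N}\end{smallmatrix}\right]$ one computes $\mathbf{v}\left[\begin{smallmatrix}1&b_\mathcal{O}\\0&-1\end{smallmatrix}\right]=\left[\begin{smallmatrix}0&-\frac{1}{N}\end{smallmatrix}\right]\equiv-\mathbf{v}$, and Lemma \ref{basicXY}(i) gives $X_{-\mathbf{v}}=X_{\mathbf{v}}$; hence $\overline{X_\mathbf{v}}=X_{\mathbf{v}}$, i.e.\ $X_{\left[\begin{smallmatrix}0&\frac{1}{N}\end{smallmatrix}\right]}\in\mathbb{R}$. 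Consequently $R\subseteq K_{\mathcal{O},\,N}\cap\mathbb{R}=R_{\mathcal{O},\,N}$.

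It remains to prove the reverse inclusion, which I would settle by a degree count. Because $K$ is imaginary quadratic, $K\cap\mathbb{R}=\mathbb{Q}$, so $K\cap R=\mathbb{Q}$, and $K/\mathbb{Q}$ being Galois yields $[K_{\mathcal{O},\,N}:R]=[KR:R]=[K:\mathbb{Q}]=2$. On the other hand, all of $j(\tau_\mathcal{O})$, $X_{\left[\begin{smallmatrix}0&\frac{1}{N}\end{smallmatrix}\right]}$ and $\overline{\tau_\mathcal{O}}$ lie in $K_{\mathcal{O},\,N}$ (the first two by the previous paragraph, the last since $\overline{\tau_\mathcal{O}}\in K$), so $K_{\mathcal{O},\,N}$ is stable under complex conjugation, which therefore restricts to an automorphism of order $2$ (nontrivial, as $K\not\subseteq\mathbb{R}$) whose fixed field is exactly $R_{\mathcal{O},\,N}$; hence $[K_{\mathcal{O},\,N}:R_{\mathcal{O},\,N}]=2$ as well. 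From $R\subseteq R_{\mathcal{O},\,N}\subseteq K_{\mathcal{O},\,N}$ with both indices equal to $2$ I conclude $R=R_{\mathcal{O},\,N}$, and then $K_{\mathcal{O},\,N}=KR=KR_{\mathcal{O},\,N}$.

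The argument is essentially bookkeeping once the reality of the generators is in hand; accordingly the only genuine input — and the step I would be most careful about — is the reality of $X_{\left[\begin{smallmatrix}0&\frac{1}{N}\end{smallmatrix}\right]}$, which rests on the complex-conjugation formula for Fricke families in Lemma \ref{complex1} (routed through Remark \ref{Xcomplex}) together with the specific shape of $\tau_\mathcal{O}$. I expect no serious obstacle beyond keeping the matrix action $\mathbf{v}\mapsto\mathbf{v}\left[\begin{smallmatrix}1&b_\mathcal{O}\\0&-1\end{smallmatrix}\right]$ and the symmetry $X_{-\mathbf{v}}=X_{\mathbf{v}}$ correctly aligned.
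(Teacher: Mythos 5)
Your proposal is correct and follows essentially the same route as the paper: establish that $X_{\left[\begin{smallmatrix}0&\frac{1}{N}\end{smallmatrix}\right]}$ is real via Remark \ref{Xcomplex} and Lemma \ref{basicXY} (i), note $j(\tau_\mathcal{O})\in\mathbb{R}$, and then combine Proposition \ref{CM} with the fact that $K$ is imaginary quadratic. The paper compresses your final degree-count/complex-conjugation bookkeeping into a single "we derive" step, but the underlying argument is identical.
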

\begin{proof}
Observe that
\begin{eqnarray*}
\overline{X_{\left[\begin{smallmatrix}
0&\frac{1}{N}
\end{smallmatrix}\right]}}
&=&
X_{\left[\begin{smallmatrix}
0&\frac{1}{N}
\end{smallmatrix}\right]\left[\begin{smallmatrix}
1&~b_\mathcal{O}\\0&-1
\end{smallmatrix}\right]}(\tau_\mathcal{O})\quad\textrm{by Remark \ref{Xcomplex}}\\
&=&X_{\left[\begin{smallmatrix}
0&-\frac{1}{N}\end{smallmatrix}\right]}\\
&=&X_{\left[\begin{smallmatrix}
0&\frac{1}{N}
\end{smallmatrix}\right]}\quad\textrm{by Lemma \ref{basicXY} (i)},
\end{eqnarray*}
and hence $X_{\left[\begin{smallmatrix}0&\frac{1}{N}\end{smallmatrix}\right]}\in\mathbb{R}$.
Furthermore, since $j(\tau_\mathcal{O})\in\mathbb{R}$ (cf. \cite[p. 179]{Silverman94}) and $K$ is imaginary quadratic,
we derive by Proposition \ref{CM} that \begin{equation*}
R_{\mathcal{O},\,N}=\mathbb{Q}\left(j(\tau_\mathcal{O}),\,
X_{\left[\begin{smallmatrix}0&\frac{1}{N}\end{smallmatrix}\right]}\right)
\quad\textrm{and}\quad K_{\mathcal{O},\,N}=KR_{\mathcal{O},\,N}.
\end{equation*}
\end{proof}

\begin{lemma}\label{QXQj}
If $N\geq2$, then $\mathbb{Q}(X_\mathbf{v}~|~\mathbf{v}\in W_N)$ contains $\mathbb{Q}(j(\tau_\mathcal{O}))$.
\end{lemma}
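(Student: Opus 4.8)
The plan is to translate the statement into one about values of Fricke functions at $\tau_\mathcal{O}$, and then to exploit the single algebraic relation between Fricke functions and $j$ recorded in Lemma \ref{ffjj}, combined with the archimedean size estimate of Lemma \ref{jinequality} (ii).

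First I would set $L=\mathbb{Q}(X_\mathbf{v}\mid\mathbf{v}\in W_N)$ and use the identity (\ref{Xf}), i.e. $f_\mathbf{v}(\tau_\mathcal{O})=-2^73^3X_\mathbf{v}$, to rewrite $L=\mathbb{Q}(f_\mathbf{v}(\tau_\mathcal{O})\mid\mathbf{v}\in W_N)$. Since every primitive vector $\mathbf{u}\in V'_N$ satisfies $N\mathbf{u}\in M_{1,2}(\mathbb{Z})$ and $\mathbf{u}\notin M_{1,2}(\mathbb{Z})$, we have $V'_N\subseteq W_N$; hence $f_\mathbf{u}(\tau_\mathcal{O})\in L$ for every $\mathbf{u}\in V'_N$. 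As the $f_\mathbf{v}$ are holomorphic on $\mathbb{H}$ (Fricke family property), each factor $f_\mathbf{u}(\tau_\mathcal{O})-f_\mathbf{v}(\tau_\mathcal{O})$ with $(\mathbf{u},\mathbf{v})\in T_N\subseteq V'_N\times V'_N$ lies in $L$.

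Next I would evaluate the identity of Lemma \ref{ffjj} at $\tau=\tau_\mathcal{O}$. Its left-hand side then lies in $L$ by the previous step, so after dividing by the nonzero rational constant $k$ I obtain
\[
\bigl\{j(\tau_\mathcal{O})^2(j(\tau_\mathcal{O})-1728)^3\bigr\}^{|T_N|}\in L
\]
(one checks $T_N\neq\emptyset$ for $N\ge2$, so $|T_N|\ge1$). Thus $L$ contains a fixed positive power of the invariant $j(\tau_\mathcal{O})^2(j(\tau_\mathcal{O})-1728)^3$.

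The remaining, and main, difficulty is to descend from this power to $j(\tau_\mathcal{O})$ itself, and this is exactly where the size estimate enters. If $h_\mathcal{O}=1$ then $j(\tau_\mathcal{O})\in\mathbb{Q}\subseteq L$ and there is nothing to prove, so assume $h_\mathcal{O}\ge2$. I would first record that the conjugates of $j(\tau_\mathcal{O})$ over $\mathbb{Q}$ are precisely the values $j(\omega_Q)$ as $Q$ ranges over the reduced forms in $\mathcal{Q}(D_\mathcal{O})$: since $j(\tau_\mathcal{O})\in\mathbb{R}$ while $K$ is imaginary, $[\mathbb{Q}(j(\tau_\mathcal{O})):\mathbb{Q}]=[K(j(\tau_\mathcal{O})):K]=h_\mathcal{O}$, and the $h_\mathcal{O}$ distinct $K$-conjugates $j(\omega_{Q_1}),\dots,j(\omega_{Q_{h_\mathcal{O}}})$ of Proposition \ref{j(omega)} are a fortiori $\mathbb{Q}$-conjugates, hence account for all of them. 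Now take any automorphism $\sigma$ of a Galois closure of $L(j(\tau_\mathcal{O}))$ over $\mathbb{Q}$ that fixes $L$ pointwise; then $\sigma(j(\tau_\mathcal{O}))=j(\omega_Q)$ for some reduced $Q$, and applying $\sigma$ to the displayed membership and taking $|T_N|$-th roots of absolute values yields $|j(\omega_Q)^2(j(\omega_Q)-1728)^3|=|j(\tau_\mathcal{O})^2(j(\tau_\mathcal{O})-1728)^3|$. By Lemma \ref{jinequality} (ii) this forces $Q=Q_0$, i.e. $\sigma(j(\tau_\mathcal{O}))=j(\omega_{Q_0})=j(\tau_\mathcal{O})$. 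Hence every automorphism fixing $L$ fixes $j(\tau_\mathcal{O})$, so $j(\tau_\mathcal{O})\in L$ by Galois theory, giving $\mathbb{Q}(j(\tau_\mathcal{O}))\subseteq L$. The crux is thus the strict archimedean domination in Lemma \ref{jinequality} (ii), which makes $j(\tau_\mathcal{O})^2(j(\tau_\mathcal{O})-1728)^3$ the unique conjugate of largest absolute value and thereby lets me recover $j(\tau_\mathcal{O})$ from any of its powers.
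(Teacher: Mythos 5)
Your proposal is correct, and it rests on the same pillars as the paper's own proof: the identity (\ref{Xf}) converting the $X_\mathbf{v}$ into Fricke values, Lemma \ref{ffjj} to place $\left\{j(\tau_\mathcal{O})^2(j(\tau_\mathcal{O})-1728)^3\right\}^{|T_N|}$ inside $L=\mathbb{Q}(X_\mathbf{v}~|~\mathbf{v}\in W_N)$, Proposition \ref{j(omega)} for the conjugates of $j(\tau_\mathcal{O})$, and the strict archimedean inequality of Lemma \ref{jinequality} (ii) as the crux. Where you genuinely diverge is in how the descent from that power to $j(\tau_\mathcal{O})$ is organized. The paper never re-runs the size argument inside this lemma: it cites Proposition \ref{jgenerate} (the power generates $H_\mathcal{O}$ over $K$; this is where Lemma \ref{jinequality} (ii) is consumed) and then converts $K$-generation into $\mathbb{Q}$-generation by identifying both $\mathbb{Q}(j(\tau_\mathcal{O}))$ and $\mathbb{Q}\left(\left\{j(\tau_\mathcal{O})^2(j(\tau_\mathcal{O})-1728)^3\right\}^{|T_N|}\right)$ with the maximal real subfield $M$ of $H_\mathcal{O}$, using $[H_\mathcal{O}:M]=2$ and the reality of both generators. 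You instead bypass Proposition \ref{jgenerate} and the real-subfield identification: you first show the $\mathbb{Q}$-conjugates of $j(\tau_\mathcal{O})$ are exactly the $j(\omega_Q)$ for reduced $Q$ (your equality $[\mathbb{Q}(j(\tau_\mathcal{O})):\mathbb{Q}]=[K(j(\tau_\mathcal{O})):K]$ is the same reality/linear-disjointness observation that the paper encodes via $M$), and then run the Galois-plus-inequality argument directly over $\mathbb{Q}$, in effect re-proving a $\mathbb{Q}$-rational analogue of Proposition \ref{jgenerate} inline. Both routes are sound and of comparable length: the paper's buys economy by reusing Proposition \ref{jgenerate} as a black box, while yours is self-contained at this step and makes explicit why the generation statement survives the passage from $K$ to $\mathbb{Q}$. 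One small point you inherit silently: applying Lemma \ref{jinequality} (ii) presupposes $j(\tau_\mathcal{O})^2(j(\tau_\mathcal{O})-1728)^3\neq0$, which holds since $D_\mathcal{O}\neq-3,-4$ (the hypothesis in force in this section, as recalled in the proof of Proposition \ref{jgenerate}); it deserves a word where you pass from equality of the $|T_N|$-th powers to the conclusion $Q=Q_0$.
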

\begin{proof}
Let $M$ be the maximal real subfield of $H_\mathcal{O}$.
Since $j(\tau_\mathcal{O})\in\mathbb{R}$ and
\begin{equation}\label{Mj}
H_\mathcal{O}=
K(j(\tau_\mathcal{O}))=
K\left(
\left\{j(\tau_\mathcal{O})^2(j(\tau_\mathcal{O})-1728)^3\right\}^{|T_N|}\right)
\end{equation}
by Propositions \ref{j(omega)} and \ref{jgenerate}, we deduce that $[H_\mathcal{O}:M]=2$ and
\begin{equation*}
M=\mathbb{Q}(j(\tau_\mathcal{O}))=
\mathbb{Q}\left(
\left\{j(\tau_\mathcal{O})^2(j(\tau_\mathcal{O})-1728)^3\right\}^{|T_N|}\right).
\end{equation*}
Then we find that
\begin{eqnarray*}
\mathbb{Q}(X_\mathbf{v}~|~\mathbf{v}\in W_N)&\supseteq&\mathbb{Q}
\left(
\prod_{(\mathbf{u},\,\mathbf{v})\in T_N}
(X_\mathbf{u}-X_\mathbf{v})
\right)\\
&=&\mathbb{Q}\left(
\left\{j(\tau_\mathcal{O})^2(j(\tau_\mathcal{O})-1728)^3\right\}^{|T_N|}\right)
\quad\textrm{by (\ref{Xf}) and Lemma \ref{ffjj}}\\
&=&\mathbb{Q}(j(\tau_\mathcal{O}))\quad\textrm{by (\ref{Mj})}.
\end{eqnarray*}
\end{proof}

\begin{proposition}\label{Xgenerate}
Assume that $D_\mathcal{O}\neq-3,\,-4$ and $N\geq2$.
\begin{enumerate}
\item[\textup{(i)}] The field $\mathbb{Q}(X_\mathbf{v}~|~\mathbf{v}\in W_N)$ contains
$R_{\mathcal{O},\,N}$. Furthermore, $K(X_\mathbf{v}~|~\mathbf{v}\in W_N)=K_{\mathcal{O},\,N}$.
\item[\textup{(ii)}] If $D_\mathcal{O}\equiv0\Mod{4}$, then
$\mathbb{Q}(X_\mathbf{v}~|~\mathbf{v}\in W_2)=R_{\mathcal{O},\,2}$.
\item[\textup{(iii)}] If $D_\mathcal{O}\equiv1\Mod{4}$ or $N\geq3$, then $\mathbb{Q}(X_\mathbf{v}~|~\mathbf{v}\in W_N)
=K_{\mathcal{O},\,N}$.
\end{enumerate}
\end{proposition}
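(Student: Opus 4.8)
The plan is to establish (i)--(iii) in order, using throughout Lemma~\ref{maximalreal}, which gives $R_{\mathcal{O},\,N}=\mathbb{Q}(j(\tau_\mathcal{O}),\,X_{[0,\,1/N]})$ and $K_{\mathcal{O},\,N}=KR_{\mathcal{O},\,N}$, the containment $\mathbb{Q}(j(\tau_\mathcal{O}))\subseteq\mathbb{Q}(X_\mathbf{v}\mid\mathbf{v}\in W_N)$ from Lemma~\ref{QXQj}, and the conjugation rule $\overline{X_\mathbf{v}}=X_{\mathbf{v}\left[\begin{smallmatrix}1&~b_\mathcal{O}\\0&-1\end{smallmatrix}\right]}$ from Remark~\ref{Xcomplex}. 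First I record the tower structure: since $R_{\mathcal{O},\,N}=K_{\mathcal{O},\,N}\cap\mathbb{R}$ is real while $K$ is imaginary quadratic, complex conjugation is a nontrivial order-$2$ automorphism of $K_{\mathcal{O},\,N}=KR_{\mathcal{O},\,N}$ fixing $R_{\mathcal{O},\,N}$, so $[K_{\mathcal{O},\,N}:R_{\mathcal{O},\,N}]=2$. Consequently any field $F$ with $R_{\mathcal{O},\,N}\subseteq F\subseteq K_{\mathcal{O},\,N}$ equals $R_{\mathcal{O},\,N}$ or $K_{\mathcal{O},\,N}$, and equals $K_{\mathcal{O},\,N}$ exactly when $F$ contains a non-real element. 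This dichotomy drives all three parts.

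For (i), note $[0,\,1/N]\in W_N$ for $N\geq2$, so Lemmas~\ref{maximalreal} and~\ref{QXQj} give $R_{\mathcal{O},\,N}=\mathbb{Q}(j(\tau_\mathcal{O}),\,X_{[0,\,1/N]})\subseteq\mathbb{Q}(X_\mathbf{v}\mid\mathbf{v}\in W_N)$. For the second assertion I use (\ref{Xf}) to write $X_\mathbf{v}=-\tfrac{1}{2^73^3}f_\mathbf{v}(\tau_\mathcal{O})$ with $f_\mathbf{v}\in\mathcal{F}_N$ by the definition (\ref{FN}); since the argument $v_1\tau_\mathcal{O}+v_2$ of the Weierstrass $\wp$-function avoids the lattice $\mathcal{O}$ (as $\mathbf{v}\notin M_{1,\,2}(\mathbb{Z})$), $f_\mathbf{v}$ is finite at $\tau_\mathcal{O}$, so Cho's description (\ref{specialization}) places every $X_\mathbf{v}$ in $K_{\mathcal{O},\,N}$. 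Hence $R_{\mathcal{O},\,N}\subseteq\mathbb{Q}(X_\mathbf{v}\mid\mathbf{v}\in W_N)\subseteq K_{\mathcal{O},\,N}$, and adjoining $K$ yields $K(X_\mathbf{v}\mid\mathbf{v}\in W_N)=KR_{\mathcal{O},\,N}=K_{\mathcal{O},\,N}$.

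For (ii), $D_\mathcal{O}\equiv0\Mod{4}$ gives $b_\mathcal{O}=0$, so Remark~\ref{Xcomplex} reads $\overline{X_{[v_1,\,v_2]}}=X_{[v_1,\,-v_2]}$. Every $\mathbf{v}\in W_2$ has each coordinate $\equiv0$ or $\tfrac12\Mod{\mathbb{Z}}$, whence $-v_2\equiv v_2\Mod{\mathbb{Z}}$ and Lemma~\ref{basicXY}(i) gives $\overline{X_\mathbf{v}}=X_\mathbf{v}$; thus every generator is real, so $\mathbb{Q}(X_\mathbf{v}\mid\mathbf{v}\in W_2)\subseteq K_{\mathcal{O},\,2}\cap\mathbb{R}=R_{\mathcal{O},\,2}$, and (i) forces equality. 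For (iii) I instead exhibit one non-real generator. If $D_\mathcal{O}\equiv1\Mod{4}$ then $b_\mathcal{O}=1$ and $\overline{X_{[1/N,\,0]}}=X_{[1/N,\,1/N]}$; by Lemma~\ref{basicXY}(i) this equals $X_{[1/N,\,0]}$ only if $[1/N,\,1/N]\equiv\pm[1/N,\,0]\Mod{M_{1,\,2}(\mathbb{Z})}$, which fails for all $N\geq2$. If instead $N\geq3$ and $b_\mathcal{O}=0$, then $\overline{X_{[1/N,\,1/N]}}=X_{[1/N,\,-1/N]}$, and $[1/N,\,-1/N]\equiv\pm[1/N,\,1/N]\Mod{M_{1,\,2}(\mathbb{Z})}$ would force $N\mid2$, again impossible. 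These two cases exhaust the hypothesis of (iii), so in each a non-real $X_\mathbf{v}$ appears and the dichotomy gives $\mathbb{Q}(X_\mathbf{v}\mid\mathbf{v}\in W_N)=K_{\mathcal{O},\,N}$.

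The main obstacle is the reality analysis in (iii): the conjugation rule makes $X_\mathbf{v}$ real for the most natural choices (e.g. $X_{[0,\,1/N]}$ is always real, and when $b_\mathcal{O}=0$ so is $X_{[1/N,\,0]}$), so one must select the test vector according to the parity of $b_\mathcal{O}$ and verify through Lemma~\ref{basicXY}(i) that the congruence $\mathbf{v}\left[\begin{smallmatrix}1&~b_\mathcal{O}\\0&-1\end{smallmatrix}\right]\equiv\pm\mathbf{v}$ genuinely fails. Everything else is bookkeeping within the index-$2$ tower $R_{\mathcal{O},\,N}\subseteq K_{\mathcal{O},\,N}$.
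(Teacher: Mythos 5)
Your proposal is correct and follows essentially the same route as the paper's proof: part (i) via Lemma \ref{maximalreal}, Lemma \ref{QXQj}, (\ref{Xf}) and (\ref{specialization}); part (ii) via the conjugation rule of Remark \ref{Xcomplex} with $b_\mathcal{O}=0$; and part (iii) by testing the very same vectors $\left[\begin{smallmatrix}\frac{1}{N}&0\end{smallmatrix}\right]$ (when $D_\mathcal{O}\equiv1\Mod{4}$) and $\left[\begin{smallmatrix}\frac{1}{N}&\frac{1}{N}\end{smallmatrix}\right]$ (when $D_\mathcal{O}\equiv0\Mod{4}$, $N\geq3$) against Lemma \ref{basicXY} (i). Your explicit statement of the index-$2$ dichotomy $R_{\mathcal{O},\,N}\subseteq F\subseteq K_{\mathcal{O},\,N}$ merely makes precise what the paper invokes implicitly when it concludes "by (i) and Lemma \ref{maximalreal}."
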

\begin{proof}
\begin{enumerate}
\item[(i)]
We see that
\begin{eqnarray*}
R_{\mathcal{O},\,N}&=&
\mathbb{Q}\left(j(\tau_\mathcal{O}),\,
X_{\left[\begin{smallmatrix}0&\frac{1}{N}\end{smallmatrix}\right]}\right)
\quad\textrm{by Lemma \ref{maximalreal}}\\
&\subseteq&\mathbb{Q}(X_\mathbf{v}~|~\mathbf{v}\in W_N)\quad\textrm{by
Lemma \ref{QXQj}}\\
&\subseteq& K_{\mathcal{O},\,N}\quad\textrm{by (\ref{specialization}) and (\ref{Xf})}\\
&=&KR_{\mathcal{O},\,N}\quad\textrm{by Lemma \ref{maximalreal}}.
\end{eqnarray*}
Thus it yields that
\begin{equation*}
K(X_\mathbf{v}~|~\mathbf{v}\in W_N)=K_{\mathcal{O},\,N}.
\end{equation*}
\item[(ii)] If $D_\mathcal{O}\equiv0\Mod{4}$, then we claim that for $\mathbf{v}\in W_2$
\begin{eqnarray*}
\overline{X_\mathbf{v}}&=&X_{\mathbf{v}\left[\begin{smallmatrix}
1&\phantom{-}0\\0&-1
\end{smallmatrix}\right]}\quad\textrm{by Remark \ref{Xcomplex}}\\
&=&X_\mathbf{v}\quad\textrm{by Lemma \ref{basicXY} (i)}.
\end{eqnarray*}
This implies by (i) that $\mathbb{Q}(X_\mathbf{v}~|~\mathbf{v}\in W_2)=R_{\mathcal{O},\,2}$.
\item[(iii)] If $D_\mathcal{O}\equiv1\Mod{4}$, then we establish that
\begin{eqnarray*}
\overline{X_{\left[\begin{smallmatrix}
\frac{1}{N}&0
\end{smallmatrix}\right]}}
&=&
X_{\left[\begin{smallmatrix}
\frac{1}{N}&0
\end{smallmatrix}\right]\left[\begin{smallmatrix}
1&\phantom{-}1\\0&-1
\end{smallmatrix}\right]}(\tau_\mathcal{O})\quad\textrm{by Remark \ref{Xcomplex}}\\
&=&X_{\left[\begin{smallmatrix}
\frac{1}{N}&\frac{1}{N}\end{smallmatrix}\right]}\\
&\neq&X_{\left[\begin{smallmatrix}
\frac{1}{N}&0
\end{smallmatrix}\right]}\quad\textrm{by Lemma \ref{basicXY} (i)}.
\end{eqnarray*}
Similarly, if $D_\mathcal{O}\equiv0\Mod{4}$ and $N\geq3$, then
\begin{equation*}
\overline{X_{\left[\begin{smallmatrix}
\frac{1}{N}&\frac{1}{N}
\end{smallmatrix}\right]}}
=X_{\left[\begin{smallmatrix}
\frac{1}{N}&\frac{1}{N}
\end{smallmatrix}\right]\left[\begin{smallmatrix}
1&\phantom{-}0\\0&-1
\end{smallmatrix}\right]}(\tau_\mathcal{O})
=X_{\left[\begin{smallmatrix}
\frac{1}{N}&-\frac{1}{N}\end{smallmatrix}\right]}
\neq X_{\left[\begin{smallmatrix}
\frac{1}{N}&\frac{1}{N}
\end{smallmatrix}\right]}.
\end{equation*}
These observations hold that if $D_\mathcal{O}\equiv1\Mod{4}$ or $N\geq3$, then
\begin{equation*}
\mathbb{Q}(X_\mathbf{v}~|~\mathbf{v}\in W_N)\not\subseteq\mathbb{R}.
\end{equation*}
Therefore we conclude by (i) and Lemma \ref{maximalreal} that
$\mathbb{Q}(X_\mathbf{v}~|~\mathbf{v}\in W_N)
=K_{\mathcal{O},\,N}$.
\end{enumerate}
\end{proof}

\begin{lemma}\label{Yratio}
If $\mathbf{u},\,\mathbf{v}\in M_{1,\,2}(\mathbb{Q})\setminus M_{1,\,2}(\mathbb{Z})$
satisfy
\begin{equation*}
2\mathbf{u}\not\in M_{1,\,2}(\mathbb{Z})\quad\textrm{and}\quad
N\mathbf{u},\,N\mathbf{v}\in M_{1,\,2}(\mathbb{Z}),
\end{equation*}
then the ratio $\displaystyle\frac{Y_\mathbf{v}}{Y_\mathbf{u}}$ lies in $K_{\mathcal{O},\,N}$.
\end{lemma}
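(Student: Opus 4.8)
The plan is to prove that $r=Y_\mathbf{v}/Y_\mathbf{u}$ is fixed by every element of $\mathrm{Gal}(\overline{\mathbb{Q}}/K_{\mathcal{O},\,N})$, after first checking that $r$ is algebraic of bounded degree over $K_{\mathcal{O},\,N}$. I would begin by disposing of the degenerate case: if $2\mathbf{v}\in M_{1,\,2}(\mathbb{Z})$, then $Y_\mathbf{v}=0$ by Lemma \ref{basicXY} (iv) and $r=0\in K_{\mathcal{O},\,N}$, so I may assume $2\mathbf{v}\notin M_{1,\,2}(\mathbb{Z})$; the hypothesis $2\mathbf{u}\notin M_{1,\,2}(\mathbb{Z})$ already gives $Y_\mathbf{u}\neq0$, so $r$ is well defined. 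From the Weierstrass equation (\ref{EO}) one has $Y_\mathbf{w}^2=4X_\mathbf{w}^3-A_\mathcal{O}X_\mathbf{w}-B_\mathcal{O}$ for $\mathbf{w}\in\{\mathbf{u},\,\mathbf{v}\}$. Since $X_\mathbf{u},\,X_\mathbf{v}\in K_{\mathcal{O},\,N}$ by Proposition \ref{Xgenerate} and $A_\mathcal{O},\,B_\mathcal{O}\in H_\mathcal{O}\subseteq K_{\mathcal{O},\,N}$ by (\ref{AandB}) and Proposition \ref{CM} (i), both $Y_\mathbf{u}^2$ and $Y_\mathbf{v}^2$ lie in $K_{\mathcal{O},\,N}$. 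Hence $r^2\in K_{\mathcal{O},\,N}$, so $r$ is algebraic of degree at most $2$ over $K_{\mathcal{O},\,N}$.

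Next I would fix $\sigma\in\mathrm{Gal}(\overline{\mathbb{Q}}/K_{\mathcal{O},\,N})$ and track its action on torsion points. Under the analytic isomorphism $\mathbb{C}/\mathcal{O}\stackrel{\sim}{\rightarrow}E_\mathcal{O}(\mathbb{C})$ the pair $(X_\mathbf{w},\,Y_\mathbf{w})$ is the $N$-torsion point $P_\mathbf{w}$ attached to $w_1\tau_\mathcal{O}+w_2$. As $\sigma$ fixes the coefficients $A_\mathcal{O},\,B_\mathcal{O}$ of $E_\mathcal{O}$ and fixes $X_\mathbf{w}$, I get $\sigma(P_\mathbf{w})=(X_\mathbf{w},\,\sigma Y_\mathbf{w})$, a point of $E_\mathcal{O}$ with the same $x$-coordinate as $P_\mathbf{w}$; therefore $\sigma(P_\mathbf{w})=\pm P_\mathbf{w}$, which forces $\sigma Y_\mathbf{w}=\epsilon_\mathbf{w}(\sigma)Y_\mathbf{w}$ for a sign $\epsilon_\mathbf{w}(\sigma)\in\{\pm1\}$. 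Consequently $\sigma(r)=\epsilon_\mathbf{v}(\sigma)\epsilon_\mathbf{u}(\sigma)\,r$, and the whole problem reduces to showing that these two signs always coincide.

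The core of the argument is the elementary fact that a matrix $M\in\mathrm{GL}_2(\mathbb{Z}/N\mathbb{Z})$ with $MP\in\{P,\,-P\}$ for every $P\in E_\mathcal{O}[N]$ must equal $\pm I_2$: identifying $E_\mathcal{O}[N]\simeq(\mathbb{Z}/N\mathbb{Z})^2$ with a basis $P,\,Q$ and writing $MP=\epsilon_1P$, $MQ=\epsilon_2Q$, $M(P+Q)=\epsilon_3(P+Q)$, independence forces $\epsilon_1\equiv\epsilon_3$ and $\epsilon_2\equiv\epsilon_3\Mod{N}$, so $M$ is the scalar $\pm I_2$ (for $N=2$ the signs collapse and $M=I_2$). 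I would apply this to the matrix $M_\sigma$ describing the action of $\sigma$ on $E_\mathcal{O}[N]$: since $K(X_\mathbf{w}\mid\mathbf{w}\in W_N)=K_{\mathcal{O},\,N}$ by Proposition \ref{Xgenerate}, $\sigma$ fixes the $x$-coordinate of \emph{every} point of $E_\mathcal{O}[N]$, so $M_\sigma$ meets the hypothesis and $M_\sigma=\pm I_2$. Thus $\sigma$ acts by a single global sign, giving $\epsilon_\mathbf{u}(\sigma)=\epsilon_\mathbf{v}(\sigma)$ and $\sigma(r)=r$; as $\sigma$ was arbitrary and $r$ is algebraic over $K_{\mathcal{O},\,N}$, Galois theory yields $r\in K_{\mathcal{O},\,N}$. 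The main obstacle is precisely this last passage, namely verifying that fixing all $x$-coordinates forces $M_\sigma=\pm I_2$ with care for composite $N$; a parallel route realizing $r$ as the value at $\tau_\mathcal{O}$ of the weight-zero quotient $\wp'(v_1\tau+v_2)/\wp'(u_1\tau+u_2)$ in $\mathcal{F}_N$ and invoking (\ref{specialization}) would work as well, but it runs into the same square-root ambiguity and so offers no real simplification.
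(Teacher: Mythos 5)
Your proof is correct, and it takes a genuinely different route from the paper's. The paper disposes of Lemma \ref{Yratio} by citation: since the weight factor $\sqrt{(g_2g_3/\Delta)^3}$ in (\ref{Yv}) cancels identically in the quotient, $Y_\mathbf{v}/Y_\mathbf{u}$ is the value at $\tau_\mathcal{O}$ of the weight-zero function $\wp'(v_1\tau+v_2;[\tau,\,1])/\wp'(u_1\tau+u_2;[\tau,\,1])$, equivalently of the Siegel-function quotient $g_{2\mathbf{v}}\,g_\mathbf{u}^4/(g_\mathbf{v}^4\,g_{2\mathbf{u}})$ recorded in the remark following the lemma; the content of the cited proof in \cite[Lemma 5.3]{J-K-S-Y} is that this quotient lies in $\mathcal{F}_N$ and is finite at $\tau_\mathcal{O}$, whence (\ref{specialization}) gives membership in $K_{\mathcal{O},\,N}$ at once. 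You instead argue Galois-theoretically: $r^2=Y_\mathbf{v}^2/Y_\mathbf{u}^2\in K_{\mathcal{O},\,N}$ via the Weierstrass equation, Proposition \ref{Xgenerate} and $A_\mathcal{O},\,B_\mathcal{O}\in H_\mathcal{O}$; any $\sigma$ fixing $K_{\mathcal{O},\,N}$ fixes every $x$-coordinate of $E_\mathcal{O}[N]$, hence sends each torsion point to $\pm$ itself, hence by your basis argument acts on $E_\mathcal{O}[N]$ as a single scalar $\pm I_2$, so the signs on $Y_\mathbf{u}$ and $Y_\mathbf{v}$ agree and $\sigma(r)=r$. Your rigidity step is essentially the paper's Lemma \ref{betaI} (itself quoted from the same reference \cite{J-K-S-Y}), which the paper deploys only later in the proof of Theorem \ref{Galoisrepresentation}; you prove it inline, and your treatment of the degenerate case $2\mathbf{v}\in M_{1,\,2}(\mathbb{Z})$ and of the $N=2$ collapse of signs is careful and complete. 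What the paper's route buys is the explicit modular expression for the ratio, consistent with its Fricke/Siegel-family machinery and usable for computation; what your route buys is self-containedness: you never need to verify the level or the rationality of Fourier coefficients of a Klein-form quotient, only the containment $X_\mathbf{w}\in K_{\mathcal{O},\,N}$ already established in Proposition \ref{Xgenerate}. One small correction to your closing remark: the ``square-root ambiguity'' you attribute to the parallel route is not actually there --- the factor $\sqrt{(g_2g_3/\Delta)^3}$ cancels in the ratio regardless of the branch chosen, so that route (which is precisely the paper's) involves no ambiguity and is, if anything, the shorter one.
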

\begin{proof}
See \cite[Proof of Lemma 5.3]{J-K-S-Y} and (\ref{specialization}).
\end{proof}

\begin{remark}
\begin{enumerate}
\item[(i)] If $2\mathbf{u}\in M_{1,\,2}(\mathbb{Z})$, then
$Y_\mathbf{u}=0$ by Lemma \ref{basicXY} (iv).
\item[(ii)] By \cite[Lemma 3.3]{J-K-S-Y}, we have
\begin{equation*}
\frac{Y_\mathbf{v}}{Y_\mathbf{u}}=\frac{g_{2\mathbf{v}}(\tau_\mathcal{O})
g_{\mathbf{u}}(\tau_\mathcal{O})^4}{g_{\mathbf{v}}(\tau_\mathcal{O})^4
g_{2\mathbf{u}}(\tau_\mathcal{O})}.
\end{equation*}
\end{enumerate}
\end{remark}

\begin{theorem}\label{maingenerate}
Assume that $D_\mathcal{O}\neq-3,\,-4$.
\begin{enumerate}
\item[\textup{(i)}]
If $D_\mathcal{O}\equiv0\Mod{4}$, then
$\mathbb{Q}(E_\mathcal{O}[2])$ is the maximal real subfield of $K_{\mathcal{O},\,2}$.
\item[\textup{(ii)}] If $D_\mathcal{O}\equiv1\Mod{4}$, then
$\mathbb{Q}(E_\mathcal{O}[2])=K_{\mathcal{O},\,2}$.
\item[\textup{(iii)}] If $N\geq3$, then
$\mathbb{Q}(E_\mathcal{O}[N])$ is an extension field of $K_{\mathcal{O},\,N}$
of degree at most $2$.
\end{enumerate}
\end{theorem}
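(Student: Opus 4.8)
The plan is to reduce both assertions to the already-established facts about the $X$-coordinates in Proposition~\ref{Xgenerate}, by showing that adjoining the $Y$-coordinates changes the field by at most a quadratic extension. The key opening remark is that, by the Weierstrass equation (\ref{EO}), every $Y_\mathbf{v}$ satisfies
\begin{equation*}
Y_\mathbf{v}^2=4X_\mathbf{v}^3-A_\mathcal{O}X_\mathbf{v}-B_\mathcal{O},
\end{equation*}
and by (\ref{AandB}) the coefficients $A_\mathcal{O}$ and $B_\mathcal{O}$ are rational functions of $j(\tau_\mathcal{O})$ with rational coefficients, so they lie in $\mathbb{Q}(j(\tau_\mathcal{O}))$. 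Since Lemma~\ref{QXQj} gives $\mathbb{Q}(j(\tau_\mathcal{O}))\subseteq\mathbb{Q}(X_\mathbf{v}~|~\mathbf{v}\in W_N)$, we conclude that $Y_\mathbf{v}^2\in\mathbb{Q}(X_\mathbf{v}~|~\mathbf{v}\in W_N)$ for every $\mathbf{v}\in W_N$; that is, each $Y_\mathbf{v}$ is a square root of an element already present among the $X$-coordinates.

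For the cases (i) and (ii), where $N=2$, I would note that every $\mathbf{v}\in W_2$ satisfies $2\mathbf{v}\in M_{1,\,2}(\mathbb{Z})$, whence $Y_\mathbf{v}=0$ by Lemma~\ref{basicXY}~(iv). Therefore $\mathbb{Q}(E_\mathcal{O}[2])=\mathbb{Q}(X_\mathbf{v}~|~\mathbf{v}\in W_2)$, and both conclusions follow directly from Proposition~\ref{Xgenerate}: part (ii) identifies this field with $R_{\mathcal{O},\,2}$ when $D_\mathcal{O}\equiv0\Mod{4}$, giving (i), while part (iii) identifies it with $K_{\mathcal{O},\,2}$ when $D_\mathcal{O}\equiv1\Mod{4}$, giving (ii).

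For (iii), where $N\geq3$, Proposition~\ref{Xgenerate}~(iii) already yields $\mathbb{Q}(X_\mathbf{v}~|~\mathbf{v}\in W_N)=K_{\mathcal{O},\,N}$, so it remains to control the effect of the $Y$-coordinates over $K_{\mathcal{O},\,N}$. I would fix the single vector $\mathbf{u}=\left[\begin{smallmatrix}0&\frac{1}{N}\end{smallmatrix}\right]\in W_N$, which satisfies $2\mathbf{u}=\left[\begin{smallmatrix}0&\frac{2}{N}\end{smallmatrix}\right]\notin M_{1,\,2}(\mathbb{Z})$ precisely because $N\geq3$. Lemma~\ref{Yratio} then gives $Y_\mathbf{v}/Y_\mathbf{u}\in K_{\mathcal{O},\,N}$ for every $\mathbf{v}\in W_N$ (the case $Y_\mathbf{v}=0$ being trivial), so $Y_\mathbf{v}\in K_{\mathcal{O},\,N}(Y_\mathbf{u})$ for all $\mathbf{v}$, and hence $\mathbb{Q}(E_\mathcal{O}[N])=K_{\mathcal{O},\,N}(Y_\mathbf{u})$. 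Since $Y_\mathbf{u}^2\in K_{\mathcal{O},\,N}$ by the opening remark, this is an extension of degree at most $2$, as required.

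The argument is essentially an assembly of the preceding results, and the one genuinely load-bearing ingredient is Lemma~\ref{Yratio}, which collapses the infinitely many $Y$-coordinates into a single square-root extension over $K_{\mathcal{O},\,N}$. I expect no real obstacle beyond verifying the existence of a base vector $\mathbf{u}$ with $2\mathbf{u}\notin M_{1,\,2}(\mathbb{Z})$; this is exactly what fails at $N=2$, which is why those cases are treated separately through the total vanishing of the $Y_\mathbf{v}$.
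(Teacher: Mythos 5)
Your proposal is correct and follows essentially the same route as the paper's own proof: the $N=2$ cases via the vanishing of all $Y_\mathbf{v}$ (Lemma \ref{basicXY} (iv)) combined with Proposition \ref{Xgenerate}, and the $N\geq3$ case by collapsing all $Y$-coordinates to the single value $Y_{\left[\begin{smallmatrix}0&\frac{1}{N}\end{smallmatrix}\right]}$ via Lemma \ref{Yratio} and then invoking the Weierstrass relation to see that its square lies in $K_{\mathcal{O},\,N}$. The only cosmetic difference is that you justify $A_\mathcal{O},B_\mathcal{O}\in K_{\mathcal{O},\,N}$ through Lemma \ref{QXQj}, while the paper cites Proposition \ref{CM}; both are valid.
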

\begin{proof}
We get by Lemma \ref{basicXY} (iv) that
\begin{equation}\label{QE2}
\mathbb{Q}(E_\mathcal{O}[2])=\mathbb{Q}(X_\mathbf{v}~|~\mathbf{v}\in W_2).
\end{equation}
\begin{enumerate}
\item[(i)] If $D_\mathcal{O}\equiv0\Mod{4}$, then
we obtain by (\ref{QE2}) and Proposition \ref{Xgenerate} that
$\mathbb{Q}(E_\mathcal{O}[2])=R_{\mathcal{O},\,2}$.
\item[(ii)] If $D_\mathcal{O}\equiv1\Mod{4}$, then we derive by (\ref{QE2})
and Proposition \ref{Xgenerate} (iii) that
$\mathbb{Q}(E_\mathcal{O}[2])=K_{\mathcal{O},\,2}$.
\item[(iii)] If $N\geq3$, then we find that
\begin{eqnarray*}
\mathbb{Q}(E_\mathcal{O}[N])&=&K_{\mathcal{O},\,N}(Y_\mathbf{v}~|~\mathbf{v}\in
W_N)\quad\textrm{by Proposition \ref{Xgenerate} (iii)}\\
&=&K_{\mathcal{O},\,N}\left(Y_{\left[\begin{smallmatrix}0&\frac{1}{N}\end{smallmatrix}\right]}
\cdot\displaystyle\frac{Y_\mathbf{v}}{Y_{\left[\begin{smallmatrix}0&\frac{1}{N}\end{smallmatrix}\right]}}~|~\mathbf{v}\in W_N\right)\\
&=&K_{\mathcal{O},\,N}\left(
Y_{\left[\begin{smallmatrix}0&\frac{1}{N}\end{smallmatrix}\right]}
\right)\quad\textrm{by Lemma \ref{Yratio}}.
\end{eqnarray*}
Here we observe by Proposition \ref{CM} that
\begin{equation}\label{YXin}
Y_{\left[\begin{smallmatrix}0&\frac{1}{N}\end{smallmatrix}\right]}^2
=4X_{\left[\begin{smallmatrix}0&\frac{1}{N}\end{smallmatrix}\right]}^3-
A_\mathcal{O}X_{\left[\begin{smallmatrix}0&\frac{1}{N}\end{smallmatrix}\right]}-B_\mathcal{O}
\in K_{\mathcal{O},\,N}.
\end{equation}
Therefore $\mathbb{Q}(E_\mathcal{O}[N])$ is an extension field of $K_{\mathcal{O},\,N}$
of degree at most $2$.
\end{enumerate}
\end{proof}

\section {Galois representations attached to $E_\mathcal{O}$}\label{sect8}

Since the elliptic curve $E_\mathcal{O}$ is defined over $\mathbb{Q}(j(E_\mathcal{O}))$, the field
$\mathbb{Q}(E_\mathcal{O}[N])$ is a finite Galois
extension of $\mathbb{Q}(j(E_\mathcal{O}))$ by Lemma \ref{QXQj} and \cite[pp. 53--54]{Silverman09}.
So we get the right action of
the Galois group $\mathrm{Gal}(\mathbb{Q}(E_\mathcal{O}[N])/
\mathbb{Q}(j(E_\mathcal{O})))$ on
the $\mathbb{Z}/N\mathbb{Z}$-module $E_\mathcal{O}[N]$.
This action gives us the faithful representation
\begin{equation*}
\rho_{\mathcal{O},\,N}:\mathrm{Gal}(\mathbb{Q}(E_\mathcal{O}[N])/
\mathbb{Q}(j(E_\mathcal{O})))\rightarrow\mathrm{GL}_2(\mathbb{Z}/N\mathbb{Z})~(\simeq
\mathrm{Aut}(E_\mathcal{O}[N]))
\end{equation*}
with respect to the ordered basis
\begin{equation*}
\left\{
\left[X_{\left[\begin{smallmatrix}
\frac{1}{N} & 0\end{smallmatrix}\right]}:
Y_{\left[\begin{smallmatrix}
\frac{1}{N} & 0\end{smallmatrix}\right]}:1
\right],~
\left[X_{\left[\begin{smallmatrix}
0& \frac{1}{N}\end{smallmatrix}\right]}:
Y_{\left[\begin{smallmatrix}
0 & \frac{1}{N}\end{smallmatrix}\right]}:1
\right]
\right\}
\end{equation*}
for $E_\mathcal{O}[N]$
so that
\begin{equation}\label{XsX}
[X_\mathbf{v}:Y_\mathbf{v}:1]^\sigma
=[X_{\mathbf{v}\rho_{\mathcal{O},\,N}(\sigma)}:
Y_{\mathbf{v}\rho_{\mathcal{O},\,N}(\sigma)}:1]
\quad\left(\mathbf{v}\in W_N\right).
\end{equation}
In this section, we shall
determine the image of $\rho_{\mathcal{O},\,N}$
by utilizing the Shimura reciprocity law.
\par
If we let
\begin{equation*}
\min(\tau_\mathcal{O},\,\mathbb{Q})=x^2+b_\mathcal{O}x+c_\mathcal{O}
\quad
(\in\mathbb{Z}[x]),
\end{equation*}
then we have a well-defined homomorphism of groups
\begin{eqnarray*}
\mu_{\mathcal{O},\,N}~:~(\mathcal{O}/N\mathcal{O})^* & \rightarrow &
\mathrm{GL}_2(\mathbb{Z}/N\mathbb{Z})\\
\mathrm{[}s\tau_\mathcal{O}+t] & \mapsto & \begin{bmatrix}
t-b_\mathcal{O}s & -c_\mathcal{O}s\\
s & t
\end{bmatrix}.
\end{eqnarray*}
Let
\begin{equation}\label{WU}
W_{\mathcal{O},\,N}=\mu_{\mathcal{O},\,N}((\mathcal{O}/N\mathcal{O})^*)\quad\textrm{and}\quad
U_{\mathcal{O},\,N}=\mu_{\mathcal{O},\,N}(\pi_{\mathcal{O},\,N}(\mathcal{O}^*))
\end{equation}
where $\pi_{\mathcal{O},\,N}:\mathcal{O}\rightarrow\mathcal{O}/N\mathcal{O}$
is the canonical homomorphism.
Then we achieve that
\begin{equation}\label{WUOG}
W_{\mathcal{O},\,N}/U_{\mathcal{O},\,N}
\simeq(\mathcal{O}/N\mathcal{O})^*/\pi_{\mathcal{O},\,N}(\mathcal{O}^*)
\simeq\mathrm{Gal}(K_{\mathcal{O},\,N}/H_\mathcal{O})
\end{equation}
(cf. \cite[Lemma 15.17]{Cox}).

\begin{proposition}[The Shimura reciprocity law]\label{reciprocity}
The map
\begin{eqnarray*}
W_{\mathcal{O},\,N}/U_{\mathcal{O},\,N} & \rightarrow & \mathrm{Gal}(K_{\mathcal{O},\,N}/H_\mathcal{O})\\
\mathrm{[}\gamma] & \mapsto &
\left(f(\tau_\mathcal{O})\mapsto f^{\widetilde{\gamma}}(\tau_\mathcal{O})~
|~f\in\mathcal{F}_N~\textrm{is finite at $\tau_\mathcal{O}$}
\right)
\end{eqnarray*}
is a well-defined isomorphism.
Here, $\widetilde{\gamma}$ means the image of $\gamma$ in
$\mathrm{GL}_2(\mathbb{Z}/N\mathbb{Z})/\langle-I_2\rangle$
\textup{($\simeq\mathrm{Gal}(\mathcal{F}_N/\mathcal{F}_1)$)}.
\end{proposition}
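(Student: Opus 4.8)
The plan is to verify that the explicit assignment $[\gamma]\mapsto\bigl(f(\tau_\mathcal{O})\mapsto f^{\widetilde\gamma}(\tau_\mathcal{O})\bigr)$ realizes the Artin action of $\mathrm{Gal}(K_{\mathcal{O},\,N}/H_\mathcal{O})$, and then to conclude by an order count. By Proposition \ref{CM} (i) we have $H_\mathcal{O}=K(j(\tau_\mathcal{O}))$, while Cho's description (\ref{specialization}) exhibits $K_{\mathcal{O},\,N}$ as the field generated over $K$ by the finite special values $f(\tau_\mathcal{O})$ with $f\in\mathcal{F}_N$; hence $K_{\mathcal{O},\,N}$ is generated over $H_\mathcal{O}$ by exactly these values. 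The isomorphisms in (\ref{WUOG}) already identify $W_{\mathcal{O},\,N}/U_{\mathcal{O},\,N}$ with $\mathrm{Gal}(K_{\mathcal{O},\,N}/H_\mathcal{O})$ as abstract groups, so once the map is shown to agree with this identification, every remaining claim is formal.

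The mechanism that couples evaluation at $\tau_\mathcal{O}$ to the Galois action on $\mathcal{F}_N$ is the following elementary computation. Writing $\gamma=\mu_{\mathcal{O},\,N}([s\tau_\mathcal{O}+t])$ and using $\tau_\mathcal{O}^2=-b_\mathcal{O}\tau_\mathcal{O}-c_\mathcal{O}$, one checks directly that
\begin{equation*}
\begin{bmatrix}t-b_\mathcal{O}s & -c_\mathcal{O}s\\ s & t\end{bmatrix}\begin{bmatrix}\tau_\mathcal{O}\\ 1\end{bmatrix}=(s\tau_\mathcal{O}+t)\begin{bmatrix}\tau_\mathcal{O}\\ 1\end{bmatrix}.
\end{equation*}
Thus $\gamma$ acts on the basis $\{\tau_\mathcal{O},\,1\}$ of $\mathcal{O}$ as multiplication by $\alpha=s\tau_\mathcal{O}+t$, and in particular fixes the point $\tau_\mathcal{O}$ as a fractional linear transformation. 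This eigenvector relation is exactly the structure under which Shimura's reciprocity law applies at the CM point $\tau_\mathcal{O}$.

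I would then invoke the general reciprocity law for the field of modular functions (Shimura, Chapter 6, together with the transformation $f^{\widetilde\gamma}=f\circ\gamma$ for $\gamma\in\mathrm{SL}_2(\mathbb{Z})$ from Proposition \ref{SL2action}): for every $f\in\mathcal{F}_N$ finite at $\tau_\mathcal{O}$ the value $f(\tau_\mathcal{O})$ lies in $K_{\mathcal{O},\,N}$, and the Artin symbol $\sigma_\alpha$ of $\alpha$ sends it to $f^{\widetilde\gamma}(\tau_\mathcal{O})$. Granting this, every assertion becomes formal: the map is well defined because it coincides with the honest field automorphism $\sigma_\alpha$, so $f^{\widetilde\gamma}(\tau_\mathcal{O})$ depends only on the value $f(\tau_\mathcal{O})$ and finiteness at $\tau_\mathcal{O}$ is preserved; it is a homomorphism because $\mu_{\mathcal{O},\,N}$ is one and $\widetilde{\gamma_1\gamma_2}=\widetilde{\gamma_1}\,\widetilde{\gamma_2}$; and it is bijective because, under the identification just established, it is nothing but the isomorphism (\ref{WUOG}).

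The principal obstacle is the reciprocity statement itself, namely that the \emph{abstract} Artin action on $K_{\mathcal{O},\,N}/H_\mathcal{O}$ agrees with the \emph{explicit} action $f\mapsto f^{\widetilde\gamma}$. This is the main theorem of complex multiplication for modular function fields and cannot be derived from the earlier lemmas; the work lies in matching the idele-theoretic Artin symbol of $\alpha$ with the finite matrix $\mu_{\mathcal{O},\,N}(\alpha)$. Concretely, one decomposes $\gamma\in\mathrm{GL}_2(\mathbb{Z}/N\mathbb{Z})$ into an $\mathrm{SL}_2$-part, which acts geometrically as $f\circ\gamma$, and a determinant part, which acts on the $q$-expansion coefficients through $\zeta_N\mapsto\zeta_N^{\det\gamma}$, and one verifies that this composite reproduces the conjugation predicted by Stevenhagen's explicit form of the reciprocity law. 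Once this single identification is in place, the proposition follows.
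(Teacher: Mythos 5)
The first thing to note is that the paper offers no argument of its own for this proposition: its proof is the single line ``See \cite[p. 859]{Cho} or \cite[Theorem 15.22]{Cox}.'' So your proposal has to be judged against that cited literature and against the machinery the paper itself sets up for the idelic reciprocity law (Propositions \ref{idelicF} and \ref{idelic}). Measured that way, your skeleton is the right one and is essentially the standard Stevenhagen--Cho--Cox route. The eigenvector computation showing that $\mu_{\mathcal{O},\,N}([s\tau_\mathcal{O}+t])$ is the matrix of multiplication by $\alpha=s\tau_\mathcal{O}+t$ on $\mathcal{O}$ in the basis $\{\tau_\mathcal{O},\,1\}$ (equivalently, the mod-$N$ reduction of the normalized embedding $q_{\tau_\mathcal{O}}(\alpha)$) is correct, and it is indeed the pivot of the whole proof. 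You are also right that once the map is identified with the Artin action of the principal classes $[\alpha\mathcal{O}]$, well-definedness on $W_{\mathcal{O},\,N}/U_{\mathcal{O},\,N}$, multiplicativity, and bijectivity via (\ref{WUOG}) are formal.

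Two caveats, one of substance. First, your closing step is circular as worded: you propose to verify that the composite action ``reproduces the conjugation predicted by Stevenhagen's explicit form of the reciprocity law,'' but Stevenhagen's explicit form \emph{is} Proposition \ref{reciprocity} (the paper says so in the introduction); what must actually be done is to derive it from the idelic law, i.e.\ from Propositions \ref{idelicF} and \ref{idelic}. Second, the derivation you defer is precisely an idele computation, not merely a bookkeeping of $\mathrm{SL}_2$-part and determinant part: one represents the Galois element attached to $[\alpha]$ by a finite idele $x\in\widehat{K}^*$ that equals $\alpha$ at the primes dividing $\ell_\mathcal{O}N$ and is trivial elsewhere (using that principal finite ideles have trivial Artin symbol over an imaginary quadratic field), observes that $q_{\tau_\mathcal{O}}(x)\in\mathrm{GL}_2(\widehat{\mathbb{Z}})$ has mod-$N$ reduction $\mu_{\mathcal{O},\,N}([\alpha])$, and then Proposition \ref{idelicF} converts $[x^{-1},\,K]$ into exactly $f(\tau_\mathcal{O})\mapsto f^{\widetilde{\gamma}}(\tau_\mathcal{O})$. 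This is the very computation the paper later writes out in full in proving Theorem \ref{transformation} (the choice of idele (\ref{s1}) and the verification (\ref{qsAG})), so your outline can genuinely be completed with tools already present in the paper; but in the proposal itself the crux is asserted rather than proved, and the one external statement you lean on by name is the statement being proved.
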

\begin{proof}
See \cite[p. 859]{Cho} or \cite[Theorem 15.22]{Cox}.
\end{proof}

\begin{lemma}\label{betaI}
Let $\beta\in M_2(\mathbb{Z})$ such that $\gcd(\det(\beta),\,N)=1$.
If
\begin{equation}\label{beta}
\mathbf{v}\beta\equiv\mathbf{v}~\textrm{or}~-\mathbf{v}\Mod{M_{1,\,2}(\mathbb{Z})}\quad
\textrm{for each}~\mathbf{v}\in W_N,
\end{equation}
then $\beta\equiv I_2$ or $-I_2\Mod{NM_2(\mathbb{Z})}$.
\end{lemma}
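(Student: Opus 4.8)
The plan is to reduce the assertion to a statement about the reduction $\bar\beta$ of $\beta$ modulo $N$ and then to test that statement against three explicit vectors. Since $W_N$ is empty when $N=1$, the congruence $\beta\equiv\pm I_2\Mod{NM_2(\mathbb{Z})}$ is then vacuously true, so I may assume $N\geq2$. First I would translate the hypothesis into a congruence modulo $N$: writing a general element of $W_N$ as $\mathbf{v}=\tfrac1N\mathbf{w}$ with $\mathbf{w}=\begin{bmatrix}w_1&w_2\end{bmatrix}\in M_{1,\,2}(\mathbb{Z})$, the condition $\mathbf{v}\in W_N$ says exactly $\mathbf{w}\not\equiv\mathbf{0}\Mod{N}$, and the hypothesis (\ref{beta}) that $\mathbf{v}\beta\equiv\pm\mathbf{v}\Mod{M_{1,\,2}(\mathbb{Z})}$ becomes $\mathbf{w}\beta\equiv\pm\mathbf{w}\Mod{NM_{1,\,2}(\mathbb{Z})}$, where the sign is allowed to depend on $\mathbf{w}$. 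Thus (\ref{beta}) asserts that every nonzero row vector over $\mathbb{Z}/N\mathbb{Z}$ is a left eigenvector of $\bar\beta$ with eigenvalue $\pm1$, and the goal is reduced to proving $\bar\beta=\pm I_2$ in $M_2(\mathbb{Z}/N\mathbb{Z})$. (Note that the determinant hypothesis $\gcd(\det(\beta),\,N)=1$ is not actually needed for this direction.)

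Next I would diagonalize $\bar\beta$ by testing the two standard basis vectors. Writing $\beta=\begin{bmatrix}p&q\\r&s\end{bmatrix}$ and applying the congruence to $\mathbf{w}=\begin{bmatrix}1&0\end{bmatrix}$ gives $\begin{bmatrix}p&q\end{bmatrix}\equiv\pm\begin{bmatrix}1&0\end{bmatrix}\Mod{N}$, hence $q\equiv0$ and $p\equiv\pm1\Mod{N}$; applying it to $\mathbf{w}=\begin{bmatrix}0&1\end{bmatrix}$ gives $r\equiv0$ and $s\equiv\pm1\Mod{N}$. Therefore $\bar\beta$ is diagonal with entries $\epsilon_1,\epsilon_2\in\{1,\,-1\}$ modulo $N$.

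It then remains to show the two signs coincide, and this is where the only real point of the argument lies: the per-vector sign ambiguity in (\ref{beta}) must be pinned down by a vector that mixes the two coordinates. I would feed in the diagonal vector $\mathbf{w}=\begin{bmatrix}1&1\end{bmatrix}$, which lies in $W_N$ after scaling by $1/N$. Then $\begin{bmatrix}1&1\end{bmatrix}\beta\equiv\begin{bmatrix}\epsilon_1&\epsilon_2\end{bmatrix}$ must be congruent to $\begin{bmatrix}1&1\end{bmatrix}$ or to $\begin{bmatrix}-1&-1\end{bmatrix}$ modulo $N$, with a single global sign in both entries. When $N\geq3$ we have $1\not\equiv-1\Mod{N}$, so the mixed possibilities $(\epsilon_1,\epsilon_2)=(1,-1)$ and $(-1,1)$ are both excluded, forcing $\epsilon_1=\epsilon_2$ and hence $\bar\beta=\pm I_2$. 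When $N=2$ one has $1\equiv-1\Mod{2}$, so every diagonal matrix with $\pm1$ entries already equals $I_2\equiv-I_2\Mod{2}$, and the conclusion holds automatically. In all cases $\beta\equiv I_2$ or $-I_2\Mod{NM_2(\mathbb{Z})}$, as required.
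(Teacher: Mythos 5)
Your proof is correct. Note first that the paper itself does not prove this lemma: it simply cites \cite[Lemma 6.1]{J-K-S-Y}, so your argument is a self-contained replacement rather than a parallel of anything printed here. Each step checks out: the reduction of the hypothesis to $\mathbf{w}\beta\equiv\pm\mathbf{w}\Mod{NM_{1,\,2}(\mathbb{Z})}$ for nonzero $\mathbf{w}$ over $\mathbb{Z}/N\mathbb{Z}$ is exactly right (clear denominators); the vectors $\left[\begin{smallmatrix}\frac{1}{N}&0\end{smallmatrix}\right]$, $\left[\begin{smallmatrix}0&\frac{1}{N}\end{smallmatrix}\right]$, $\left[\begin{smallmatrix}\frac{1}{N}&\frac{1}{N}\end{smallmatrix}\right]$ all lie in $W_N$ (membership in $W_N$ requires only $N\mathbf{v}\in M_{1,\,2}(\mathbb{Z})$ and $\mathbf{v}\notin M_{1,\,2}(\mathbb{Z})$, not primitivity, so the mixing vector is legitimate even though it is the key point); the first two force $\beta$ diagonal with entries $\epsilon_1,\epsilon_2\in\{\pm1\}$ modulo $N$, and the third pins the two signs together because the hypothesis imposes a \emph{single} sign on both coordinates of $\left[\begin{smallmatrix}\epsilon_1&\epsilon_2\end{smallmatrix}\right]$. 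Your case split is also handled correctly: for $N\geq3$ one uses $1\not\equiv-1\Mod{N}$, for $N=2$ the conclusion is automatic since $I_2\equiv-I_2\Mod{2}$, and for $N=1$ both hypothesis and conclusion are vacuous. Finally, your parenthetical observation that $\gcd(\det(\beta),\,N)=1$ is never used is accurate, and unsurprising: the conclusion $\beta\equiv\pm I_2\Mod{NM_2(\mathbb{Z})}$ already implies $\det(\beta)\equiv1\Mod{N}$, so that hypothesis is redundant for this implication (it matters elsewhere in the paper, where $\beta$ must act on $\mathcal{F}_N$ through $\mathrm{GL}_2(\mathbb{Z}/N\mathbb{Z})$).
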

\begin{proof}
See \cite[Lemma 6.1]{J-K-S-Y}.
\end{proof}

\begin{theorem}\label{Galoisrepresentation}
Assume that $D_\mathcal{O}\neq-3,\,-4$ and $N\geq2$.
Let $\widehat{W}_{\mathcal{O},\,N}$ be the subgroup
of $\mathrm{GL}_2(\mathbb{Z}/N\mathbb{Z})$ defined by
\begin{equation*}
\widehat{W}_{\mathcal{O},\,N}=\left\langle
W_{\mathcal{O},\,N},\,\begin{bmatrix}1&~~b_\mathcal{O}\\0&-1\end{bmatrix}\right\rangle.
\end{equation*}
\begin{enumerate}
\item[\textup{(i)}] The image of the
Galois representation
$\rho_{\mathcal{O},\,N}$
is a subgroup of $\widehat{W}_{\mathcal{O},\,N}$
of index at most $2$.
\item[\textup{(ii)}] If $-1$ is a quadratic residue modulo $N$, then
the image of $\rho_{\mathcal{O},\,N}$ is exactly
$\widehat{W}_{\mathcal{O},\,N}$.
\end{enumerate}
\end{theorem}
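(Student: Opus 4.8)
The plan is to realize $G=\mathrm{Gal}(\mathbb{Q}(E_\mathcal{O}[N])/\mathbb{Q}(j(\tau_\mathcal{O})))$ first as a group of matrices acting on the $X$-coordinates, and only afterwards to recover the sign carried by the $Y$-coordinates. Write $c=\left[\begin{smallmatrix}1&b_\mathcal{O}\\0&-1\end{smallmatrix}\right]$. I would begin with the purely group-theoretic preliminaries: $c^2=I_2$; conjugation by $c$ sends $\mu_{\mathcal{O},\,N}([\alpha])$ to $\mu_{\mathcal{O},\,N}([\overline{\alpha}])$, so $c$ normalizes $W_{\mathcal{O},\,N}$ and $\widehat{W}_{\mathcal{O},\,N}=W_{\mathcal{O},\,N}\cup W_{\mathcal{O},\,N}c$; and $-I_2=\mu_{\mathcal{O},\,N}([-1])\in W_{\mathcal{O},\,N}$. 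A short matrix computation then records $|\widehat{W}_{\mathcal{O},\,N}|$, distinguishing whether $c\in W_{\mathcal{O},\,N}$; this happens precisely when $N=2$ and $D_\mathcal{O}\equiv0\Mod{4}$, where $b_\mathcal{O}=0$ forces $c\equiv I_2$, and otherwise $|\widehat{W}_{\mathcal{O},\,N}|=2|W_{\mathcal{O},\,N}|$.

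The central step, which I expect to be the main obstacle, is to show that $G$ acts on the family $\{X_\mathbf{v}\}_{\mathbf{v}\in W_N}$ through $\widehat{W}_{\mathcal{O},\,N}/\langle-I_2\rangle$, surjectively. By (\ref{Xf}) together with (\ref{specialization}) each $X_\mathbf{v}$ lies in $K_{\mathcal{O},\,N}$, and since the modulus $\ell_\mathcal{O}N\mathcal{O}_K$ is stable under complex conjugation, $K_{\mathcal{O},\,N}/\mathbb{Q}(j(\tau_\mathcal{O}))$ is Galois; hence the $G$-action on $\{X_\mathbf{v}\}$ factors through $\mathrm{Gal}(K_{\mathcal{O},\,N}/\mathbb{Q}(j(\tau_\mathcal{O})))$. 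On the index-two subgroup $\mathrm{Gal}(K_{\mathcal{O},\,N}/H_\mathcal{O})$ the Shimura reciprocity law (Proposition \ref{reciprocity}) and the Fricke-family relation $f_\mathbf{v}^{\widetilde\gamma}=f_{\mathbf{v}\gamma}$ (Proposition \ref{Frickefamily}) identify the action with $W_{\mathcal{O},\,N}$, while on the remaining coset complex conjugation contributes the matrix $c$ by Remark \ref{Xcomplex}; together these exhaust $\widehat{W}_{\mathcal{O},\,N}/\langle-I_2\rangle$. Surjectivity then follows from an order comparison: a case check using Lemma \ref{maximalreal}, Proposition \ref{Xgenerate} and (\ref{WUOG}) shows that $[\mathbb{Q}(X_\mathbf{v}\mid\mathbf{v}\in W_N):\mathbb{Q}(j(\tau_\mathcal{O}))]$ equals $|\widehat{W}_{\mathcal{O},\,N}/\langle-I_2\rangle|$ in every case.

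Granting this, part (i) finishes quickly. For $\sigma\in G$ the previous step produces $\gamma\in\widehat{W}_{\mathcal{O},\,N}$ with $X_\mathbf{v}^{\sigma}=X_{\mathbf{v}\gamma}$ for all $\mathbf{v}$; comparing with (\ref{XsX}) gives $\mathbf{v}\rho_{\mathcal{O},\,N}(\sigma)\equiv\pm\mathbf{v}\gamma\Mod{M_{1,\,2}(\mathbb{Z})}$, so Lemma \ref{betaI} forces $\rho_{\mathcal{O},\,N}(\sigma)=\pm\gamma$, which lies in $\widehat{W}_{\mathcal{O},\,N}$ because $-I_2\in W_{\mathcal{O},\,N}$. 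Thus $\mathrm{Im}(\rho_{\mathcal{O},\,N})\subseteq\widehat{W}_{\mathcal{O},\,N}$, and the quotient map $q:\widehat{W}_{\mathcal{O},\,N}\to\widehat{W}_{\mathcal{O},\,N}/\langle-I_2\rangle$ satisfies $q\circ\rho_{\mathcal{O},\,N}=$ the surjective $X$-action above. Hence $q$ restricted to $\mathrm{Im}(\rho_{\mathcal{O},\,N})$ is onto with kernel $\mathrm{Im}(\rho_{\mathcal{O},\,N})\cap\langle-I_2\rangle$, giving $[\widehat{W}_{\mathcal{O},\,N}:\mathrm{Im}(\rho_{\mathcal{O},\,N})]=|\langle-I_2\rangle|/|\mathrm{Im}(\rho_{\mathcal{O},\,N})\cap\langle-I_2\rangle|\le2$.

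For part (ii) it then suffices to show $-I_2\in\mathrm{Im}(\rho_{\mathcal{O},\,N})$, which for $N=2$ is automatic and for $N\ge3$ is exactly where the hypothesis enters. Writing $-1\equiv m^2\Mod{N}$ with $\gcd(m,N)=1$, the scalar $mI_2=\mu_{\mathcal{O},\,N}([m])$ lies in $W_{\mathcal{O},\,N}$; lifting the associated element of $\mathrm{Gal}(K_{\mathcal{O},\,N}/H_\mathcal{O})$ to some $\widetilde\sigma\in G$ (possible since $K_{\mathcal{O},\,N}/\mathbb{Q}(j(\tau_\mathcal{O}))$ is Galois), the sign-pinning via Lemma \ref{betaI} gives $\rho_{\mathcal{O},\,N}(\widetilde\sigma)=\pm mI_2$, whence $\rho_{\mathcal{O},\,N}(\widetilde\sigma^{\,2})=m^2I_2=-I_2$. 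Therefore $\langle-I_2\rangle\subseteq\mathrm{Im}(\rho_{\mathcal{O},\,N})$, the index computed in part (i) collapses to $1$, and $\mathrm{Im}(\rho_{\mathcal{O},\,N})=\widehat{W}_{\mathcal{O},\,N}$.
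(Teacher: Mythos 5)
Your proof is correct, and its skeleton matches the paper's: containment of the image in $\widehat{W}_{\mathcal{O},\,N}$ is obtained in both cases by realizing each $\sigma$ as (an extension of) an element of $\mathrm{Gal}(K_{\mathcal{O},\,N}/H_\mathcal{O})$ possibly composed with complex conjugation, converting this to matrices via Propositions \ref{reciprocity}, \ref{Frickefamily} and Remark \ref{Xcomplex}, and pinning the sign with Lemma \ref{betaI}; and part (ii) in both treatments ultimately rests on the scalar matrices $\pm tI_2$ with $t^2\equiv-1\Mod{N}$, whose square is $-I_2$. Where you genuinely diverge is in the bookkeeping, and your version is leaner in two respects. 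First, for the index bound in (i) the paper computes $|\mathrm{Im}(\rho_{\mathcal{O},\,N})|=[\mathbb{Q}(E_\mathcal{O}[N]):\mathbb{Q}(j(E_\mathcal{O}))]$ case by case via Theorem \ref{maingenerate}, which drags in the $Y$-coordinate analysis (Lemma \ref{Yratio}); you instead prove surjectivity of $q\circ\rho_{\mathcal{O},\,N}$ onto $\widehat{W}_{\mathcal{O},\,N}/\langle-I_2\rangle$ using only $X$-coordinate field degrees (Lemma \ref{maximalreal}, Proposition \ref{Xgenerate}, (\ref{WUOG})) and then the abstract identity $[\widehat{W}_{\mathcal{O},\,N}:\mathrm{Im}(\rho_{\mathcal{O},\,N})]=|\langle-I_2\rangle|/|\mathrm{Im}(\rho_{\mathcal{O},\,N})\cap\langle-I_2\rangle|\le 2$, so your part (i) is independent of Theorem \ref{maingenerate}. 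Second, for (ii) the paper argues by contradiction (if $R\neq\widehat{W}_{\mathcal{O},\,N}$ then $-I_2\notin R$ and $\langle R,\,-I_2\rangle=\widehat{W}_{\mathcal{O},\,N}$, forcing one of $\pm tI_2$ into $R$, whose square $-I_2$ gives the contradiction), and this requires the intermediate facts (\ref{index2})--(\ref{N3QK}), in particular $\mathbb{Q}(E_\mathcal{O}[N])=K_{\mathcal{O},\,N}$; you construct $-I_2$ in the image directly by lifting $mI_2\in W_{\mathcal{O},\,N}$ through the reciprocity isomorphism to $\widetilde{\sigma}$, pinning $\rho_{\mathcal{O},\,N}(\widetilde{\sigma})=\pm mI_2$, and squaring, which bypasses that machinery. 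One presentational caution: your phrase that the $G$-action on $\{X_\mathbf{v}\}$ ``factors through $\mathrm{Gal}(K_{\mathcal{O},\,N}/\mathbb{Q}(j(\tau_\mathcal{O})))$'' needs interpreting when $N=2$ and $D_\mathcal{O}\equiv0\Mod{4}$, since then $K_{\mathcal{O},\,2}\not\subseteq\mathbb{Q}(E_\mathcal{O}[2])=R_{\mathcal{O},\,2}$; one must first extend $\sigma$ to $K(E_\mathcal{O}[N])$ and restrict back, exactly the gloss the paper itself makes at (\ref{smmc}), so this is a wording issue rather than a gap.
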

\begin{proof}
\begin{enumerate}
\item[(i)] Let $\sigma\in\mathrm{Gal}(\mathbb{Q}(E_\mathcal{O}[N])/\mathbb{Q}(j(E_\mathcal{O})))$
and $\gamma=\rho_{\mathcal{O},\,N}(\sigma)$. We get by (\ref{XsX}) that
\begin{equation}\label{XsXvg}
X_\mathbf{v}^\sigma=X_{\mathbf{v}\gamma}
\quad(\mathbf{v}\in W_N).
\end{equation}
Recall that
$j(E_\mathcal{O})=j(\tau_\mathcal{O})\in\mathbb{R}$ and $K(j(E_\mathcal{O}))=H_\mathcal{O}$
by Proposition \ref{CM} (i).
Moreover, since $K$ is imaginary quadratic, we have
\begin{equation}\label{smmc}
\sigma=
\mu|_{\mathbb{Q}(E_\mathcal{O}[N])}
~\textrm{or}~
\mathfrak{c}|_{\mathbb{Q}(E_\mathcal{O}[N])}
\mu|_{\mathbb{Q}(E_\mathcal{O}[N])}
\quad\textrm{for some}~\mu\in
\mathrm{Gal}(K(E_\mathcal{O}[N])/H_\mathcal{O})
\end{equation}
where $\mathfrak{c}$ is the complex conjugation on $\mathbb{C}$.
Then we see that
\begin{eqnarray*}
X_\mathbf{v}^\sigma&=&-\frac{1}{2^73^3}f_\mathbf{v}(\tau_\mathcal{O})^\sigma\quad\textrm{by
(\ref{Xf})}\\
&=&
-\frac{1}{2^73^3}f_{\mathbf{v}\alpha}(\tau_\mathcal{O})
\quad\textrm{for some}~\alpha\in\widehat{W}_{\mathcal{O},\,N}\\
&&\hspace{2.8cm}\textrm{by (\ref{smmc}), Propositions \ref{Frickefamily}, \ref{reciprocity} and Lemma \ref{complex1}}\\
&=&X_{\mathbf{v}\alpha}\quad\textrm{again by (\ref{Xf})}.
\end{eqnarray*}
And, it follows from (\ref{XsXvg}) that
\begin{equation*}
X_{\mathbf{v}\gamma}=X_{\mathbf{v}\alpha}\quad(\mathbf{v}\in W_N)
\end{equation*}
and so
\begin{equation*}
\mathbf{v}\gamma\equiv\mathbf{v}\alpha~
\textrm{or}~-\mathbf{v}\alpha\Mod{M_{1,\,2}(\mathbb{Z})}\quad(\mathbf{v}\in W_N)
\end{equation*}
by Lemma \ref{basicXY} (i).
Thus we obtain by Lemma \ref{betaI} that
\begin{equation*}
\gamma=\alpha~\textrm{or}~-\alpha~\textrm{in}~\mathrm{GL}_2(\mathbb{Z}/N\mathbb{Z}).
\end{equation*}
This observation asserts that the image of
$\rho_{\mathcal{O},\,N}$ is a subgroup of $\widehat{W}_{K,\,N}$.
Furthermore, we find that
\begin{eqnarray*}
&&|\rho_{\mathcal{O},\,N}(\mathrm{Gal}(\mathbb{Q}(E_\mathcal{O}[N])/\mathbb{Q}(j(E_\mathcal{O}))))|\\&=&
[\mathbb{Q}(E_\mathcal{O}[N]):\mathbb{Q}(j(E_\mathcal{O}))]
\quad\textrm{since $\rho_{\mathcal{O},\,N}$ is injective}\\
&=&\left\{\begin{array}{ll}
\phantom{2}[K_{\mathcal{O},\,N}:H_\mathcal{O}] & \textrm{if}~N=2~\textrm{and}~D_\mathcal{O}\equiv0\Mod{4},\\
2[K_{\mathcal{O},\,N}:H_\mathcal{O}] & \textrm{if}~N=2~\textrm{and}~D_\mathcal{O}\equiv1\Mod{4},\\
2[K_{\mathcal{O},\,N}:H_\mathcal{O}] ~\textrm{or}~
4[K_{\mathcal{O},\,N}:H_\mathcal{O}] & \textrm{if}~N\geq3
\end{array}\right.\\
&&\hspace{3cm}\textrm{by Proposition \ref{CM} (i), the fact $j(E_\mathcal{O})\in\mathbb{R}$ and Theorem \ref{maingenerate}}\\
&=&
|W_{\mathcal{O},\,N}/\{I_2,\,-I_2\}|\times
\left\{\begin{array}{ll}
1& \textrm{if}~N=2~\textrm{and}~D_\mathcal{O}\equiv0\Mod{4},\\
2 & \textrm{if}~N=2~\textrm{and}~D_\mathcal{O}\equiv1\Mod{4},\\
2~\textrm{or}~4& \textrm{if}~N\geq3,
\end{array}\right.\\
&&\hspace{3cm}\textrm{by Proposition \ref{reciprocity} because the assumption $D_\mathcal{O}\neq-3,\,-4$ yields}\\
&&\hspace{3cm}\textrm{that $\mathcal{O}^*=\{1,\,-1\}$ (cf. \cite[Exercise 5.9]{Cox})}\\
&=&
|W_{\mathcal{O},\,N}|\times
\left\{\begin{array}{ll}
1& \textrm{if}~N=2~\textrm{and}~D_\mathcal{O}\equiv0\Mod{4},\\
2 & \textrm{if}~N=2~\textrm{and}~D_\mathcal{O}\equiv1\Mod{4},\\
1~\textrm{or}~2& \textrm{if}~N\geq3\\
\end{array}\right.\\
&=&|\widehat{W}_{\mathcal{O},\,N}|\times
\left\{\begin{array}{ll}
1& \textrm{if}~N=2,\\
\frac{1}{2}~\textrm{or}~1& \textrm{if}~N\geq3.
\end{array}\right.
\end{eqnarray*}
Hence the image of $\rho_{\mathcal{O},\,N}$ is a subgroup of $\widehat{W}_{\mathcal{O},\,N}$
of index at most $2$.
\item[(ii)] Let $R$ be the image of $\rho_{\mathcal{O},\,N}$.
Suppose on the contrary that $R\neq\widehat{W}_{\mathcal{O},\,N}$.
Then we derive by (i) and its proof that
\begin{equation}\label{index2}
[\widehat{W}_{\mathcal{O},\,N}:R]=2
\end{equation}
and
\begin{equation}\label{N3QK}
N\geq3\quad\textrm{and}\quad
\mathbb{Q}(E_\mathcal{O}[N])=K_{\mathcal{O},\,N}.
\end{equation}
If $R$ contains $-I_2$ and so
\begin{equation}\label{-I}
-I_2=\rho_{\mathcal{O},\,N}(\sigma)
\quad\textrm{for some}~\sigma\in\mathrm{Gal}(\mathbb{Q}(E_\mathcal{O}[N])/
\mathbb{Q}(j(E_\mathcal{O}))),
\end{equation}
then we see that for $\mathbf{v}\in W_N$
\begin{eqnarray*}
X_\mathbf{v}^\sigma&=&X_{\mathbf{v}(-I_2)}\quad\textrm{by (\ref{XsX})}\\
&=&X_\mathbf{v}\quad\textrm{by Lemma \ref{basicXY} (i)}.
\end{eqnarray*}
This claims by Proposition \ref{Xgenerate} (iii) that $\sigma=\mathrm{id}_{K_{\mathcal{O},\,N}}$, which contradicts (\ref{N3QK}) and (\ref{-I}).
Thus we should have
\begin{equation}\label{Rnot-I}
R\not\ni-I_2,
\end{equation}
and hence
\begin{equation}\label{R-I}
\langle R,\,-I_2\rangle=
\widehat{W}_{\mathcal{O},\,N}
\end{equation}
by (\ref{index2}).
Let $t$ be an integer such that $t^2\equiv-1\Mod{N}$. Since
$t+N\mathcal{O}\in(\mathcal{O}/N\mathcal{O})^*$,
\begin{equation}\label{Wcontain}
W_{\mathcal{O},\,N}\ni\mu_{\mathcal{O},\,N}([t])=tI_2\quad\textrm{and}\quad
W_{\mathcal{O},\,N}\ni\mu_{\mathcal{O},\,N}([-t])=-tI_2,
\end{equation}
$R$ contains at least one of $tI_2$ or $-tI_2$ by (\ref{R-I}). And it follows that
\begin{equation*}
R\ni(\pm tI_2)^2=t^2I_2=-I_2\quad\textrm{in}~\mathrm{GL}_2(\mathbb{Z}/N\mathbb{Z}),
\end{equation*}
which contradicts (\ref{Rnot-I}).
\par
Therefore we conclude that if $-1$ is a quadratic residue modulo $N$, then the image of
$\rho_{\mathcal{O},\,N}$ is the whole of $\widehat{W}_{\mathcal{O},\,N}$.
\end{enumerate}
\end{proof}

\section {An isomorphism of $\mathcal{C}_N(D_\mathcal{O})$ onto
$\mathcal{C}_N(\mathcal{O})$}\label{sect9}

By constructing a bijection between $\mathcal{C}_N(D_\mathcal{O})$ and
$\mathcal{C}_N(\mathcal{O})$,
we shall simultaneously prove that
the binary operation on $\mathcal{C}_N(D_\mathcal{O})$ given in Definition \ref{binaryoperation}
is well defined
and $\mathcal{C}_N(D_\mathcal{O})$ is isomorphic to $\mathcal{C}_N(\mathcal{O})$.

\begin{lemma}\label{QQwI}
If $Q=ax^2+bxy+cy^2\in\mathcal{Q}(D_\mathcal{O})$, then we have
\begin{equation*}
Q\in\mathcal{Q}(D_\mathcal{O},\,N)\quad\Longleftrightarrow\quad
[\omega_Q,\,1]\in I(\mathcal{O},\,N).
\end{equation*}
\end{lemma}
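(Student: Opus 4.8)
The plan is to reduce the stated equivalence to the single condition $\gcd(a,\,N)=1$ by exhibiting $[\omega_Q,\,1]$ as the inverse, inside the group $I(\mathcal{O})$, of a genuine integral $\mathcal{O}$-ideal whose norm is $a$; membership of an integral ideal in $I(\mathcal{O},\,N)$ is governed by Lemma \ref{IONMOMON}, and Lemma \ref{primelemma} then converts ``prime to $N$'' into the arithmetic condition on $a$. Concretely, I would set $\mathfrak{a}=a[\omega_Q,\,1]$, which by Lemma \ref{QaN} is a proper integral $\mathcal{O}$-ideal (so $[\omega_Q,\,1]\in I(\mathcal{O})$) with $\mathrm{N}_\mathcal{O}(\mathfrak{a})=a$.

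First I would record the identity $[\omega_Q,\,1]^{-1}=\overline{\mathfrak{a}}$ in $I(\mathcal{O})$. This comes from Lemma \ref{normbasic}(iii), which gives $\mathfrak{a}\overline{\mathfrak{a}}=\mathrm{N}_\mathcal{O}(\mathfrak{a})\mathcal{O}=a\mathcal{O}$; since $\mathfrak{a}=a[\omega_Q,\,1]$, cancelling the scalar $a$ leaves $[\omega_Q,\,1]\,\overline{\mathfrak{a}}=\mathcal{O}$, so $\overline{\mathfrak{a}}=[\omega_Q,\,1]^{-1}$. Here $\overline{\mathfrak{a}}$ is again a proper integral ideal, and $\mathrm{N}_\mathcal{O}(\overline{\mathfrak{a}})=a$ (for instance because $\mathrm{N}_\mathcal{O}(\mathfrak{a})\mathrm{N}_\mathcal{O}(\overline{\mathfrak{a}})=\mathrm{N}_\mathcal{O}(a\mathcal{O})=a^2$ by Lemma \ref{normbasic}(i),(ii)).

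With this identity in hand the proof becomes a chain of equivalences valid in both directions. Since $I(\mathcal{O},\,N)$ is a subgroup of $I(\mathcal{O})$, we have $[\omega_Q,\,1]\in I(\mathcal{O},\,N)$ if and only if $\overline{\mathfrak{a}}\in I(\mathcal{O},\,N)$. As $\overline{\mathfrak{a}}$ lies in $\mathcal{M}(\mathcal{O})$, Lemma \ref{IONMOMON} gives $\overline{\mathfrak{a}}\in I(\mathcal{O},\,N)$ if and only if $\overline{\mathfrak{a}}\in\mathcal{M}(\mathcal{O},\,N)$, that is, if and only if $\overline{\mathfrak{a}}$ is prime to $N$. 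Finally Lemma \ref{primelemma}, together with $\mathrm{N}_\mathcal{O}(\overline{\mathfrak{a}})=a$, turns this into $\gcd(a,\,N)=1$, which is exactly the condition $Q\in\mathcal{Q}(D_\mathcal{O},\,N)$.

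The main obstacle, and the reason for passing to the conjugate, is that the more obvious factorization $[\omega_Q,\,1]=(a\mathcal{O})^{-1}\mathfrak{a}$ is useless for the reverse implication: deciding whether the principal ideal $a\mathcal{O}$ lies in $I(\mathcal{O},\,N)$ is itself equivalent to $\gcd(a,\,N)=1$, so arguing along those lines is circular. Replacing $(a\mathcal{O})^{-1}\mathfrak{a}$ by $\overline{\mathfrak{a}}$ cancels the spurious principal factor and leaves a single integral ideal of norm $a$, to which Lemmas \ref{IONMOMON} and \ref{primelemma} apply cleanly in both directions.
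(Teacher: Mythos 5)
Your proof is correct, and it takes a genuinely different route from the paper's, at least in how it handles the harder direction. The paper argues the two implications asymmetrically: for the forward direction it writes $[\omega_Q,\,1]=(a[\omega_Q,\,1])(a\mathcal{O})^{-1}$ and checks via Lemma \ref{primelemma} that both factors lie in $\mathcal{M}(\mathcal{O},\,N)$; for the converse it takes a generic decomposition $[\omega_Q,\,1]=\mathfrak{b}\mathfrak{c}^{-1}$ with $\mathfrak{b},\,\mathfrak{c}\in\mathcal{M}(\mathcal{O},\,N)$, clears denominators to get $(a[\omega_Q,\,1])\mathfrak{c}=(a\mathcal{O})\mathfrak{b}$, and extracts $\gcd(a,\,N)=1$ by taking norms --- in effect re-running the norm argument that proves Lemma \ref{IONMOMON} rather than citing it. Your proof instead exploits the identity $[\omega_Q,\,1]^{-1}=\overline{a[\omega_Q,\,1]}$ coming from Lemma \ref{normbasic} (iii), which converts the whole statement into a membership question for a single \emph{integral} proper ideal of norm $a$; then Lemma \ref{IONMOMON} (which the paper proves but does not invoke here) and Lemma \ref{primelemma} give both directions at once as a chain of equivalences. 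What this buys is uniformity and economy: no case split between directions and no repeated norm computation. What the paper's version buys is that it stays entirely with the ideals $a[\omega_Q,\,1]$ and $a\mathcal{O}$ already certified by Lemma \ref{QaN}, whereas you additionally need that $\overline{a[\omega_Q,\,1]}$ is a proper integral ideal --- you assert this correctly, and it is immediate (it is invertible with inverse $\tfrac{1}{a}\,a[\omega_Q,\,1]$, and proper equals invertible per the paper's conventions; alternatively $\overline{a[\omega_Q,\,1]}=a[\omega_{Q'},\,1]$ for $Q'=ax^2-bxy+cy^2\in\mathcal{Q}(D_\mathcal{O})$, so Lemma \ref{QaN} applies directly), but it is a step the paper's route never has to mention. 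Your closing remark about the circularity of using the factorization $(a\mathcal{O})^{-1}(a[\omega_Q,\,1])$ in the reverse direction is a fair diagnosis of exactly the obstacle the paper's norm computation is designed to get around.
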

\begin{proof}
Let $\mathfrak{a}=[\omega_Q,\,1]$.
Observe by Lemma \ref{QaN} that $a\mathfrak{a}\in\mathcal{M}(\mathcal{O})$ and
$\mathrm{N}_\mathcal{O}(a\mathfrak{a})=a$.
\par
Assume that $Q\in\mathcal{Q}(D_\mathcal{O},\,N)$. Since
$\gcd(\mathrm{N}_\mathcal{O}(a\mathfrak{a}),\,N)=\gcd(a,\,N)=1$,
$a\mathfrak{a}$ lies in $\mathcal{M}(\mathcal{O},\,N)$
by Lemma \ref{primelemma}.
Therefore $\mathfrak{a}=(a\mathfrak{a})(a\mathcal{O})^{-1}$ belongs to $I(\mathcal{O},\,N)$.
\par
Conversely, assume that $\mathfrak{a}\in I(\mathcal{O},\,N)$. Then we attain
$\mathfrak{a}=\mathfrak{b}\mathfrak{c}^{-1}$ for some $\mathfrak{b},\,\mathfrak{c}\in\mathcal{M}(\mathcal{O},\,N)$
and so
\begin{equation*}
(a\mathfrak{a})\mathfrak{c}=(a\mathcal{O})\mathfrak{b}.
\end{equation*}
Taking norm on both sides, we find that
$a\mathrm{N}_\mathcal{O}(\mathfrak{c})=a^2\mathrm{N}_\mathcal{O}(\mathfrak{b})$
by the fact $\mathrm{N}_\mathcal{O}(a\mathfrak{a})=a$ and Lemma \ref{normbasic}, and hence
\begin{equation}\label{NcaNb}
\mathrm{N}_\mathcal{O}(\mathfrak{c})=a\mathrm{N}_\mathcal{O}(\mathfrak{b}).
\end{equation}
Since
$\gcd(\mathrm{N}_\mathcal{O}(\mathfrak{c}),\,N)=1$ by
the fact $\mathfrak{c}\in\mathcal{M}(\mathcal{O},\,N)$ and
Lemma \ref{primelemma}, we achieve from (\ref{NcaNb}) that
$\gcd(a,\,N)=1$. Thus $Q$ belongs to $\mathcal{Q}(D_\mathcal{O},\,N)$.
\end{proof}

\begin{lemma}\label{anotherP}
The set $P_N(\mathcal{O})$ coincides with
\begin{equation*}
P=\left\{
\frac{\nu_1}{\nu_2}\mathcal{O}~|~\nu_1,\,
\nu_2\in\mathcal{O}\setminus\{0\}~\textrm{satisfy}~
\nu_1\mathcal{O},\,\nu_2\mathcal{O}\in P(\mathcal{O},\,N)~\textrm{and}~
\nu_1\equiv\nu_2\Mod{N\mathcal{O}}\right\}.
\end{equation*}
\end{lemma}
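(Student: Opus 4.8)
The plan is to establish the two inclusions $P_N(\mathcal{O})\subseteq P$ and $P\subseteq P_N(\mathcal{O})$ separately. For the first inclusion it is convenient to check at the outset that $P$ is itself a subgroup of $I(\mathcal{O})$, so that it suffices to show $P$ contains every generator of $P_N(\mathcal{O})$. Throughout I would freely use that, by its very definition as an angle-bracket generated group, $P(\mathcal{O},\,N)=P_1(\mathcal{O},\,N)$ is a subgroup of $I(\mathcal{O})$ whose generators are the ideals $\nu\mathcal{O}$ with $\nu\in\mathcal{O}\setminus\{0\}$ and $\nu\mathcal{O}$ prime to $N$.

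First I would record the elementary observation that $\nu\equiv1\Mod{N\mathcal{O}}$ forces $\nu\mathcal{O}$ to be prime to $N$: writing $\nu=1+N\alpha$ with $\alpha\in\mathcal{O}$ gives $1=\nu-N\alpha\in\nu\mathcal{O}+N\mathcal{O}$, whence $\nu\mathcal{O}+N\mathcal{O}=\mathcal{O}$. In particular each generator $\nu\mathcal{O}$ of $P_N(\mathcal{O})$ lies in $P(\mathcal{O},\,N)$, and taking $\nu_1=\nu$, $\nu_2=1$ exhibits it as an element of $P$. To upgrade this to $P_N(\mathcal{O})\subseteq P$ I would verify that $P$ is a group: it contains $\mathcal{O}=(1/1)\mathcal{O}$; it is closed under inversion, since interchanging $\nu_1$ and $\nu_2$ preserves all three defining conditions; and it is closed under multiplication, because $(\nu_1\mu_1)\mathcal{O}=(\nu_1\mathcal{O})(\mu_1\mathcal{O})$ and $(\nu_2\mu_2)\mathcal{O}=(\nu_2\mathcal{O})(\mu_2\mathcal{O})$ remain in the group $P(\mathcal{O},\,N)$, while $\nu_1\equiv\nu_2$ and $\mu_1\equiv\mu_2\Mod{N\mathcal{O}}$ multiply to give $\nu_1\mu_1\equiv\nu_2\mu_2\Mod{N\mathcal{O}}$.

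For the reverse inclusion I would take a typical element $(\nu_1/\nu_2)\mathcal{O}\in P$. The key step is to produce a modular inverse of $\nu_2$: since $\nu_2\mathcal{O}\in P(\mathcal{O},\,N)\subseteq I(\mathcal{O},\,N)$ is an integral proper ideal, Lemma \ref{IONMOMON} places it in $\mathcal{M}(\mathcal{O},\,N)$, so $\nu_2\mathcal{O}$ is prime to $N$ and hence $\nu_2\mathcal{O}+N\mathcal{O}=\mathcal{O}$ yields $\beta\in\mathcal{O}\setminus\{0\}$ with $\nu_2\beta\equiv1\Mod{N\mathcal{O}}$. Clearing numerator and denominator against $\beta$, I would write $(\nu_1/\nu_2)\mathcal{O}=(\nu_1\beta\mathcal{O})(\nu_2\beta\mathcal{O})^{-1}$, and then use $\nu_1\equiv\nu_2\Mod{N\mathcal{O}}$ to get $\nu_1\beta\equiv\nu_2\beta\equiv1\Mod{N\mathcal{O}}$. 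Thus $\nu_1\beta\mathcal{O}$ and $\nu_2\beta\mathcal{O}$ are both generators of $P_N(\mathcal{O})$, so their quotient lies in $P_N(\mathcal{O})$.

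I expect the reverse inclusion to be the main obstacle, and specifically the device of multiplying top and bottom by a modular inverse $\beta$ of $\nu_2$: this is what converts the single weak hypothesis $\nu_1\equiv\nu_2\Mod{N\mathcal{O}}$ into the two genuine congruences $\nu_1\beta\equiv1$ and $\nu_2\beta\equiv1\Mod{N\mathcal{O}}$ that match the defining generators of $P_N(\mathcal{O})$. The remaining points are routine: the passage from ``$\nu_2\mathcal{O}\in P(\mathcal{O},\,N)$'' to ``$\nu_2\mathcal{O}$ prime to $N$'' via Lemma \ref{IONMOMON}, the nonvanishing of $\beta$, and the degenerate case $N=1$, where $N\mathcal{O}=\mathcal{O}$ makes every congruence vacuous and both $P$ and $P_N(\mathcal{O})$ reduce to $P(\mathcal{O})$.
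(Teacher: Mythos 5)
Your proposal is correct and follows essentially the same route as the paper: the inclusion $P_N(\mathcal{O})\subseteq P$ is immediate from the definitions (the paper leaves the group-closure check implicit, which you spell out), and the reverse inclusion is obtained by multiplying $\nu_1$ and $\nu_2$ by a modular inverse of $\nu_2$ so that both become $\equiv1\Mod{N\mathcal{O}}$. The only cosmetic difference is that the paper takes this inverse to be $\nu_2^{m-1}$ with $m=|(\mathcal{O}/N\mathcal{O})^*|$, whereas you produce it from $\nu_2\mathcal{O}+N\mathcal{O}=\mathcal{O}$; both rest on the same fact that $\nu_2+N\mathcal{O}$ is a unit in $\mathcal{O}/N\mathcal{O}$.
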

\begin{proof}
By the definition (\ref{PNO})
of $P_N(\mathcal{O})$, we deduce the inclusion $P_N(\mathcal{O})\subseteq P$.
\par
Now, let $\mathfrak{a}\in P$ and so
\begin{equation*}
\mathfrak{a}=
\frac{\nu_1}{\nu_2}\mathcal{O}
\quad\textrm{for some}~\nu_1,\,
\nu_2\in\mathcal{O}\setminus\{0\}~\textrm{such that}~
\left\{\begin{array}{l}
\nu_1\mathcal{O},\,\nu_2\mathcal{O}\in P(\mathcal{O},\,N),\\
\nu_1\equiv\nu_2\Mod{N\mathcal{O}}.
\end{array}\right.
\end{equation*}
Since $\nu_i\mathcal{O}$ ($i=1,\,2$) is prime to $N$, that is,
$\nu_i\mathcal{O}+N\mathcal{O}=\mathcal{O}$,
the coset $\nu_i+N\mathcal{O}$ in the quotient ring $\mathcal{O}/N\mathcal{O}$
is a unit.
If we let $m$ be the order of the unit group $(\mathcal{O}/N\mathcal{O})^*$, then
we get from the fact $\nu_1\equiv\nu_2\Mod{N\mathcal{O}}$ that
\begin{equation*}
\nu_1\nu_2^{m-1}\equiv\nu_2^m\equiv1\Mod{N\mathcal{O}}.
\end{equation*}
And we obtain that
\begin{equation*}
\mathfrak{a}=\frac{\nu_1}{\nu_2}\mathcal{O}=(\nu_1\nu_2^{m-1}\mathcal{O})
(\nu_2^m\mathcal{O})^{-1}\in P_N(\mathcal{O}),
\end{equation*}
which proves the converse inclusion $P\subseteq P_N(\mathcal{O})$.
Therefore we conclude that $P_N(\mathcal{O})=P$.
\end{proof}

\begin{proposition}\label{phibijective}
The map
\begin{eqnarray*}
\phi_{\mathcal{O},\,N}~:~\mathcal{C}_N(D_\mathcal{O})&\rightarrow&\mathcal{C}_N(\mathcal{O})\\
\mathrm{[}Q]~~ & \mapsto & [[\omega_Q,\,1]]
\end{eqnarray*}
is a well-defined bijection.
\end{proposition}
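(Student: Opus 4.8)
The plan is to verify the three requirements---that the image lands in $\mathcal{C}_N(\mathcal{O})$, well-definedness on $\Gamma_1(N)$-classes, injectivity, and surjectivity---by reducing everything to the behaviour of the single quantity $\mu=j(\gamma^{-1},\omega_Q)=p-r\omega_Q$ attached to a matrix $\gamma=\left[\begin{smallmatrix}p&q\\r&s\end{smallmatrix}\right]\in\mathrm{SL}_2(\mathbb{Z})$, together with the classical isomorphism $\mathcal{C}(D_\mathcal{O})\stackrel{\sim}{\rightarrow}\mathcal{C}(\mathcal{O})$ of Proposition \ref{Dirichlet}. The basic computation, used throughout, is that for $Q'=Q^\gamma$ one has $\omega_{Q'}=\gamma^{-1}\omega_Q$, and since $\gamma^{-1}\in\mathrm{SL}_2(\mathbb{Z})$ is a change of $\mathbb{Z}$-basis of $[\omega_Q,1]$, we get $[\omega_{Q'},1]=\mu^{-1}[\omega_Q,1]$; thus the two ideals differ precisely by the principal ideal $\mu^{-1}\mathcal{O}$. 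I will also record the two identities that make the bookkeeping work: $a\omega_Q=m+\tau_\mathcal{O}$ for an explicit $m\in\mathbb{Z}$ (so that $a\mu=(ap-rm)-r\tau_\mathcal{O}\in\mathcal{O}$), and $\mathrm{N}_{K/\mathbb{Q}}(\mu)=Q(p,r)/a$.

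For well-definedness, membership in $\mathcal{C}_N(\mathcal{O})$ is exactly Lemma \ref{QQwI}. If $\gamma\in\Gamma_1(N)$, then $r\equiv0$ and $p\equiv1\Mod{N}$, whence $a\mu\equiv a\Mod{N\mathcal{O}}$; applying Lemma \ref{anotherP} with $\nu_1=a$ and $\nu_2=a\mu$ (both prime to $N$, since $a\mu\equiv a$ is a unit modulo $N\mathcal{O}$) shows $\mu^{-1}\mathcal{O}=(\nu_1/\nu_2)\mathcal{O}\in P_N(\mathcal{O})$, so $[[\omega_{Q'},1]]=[[\omega_Q,1]]$ (that $\Gamma_1(N)$ preserves $\mathcal{Q}(D_\mathcal{O},N)$ was already noted in $\S$\ref{sect5}). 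For injectivity, suppose $\phi_{\mathcal{O},N}([Q])=\phi_{\mathcal{O},N}([Q'])$. Projecting to $\mathcal{C}(\mathcal{O})$ and using Proposition \ref{Dirichlet} gives $Q'=Q^\gamma$ for some $\gamma\in\mathrm{SL}_2(\mathbb{Z})$; the displayed computation then yields $\mu^{-1}\mathcal{O}\in P_N(\mathcal{O})$, which by Lemma \ref{anotherP} forces $a\mu\equiv a\eta\Mod{N\mathcal{O}}$ for some unit $\eta\in\mathcal{O}^*$. Replacing $\gamma$ by $u\gamma$ for a suitable $u\in\mathrm{Stab}_{\mathrm{SL}_2(\mathbb{Z})}(Q)\cong\mathcal{O}^*$ (which does not change $Q^\gamma$) multiplies $\mu$ by the automorphy factor $j(u^{-1},\omega_Q)$, ranging over all of $\mathcal{O}^*$, so I may arrange $a\mu\equiv a\Mod{N\mathcal{O}}$. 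Comparing the $\tau_\mathcal{O}$- and constant coordinates then gives $r\equiv0$ and $p\equiv1\Mod{N}$, hence $s\equiv1\Mod{N}$ from $\det\gamma=1$; the adjusted $\gamma$ lies in $\Gamma_1(N)$, so $Q'\sim_{\Gamma_1(N)}Q$.

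For surjectivity, take $C\in\mathcal{C}_N(\mathcal{O})$ and, by Remark \ref{CCOMN}, a proper ideal $\mathfrak{a}$ prime to $N$ in $C$; by Proposition \ref{Dirichlet} and Lemma \ref{Qgprime} I find $Q_1\in\mathcal{Q}(D_\mathcal{O},N)$ with $\mathfrak{a}=\lambda[\omega_{Q_1},1]$ for some $\lambda\in K^*$, so that $C=[\lambda\mathcal{O}]\,\phi_{\mathcal{O},N}([Q_1])$ with $[\lambda\mathcal{O}]$ lying in the kernel of the natural surjection $\mathcal{C}_N(\mathcal{O})\rightarrow\mathcal{C}(\mathcal{O})$, a group I identify with $(\mathcal{O}/N\mathcal{O})^*/\pi_{\mathcal{O},N}(\mathcal{O}^*)$. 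The task is to realize this prescribed unit class as $\overline{a\mu}$ for an admissible $\gamma$: solving the linear congruences $r\equiv-y$ and $ap\equiv x+rm\Mod{N}$ coming from $a\mu=(ap-rm)-r\tau_\mathcal{O}$ produces a first column $(p,r)$ which (because the target is a unit) is primitive modulo $N$ and hence completes to a matrix in $\mathrm{SL}_2(\mathbb{Z})$; the identity $\mathrm{N}_{K/\mathbb{Q}}(\mu)=Q_1(p,r)/a$ guarantees $Q:=Q_1^\gamma\in\mathcal{Q}(D_\mathcal{O},N)$, and then $\phi_{\mathcal{O},N}([Q])=C$.

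I expect the main obstacle to be this realization step together with the unit bookkeeping: correctly identifying the kernel of $\mathcal{C}_N(\mathcal{O})\rightarrow\mathcal{C}(\mathcal{O})$ with $(\mathcal{O}/N\mathcal{O})^*/\pi_{\mathcal{O},N}(\mathcal{O}^*)$, transporting the ideal condition $\mu^{\pm1}\mathcal{O}\in P_N(\mathcal{O})$ through Lemma \ref{anotherP} into congruences on $(p,r)$, and---since $\S$\ref{sect9} does not exclude $D_\mathcal{O}=-3,-4$---handling the larger unit groups $\mathcal{O}^*$ when absorbing the ambiguity coming from $\mathrm{Stab}_{\mathrm{SL}_2(\mathbb{Z})}(Q)$.
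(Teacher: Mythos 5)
Your proposal is correct and takes essentially the same approach as the paper: both proofs control the automorphy factor $j(\gamma^{\pm 1},\omega_Q)$ modulo $N\mathcal{O}$ (via $a\omega_Q\in\tau_\mathcal{O}+\mathbb{Z}$ and $\gcd(a,\,N)=1$), reduce injectivity to Proposition \ref{Dirichlet} while absorbing the $\mathcal{O}^*$-ambiguity (the paper's matrix $\alpha$ constructed from the lattice relation is precisely your stabilizer adjustment $\gamma\mapsto u\gamma$), and prove surjectivity by splitting $\mathcal{C}_N(\mathcal{O})$ along the kernel of $\mathcal{C}_N(\mathcal{O})\rightarrow\mathcal{C}(\mathcal{O})$ and realizing prescribed principal classes as automorphy factors of $\mathrm{SL}_2(\mathbb{Z})$-matrices satisfying congruences modulo $N$. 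The only differences are bookkeeping: you invoke Lemma \ref{anotherP} where the paper uses the $a^{\varphi(N)}$-power trick, and you solve congruences for a first column where the paper approximates a generator $u\omega_{Q_i}+v$ by the bottom row of a matrix in $\mathrm{SL}_2(\mathbb{Z})$.
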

\begin{proof}
First, we shall show that $\phi_{\mathcal{O},\,N}$ is well defined.
Let $Q=ax^2+bxy+cy^2\in\mathcal{Q}(D_\mathcal{O},\,N)$ and $\gamma\in\Gamma_1(N)$
with $\gamma^{-1}=\begin{bmatrix}p&q\\r&s\end{bmatrix}$ ($\in\Gamma_1(N)$).
Then we see that
\begin{equation}\label{wQg}
[\omega_{Q^\gamma},\,1]=[\gamma^{-1}(\omega_Q),\,1]=
\frac{1}{j(\gamma^{-1},\,\omega_Q)}[\omega_Q,\,1].
\end{equation}
Since $\gcd(a,\,N)=1$, we have
\begin{equation}\label{ap1}
a^{\varphi(N)}\equiv1\Mod{N}
\end{equation}
where $\varphi$ is the Euler totient function.
Furthermore, since
\begin{equation*}
b^2-4ac=D_\mathcal{O}=b_\mathcal{O}^2-4c_\mathcal{O},
\end{equation*}
$b$ and $b_\mathcal{O}$ have the same parity.
So we find that
\begin{equation}\label{aj}
a^{\varphi(N)}\cdot j(\gamma^{-1},\,\omega_Q)
=ra^{\varphi(N)-1}\left(\tau_\mathcal{O}+\frac{b_\mathcal{O}-b}{2}\right)+sa^{\varphi(N)}
\equiv1\Mod{N\mathcal{O}}
\end{equation}
because $r\equiv0,\,s\equiv1\Mod{N}$.
Hence we obtain by (\ref{wQg}), (\ref{ap1}) and (\ref{aj}) that
\begin{equation*}
[[\omega_{Q^\gamma},\,1]]=
[[\omega_Q,\,1]]\quad\textrm{in}~\mathcal{C}_N(\mathcal{O}),
\end{equation*}
which shows that $\phi_{\mathcal{O},\,N}$ is well defined.
\par
Second, we shall prove that $\phi_{\mathcal{O},\,N}$ is surjective.
Let $h$ be the order of the group $\mathcal{C}(\mathcal{O},\,N)$.
By Proposition \ref{Dirichlet} (ii) and Lemmas \ref{COlCO1}, \ref{Qgprime}, \ref{QQwI}, we derive that
\begin{equation*}
\mathcal{C}(\mathcal{O},\,N)=I(\mathcal{O},\,N)/P(\mathcal{O},\,N)=
\{[\omega_{Q_1},\,1]P(\mathcal{O},\,N),\,
[\omega_{Q_2},\,1]P(\mathcal{O},\,N),\,
\ldots,\,
[\omega_{Q_h},\,1]P(\mathcal{O},\,N)\}
\end{equation*}
for some $Q_i=a_ix^2+b_ixy+c_iy^2\in\mathcal{Q}(D_\mathcal{O},\,N)$ ($i=1,\,2,\,\ldots,\,h$).
Thus we deduce the decomposition
\begin{equation*}
\mathcal{C}_N(\mathcal{O})=
I(\mathcal{O},\,N)/P_N(\mathcal{O})=(P(\mathcal{O},\,N)/P_N(\mathcal{O}))\cdot
\{[\omega_{Q_i},\,1]P_N(\mathcal{O})~|~i=1,\,2,\,\ldots,\,h\}.
\end{equation*}
Now, let $C\in\mathcal{C}_N(\mathcal{O})$. By the above decomposition, we have
\begin{equation*}
C=C'\cdot[\omega_{Q_i},\,1]P_N(\mathcal{O})\quad\textrm{for some $C'\in P(\mathcal{O},\,N)/P_N(\mathcal{O})$ and $1\leq i\leq h$}.
\end{equation*}
And one can take an $\mathcal{O}$-ideal $\mathfrak{d}$ in $C'^{-1}\cap\mathcal{M}(\mathcal{O},\,N)$
by Remark \ref{CCOMN}. Since $\mathcal{O}=[a_i\omega_{Q_i},\,1]$, we achieve
\begin{equation*}
\mathfrak{d}=(ka_i\omega_{Q_i}+v)\mathcal{O}\quad\textrm{for some}~k,\,v\in\mathbb{Z}
\end{equation*}
and so
\begin{equation}\label{C1w}
C=\left[\frac{1}{u\omega_{Q_i}+v}\,[\omega_{Q_i},\,1]\right]\quad\textrm{with}~u=ka_i.
\end{equation}
Since $\gcd(u,\,v,\,N)=1$
by the facts $\mathfrak{d}\in\mathcal{M}(\mathcal{O},\,N)$
and $\gcd(a_i,\,N)=1$, we can take a matrix
$\sigma=\begin{bmatrix}
\mathrm{*} & \mathrm{*}\\
\widetilde{u}&\widetilde{v}
\end{bmatrix}$ in $\mathrm{SL}_2(\mathbb{Z})$ such that
$\widetilde{u}\equiv u,\,\widetilde{v}\equiv v\Mod{N}$.
Then we see that
\begin{equation}\label{belongP}
\frac{u\omega_{Q_i}+v}
{\widetilde{u}\omega_{Q_i}+\widetilde{v}}\,\mathcal{O}=\frac{u(a_i\omega_{Q_i})+a_iv}
{\widetilde{u}(a_i\omega_{Q_i})+a_i\widetilde{v}}\,\mathcal{O}\in P_N(\mathcal{O})
\end{equation}
by the fact $a_i\omega_{Q_i}\in\mathcal{O}$ and Lemma \ref{anotherP}.
And we establish that
\begin{eqnarray*}
C&=&\left[\frac{u\omega_{Q_i}+v}
{\widetilde{u}\omega_{Q_i}+\widetilde{v}}\,\mathcal{O}\right]C\quad\textrm{by (\ref{belongP})}\\
&=&\left[
\frac{1}{j(\sigma,\,\omega_{Q_i})}[\omega_{Q_i},\,1]\right]\quad\textrm{by (\ref{C1w})}\\
&=&\left[
[\sigma(\omega_{Q_i}),\,1]\right]\\
&=&[[\omega_{Q_i^{\sigma^{-1}}},\,1]].
\end{eqnarray*}
Thus, if we let $Q=Q_i^{\sigma^{-1}}$, then we get by Lemma \ref{QQwI} that
\begin{equation*}
Q\in\mathcal{Q}(D_\mathcal{O},\,N)\quad\textrm{and}\quad\phi_{\mathcal{O},\,N}([Q])=C.
\end{equation*}
This proves that $\phi_{\mathcal{O},\,N}$ is surjective.
\par
Third, we shall show that $\phi_{\mathcal{O},\,N}$ is injective.
Suppose that
\begin{equation*}
\phi_{\mathcal{O},\,N}([Q_1])
=\phi_{\mathcal{O},\,N}([Q_2])
\quad\textrm{for some}~Q_i=a_ix^2+b_ixy+c_iy^2\in\mathcal{Q}(D_\mathcal{O},\,N)~(i=1,\,2)
\end{equation*}
Then we have
\begin{equation}\label{wnw}
[\omega_{Q_1},\,1]=\frac{\nu_1}{\nu_2}[\omega_{Q_2},\,1]
\quad\textrm{for some}~\nu_1,\,\nu_2\in\mathcal{O}\setminus\{0\}~
\textrm{such that}~
\nu_1\equiv\nu_2\equiv1\Mod{N\mathcal{O}},
\end{equation}
which implies by Proposition \ref{Dirichlet} (ii) that
\begin{equation*}
Q_1=Q_2^\gamma\quad\textrm{for some}~\gamma\in\mathrm{SL}_2(\mathbb{Z}).
\end{equation*}
And it follows from (\ref{wnw}) that
\begin{equation*}
[\omega_{Q_1},\,1]=\frac{\nu_1}{\nu_2}[\gamma(\omega_{Q_1}),\,1]=\frac{\nu_1}{\nu_2}\cdot\frac{1}{j}[\omega_{Q_1},\,1]\quad\textrm{where}~j=j(\gamma,\,\omega_{Q_1}),
\end{equation*}
and hence
\begin{equation}\label{znn1j}
\zeta:=
\frac{\nu_1}{\nu_2}\cdot\frac{1}{j}\in\mathcal{O}^*.
\end{equation}
Now, since
$[\omega_{Q_1},\,1]=\zeta j[\omega_{Q_2},\,1]$
by (\ref{wnw}) and $\omega_{Q_1},\,\omega_{Q_2}\in\mathbb{H}$, there is a matrix $\alpha=\begin{bmatrix}p&q\\r&s\end{bmatrix}$ in $\mathrm{SL}_2(\mathbb{Z})$ so that
\begin{equation}\label{pqrs}
\begin{bmatrix}\zeta j\omega_{Q_2}\\\zeta j\end{bmatrix}
=\begin{bmatrix}p&q\\r&s\end{bmatrix}\begin{bmatrix}\omega_{Q_1}\\1\end{bmatrix}.
\end{equation}
We then find by (\ref{pqrs}) that
\begin{equation*}
\omega_{Q_2}=\frac{\zeta j\omega_{Q_2}}{\zeta j}
=\frac{p\omega_{Q_1}+q}{r\omega_{Q_1}+s}=\alpha(\omega_{Q_1}),
\end{equation*}
which gives
\begin{equation}\label{QQa}
Q_1=Q_2^\alpha.
\end{equation}
We get again by (\ref{pqrs}) and the fact $\nu_2\equiv1\Mod{N\mathcal{O}}$ that
\begin{equation}\label{anzj}
a_1\nu_2(\zeta j)\equiv a_1(r\omega_{Q_1}+s)\equiv r(a_1\omega_{Q_1})+a_1s
\Mod{N\mathcal{O}}.
\end{equation}
On the other hand, we see by (\ref{znn1j}) and the fact $\nu_1\equiv1\Mod{N\mathcal{O}}$ that
\begin{equation}\label{anzj'}
a_1\nu_2(\zeta j)=
a_1\nu_1\equiv a_1\Mod{N\mathcal{O}}.
\end{equation}
Thus we obtain by (\ref{anzj}) and (\ref{anzj'}) that
\begin{equation*}
r(a_1\omega_{Q_1})+a_1s\equiv a_1\Mod{N\mathcal{O}},
\end{equation*}
from which it follows that
\begin{equation}\label{rawas10}
r(a_1\omega_{Q_1})+a_1(s-1)\equiv0\Mod{N\mathcal{O}}.
\end{equation}
Since $N\mathcal{O}=[N(a_1\omega_{Q_1}),\,N]$ and $\gcd(a_1,\,N)=1$, (\ref{rawas10}) implies that $r\equiv0,\,s\equiv1\Mod{N}$ and hence $\alpha\in\Gamma_1(N)$. Therefore we achieve by (\ref{QQa}) that
\begin{equation*}
[Q_1]=[Q_2]\quad\textrm{in}~\mathcal{C}_N(D_\mathcal{O}),
\end{equation*}
which proves the injectivity of $\phi_{\mathcal{O},\,N}$.
\end{proof}

\begin{theorem}\label{formideal}
The form class group $\mathcal{C}_N(D_\mathcal{O})$ is isomorphic to
the ideal class group $\mathcal{C}_N(\mathcal{O})$.
\end{theorem}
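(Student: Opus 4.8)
The plan is to promote the bijection of Proposition~\ref{phibijective} to a group isomorphism, which will at one stroke show that the operation of Definition~\ref{binaryoperation} is well defined and that it turns $\mathcal{C}_N(D_\mathcal{O})$ into a group. Recall that $\mathcal{C}_N(\mathcal{O})$ is already known to be a finite abelian group under ideal multiplication (Remark~\ref{CNOIP}) and that $\phi_{\mathcal{O},N}\colon[Q]\mapsto[[\omega_Q,1]]$ is a well-defined bijection. Since a bijection onto a group pulls the group law back to its source, it suffices to establish the single identity
\begin{equation*}
\phi_{\mathcal{O},N}(CC')=\phi_{\mathcal{O},N}(C)\,\phi_{\mathcal{O},N}(C')\qquad(C,C'\in\mathcal{C}_N(D_\mathcal{O})),
\end{equation*}
with $CC'$ produced by the recipe of Definition~\ref{binaryoperation}. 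The right-hand side depends only on the classes $C$ and $C'$, so once the identity holds for every admissible choice of the data $Q\in C$, $Q'\in C'$, $\gamma$, $Q'''$ and $\sigma$, the injectivity of $\phi_{\mathcal{O},N}$ forces the class $[(Q''')^{\sigma^{-1}}]$ to be independent of those choices. This simultaneously delivers well-definedness of the operation, the homomorphism property, and the conclusion that $\phi_{\mathcal{O},N}$ is an isomorphism onto the finite abelian group $\mathcal{C}_N(\mathcal{O})$.

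To verify the identity I would unwind both sides into fractional $\mathcal{O}$-ideals. With $C=[Q]$, $C'=[Q']$, $Q''=Q'^\gamma$ and $Q'''$ a Dirichlet composition of $Q$ and $Q''$, Lemma~\ref{B}(ii) gives $[\omega_{Q'''},1]=[\omega_Q,1][\omega_{Q''},1]$. The transformation law $[\omega_{Q^\delta},1]=j(\delta^{-1},\omega_Q)^{-1}[\omega_Q,1]$ for $\delta\in\mathrm{SL}_2(\mathbb{Z})$ (used in the proof of Proposition~\ref{phibijective}), together with the cocycle relation for $j$, yields $[\omega_{Q''},1]=j(\gamma,\omega_{Q''})[\omega_{Q'},1]$ and $[\omega_{(Q''')^{\sigma^{-1}}},1]=j(\sigma,\omega_{Q'''})^{-1}[\omega_{Q'''},1]$. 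Combining these identities gives
\begin{equation*}
\phi_{\mathcal{O},N}(CC')=\bigl[\,\lambda\,[\omega_Q,1][\omega_{Q'},1]\,\bigr],\qquad
\lambda=\frac{j(\gamma,\omega_{Q''})}{j(\sigma,\omega_{Q'''})},
\end{equation*}
while $\phi_{\mathcal{O},N}(C)\phi_{\mathcal{O},N}(C')=[[\omega_Q,1][\omega_{Q'},1]]$. Hence the entire theorem reduces to the single assertion that $\lambda\mathcal{O}\in P_N(\mathcal{O})$.

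This membership is precisely what the auxiliary data of Definition~\ref{binaryoperation} is engineered to secure, and I expect it to be the main obstacle. Because $\{1,\omega_{Q'''}\}$ is linearly independent over $\mathbb{Q}$, the relation $u\nu_1+v\nu_2=1$ pins $(u,v)$ down to the bottom row $(r,s)$ of $\gamma$; consequently the bottom row of $\sigma$ is congruent to $(r,s)$ modulo $N$, and $j(\sigma,\omega_{Q'''})\equiv j(\gamma,\omega_{Q'''})\pmod{N[\omega_{Q'''},1]}$. On the other hand, the shape of the Dirichlet composition together with the congruence $B\equiv b''\Mod{2a''}$ from Lemma~\ref{B}(i) produces the key relation $\omega_{Q''}=a\omega_{Q'''}+m$ for some $m\in\mathbb{Z}$, so that $j(\gamma,\omega_{Q''})=ra\,\omega_{Q'''}+(rm+s)$. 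Feeding these two facts into $\lambda$, using that $\gcd(a,N)=1$ because $Q\in\mathcal{Q}(D_\mathcal{O},N)$, I would clear denominators, cancel the common factors forced by the composition, and represent $\lambda$ as a quotient $\nu_1/\nu_2$ with $\nu_1,\nu_2\in\mathcal{O}$. The concluding step is to confirm, via the flexible description of $P_N(\mathcal{O})$ in Lemma~\ref{anotherP}---which permits adjusting $\nu_1$ and $\nu_2$ by units of $\mathcal{O}^*$---that one may arrange $\nu_1\equiv\nu_2\pmod{N\mathcal{O}}$ with $\nu_1\mathcal{O},\nu_2\mathcal{O}\in P(\mathcal{O},N)$, whence $\lambda\mathcal{O}\in P_N(\mathcal{O})$. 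I anticipate that this congruence bookkeeping modulo $N\mathcal{O}$---reconciling the two automorphy factors, which live at the distinct quadratic irrationalities $\omega_{Q''}$ and $\omega_{Q'''}$, after clearing denominators---is the only genuinely delicate point, every other step being a formal transport of structure through $\phi_{\mathcal{O},N}$.
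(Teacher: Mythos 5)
Your global strategy coincides with the paper's: prove $\phi_{\mathcal{O},\,N}(CC')=\phi_{\mathcal{O},\,N}(C)\,\phi_{\mathcal{O},\,N}(C')$ for every admissible choice of the data and let the injectivity of the bijection from Proposition \ref{phibijective} deliver well-definedness and the isomorphism simultaneously; your reduction of the whole theorem to the single membership $\lambda\mathcal{O}\in P_N(\mathcal{O})$, with $\lambda=j(\gamma,\,\omega_{Q''})/j(\sigma,\,\omega_{Q'''})$, is exactly the paper's reduction. But your treatment of that membership---which is the actual content of the theorem---contains an inconsistency and a genuine gap. The inconsistency: you assert both that $u\nu_1+v\nu_2=1$ forces $(u,\,v)=(r,\,s)$ and that $j(\gamma,\,\omega_{Q''})=ra\,\omega_{Q'''}+(rm+s)$. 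With the normalization the paper's proof actually uses, namely $\nu_1=\omega_{Q'''}/j(\gamma,\,\omega_{Q''})$ and $\nu_2=1/j(\gamma,\,\omega_{Q''})$ (the normalization for which $[\nu_1,\,\nu_2]=[\omega_Q,\,1][\omega_{Q'},\,1]$ contains $1$, so that $(u,\,v)$ exists at all), the relation $u\nu_1+v\nu_2=1$ reads $u\omega_{Q'''}+v=j(\gamma,\,\omega_{Q''})$, and your own second identity then gives $(u,\,v)=(ra,\,rm+s)$, not $(r,\,s)$; the correct congruence is $j(\sigma,\,\omega_{Q'''})\equiv j(\gamma,\,\omega_{Q''})$, not $j(\gamma,\,\omega_{Q'''})$, modulo $N[\omega_{Q'''},\,1]$. (Your $(u,\,v)=(r,\,s)$ comes from reading Definition \ref{binaryoperation} literally with $j(\gamma,\,\omega_{Q'''})$; under that reading the numerator and denominator of $\lambda$ are \emph{not} congruent modulo $N[\omega_{Q'''},\,1]$ and the membership can no longer be established---the paper's own proof silently works with $j(\gamma,\,\omega_{Q''})$. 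This ambiguity must be resolved, not straddled.)

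The gap: even after this repair, your concluding plan---``clear denominators, cancel the common factors forced by the composition, apply Lemma \ref{anotherP}''---fails as described. The only evident integral scalar clearing both denominators is $aa''$, the leading coefficient of $Q'''$; but Definition \ref{binaryoperation} invokes Lemma \ref{Qgprime} only to arrange $\gcd(a,\,a'',\,(b+b'')/2)=1$, so $\gcd(a'',\,N)=1$ is \emph{not} guaranteed, and since $\mathrm{N}_{K/\mathbb{Q}}\left(aa''\,j(\gamma,\,\omega_{Q''})\right)=a^2a'a''$, whenever $\gcd(a'',\,N)>1$ neither scaled element generates an ideal prime to $N$ (Lemma \ref{primelemma}), so Lemma \ref{anotherP} is inapplicable to that pair; ``cancelling'' the offending factor inside a non-maximal order is precisely the step one cannot do naively. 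The paper's key maneuver, absent from your sketch, avoids $a''$ and the explicit form of $(u,\,v)$ altogether: scale by $aa'$, noting $aa'[\nu_1,\,\nu_2]=(a[\omega_Q,\,1])(a'[\omega_{Q'},\,1])\in\mathcal{M}(\mathcal{O},\,N)$ by Lemmas \ref{QaN} and \ref{primelemma}---this is where $\gcd(a',\,N)=1$, which you never invoke, is indispensable---and then use the exact relation $u\nu_1+v\nu_2=1$ to get $\lambda=aa'/\mu$ with $\mu=u'(aa'\nu_1)+v'(aa'\nu_2)\in\mathcal{O}$ and $\mu-aa'=(u'-u)(aa'\nu_1)+(v'-v)(aa'\nu_2)\in N\mathcal{O}$. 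Since $\gcd(aa',\,N)=1$, Lemma \ref{anotherP} then yields $\lambda\mathcal{O}\in P_N(\mathcal{O})$ at once. Incidentally, Lemma \ref{anotherP} does not ``permit adjusting $\nu_1$ and $\nu_2$ by units of $\mathcal{O}^*$''; it is exactly the congruence description just used, and no unit adjustment can repair the failure of primality to $N$ above.
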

\begin{proof}
Let $\phi_{\mathcal{O},\,N}:\mathcal{C}_N(D_\mathcal{O})\rightarrow\mathcal{C}_N(\mathcal{O})$ be the bijection
given in Proposition \ref{phibijective}.
Let $C,\,C'\in\mathcal{C}_N(D_\mathcal{O})$ and so
$C=[Q],\,C'=[Q']$ for some $Q=ax^2+bxy+cy^2,\,Q'=a'x^2+b'xy+c'y^2\in\mathcal{Q}(D_\mathcal{O},\,N)$, respectively.
By Lemma \ref{Qgprime}, we see that there is a matrix $\gamma$ in $\mathrm{SL}_2(\mathbb{Z})$ such that
$Q''=Q'^\gamma=a''x^2+b''xy+c''y^2$ satisfies
$\gcd(a,\,a'',\,(b+b'')/2)=1$.
We then find that
\begin{eqnarray*}
\phi_{\mathcal{O},\,N}(C)
\phi_{\mathcal{O},\,N}(C')&=&\left[
[\omega_{Q},\,1][\omega_{Q'},\,1]
\right]\\
&=&\left[
[\omega_{Q},\,1][\gamma(\omega_{Q''}),\,1]
\right]\\
&=&
\left[\frac{1}{j}[\omega_{Q},\,1][\omega_{Q''},\,1]
\right]\quad\textrm{with}~j=j(\gamma,\,\omega_{Q''})\\
&=&\left[\frac{1}{j}[\omega_{Q'''},\,1]\right]
\quad\textrm{where $Q'''$ is a Dirichlet composition of $Q$ and $Q''$}\\
&&\hspace{2.6cm}\textrm{by Lemma \ref{Dirichlet}}\\
&=&\left[[\nu_1,\,\nu_2]\right]\quad\textrm{where}~
\nu_1=\frac{\omega_{Q'''}}{j}~\textrm{and}~\nu_2=\frac{1}{j}.
\end{eqnarray*}
Since $[\omega_Q,\,1][\omega_{Q'},\,1]=[\nu_1,\,\nu_2]$ contains $1$, we derive
\begin{equation}\label{uv1}
u\nu_1+v\nu_2=1\quad
\textrm{for some unique}~(u,\,v)\in\mathbb{Z}^2.
\end{equation}
Furthermore, since
\begin{equation}\label{aa'M}
aa'[\nu_1,\,\nu_2]=(a[\omega_Q,\,1])(a'[\omega_{Q'},\,1])\in \mathcal{M}(\mathcal{O},\,N)
\end{equation}
by Lemmas \ref{primelemma}, \ref{QaN} and the fact $\gcd(N,\,a)=\gcd(N,\,a')=1$,
we must have $\gcd(N,\,u,\,v)=1$.
Since the reduction $\mathrm{SL}_2(\mathbb{Z})\rightarrow\mathrm{SL}_2(\mathbb{Z}/N\mathbb{Z})$ is surjective
(\cite[Lemma 1.38]{Shimura}), there exists a matrix
$\sigma=\begin{bmatrix}\mathrm{*}&\mathrm{*}\\
u'&v'\end{bmatrix}$ in $\mathrm{SL}_2(\mathbb{Z})$ such that
\begin{equation}\label{equivuv}
\sigma\equiv\begin{bmatrix}\mathrm{*}&\mathrm{*}\\u&v\end{bmatrix}\Mod{NM_2(\mathbb{Z})}.
\end{equation}
If we let $Q''''=Q'''^{\sigma^{-1}}$, then we find by the fact $\displaystyle\frac{\nu_1}{\nu_2}=\omega_{Q'''}$ that
\begin{equation}\label{Q''''}
[\omega_{Q''''},\,1]=[\sigma(\omega_{Q'''}),\,1]=\frac{1}{j(\sigma,\,\omega_{Q'''})}[\omega_{Q'''},\,1]
=\frac{aa'}{u'(aa'\nu_1)+v'(aa'\nu_2)}[\nu_1,\,\nu_2].
\end{equation}
Thus we establish that
\begin{eqnarray*}
\{u'(aa'\nu_1)+v'(aa'\nu_2)\}-aa'&=&\{u'(aa'\nu_1)+v'(aa'\nu_2)\}-aa'(u\nu_1+v\nu_2)
\quad\textrm{by (\ref{uv1})}\\
&=&(u'-u)(aa'\nu_1)+(v'-v)(aa'\nu_2)\\
&\in& N(aa'[\nu_1,\,\nu_2])\quad\textrm{by (\ref{equivuv})}\\
&\subseteq&N\mathcal{O}\quad\textrm{by (\ref{aa'M})},
\end{eqnarray*}
and hence
\begin{equation*}
nu'(aa'\nu_1)+v'(aa'\nu_2)\equiv aa'\Mod{N\mathcal{O}}.
\end{equation*}
And we attain by (\ref{Q''''}) and Lemma \ref{anotherP} that
\begin{equation*}
\phi_{\mathcal{O},\,N}([Q''''])=\left[[\omega_{Q''''},\,1]\right]
=\left[[\nu_1,\,\nu_2]\right]
=\phi_{\mathcal{O},\,N}(C)\phi_{\mathcal{O},\,N}(C').
\end{equation*}
Therefore the binary operation on $\mathcal{C}_N(D_\mathcal{O})$ given in Definition \ref{binaryoperation} is
well defined, which makes $\mathcal{C}_N(D_\mathcal{O})$ a group isomorphic to $\mathcal{C}_N(\mathcal{O})$
via the isomorphism $\phi_{\mathcal{O},\,N}$.
\end{proof}

\section {The Shimura reciprocity law}

We shall briefly review the original version of Shimura's reciprocity law
in order to prepare for $\S$\ref{sect11}.
\par
Let $\widehat{\mathbb{Z}}=\displaystyle\prod_{p\,:\,\textrm{primes}}\mathbb{Z}_p$ and
$\widehat{\mathbb{Q}}=\mathbb{Q}\otimes_\mathbb{Z}\widehat{\mathbb{Z}}$.
Note that
\begin{equation*}
\mathrm{GL}_2(\widehat{\mathbb{Q}})=
\left\{\gamma=(\gamma_p)_p\in\prod_p\mathrm{GL}_2(\mathbb{Q}_p)~|~
\gamma_p\in\mathrm{GL}_2(\mathbb{Z}_p)~\textrm{for all but finitely many $p$}\right\}
\end{equation*}
(\cite[Exercise 15.6]{Cox}).
Let $\mathcal{F}$ be the field of all meromorphic modular functions, namely,
\begin{equation*}
\mathcal{F}=\displaystyle\bigcup_{N=1}^\infty\mathcal{F}_N.
\end{equation*}

\begin{proposition}\label{idelicF}
There is a
surjective homomorphism
\begin{equation*}
\sigma_\mathcal{F}~:~\mathrm{GL}_2(\widehat{\mathbb{Q}})\rightarrow\mathrm{Aut}(\mathcal{F})
\end{equation*}
described as follows\,\textup{:} Let $\gamma\in\mathrm{GL}_2(\widehat{\mathbb{Q}})$ and
$f\in\mathcal{F}_N$ for some positive integer $N$.
One can decompose $\gamma$ as
\begin{equation*}
\gamma=\alpha\beta\quad\textrm{for some}~\alpha=(\alpha_p)_p\in\mathrm{GL}_2(\widehat{\mathbb{Z}})
~\textrm{and}~\beta\in\mathrm{GL}_2^+(\mathbb{Q})
\end{equation*}
\textup{(\cite[Theorem 1 in Chapter 7]{Lang87})}.
By using the Chinese remainder theorem, take a matrix $A$ in $M_2(\mathbb{Z})$ such that $A\equiv\alpha_p\Mod{N M_2(\mathbb{Z}_p)}$ for all
primes $p$ dividing $N$. Then we have
\begin{equation*}
f^{\sigma_\mathcal{F}(\gamma)}=f^{\widetilde{A}}\circ\beta
\end{equation*}
where $\widetilde{A}$ is the image of $A$ in $\mathrm{GL}_2(\mathbb{Z}/N\mathbb{Z})/\langle-I_2\rangle$
\textup{(}$\simeq\mathrm{Gal}(\mathcal{F}_N/\mathcal{F}_1)$\textup{)}
and $\beta$ is regarded as a fractional linear transformation on $\mathbb{H}$.
\end{proposition}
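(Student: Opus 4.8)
The plan is to construct $\sigma_\mathcal{F}$ one level at a time and then patch the pieces together, using the two distinguished actions already available: the Galois action of $\mathrm{GL}_2(\mathbb{Z}/N\mathbb{Z})/\langle -I_2\rangle$ on $\mathcal{F}_N$ from Proposition \ref{SL2action}, and the action of $\mathrm{GL}_2^+(\mathbb{Q})$ by fractional linear transformations $f\mapsto f\circ\beta$. Fix $\gamma\in\mathrm{GL}_2(\widehat{\mathbb{Q}})$ and $f\in\mathcal{F}_N$. Using the decomposition $\gamma=\alpha\beta$ with $\alpha=(\alpha_p)_p\in\mathrm{GL}_2(\widehat{\mathbb{Z}})$ and $\beta\in\mathrm{GL}_2^+(\mathbb{Q})$ (as recalled in the statement), and the Chinese remainder theorem to pick $A\in M_2(\mathbb{Z})$ with $A\equiv\alpha_p\pmod{NM_2(\mathbb{Z}_p)}$ for $p\mid N$, I would set $f^{\sigma_\mathcal{F}(\gamma)}=f^{\widetilde A}\circ\beta$. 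Since each $\alpha_p$ is a $p$-adic unit matrix, $\det A$ is prime to $N$ and $\widetilde A\in\mathrm{GL}_2(\mathbb{Z}/N\mathbb{Z})$, so $f^{\widetilde A}$ is meaningful via Proposition \ref{SL2action}. The bulk of the work is then to verify that this prescription (i) does not depend on the representative $A$ nor on the decomposition $\gamma=\alpha\beta$, (ii) does not depend on the level $N$ with $f\in\mathcal{F}_N$, (iii) defines a group homomorphism, and (iv) is surjective onto $\mathrm{Aut}(\mathcal{F})$.

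For (i), independence of $A$ is immediate because two admissible choices agree modulo $N$ and the Galois action on $\mathcal{F}_N$ factors through $\mathrm{GL}_2(\mathbb{Z}/N\mathbb{Z})/\langle -I_2\rangle$. The crucial point is independence of the decomposition. If $\alpha\beta=\alpha'\beta'$, then $\delta:=\alpha^{-1}\alpha'=\beta(\beta')^{-1}$ lies in $\mathrm{GL}_2(\widehat{\mathbb{Z}})\cap\mathrm{GL}_2^+(\mathbb{Q})=\mathrm{SL}_2(\mathbb{Z})$. Writing $A'\equiv A\delta\pmod{NM_2(\mathbb{Z})}$ for the representative attached to $\alpha'=\alpha\delta$, the right-action property gives $f^{\widetilde{A'}}=\left(f^{\widetilde A}\right)^{\widetilde\delta}$, and here Proposition \ref{SL2action} supplies the compatibility $\left(f^{\widetilde A}\right)^{\widetilde\delta}=f^{\widetilde A}\circ\delta$. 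Hence
\[
f^{\widetilde{A'}}\circ\beta'=\bigl(f^{\widetilde A}\circ\delta\bigr)\circ(\delta^{-1}\beta)=f^{\widetilde A}\circ\beta,
\]
so the value is well defined. This identity $f^{\widetilde\delta}=f\circ\delta$ on $\mathrm{SL}_2(\mathbb{Z})$ is exactly the hinge that glues the finite and archimedean pieces, and pinning it down carefully is the main obstacle. For (ii), if $f\in\mathcal{F}_N\subseteq\mathcal{F}_M$ with $N\mid M$, the two recipes agree because the reduction $\mathrm{GL}_2(\mathbb{Z}/M\mathbb{Z})\to\mathrm{GL}_2(\mathbb{Z}/N\mathbb{Z})$ is compatible with the restriction $\mathrm{Gal}(\mathcal{F}_M/\mathcal{F}_1)\twoheadrightarrow\mathrm{Gal}(\mathcal{F}_N/\mathcal{F}_1)$ of Proposition \ref{SL2action}, and $f\mapsto f\circ\beta$ commutes with the change of level; thus $\sigma_\mathcal{F}(\gamma)$ is a well-defined automorphism of $\mathcal{F}=\bigcup_N\mathcal{F}_N$.

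For (iii), writing $\gamma_i=\alpha_i\beta_i$, one decomposes the product $\gamma_1\gamma_2$ into its $\mathrm{GL}_2(\widehat{\mathbb{Z}})$- and $\mathrm{GL}_2^+(\mathbb{Q})$-parts and checks that $\sigma_\mathcal{F}(\gamma_1)$ applied after $\sigma_\mathcal{F}(\gamma_2)$ yields the same automorphism; since the Galois action of Proposition \ref{SL2action} is itself a homomorphism and $f\mapsto f\circ\beta$ is compatible with reduction, the verification reduces once more to the identity $f^{\widetilde\delta}=f\circ\delta$ for the $\mathrm{SL}_2(\mathbb{Z})$-factor produced when the finite and archimedean parts are commuted past one another. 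For (iv), surjectivity, I would use that by Proposition \ref{SL2action} the finite part $\mathrm{GL}_2(\widehat{\mathbb{Z}})$ already surjects, level by level, onto $\mathrm{Gal}(\mathcal{F}_N/\mathcal{F}_1)\simeq\mathrm{GL}_2(\mathbb{Z}/N\mathbb{Z})/\langle -I_2\rangle$; since $\mathcal{F}_N$ is generated over $\mathbb{Q}(j)$ by the $f_\mathbf{v}$ with $N\mathbf{v}$ integral and the determinant records the action on the cyclotomic field $\mathbb{Q}(\zeta_N)$ of Fourier coefficients, every automorphism of $\mathcal{F}$ is realised in the inverse limit by a suitable $\alpha\in\mathrm{GL}_2(\widehat{\mathbb{Z}})$, with the $\mathrm{GL}_2^+(\mathbb{Q})$-factor accounting for the remaining geometric symmetries. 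Passing to the projective limit over $N$ then shows $\sigma_\mathcal{F}$ is onto $\mathrm{Aut}(\mathcal{F})$. The one genuinely delicate ingredient throughout is the interplay captured by $f^{\widetilde\delta}=f\circ\delta$; once it is secured, steps (i)--(iv) amount to bookkeeping with the decomposition.
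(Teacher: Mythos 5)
Your step (i) is sound, and it is the only place where the tools actually available in this paper (Proposition \ref{SL2action}) suffice; but be aware that the paper does not prove this proposition at all --- its ``proof'' is the citation to \cite[Theorem 6 in Chapter 7]{Lang87} and \cite[Theorem 6.23]{Shimura} --- and the reason it is a cited theorem rather than a remark is exactly the content you dismiss as bookkeeping. The genuine gap is step (iii). To verify $\sigma_\mathcal{F}(\gamma_1\gamma_2)=\sigma_\mathcal{F}(\gamma_1)\sigma_\mathcal{F}(\gamma_2)$ with $\gamma_i=\alpha_i\beta_i$, you must rewrite the mixed product $\beta_1\alpha_2$ in the form $\alpha_3\beta_3$ with $\alpha_3\in\mathrm{GL}_2(\widehat{\mathbb{Z}})$ and $\beta_3\in\mathrm{GL}_2^+(\mathbb{Q})$, and these commutation factors are \emph{not} elements of $\mathrm{SL}_2(\mathbb{Z})$: for instance, $\det\beta_3$ generates the same fractional ideal as $\det\beta_1$, so if $\det\beta_1=p$ then $\beta_3\notin\mathrm{SL}_2(\mathbb{Z})$. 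The identity you then need is
\[
(g\circ\beta_1)^{\widetilde{A_2}}\;=\;g^{\widetilde{A_3}}\circ\beta_3
\qquad(g\in\mathcal{F}_N),
\]
where $\widetilde{A_2}$ acts at the level of $g\circ\beta_1$ (roughly $N\det\beta_1$) and $\widetilde{A_3}$ at level $N$. This compares the Galois action at two different levels across a fractional linear transformation, and it does not follow from the agreement of the two actions on the overlap $\mathrm{SL}_2(\mathbb{Z})$. Abstractly: if $G=G_1G_2$ and actions of $G_1$, $G_2$ agree on $G_1\cap G_2$, your step (i) shows the glued map is well defined, but it need not be a homomorphism --- compatibility on mixed products is an independent condition (one already gets counterexamples with $G=S_3$, $G_1=\langle(12)\rangle$, $G_2=\langle(123)\rangle$ and $G_1$ acting trivially). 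Establishing this mixed compatibility is precisely what the $q$-expansion and lattice computations in \cite[Chapter 7]{Lang87} and \cite[$\S$6.6]{Shimura} do, e.g.\ controlling how $\mathcal{F}_N\circ\beta$ sits inside $\mathcal{F}_{N\det\beta}$ together with the Galois action on Fourier coefficients of Fricke functions; nothing in the present paper substitutes for it.

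Two further gaps, one of which feeds the first: even for your formula to define a map into $\mathrm{Aut}(\mathcal{F})$ you must know that $\mathcal{F}$ is stable under $f\mapsto f\circ\beta$ for $\beta\in\mathrm{GL}_2^+(\mathbb{Q})$, i.e.\ that $f\circ\beta$ again has cyclotomic Fourier coefficients of some level --- this is the same circle of facts, and you assume it silently. And your surjectivity argument (iv) tacitly assumes that an arbitrary $\sigma\in\mathrm{Aut}(\mathcal{F})$ stabilizes $\mathcal{F}_1=\mathbb{Q}(j)$ up to a fractional linear transformation, so that it can be captured by the inverse limit of the groups $\mathrm{Gal}(\mathcal{F}_N/\mathcal{F}_1)$; but automorphisms of $\mathcal{F}$ need not fix $j$, and proving that $j^\sigma=j\circ\beta$ for some $\beta\in\mathrm{GL}_2^+(\mathbb{Q})$ is another substantive component of Shimura's proof, not a consequence of the level-by-level surjectivity in Proposition \ref{SL2action}. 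In short, steps (i) and (ii) stand, but steps (iii) and (iv) are the theorem itself, and your reduction of them to the hinge identity $f^{\widetilde{\delta}}=f\circ\delta$ on $\mathrm{SL}_2(\mathbb{Z})$ is where the argument fails.
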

\begin{proof}
See \cite[Theorem 6 in Chapter 7]{Lang87} or \cite[Theorem 6.23]{Shimura}.
\end{proof}

\begin{remark}
The kernel of $\sigma_\mathcal{F}$ is the image of $\mathbb{Q}^*$ through the diagonal embedding
into $\mathrm{GL}_2(\widehat{\mathbb{Q}})$.
\end{remark}

Let $\omega\in K\cap\mathbb{H}$. We define the normalized embedding
\begin{equation*}
q_\omega:K^*\rightarrow\mathrm{GL}_2^+(\mathbb{Q})
\end{equation*}
by the relation
\begin{equation*}
\nu\begin{bmatrix}\omega\\1\end{bmatrix}=q_\omega(\nu)\begin{bmatrix}\omega\\1\end{bmatrix}
\quad(\nu\in K^*).
\end{equation*}
For each prime $p$, we can continuously extend $q_\omega$ to the embedding
\begin{equation*}
q_{\omega,\,p}:(K\otimes_\mathbb{Z}\mathbb{Z}_p)^*\rightarrow\mathrm{GL}_2(\mathbb{Q}_p),
\end{equation*}
and hence to the embedding
\begin{equation*}
q_\omega:\widehat{K}^*\rightarrow\mathrm{GL}_2(\widehat{\mathbb{Q}})
\end{equation*}
where $\widehat{K}=K\otimes_\mathbb{Z}\widehat{\mathbb{Z}}$.
Let $K^{\mathrm{ab}}$ be the maximal abelian extension of $K$, and denote by
\begin{equation*}
[\,\cdot\,,\,K]:\widehat{K}^*\rightarrow\mathrm{Gal}(K^{\mathrm{ab}}/K)
\end{equation*}
the Artin map for $K$ defined on the group $\widehat{K}^*$ of finite $K$-ideles.
By using his theory of canonical models for modular curves,
Shimura established the following reciprocity law.

\begin{proposition}\label{idelic}
Let $\omega\in K\cap\mathbb{H}$ and $f\in\mathcal{F}$. If
$f$ is finite at $\omega$, then $f(\omega)$ belongs to $K^{\mathrm{ab}}$
and satisfies
\begin{equation*}
f(\omega)^{[s^{-1},\,K]}=f^{\sigma_\mathcal{F}(q_\omega(s))}(\omega)\quad(s\in\widehat{K}^*).
\end{equation*}
\end{proposition}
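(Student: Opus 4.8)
The plan is to follow Shimura's strategy of reducing the asserted identity to a check on generators of $\mathcal{F}$ and then feeding in the main theorem of complex multiplication. Since $x\mapsto x^{[s^{-1},\,K]}$ is a field automorphism of $K^{\mathrm{ab}}$ and, by Proposition \ref{idelicF}, $f\mapsto f^{\sigma_\mathcal{F}(q_\omega(s))}(\omega)$ is a homomorphism on the functions in $\mathcal{F}$ finite at $\omega$, it suffices to fix a level $N$, take $f\in\mathcal{F}_N$, and verify the equality on a generating set finite at $\omega$, namely (suitable rational combinations of) $j$ and the Fricke functions $f_\mathbf{v}$ with $N\mathbf{v}\in M_{1,\,2}(\mathbb{Z})$ from (\ref{FN}). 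Using Proposition \ref{idelicF} I would write $q_\omega(s)=\alpha\beta$ with $\alpha=(\alpha_p)_p\in\mathrm{GL}_2(\widehat{\mathbb{Z}})$ and $\beta\in\mathrm{GL}_2^+(\mathbb{Q})$, pick $A\in M_2(\mathbb{Z})$ congruent to $\alpha_p$ modulo $NM_2(\mathbb{Z}_p)$ for every $p\mid N$, and thereby rewrite the right-hand side as $f^{\widetilde{A}}(\beta(\omega))$. The task then is to compare the Galois conjugate $f(\omega)^{[s^{-1},\,K]}$ with $f^{\widetilde{A}}(\beta(\omega))$.

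The membership $f(\omega)\in K^{\mathrm{ab}}$ I would obtain for free: $\omega$ is a CM point, so $[\omega,\,1]$ is a proper ideal of an order $\mathcal{O}$ in $K$, and each value $f(\omega)$ lies in a ray class field $K_{\mathcal{O},\,N}$ by the specialization (\ref{specialization}); as $K_{\mathcal{O},\,N}/K$ is abelian, $f(\omega)\in K^{\mathrm{ab}}$.

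For $f=j$ the finite part acts trivially (because $j\in\mathcal{F}_1$), so the right-hand side collapses to $j(\beta(\omega))$, where $\beta(\omega)$ is the normalized point attached to the lattice obtained from $[\omega,\,1]$ by the idele $s$; the identity $j(\omega)^{[s^{-1},\,K]}=j(\beta(\omega))$ is then precisely the first main theorem of complex multiplication. For $f=f_\mathbf{v}$, recall from (\ref{fv}) that $f_\mathbf{v}$ is, up to the normalizing constant, the $x$-coordinate of the torsion point $v_1\omega+v_2$ on $\mathbb{C}/[\omega,\,1]$; the second main theorem of complex multiplication computes $[s^{-1},\,K]$ on such torsion points in terms of $s$, and the defining relation of the normalized embedding $q_\omega$ converts this into the right multiplication $\mathbf{v}\mapsto\mathbf{v}A$ of the index vector (so that $f_\mathbf{v}^{\widetilde{A}}=f_{\mathbf{v}A}$ by the Fricke-family rule of Proposition \ref{Frickefamily}) together with the base-point move $\omega\mapsto\beta(\omega)$, yielding $f_\mathbf{v}(\omega)^{[s^{-1},\,K]}=f_\mathbf{v}^{\widetilde{A}}(\beta(\omega))$.

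The hard part will be the bookkeeping in this last step: one must check that the single idelic matrix $q_\omega(s)$ \emph{simultaneously} governs the Galois action on all torsion points, through the main theorem of complex multiplication, and the function-field automorphism $\sigma_\mathcal{F}(q_\omega(s))$, through the separate action of $\widetilde{A}$ on Fourier coefficients and of $\beta$ on $\mathbb{H}$ --- in other words that the decomposition $q_\omega(s)=\alpha\beta$ is compatible with the analytic uniformization $\mathbb{C}/[\omega,\,1]\xrightarrow{\sim}E(\mathbb{C})$ on the nose, matching the $p$-adic components of $\alpha$ with the torsion indexing and $\beta$ with the complex model. This compatibility is exactly what Shimura's theory of canonical models for modular curves supplies, so rather than reprove the main theorem of complex multiplication I would invoke \cite[Chapter 6]{Shimura} (or \cite[Chapter 7]{Lang87}) to close the argument.
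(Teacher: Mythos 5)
Your outline is sound and is essentially the paper's proof: the paper disposes of this proposition by citing \cite[6.31]{Shimura}, and the argument you sketch (reduction to the generators $j$ and $f_\mathbf{v}$ of $\mathcal{F}_N$, the decomposition $q_\omega(s)=\alpha\beta$ from Proposition \ref{idelicF}, the two main theorems of complex multiplication, and canonical models to guarantee the compatibility) is precisely the content of that cited result. Since you yourself defer the crucial compatibility step to \cite[Chapter 6]{Shimura} rather than reproving it, your proposal and the paper ultimately rest on the same source and the same route.
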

\begin{proof}
See \cite[6.31]{Shimura}.
\end{proof}

\section {Ray class invariants for orders}\label{sect11}

We shall define invariants for each class in $\mathcal{C}_N(\mathcal{O})$
in terms of special values of modular functions, and examine their
Galois conjugates via the Artin map.

\begin{definition}\label{invariant}
Let $C\in\mathcal{C}_N(\mathcal{O})$.
For each $f\in\mathcal{F}_N$, we define the invariant $f(C)$ as follows\,:
One can take an $\mathcal{O}$-ideal $\mathfrak{c}$ in $C\cap\mathcal{M}(\mathcal{O},\,N)$
by Remark \ref{CCOMN}. Observe that $\gcd(\mathrm{N}_\mathcal{O}(\mathfrak{c}),\,N)=1$ by Lemma \ref{primelemma}.
Choose
a $\mathbb{Z}$-basis $\{\xi_1,\,\xi_2\}$ of $\mathfrak{c}^{-1}$
so that
\begin{equation*}
\xi:=\frac{\xi_1}{\xi_2}\in\mathbb{H}.
\end{equation*}
Since $[\tau_\mathcal{O},\,1]=\mathcal{O}\subseteq\mathfrak{c}^{-1}=[\xi_1,\,\xi_2]$
and $\tau_Q,\,\xi\in\mathbb{H}$, we have
\begin{equation}\label{tAx}
\begin{bmatrix}
\tau_\mathcal{O}\\1
\end{bmatrix}=A\begin{bmatrix}\xi_1\\\xi_2\end{bmatrix}
\quad\textrm{for some}~A\in M_2(\mathbb{Z})\cap\mathrm{GL}_2^+(\mathbb{Q}).
\end{equation}
On the other hand, we get by Lemma \ref{normbasic} (iii) that
\begin{equation*}
[\tau_\mathcal{O},\,1]=\mathcal{O}\supseteq\overline{\mathfrak{c}}=\mathrm{N}_\mathcal{O}(\mathfrak{c})\mathfrak{c}^{-1}
=[\mathrm{N}_\mathcal{O}(\mathfrak{c})\xi_1,\,\mathrm{N}_\mathcal{O}(\mathfrak{c})\xi_2]
\end{equation*}
and so
\begin{equation}\label{NBt}
\begin{bmatrix}\mathrm{N}_\mathcal{O}(\mathfrak{c})\xi_1\\
\mathrm{N}_\mathcal{O}(\mathfrak{c})\xi_2\end{bmatrix}=
B\begin{bmatrix}\tau_\mathcal{O}\\1\end{bmatrix}\quad
\textrm{for some}~B\in M_2(\mathbb{Z})\cap\mathrm{GL}_2^+(\mathbb{Q}).
\end{equation}
Thus we obtain by (\ref{tAx}) and (\ref{NBt}) that
\begin{equation*}
AB\begin{bmatrix}\tau_\mathcal{O} & \overline{\tau}_\mathcal{O}\\1&1\end{bmatrix}=\mathrm{N}_\mathcal{O}(\mathfrak{c})
\begin{bmatrix}\tau_\mathcal{O} & \overline{\tau}_\mathcal{O}\\1&1\end{bmatrix}.
\end{equation*}
By taking determinant and squaring, we attain 
\begin{equation*}
\det(A)^2\det(B)^2D_\mathcal{O}=
\mathrm{N}_\mathcal{O}(\mathfrak{c})^4D_\mathcal{O},
\end{equation*}
which shows that $\gcd(\det(A),\,N)=1$.
We then define
\begin{equation*}
f(C)=f^{\widetilde{A}}(\xi)
\end{equation*}
where $\widetilde{A}$ is the image of $A$ in
$\mathrm{GL}_2(\mathbb{Z}/N\mathbb{Z})/
\langle-I_2\rangle$, if
$f^{\widetilde{A}}$ is finite at $\xi$.
\end{definition}

\begin{lemma}\label{a+b=O}
Let $\mathfrak{a},\,\mathfrak{b}\in\mathcal{M}(\mathcal{O})$. If $\mathfrak{a}+\mathfrak{b}=\mathcal{O}$,
then $\mathfrak{a}\cap\mathfrak{b}\subseteq\mathfrak{a}\mathfrak{b}$.
\end{lemma}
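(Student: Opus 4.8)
The plan is to run the standard coprimality argument for ideals in a commutative ring with identity. Since $\mathfrak{a}+\mathfrak{b}=\mathcal{O}$ and $1\in\mathcal{O}$, the first step is to write $1=x+y$ for some $x\in\mathfrak{a}$ and $y\in\mathfrak{b}$. This is the only place the hypothesis $\mathfrak{a}+\mathfrak{b}=\mathcal{O}$ enters, and it is immediate from the definition of the sum of two ideals together with the fact that $\mathcal{O}$ contains $1$.

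Next I would take an arbitrary element $z\in\mathfrak{a}\cap\mathfrak{b}$ and multiply the relation $x+y=1$ by $z$, obtaining $z=zx+zy$. The key point is that each summand lies in the product ideal $\mathfrak{a}\mathfrak{b}$: in $zx$ one uses $z\in\mathfrak{b}$ and $x\in\mathfrak{a}$, while in $zy$ one uses $z\in\mathfrak{a}$ and $y\in\mathfrak{b}$, so both products are sums of elements of the form (element of $\mathfrak{a}$)$\cdot$(element of $\mathfrak{b}$). Since $\mathfrak{a}\mathfrak{b}$ is closed under addition, it follows that $z=zx+zy\in\mathfrak{a}\mathfrak{b}$, which yields the inclusion $\mathfrak{a}\cap\mathfrak{b}\subseteq\mathfrak{a}\mathfrak{b}$.

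I do not expect a genuine obstacle here: the argument is purely formal, valid in any commutative ring with identity, and it uses neither the properness (invertibility) of $\mathfrak{a}$ and $\mathfrak{b}$ nor any special feature of orders in imaginary quadratic fields. The only point that requires a little care is the bookkeeping of which factor is drawn from $\mathfrak{a}$ and which from $\mathfrak{b}$ when checking membership of $zx$ and $zy$ in $\mathfrak{a}\mathfrak{b}$. I would also remark in passing that the reverse inclusion $\mathfrak{a}\mathfrak{b}\subseteq\mathfrak{a}\cap\mathfrak{b}$ holds automatically, so that in fact $\mathfrak{a}\cap\mathfrak{b}=\mathfrak{a}\mathfrak{b}$ under the coprimality hypothesis, although only the stated inclusion is needed for the subsequent arguments.
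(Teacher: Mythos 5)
Your proposal is correct and is essentially identical to the paper's own proof: both write $1=x+y$ with $x\in\mathfrak{a}$, $y\in\mathfrak{b}$, multiply an arbitrary $z\in\mathfrak{a}\cap\mathfrak{b}$ by this relation, and observe that each summand lies in $\mathfrak{a}\mathfrak{b}$ by the same bookkeeping of factors. The extra remark that the reverse inclusion gives equality is fine but not used in the paper.
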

\begin{proof}
Since $\mathfrak{a}+\mathfrak{b}=\mathcal{O}$, we deduce
\begin{equation}\label{a+b=1}
1=a+b\quad\textrm{for some $a\in\mathfrak{a}$ and $b\in\mathfrak{b}$}.
\end{equation}
Let $c\in\mathfrak{a}\cap\mathfrak{b}$.
Then we see that
\begin{eqnarray*}
c&=&c(a+b)\quad\textrm{by (\ref{a+b=1})}\\
&=&ac+cb\\
&\in&\mathfrak{a}\mathfrak{b}\quad\textrm{since $a\in\mathfrak{a}$, $c\in\mathfrak{b}$
and $c\in\mathfrak{a}$, $b\in\mathfrak{b}$}.
\end{eqnarray*}
This proves that $\mathfrak{a}\cap\mathfrak{b}\subseteq\mathfrak{a}\mathfrak{b}$.
\end{proof}

\begin{lemma}\label{welldefined}
Let $C\in\mathcal{C}_N(\mathcal{O})$
and $f\in\mathcal{F}_N$. The value $f(C)$
in \textup{Definition \ref{invariant}}, if
it is finite, does not depend on the choice
of $\mathfrak{c}$ and $\{\xi_1,\,\xi_2\}$.
\end{lemma}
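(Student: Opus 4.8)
The plan is to establish independence in two stages: first fix the ideal $\mathfrak{c}$ and vary the $\mathbb{Z}$-basis $\{\xi_1,\xi_2\}$ of $\mathfrak{c}^{-1}$, then vary $\mathfrak{c}$ within the class $C$. A small fact I will use repeatedly: if $M$ is a matrix with rational entries and $M\begin{bmatrix}\xi_1\\\xi_2\end{bmatrix}=0$ with $\xi=\xi_1/\xi_2\in\mathbb{H}$, then $M=0$, since each row $(m_1,m_2)\in\mathbb{Q}^2$ gives $m_1\xi+m_2=0$ and $\xi\notin\mathbb{R}$ forces $m_1=m_2=0$.

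For the basis: two $\mathbb{Z}$-bases $\{\xi_1,\xi_2\}$, $\{\xi_1',\xi_2'\}$ of $\mathfrak{c}^{-1}$ with both ratios in $\mathbb{H}$ are related by $\begin{bmatrix}\xi_1'\\\xi_2'\end{bmatrix}=\gamma\begin{bmatrix}\xi_1\\\xi_2\end{bmatrix}$ for some $\gamma\in\mathrm{GL}_2(\mathbb{Z})$, and the positivity of both imaginary parts forces $\det\gamma=1$, so $\gamma\in\mathrm{SL}_2(\mathbb{Z})$ and $\xi'=\gamma(\xi)$. Comparing $A\begin{bmatrix}\xi_1\\\xi_2\end{bmatrix}=\begin{bmatrix}\tau_\mathcal{O}\\1\end{bmatrix}=A'\gamma\begin{bmatrix}\xi_1\\\xi_2\end{bmatrix}$ and using the fact above yields $A'=A\gamma^{-1}$. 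Writing $g=f^{\widetilde{A}}$ and invoking $f^{\widetilde{\gamma}}=f\circ\gamma$ for $\gamma^{-1}\in\mathrm{SL}_2(\mathbb{Z})$ (Proposition \ref{SL2action}), I get $f^{\widetilde{A'}}=g^{\widetilde{\gamma^{-1}}}=g\circ\gamma^{-1}$, so $f^{\widetilde{A'}}(\xi')=g(\gamma^{-1}\gamma(\xi))=g(\xi)=f^{\widetilde{A}}(\xi)$. This step is routine.

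For the ideal: let $\mathfrak{c},\mathfrak{c}'\in C\cap\mathcal{M}(\mathcal{O},N)$. Since $[\mathfrak{c}]=[\mathfrak{c}']$ in $\mathcal{C}_N(\mathcal{O})$, we have $\mathfrak{c}'\mathfrak{c}^{-1}\in P_N(\mathcal{O})$, and Lemma \ref{anotherP} writes $\mathfrak{c}'=\mu\mathfrak{c}$ with $\mu=\nu_1/\nu_2$, where $\nu_1,\nu_2\in\mathcal{O}\setminus\{0\}$, $\nu_1\mathcal{O},\nu_2\mathcal{O}$ are prime to $N$, and $\nu_1\equiv\nu_2\pmod{N\mathcal{O}}$. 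By the basis-independence just proved, I may choose the basis $\{\xi_1',\xi_2'\}=\{\mu^{-1}\xi_1,\mu^{-1}\xi_2\}$ of $\mathfrak{c}'^{-1}=\mu^{-1}\mathfrak{c}^{-1}$, so $\xi'=\xi$. The key step is to pin down $A'$ versus $A$: inserting the normalized embedding $\mu^{-1}\begin{bmatrix}\tau_\mathcal{O}\\1\end{bmatrix}=q_{\tau_\mathcal{O}}(\mu^{-1})\begin{bmatrix}\tau_\mathcal{O}\\1\end{bmatrix}$ into $A'\begin{bmatrix}\xi_1'\\\xi_2'\end{bmatrix}=\begin{bmatrix}\tau_\mathcal{O}\\1\end{bmatrix}$ and comparing with $A\begin{bmatrix}\xi_1\\\xi_2\end{bmatrix}=\begin{bmatrix}\tau_\mathcal{O}\\1\end{bmatrix}$ gives $(A-q_{\tau_\mathcal{O}}(\mu^{-1})A')\begin{bmatrix}\xi_1\\\xi_2\end{bmatrix}=0$. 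Since $q_{\tau_\mathcal{O}}(\mu^{-1})\in\mathrm{GL}_2^+(\mathbb{Q})$ and $A,A'$ are integral, this difference matrix is rational, so by the fact above it vanishes, yielding the clean identity $A'=q_{\tau_\mathcal{O}}(\mu)A$.

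It then remains to show $q_{\tau_\mathcal{O}}(\mu)\equiv I_2\pmod{N}$. For $\nu=s\tau_\mathcal{O}+t\in\mathcal{O}$ the matrix $q_{\tau_\mathcal{O}}(\nu)$ is precisely the integral matrix $\begin{bmatrix}t-b_\mathcal{O}s&-c_\mathcal{O}s\\s&t\end{bmatrix}$, i.e. $q_{\tau_\mathcal{O}}$ restricted to $\mathcal{O}$ reduces mod $N$ to $\mu_{\mathcal{O},N}$. As $\nu_i\mathcal{O}$ is prime to $N$, $\det q_{\tau_\mathcal{O}}(\nu_i)=\mathrm{N}_{K/\mathbb{Q}}(\nu_i)=\mathrm{N}_\mathcal{O}(\nu_i\mathcal{O})$ is prime to $N$ (Lemmas \ref{normbasic}(i) and \ref{primelemma}), so $q_{\tau_\mathcal{O}}(\mu)=q_{\tau_\mathcal{O}}(\nu_1)q_{\tau_\mathcal{O}}(\nu_2)^{-1}$ has denominators prime to $N$ and reduces to $\mu_{\mathcal{O},N}([\nu_1])\mu_{\mathcal{O},N}([\nu_2])^{-1}$; the congruence $\nu_1\equiv\nu_2\pmod{N\mathcal{O}}$ gives $[\nu_1]=[\nu_2]$ in $\mathcal{O}/N\mathcal{O}$, hence $q_{\tau_\mathcal{O}}(\mu)\equiv I_2\pmod{N}$. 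Therefore $A'\equiv A\pmod{NM_2(\mathbb{Z})}$, so $\widetilde{A'}=\widetilde{A}$, and together with $\xi'=\xi$ this gives $f^{\widetilde{A'}}(\xi')=f^{\widetilde{A}}(\xi)$, with one side finite exactly when the other is. I expect the main obstacle to be exactly the passage from an identity that holds only after applying both sides to the complex vector $\begin{bmatrix}\xi_1\\\xi_2\end{bmatrix}$ to an honest matrix identity over $\mathbb{Q}$: recognizing that replacing the scalar $\mu^{-1}$ by the rational matrix $q_{\tau_\mathcal{O}}(\mu^{-1})$ makes the difference matrix rational, so that the irrationality of $\xi$ forces it to vanish.
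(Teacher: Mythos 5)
Your proof is correct, and its key step takes a genuinely different route from the paper's. The paper treats the change of ideal and the change of basis simultaneously: writing $\mathfrak{c}'=\nu\mathfrak{c}$ with $\nu=\nu_1/\nu_2$, $\nu_1\equiv\nu_2\equiv1\Mod{N\mathcal{O}}$, and relating the two bases by $\nu^{-1}B$ with $B\in\mathrm{SL}_2(\mathbb{Z})$, it proves the congruence $A'\equiv AB^{-1}\Mod{NM_2(\mathbb{Z})}$ by an integrality argument: the containment $(\nu-1)\mathcal{O}\subseteq N\mathfrak{c}^{-1}$ (obtained from Lemma \ref{a+b=O} and the coprimality of $\nu_2\mathcal{O}$ to $N$) produces an auxiliary integral matrix $A''$ with $NA''=A'B-A$, the passage from a vector identity to a matrix identity being made via the conjugate-column trick --- the same rationality principle you isolate as your ``small fact''. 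You instead decouple the two ambiguities, and for the change of ideal you obtain the exact identity $A'=q_{\tau_\mathcal{O}}(\mu)A$ and then reduce modulo $N$: you use that $q_{\tau_\mathcal{O}}$ restricted to $\mathcal{O}$ is precisely the integral Cartan matrix defining $\mu_{\mathcal{O},\,N}$, that its determinant is prime to $N$ by Lemmas \ref{normbasic} (i) and \ref{primelemma}, and that $\nu_1\equiv\nu_2\Mod{N\mathcal{O}}$ (your appeal to Lemma \ref{anotherP}, where the paper can take $\nu_1\equiv\nu_2\equiv1$ straight from the definition of $P_N(\mathcal{O})$; either suffices) forces $q_{\tau_\mathcal{O}}(\mu)\equiv I_2\Mod{N}$. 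Both arguments are sound, and both ultimately rest on the same fact that a rational matrix annihilating the column $(\xi_1,\,\xi_2)$ vanishes. What the two routes buy is different: the paper's stays entirely inside elementary ideal arithmetic and needs no embedding machinery at this stage, while yours is more structural --- it identifies the discrepancy between admissible matrices $A$ as right multiplication by $\mathrm{SL}_2(\mathbb{Z})$ together with left multiplication by a rational matrix lying over the Cartan subgroup $W_{\mathcal{O},\,N}$, trivial modulo $N$ by the ray condition --- and it anticipates exactly the mechanism the paper deploys later in the proof of Theorem \ref{transformation}, where $q_{\tau_\mathcal{O},\,p}(s_p^{-1})A$ is shown to be $p$-integral and then reduced modulo $N$.
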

\begin{proof}
We follow the notation of Definition \ref{invariant}.
Let $\mathfrak{c}'\in C\cap\mathcal{M}(\mathcal{O},\,N)$, and let $\{\xi_1',\,\xi_2'\}$
be a $\mathbb{Z}$-basis of $\mathfrak{c}'^{-1}$
so that
\begin{equation*}
\xi':=\frac{\xi_1'}{\xi_2'}\in\mathbb{H}.
\end{equation*}
Let $A'$ be the matrix in $M_2(\mathbb{Z})\cap\mathrm{GL}_2^+(\mathbb{Q})$
such that
\begin{equation}\label{tA'x'}
\begin{bmatrix}\tau_\mathcal{O}\\1\end{bmatrix}
=A'\begin{bmatrix}\xi_1'\\\xi_2'\end{bmatrix}.
\end{equation}
Since $C=[\mathfrak{c}]=[\mathfrak{c}']$,
we have
\begin{equation}\label{c'c}
\mathfrak{c}'=\frac{\nu_1}{\nu_2}\mathfrak{c}
\end{equation}
for some  $\nu_1,\,\nu_2\in\mathcal{O}\setminus\{0\}$ satisfying
\begin{equation}\label{nn1N}
\nu_1\equiv\nu_2\equiv1\Mod{N\mathcal{O}}.
\end{equation}
If we let $\displaystyle\nu=\frac{\nu_1}{\nu_2}$, then
we get by (\ref{c'c}) that
\begin{equation*}
[\xi_1',\,\xi_2']=\mathfrak{c}'^{-1}=
\nu^{-1}\mathfrak{c}^{-1}=
[\nu^{-1}\xi_1,\,\nu^{-1}\xi_2],
\end{equation*}
and hence
\begin{equation}\label{xBx}
\begin{bmatrix}\xi_1'\\\xi_2'
\end{bmatrix}=B\begin{bmatrix}\nu^{-1}\xi_1\\
\nu^{-1}\xi_2\end{bmatrix}=\nu^{-1}B\begin{bmatrix}\xi_1\\\xi_2\end{bmatrix}
\quad\textrm{for some}~B\in\mathrm{SL}_2(\mathbb{Z}).
\end{equation}
Note that
\begin{eqnarray*}
(\nu_1-\nu_2)\mathfrak{c}&\subseteq&N\mathcal{O}\cap
(\nu_1-\nu_2)\mathfrak{c}\quad\textrm{by (\ref{nn1N})}\\
&\subseteq&N\mathcal{O}\cap(\nu_1\mathfrak{c}+\nu_2\mathfrak{c})\\
&=&N\mathcal{O}\cap(\nu_2\mathfrak{c}'+\nu_2\mathfrak{c})\quad\textrm{by (\ref{c'c})}\\
&\subseteq&N\mathcal{O}\cap\nu_2\mathcal{O}\\
&\subseteq&N\nu_2\mathcal{O}\quad\textrm{by Lemma \ref{a+b=O}
because $\nu_2\mathcal{O}$ is prime to $N$ owing to (\ref{nn1N})}
\end{eqnarray*}
and so
\begin{equation*}
[(\nu-1)\tau_\mathcal{O},\,(\nu-1)]=(\nu-1)\mathcal{O}\subseteq N\mathfrak{c}^{-1}=
[N\xi_1,\,N\xi_2].
\end{equation*}
Thus we obtain that
\begin{equation}\label{nAN}
\begin{bmatrix}(\nu-1)\tau_\mathcal{O}\\\nu-1\end{bmatrix}
=A''\begin{bmatrix}N\xi_1\\N\xi_2\end{bmatrix}
\quad\textrm{for some}~A''\in M_2(\mathbb{Z})\cap\mathrm{GL}_2^+(\mathbb{Q}).
\end{equation}
And we find that
\begin{eqnarray*}
NA''
\begin{bmatrix}\xi_1\\\xi_2\end{bmatrix}
&=&\nu\begin{bmatrix}\tau_\mathcal{O}\\
1\end{bmatrix}-\begin{bmatrix}\tau_\mathcal{O}\\1\end{bmatrix}
\quad\textrm{by (\ref{nAN})}\\
&=&\nu A'\begin{bmatrix}\xi_1'\\\xi_2'\end{bmatrix}-A\begin{bmatrix}\xi_1\\\xi_2\end{bmatrix}\quad\textrm{by
(\ref{tAx}) and (\ref{tA'x'})}\\
&=&(A'B-A)\begin{bmatrix}\xi_1\\\xi_2\end{bmatrix}
\quad\textrm{by (\ref{xBx})}.
\end{eqnarray*}
It then follows that
\begin{equation*}
NA''\begin{bmatrix}\xi_1&\overline{\xi}_1\\\xi_2&\overline{\xi}_2\end{bmatrix}
=(A'B-A)\begin{bmatrix}\xi_1&\overline{\xi}_1\\\xi_2&\overline{\xi}_2\end{bmatrix},
\end{equation*}
and hence $NA''=A'B-A$ and
\begin{equation}\label{AAB}
A'\equiv AB^{-1}\Mod{NM_2(\mathbb{Z})}.
\end{equation}
Finally we derive that
\begin{eqnarray*}
f^{\widetilde{A}}(\xi)&=&f^{\widetilde{A}}(B^{-1}B(\xi))\\
&=&f^{\widetilde{AB^{-1}}}(B(\xi))\quad\textrm{by Proposition \ref{SL2action}}\\
&=&f^{\widetilde{AB^{-1}}}(\xi')\quad\textrm{by (\ref{xBx})}\\
&=&f^{\widetilde{A'}}(\xi')\quad\textrm{by (\ref{AAB})},
\end{eqnarray*}
which proves the well-definedness of the invariant $f(C)$.
\end{proof}

By composing
three isomorphisms
\begin{enumerate}
\item[(i)] $\mathcal{C}_N(\mathcal{O})\stackrel{\sim}{\rightarrow}
\mathcal{C}_N(\mathcal{O},\,\ell_\mathcal{O}N)$ achieved from Proposition \ref{MNN},
\item[(ii)] $\mathcal{C}_N(\mathcal{O},\,\ell_\mathcal{O}N)\stackrel{\sim}{\rightarrow}
I(\mathcal{O}_K,\,\ell_\mathcal{O}N)/P_{\mathbb{Z},\,N}(\mathcal{O}_K,\,\ell_\mathcal{O}N)$ established
in Proposition \ref{generalized},
\item[(iii)] the Artin map
$I(\mathcal{O}_K,\,\ell_\mathcal{O}N)/P_{\mathbb{Z},\,N}(\mathcal{O}_K,\,\ell_\mathcal{O}N)
\stackrel{\sim}{\rightarrow}\mathrm{Gal}(K_{\mathcal{O},\,N}/K)$,
\end{enumerate}
we get the isomorphism
\begin{equation*}
\sigma_{\mathcal{O},\,N}:\mathcal{C}_N(\mathcal{O})
\stackrel{\sim}{\rightarrow}
\mathrm{Gal}(K_{\mathcal{O},\,N}/K).
\end{equation*}
Let $C_0$ denote the identity class in $\mathcal{C}_N(\mathcal{O})$.

\begin{theorem}\label{transformation}
Let $C\in\mathcal{C}_N(\mathcal{O})$ and $f\in\mathcal{F}_N$. If
$f$ is finite at $\tau_\mathcal{O}$, then
$f(C)$ belongs to $K_{\mathcal{O},\,N}$
and satisfies
\begin{equation*}
f(C)^{\sigma_{\mathcal{O},\,N}(C')}=
f(CC')\quad(C'\in\mathcal{C}_N(\mathcal{O})).
\end{equation*}
\end{theorem}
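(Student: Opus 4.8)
The plan is to prove the two assertions of Theorem \ref{transformation} by reducing everything to Shimura's idelic reciprocity law (Proposition \ref{idelic}) applied at the CM point $\tau_\mathcal{O}$, using the normalized embedding $q_{\tau_\mathcal{O}}$ to translate ideal-theoretic data into idelic data. First I would set up notation: write $C=[\mathfrak{c}]$ with $\mathfrak{c}\in C\cap\mathcal{M}(\mathcal{O},\,N)$, so that $f(C)=f^{\widetilde{A}}(\xi)$ where $\xi=\xi_1/\xi_2$ comes from a $\mathbb{Z}$-basis of $\mathfrak{c}^{-1}$ and $A\in M_2(\mathbb{Z})\cap\mathrm{GL}_2^+(\mathbb{Q})$ satisfies $\left[\begin{smallmatrix}\tau_\mathcal{O}\\1\end{smallmatrix}\right]=A\left[\begin{smallmatrix}\xi_1\\\xi_2\end{smallmatrix}\right]$. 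The key observation is that $f^{\widetilde{A}}(\xi)=(f^{\widetilde{A}}\circ\beta)(\tau_\mathcal{O})$ where $\beta=A^{-1}\in\mathrm{GL}_2^+(\mathbb{Q})$ sends $\tau_\mathcal{O}$ to $\xi$; by Proposition \ref{idelicF} this is exactly $f^{\sigma_\mathcal{F}(\gamma)}(\tau_\mathcal{O})$ for an idele $\gamma$ whose finite components encode $A$ and $\beta$. Thus $f(C)$ is realized as the image of $f(\tau_\mathcal{O})$ under a Galois automorphism coming from an idele, which by Proposition \ref{idelic} lands in $K^{\mathrm{ab}}$; membership in $K_{\mathcal{O},\,N}$ then follows from Cho's description (\ref{specialization}) together with the fact that $f^{\widetilde{A}}\in\mathcal{F}_N$ is finite at $\xi\in K\cap\mathbb{H}$.

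For the transformation formula, I would make the idele explicit. Given $C'=[\mathfrak{c}']$ with $\mathfrak{c}'\in C'\cap\mathcal{M}(\mathcal{O},\,N)$, the Artin action $\sigma_{\mathcal{O},\,N}(C')$ is computed through the composite of the three isomorphisms recalled before the theorem, and under $q_{\tau_\mathcal{O}}$ it corresponds to the finite idele $s'\in\widehat{K}^*$ whose localization generates $\mathfrak{c}'^{-1}$ at each prime. The heart of the argument is to verify the cocycle-type identity: applying $\sigma_{\mathcal{O},\,N}(C')$ to $f(C)=f^{\sigma_\mathcal{F}(q_{\tau_\mathcal{O}}(s))}(\tau_\mathcal{O})$ and using Proposition \ref{idelic} in the form $f(\tau_\mathcal{O})^{[s'^{-1},\,K]}=f^{\sigma_\mathcal{F}(q_{\tau_\mathcal{O}}(s'))}(\tau_\mathcal{O})$ produces $f^{\sigma_\mathcal{F}(q_{\tau_\mathcal{O}}(ss'))}(\tau_\mathcal{O})$, because $\sigma_\mathcal{F}$ is a homomorphism and $q_{\tau_\mathcal{O}}$ is multiplicative. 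The remaining task is to recognize $q_{\tau_\mathcal{O}}(ss')$ as the idele attached to the product class $CC'=[\mathfrak{c}\mathfrak{c}']$, so that $f^{\sigma_\mathcal{F}(q_{\tau_\mathcal{O}}(ss'))}(\tau_\mathcal{O})=f(CC')$.

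The main obstacle, and the step requiring the most care, is the precise bookkeeping that matches the algebraically-defined invariant $f(C)=f^{\widetilde{A}}(\xi)$ with the idelic expression $f^{\sigma_\mathcal{F}(q_{\tau_\mathcal{O}}(s))}(\tau_\mathcal{O})$, and in particular confirming that the matrix $A$ produced in Definition \ref{invariant} agrees modulo $NM_2(\widehat{\mathbb{Z}})$ with the finite part of $q_{\tau_\mathcal{O}}(s)^{-1}$ for the correct idele $s$ representing $\mathfrak{c}$. This is where the relations (\ref{tAx}) and (\ref{NBt}), the coprimality $\gcd(\det(A),\,N)=1$, and the compatibility of $q_{\tau_\mathcal{O}}$ with the ideal--idele correspondence all come together; I expect to invoke the local description of $q_{\tau_\mathcal{O},\,p}$ prime-by-prime and then reassemble via the Chinese remainder theorem exactly as in Proposition \ref{idelicF}. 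Once this identification is pinned down, multiplicativity of $\sigma_\mathcal{F}\circ q_{\tau_\mathcal{O}}$ makes the transformation rule essentially formal, and the well-definedness already secured in Lemma \ref{welldefined} guarantees the statement is independent of the chosen representatives $\mathfrak{c},\,\mathfrak{c}'$.
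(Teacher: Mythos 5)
Your plan is, in substance, the paper's own proof: both arguments realize $f(C)$ as $f(\tau_\mathcal{O})^{[s,\,K]}$ for an explicit finite idele $s$ adapted to the representative ideal $\mathfrak{c}$ (via the decomposition of Proposition \ref{idelicF} together with Shimura's reciprocity law, Proposition \ref{idelic}, and the membership $f(\tau_\mathcal{O})\in K_{\mathcal{O},\,N}$ from (\ref{specialization})), and both then obtain the transformation rule formally from multiplicativity of the Artin map --- you phrase this by multiplying ideles and recognizing $ss'$ as an idele for $\mathfrak{c}\mathfrak{c}'$, while the paper equivalently establishes the master formula $f(C_0)^{\sigma_{\mathcal{O},\,N}(C)}=f(C)$ for every class and then writes $f(C)^{\sigma_{\mathcal{O},\,N}(C')}=f(C_0)^{\sigma_{\mathcal{O},\,N}(C)\sigma_{\mathcal{O},\,N}(C')}=f(C_0)^{\sigma_{\mathcal{O},\,N}(CC')}=f(CC')$.

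There is, however, one concrete point where your bookkeeping as written would fail: you take $\mathfrak{c}\in C\cap\mathcal{M}(\mathcal{O},\,N)$ and assert that $\sigma_{\mathcal{O},\,N}(C')$ corresponds under $q_{\tau_\mathcal{O}}$ to an idele whose localizations generate the ideal at \emph{each} prime. The isomorphism $\sigma_{\mathcal{O},\,N}$ is defined through $\mathcal{C}_N(\mathcal{O})\simeq\mathcal{C}_N(\mathcal{O},\,\ell_\mathcal{O}N)\simeq I(\mathcal{O}_K,\,\ell_\mathcal{O}N)/P_{\mathbb{Z},\,N}(\mathcal{O}_K,\,\ell_\mathcal{O}N)$ (Propositions \ref{MNN} and \ref{generalized}) and the ideal-theoretic Artin map for the modulus $\ell_\mathcal{O}N\mathcal{O}_K$; to identify its action with the idelic Artin map $[\,\cdot\,,\,K]$ one must use an idele equal to $1$ at every prime dividing $\ell_\mathcal{O}N$, and this is possible only if the representative ideal is prime to $\ell_\mathcal{O}N$, not merely to $N$. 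The same normalization $s_p=1$ for $p\,|\,\ell_\mathcal{O}N$ is what yields the congruence $G\equiv A\Mod{NM_2(\mathbb{Z})}$ in your CRT step, and what makes the local identification $s_p(\mathcal{O}\otimes_\mathbb{Z}\mathbb{Z}_p)=\mathfrak{c}\otimes_\mathbb{Z}\mathbb{Z}_p$ valid at primes dividing the conductor (this is (\ref{pMN}) in the paper, which uses precisely that $\mathfrak{c}$ is prime to $\ell_\mathcal{O}N$); at a conductor prime dividing $\mathfrak{c}'$ the extension/contraction correspondence between $\mathcal{O}$-ideals and $\mathcal{O}_K$-ideals breaks down and no such idele exists. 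The repair is immediate and is exactly the paper's move: by Remark \ref{CCOMN} choose $\mathfrak{c}\in C\cap\mathcal{M}(\mathcal{O},\,\ell_\mathcal{O}N)$ and $\mathfrak{c}'\in C'\cap\mathcal{M}(\mathcal{O},\,\ell_\mathcal{O}N)$, which is harmless because Lemma \ref{welldefined} makes $f(C)$ independent of the representative; with that single change your argument coincides with the paper's proof.
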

\begin{proof}
By Definition \ref{invariant}, Lemma \ref{welldefined} and (\ref{specialization}), we attain
\begin{equation}\label{fC0}
f(C_0)=f(\tau_\mathcal{O})\in K_{\mathcal{O},\,N}.
\end{equation}
And one can take $\mathfrak{c}\in C\cap\mathcal{M}(\mathcal{O},\,\ell_\mathcal{O}N)$
by Remark \ref{CCOMN}.
Let $\{\xi_1,\,\xi_2\}$ be a $\mathbb{Z}$-basis
of $\mathfrak{c}^{-1}$ such that
\begin{equation*}
\xi:=\frac{\xi_1}{\xi_2}\in\mathbb{H}.
\end{equation*}
Furthermore, let $A$ be the matrix in $M_2(\mathbb{Z})\cap\mathrm{GL}_2^+(\mathbb{Q})$ satisfying
\begin{equation}\label{t1axx}
\begin{bmatrix}\tau_\mathcal{O}\\1\end{bmatrix}=A\begin{bmatrix}\xi_1\\
\xi_2\end{bmatrix}.
\end{equation}
Take an idele $s=(s_p)_{p\,:\,\mathrm{primes}}$ in $\widehat{K}^*=(K\otimes_\mathbb{Z}\widehat{\mathbb{Z}})^*$ such that
\begin{equation}\label{s1}
\left\{\begin{array}{ll}
s_p=1 & \textrm{if}~p\,|\,\ell_\mathcal{O}N,\\
s_p(\mathcal{O}_K\otimes_\mathbb{Z}\mathbb{Z}_p)=
\mathfrak{c}\mathcal{O}_K\otimes_\mathbb{Z}\mathbb{Z}_p
 & \textrm{if}~p\nmid \ell_\mathcal{O}N.
\end{array}\right.
\end{equation}
If $p\,|\,\ell_\mathcal{O}N$, then we see that
\begin{eqnarray*}
s_p(\mathcal{O}\otimes_\mathbb{Z}\mathbb{Z}_p)&=&
\mathcal{O}\otimes_\mathbb{Z}\mathbb{Z}_p\quad\textrm{since}~s_p=1\\
&\supseteq&\mathfrak{c}\otimes_\mathbb{Z}\mathbb{Z}_p\\
&\supseteq&
\mathrm{N}_\mathcal{O}(\mathfrak{c})\mathcal{O}\otimes_\mathbb{Z}\mathbb{Z}_p\quad\textrm{by Lemma \ref{normbasic} (iii)}\\
&=&\mathcal{O}\otimes_\mathbb{Z}\mathbb{Z}_p\quad\textrm{by the fact $\mathfrak{c}\in\mathcal{M}(\mathcal{O},\,\ell_\mathcal{O}N)$
and Lemma \ref{primelemma}},
\end{eqnarray*}
and hence
\begin{equation}\label{pMN}
s_p(\mathcal{O}\otimes_\mathbb{Z}\mathbb{Z}_p)=\mathfrak{c}\otimes_\mathbb{Z}
\mathbb{Z}_p\quad(\textrm{if}~p\,|\,\ell_\mathcal{O}N).
\end{equation}
If $p\nmid \ell_\mathcal{O}N$, then $\ell_\mathcal{O}$ is a unit in $\mathbb{Z}_p$ and so
\begin{equation}\label{pnotMN}
s_p(\mathcal{O}\otimes_\mathbb{Z}\mathbb{Z}_p)
=s_p(\mathcal{O}_K\otimes_\mathbb{Z}\mathbb{Z}_p)
=\mathfrak{c}\mathcal{O}_K\otimes_\mathbb{Z}\mathbb{Z}_p
=\mathfrak{c}\mathcal{O}\otimes_\mathbb{Z}\mathbb{Z}_p
=\mathfrak{c}\otimes_\mathbb{Z}\mathbb{Z}_p
\quad(\textrm{if}~p\nmid \ell_\mathcal{O}N).
\end{equation}
Then it follows from (\ref{pMN}) and (\ref{pnotMN}) that
\begin{equation}\label{s-1c-1}
s_p^{-1}(\mathcal{O}\otimes_\mathbb{Z}\mathbb{Z}_p)=\mathfrak{c}^{-1}\otimes_\mathbb{Z}\mathbb{Z}_p
\quad\textrm{for every prime $p$}.
\end{equation}
Since
\begin{equation*}
s_p^{-1}\begin{bmatrix}\tau_\mathcal{O}\\1\end{bmatrix}=q_{\tau_\mathcal{O},\,p}(s_p^{-1})\begin{bmatrix}\tau_\mathcal{O}\\1\end{bmatrix}=
q_{\tau_\mathcal{O},\,p}(s_p^{-1})A\begin{bmatrix}\xi_1\\\xi_2\end{bmatrix}
\end{equation*}
by (\ref{t1axx}),
we deduce by (\ref{s-1c-1}) and the fact $\mathfrak{c}^{-1}=[\xi_1,\,\xi_2]$ that
\begin{equation*}
q_{\tau_\mathcal{O},\,p}(s_p^{-1})A\in\mathrm{GL}_2(\mathbb{Z}_p),
\end{equation*}
and so
\begin{equation}\label{qsAG}
q_{\tau_\mathcal{O}}(s^{-1})A\in\mathrm{GL}_2(\widehat{\mathbb{Z}}).
\end{equation}
We then find that
\begin{eqnarray*}
f(C_0)^{\sigma_{\mathcal{O},\,N}(C)}&=&f(C_0)^{\sigma_{\mathcal{O},\,N}([\mathfrak{c}])}\\
&=&f(\tau_\mathcal{O})^{[s,\,K]}\quad\textrm{by (\ref{fC0}) and (\ref{s1})}\\
&=&f^{\sigma_\mathcal{F}(q_{\tau_\mathcal{O}}(s^{-1}))}(\tau_\mathcal{O})
\quad\textrm{by Proposition \ref{idelic}}\\
&=&f^{\sigma_\mathcal{F}((q_{\tau_\mathcal{O}}(s^{-1})A)A^{-1})}(\tau_\mathcal{O})\\
&=&f^{\sigma_\mathcal{F}(q_{\tau_\mathcal{O}}(s^{-1})A)}(A^{-1}(\tau_\mathcal{O}))\quad
\textrm{by Proposition \ref{idelicF} because}~A^{-1}\in\mathrm{GL}_2^+(\mathbb{Q})\\
&=&f^{\sigma_\mathcal{F}(q_{\tau_\mathcal{O}}(s^{-1})A)}(\xi)\quad
\textrm{by (\ref{t1axx})}\\
&=&f^{\widetilde{G}}(\xi)\quad\textrm{where $G$ is a matrix in $M_2(\mathbb{Z})$ such that}\\
&&\hspace{1.4cm}G\equiv q_{\tau_\mathcal{O},\,p}(s_p^{-1})A\Mod{NM_2(\mathbb{Z}_p)}~\textrm{for all primes $p$ dividing $N$,}\\
&&\hspace{1.4cm}\textrm{by (\ref{qsAG})
and Proposition \ref{idelicF}}\\
&=&f^{\widetilde{A}}(\xi)\quad\textrm{since the fact $s_p=1$ for all primes $p$ dividing $N$ implies}\\
&&\hspace{1.4cm}G\equiv A\Mod{NM_2(\mathbb{Z})}\\
&=&f(C).
\end{eqnarray*}
This proves that $f(C)$ is finite and belongs to $K_{\mathcal{O},\,N}$.
And we further derive that for $C'\in\mathcal{C}_N(\mathcal{O})$
\begin{equation*}
f(C)^{\sigma_{\mathcal{O},\,N}(C')}
=(f(C_0)^{\sigma_{\mathcal{O},\,N}(C)})^{\sigma_{\mathcal{O},\,N}(C')}
=f(C_0)^{\sigma_{\mathcal{O},\,N}(CC')}=f(CC').
\end{equation*}
\end{proof}

\section {An isomorphism of $\mathcal{C}_{N}(D_\mathcal{O})$ onto
$\mathrm{Gal}(K_{\mathcal{O},\,N}/K)$}

In this section, we shall explicitly describe the isomorphism
\begin{equation*}
\sigma_{\mathcal{O},\,N}\circ\phi_{\mathcal{O},\,N}:\mathcal{C}_N(D_\mathcal{O})
\stackrel{\sim}{\rightarrow}
\mathrm{Gal}(K_{\mathcal{O},\,N}/K).
\end{equation*}
Furthermore, we shall show that there exists a form class group associated with
the principal congruence subgroup $\Gamma(N)$.

\begin{definition}\label{DeffQ}
Let $Q\in\mathcal{Q}(D_\mathcal{O},\,N)$ and $f\in\mathcal{F}_N$. If $f$ is finite at $\tau_\mathcal{O}$, then
we define
\begin{equation*}
f([Q])=f(\phi_{\mathcal{O},\,N}([Q])).
\end{equation*}
\end{definition}

\begin{lemma}\label{fmatrix}
Let $Q=ax^2+bxy+cy^2\in\mathcal{Q}(D_\mathcal{O},\,N)$ and $f\in\mathcal{F}_N$ which is finite at $\tau_\mathcal{O}$.
Then we have
\begin{equation*}
f([Q])=f^{\left[\begin{smallmatrix}
1 & -a'(b+b_\mathcal{O})/2\\
0&a'\end{smallmatrix}\right]}(-\overline{\omega}_Q)
\end{equation*}
where $a'$ is an integer satisfying $aa'\equiv1\Mod{N}$.
\end{lemma}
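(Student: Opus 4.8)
The plan is to unwind the two layers of definitions and then compute the invariant $f(C)$ directly from Definition \ref{invariant}, choosing an especially convenient integral representative of the ideal class. By Definition \ref{DeffQ} and Proposition \ref{phibijective} we have $f([Q]) = f(C)$ with $C = [[\omega_Q,1]]$, so everything reduces to producing a suitable $\mathfrak{c} \in C \cap \mathcal{M}(\mathcal{O},N)$, a $\mathbb{Z}$-basis of $\mathfrak{c}^{-1}$, and the accompanying integral matrix $A$, and then reading off $f(C) = f^{\widetilde{A}}(\xi)$.

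The one genuinely delicate point I would confront first is that the obvious integral representative $\mathfrak{c}_0 = a[\omega_Q,1]$ — which is integral, prime to $N$, and of norm $a$ by Lemmas \ref{QaN} and \ref{primelemma} — does \emph{not} lie in $C$, but rather in $[a\mathcal{O}]\,C$, since $a\mathcal{O}$ is in general nontrivial in $\mathcal{C}_N(\mathcal{O})$. This is precisely the origin of the factor $a'$ in the statement. The remedy is to set $\mathfrak{c} = a'\mathfrak{c}_0 = aa'[\omega_Q,1]$: writing $aa' = 1 + kN$ with $k\in\mathbb{Z}$ we have $aa'\equiv 1\Mod{N\mathcal{O}}$, so $aa'\mathcal{O}\in P_N(\mathcal{O})$ by the definition (\ref{PNO}); hence $\mathfrak{c}$ and $[\omega_Q,1]$ represent the same class $C$, while $\mathfrak{c}$ remains integral and prime to $N$.

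Next I would compute the inverse. From $\mathfrak{c}_0\overline{\mathfrak{c}_0} = \mathrm{N}_\mathcal{O}(\mathfrak{c}_0)\mathcal{O} = a\mathcal{O}$ (Lemma \ref{normbasic}(iii)) together with $\overline{\mathfrak{c}_0} = a[-\overline{\omega}_Q,1]$ one gets $\mathfrak{c}_0^{-1} = [-\overline{\omega}_Q,1]$, and therefore $\mathfrak{c}^{-1} = \frac{1}{a'}[-\overline{\omega}_Q,1]$. I would take the basis $\xi_1 = -\overline{\omega}_Q/a'$, $\xi_2 = 1/a'$, so that $\xi = \xi_1/\xi_2 = -\overline{\omega}_Q \in \mathbb{H}$, the point appearing in the claim. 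To determine $A$ with $\left[\begin{smallmatrix}\tau_\mathcal{O}\\1\end{smallmatrix}\right] = A\left[\begin{smallmatrix}\xi_1\\\xi_2\end{smallmatrix}\right]$, I would use $\sqrt{D_\mathcal{O}} = 2\tau_\mathcal{O} + b_\mathcal{O}$ (immediate from $\tau_\mathcal{O} = \tfrac{-b_\mathcal{O}+\sqrt{D_\mathcal{O}}}{2}$ in (\ref{tauO})) to rewrite $a(-\overline{\omega}_Q) = \tau_\mathcal{O} + \tfrac{b+b_\mathcal{O}}{2}$, noting $\tfrac{b+b_\mathcal{O}}{2}\in\mathbb{Z}$ since $b$ and $b_\mathcal{O}$ share the parity of $D_\mathcal{O}$. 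Since $-\overline{\omega}_Q = a'\xi_1$ and $1 = a'\xi_2$, this gives $\tau_\mathcal{O} = aa'\xi_1 - a'\tfrac{b+b_\mathcal{O}}{2}\xi_2$ and $1 = a'\xi_2$, that is
\[
A = \begin{bmatrix} aa' & -a'(b+b_\mathcal{O})/2\\ 0 & a'\end{bmatrix}\in M_2(\mathbb{Z})\cap\mathrm{GL}_2^+(\mathbb{Q}),\qquad \det A = a(a')^2,
\]
which is prime to $N$ as Definition \ref{invariant} requires.

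Finally, reducing $A$ modulo $N$ and using $aa'\equiv 1\Mod{N}$ yields $\widetilde{A} = \left[\begin{smallmatrix}1 & -a'(b+b_\mathcal{O})/2\\0 & a'\end{smallmatrix}\right]$ in $\mathrm{GL}_2(\mathbb{Z}/N\mathbb{Z})/\langle -I_2\rangle$, so Definition \ref{invariant}, backed by the well-definedness in Lemma \ref{welldefined}, gives $f([Q]) = f(C) = f^{\widetilde{A}}(-\overline{\omega}_Q)$, exactly as claimed. I expect the main obstacle to be the class-correction step: recognizing that $a[\omega_Q,1]$ lands in $[a\mathcal{O}]\,C$ rather than $C$, and that scaling by $a'$ restores the correct class precisely because $aa'\mathcal{O}\in P_N(\mathcal{O})$; once this is in place, the determination of $A$ is a routine basis computation.
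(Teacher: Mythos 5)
Your proof is correct and follows essentially the same route as the paper: represent $C=\phi_{\mathcal{O},N}([Q])$ by an integral ideal obtained from $a[\omega_Q,1]$ by scaling with an integer congruent to $1$ modulo $N$, compute $\mathfrak{c}^{-1}$ via $\mathfrak{c}_0\overline{\mathfrak{c}_0}=a\mathcal{O}$ to land on the basis with ratio $-\overline{\omega}_Q$, and read off $A$ and its reduction modulo $N$. The only (immaterial) difference is that the paper scales by $a^{\varphi(N)-1}$, i.e.\ takes $\mathfrak{c}=a^{\varphi(N)}[\omega_Q,1]$ so that the factor is automatically $\equiv1\Mod{N}$, whereas you scale by $a'$ itself, and both matrices reduce to $\left[\begin{smallmatrix}1 & -a'(b+b_\mathcal{O})/2\\ 0&a'\end{smallmatrix}\right]$ modulo $N$.
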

\begin{proof}
Let $C=\phi_{\mathcal{O},\,N}([Q])=[[\omega_Q,\,1]]$.
We see by the facts $\gcd(a,\,N)=1$, $a^{\varphi(N)}\equiv1\Mod{N}$ and Lemma \ref{QaN} that
\begin{equation*}
\mathfrak{c}:=a^{\varphi(N)}[\omega_Q,\,1]
=a^{\varphi(N)-1}(a[\omega_Q,\,1])\in C\cap\mathcal{M}(\mathcal{O},\,N).
\end{equation*}
Now, we find that
\begin{eqnarray*}
\mathfrak{c}^{-1}&=&a^{-\varphi(N)+1}\mathfrak{b}^{-1}\quad\textrm{where}~\mathfrak{b}=a[\omega_Q,\,1]~(\in\mathcal{M}
(\mathcal{O},\,N))\\
&=&a^{-\varphi(N)+1}\mathrm{N}_\mathcal{O}(\mathfrak{b})^{-1}\overline{\mathfrak{b}}\quad\textrm{by Lemma \ref{normbasic} (iii)}\\
&=&a^{-\varphi(N)+1}a^{-1}(a[\overline{\omega}_Q,\,1])\quad\textrm{by Lemma \ref{QaN}}\\
&=&a^{-\varphi(N)+1}[-\overline{\omega}_Q,\,1].
\end{eqnarray*}
Thus, if we take
\begin{equation*}
\xi_1=-a^{-\varphi(N)+1}\overline{\omega}_Q\quad\textrm{and}\quad
\xi_2=a^{-\varphi(N)+1},
\end{equation*}
then we achieve
\begin{equation*}
\mathfrak{c}^{-1}=[\xi_1,\,\xi_2]\quad\textrm{and}\quad
\xi:=\frac{\xi_1}{\xi_2}=-\overline{\omega}_Q\in\mathbb{H}.
\end{equation*}
Furthermore, since
\begin{equation*}
-\overline{\omega}_Q=\frac{1}{a}\left(\tau_\mathcal{O}+\frac{b+b_\mathcal{O}}{2}\right),
\end{equation*}
we obtain that
\begin{equation*}
\begin{bmatrix}\tau_\mathcal{O}\\1\end{bmatrix}=
A\begin{bmatrix}\xi_1\\\xi_2\end{bmatrix}\quad\textrm{with}~
A=\begin{bmatrix}a^{\varphi(N)} & -a^{\varphi(N)-1}(b+b_\mathcal{O})/2\\
0&a^{\varphi(N)-1}\end{bmatrix}.
\end{equation*}
Since
\begin{equation*}
A\equiv
\begin{bmatrix}1 & -a'(b+b_\mathcal{O})/2\\
0&a'\end{bmatrix}\Mod{NM_2(\mathbb{Z})},
\end{equation*}
we conclude by Definition \ref{invariant} and Lemma \ref{welldefined} that
\begin{equation*}
f([Q])=
f(C)=
f^{\left[\begin{smallmatrix}
1 & -a'(b+b_\mathcal{O})/2\\
0&a'\end{smallmatrix}\right]}(-\overline{\omega}_Q).
\end{equation*}
\end{proof}

\begin{theorem}\label{CDGKK}
The map
\begin{eqnarray*}
\mathcal{C}_N(D_\mathcal{O}) & \rightarrow & \mathrm{Gal}(K_{\mathcal{O},\,N}/K)\\
\mathrm{[}Q]=[ax^2+bxy+cy^2] & \mapsto & \left(f(\tau_\mathcal{O})\mapsto
f^{\left[\begin{smallmatrix}
1 & -a'(b+b_\mathcal{O})/2\\
0&a'\end{smallmatrix}\right]}(-\overline{\omega}_Q)~|~f\in\mathcal{F}_N~\textrm{is finite at $\tau_\mathcal{O}$}\right)
\end{eqnarray*}
is a well-defined isomorphism, where $a'$ is an integer which holds $aa'\equiv1\Mod{N}$.
\end{theorem}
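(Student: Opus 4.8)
The plan is to recognize the displayed map as the composite
$\sigma_{\mathcal{O},\,N}\circ\phi_{\mathcal{O},\,N}\colon\mathcal{C}_N(D_\mathcal{O})\to\mathrm{Gal}(K_{\mathcal{O},\,N}/K)$,
so that both its well-definedness and the fact that it is an isomorphism are inherited from Theorems \ref{formideal} and \ref{transformation}. Recall from (\ref{specialization}) that $K_{\mathcal{O},\,N}$ is generated over $K$ by the values $f(\tau_\mathcal{O})$ as $f$ runs through the functions in $\mathcal{F}_N$ finite at $\tau_\mathcal{O}$; hence any automorphism in $\mathrm{Gal}(K_{\mathcal{O},\,N}/K)$ is uniquely determined by its effect on these values. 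It therefore suffices to verify that the automorphism $\sigma_{\mathcal{O},\,N}(\phi_{\mathcal{O},\,N}([Q]))$ carries each such $f(\tau_\mathcal{O})$ precisely to $f^{\left[\begin{smallmatrix}1 & -a'(b+b_\mathcal{O})/2\\0&a'\end{smallmatrix}\right]}(-\overline{\omega}_Q)$.

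First I would chain together the preceding results. Let $C_0$ denote the identity class of $\mathcal{C}_N(\mathcal{O})$ and put $C=\phi_{\mathcal{O},\,N}([Q])$. Applying Theorem \ref{transformation} with $C_0$ in the role of $C$ and $C$ in the role of $C'$, and using $f(C_0)=f(\tau_\mathcal{O})$ from (\ref{fC0}), I obtain
\begin{equation*}
f(\tau_\mathcal{O})^{\sigma_{\mathcal{O},\,N}(\phi_{\mathcal{O},\,N}([Q]))}
=f(C_0)^{\sigma_{\mathcal{O},\,N}(C)}
=f(C_0C)=f(C)=f([Q]),
\end{equation*}
the last equality being Definition \ref{DeffQ}. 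Lemma \ref{fmatrix} then rewrites $f([Q])$ as exactly $f^{\left[\begin{smallmatrix}1 & -a'(b+b_\mathcal{O})/2\\0&a'\end{smallmatrix}\right]}(-\overline{\omega}_Q)$. This shows that $\sigma_{\mathcal{O},\,N}(\phi_{\mathcal{O},\,N}([Q]))$ acts on the generators of $K_{\mathcal{O},\,N}/K$ by the stated rule, so the displayed map coincides with $\sigma_{\mathcal{O},\,N}\circ\phi_{\mathcal{O},\,N}$.

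Since $\phi_{\mathcal{O},\,N}$ is an isomorphism by Theorem \ref{formideal} and $\sigma_{\mathcal{O},\,N}$ is an isomorphism by construction, their composite is an isomorphism, which finishes the proof. I expect the only real subtlety to be one of well-definedness: as literally written, the assignment is phrased in terms of the form data $a$, $b$, $\omega_Q$ together with a choice of $a'$ satisfying $aa'\equiv1\Mod{N}$, so it is not a priori clear that it even yields a field automorphism, nor that this automorphism is independent of the representative $Q$ of its $\Gamma_1(N)$-class or of the auxiliary integer $a'$. Identifying the rule with the intrinsically defined $\sigma_{\mathcal{O},\,N}\circ\phi_{\mathcal{O},\,N}$ dissolves all of these concerns simultaneously, because Lemma \ref{fmatrix} guarantees that the right-hand side equals the representative-free quantity $f(\phi_{\mathcal{O},\,N}([Q]))$, and $\phi_{\mathcal{O},\,N}([Q])$ depends only on the class $[Q]$.
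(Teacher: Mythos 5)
Your proposal is correct and follows essentially the same route as the paper's own proof: both identify the displayed map with the composite $\sigma_{\mathcal{O},\,N}\circ\phi_{\mathcal{O},\,N}$, then compute $f(\tau_\mathcal{O})^{\sigma_{\mathcal{O},\,N}(\phi_{\mathcal{O},\,N}([Q]))}$ via (\ref{fC0}), Theorem \ref{transformation}, Definition \ref{DeffQ} and Lemma \ref{fmatrix}, and close the argument with the generation statement (\ref{specialization}). Your extra remark explaining how this identification simultaneously settles well-definedness in the representative $Q$ and the auxiliary integer $a'$ is a helpful clarification of a point the paper leaves implicit, but it is not a different method.
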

\begin{proof}
Let $\psi_{\mathcal{O},\,N}:\mathcal{C}_N(D_\mathcal{O})\stackrel{\sim}{\rightarrow}\mathrm{Gal}(K_{\mathcal{O},\,N}/K)$ be the isomorphism arising from composition of two isomorphisms
\begin{equation*}
\phi_{\mathcal{O},\,N}:\mathcal{C}_N(D_\mathcal{O})\stackrel{\sim}{\rightarrow}\mathcal{C}_N(\mathcal{O})
\quad\textrm{and}\quad
\sigma_{\mathcal{O},\,N}:\mathcal{C}_N(\mathcal{O})\stackrel{\sim}{\rightarrow}\mathrm{Gal}(K_{\mathcal{O},\,N}/K).
\end{equation*}
Let $Q=ax^2+bxy+cy^2\in\mathcal{Q}(D_\mathcal{O},\,N)$ and $f\in\mathcal{F}_N$ which is finite at $\tau_\mathcal{O}$. If we let $C=\phi_{\mathcal{O},\,N}([Q])$, then
we find that
\begin{eqnarray*}
f(\tau_\mathcal{O})^{\psi_{\mathcal{O},\,N}([Q])}&=&
f(C_0)^{\sigma_{\mathcal{O},\,N}(C)}\quad\textrm{by (\ref{fC0})}\\
&=&f(C)\quad\textrm{by Theorem \ref{transformation}}\\
&=&f([Q])\quad\textrm{by Definition \ref{DeffQ}}\\
&=&f^{\left[\begin{smallmatrix}
1 & -a'(b+b_\mathcal{O})/2\\
0&a'\end{smallmatrix}\right]}(-\overline{\omega}_Q)\quad\textrm{where $a'$ is an integer satisfying $aa'\equiv1\Mod{N}$},\\
&&\hspace{3.9cm}\textrm{by Lemma \ref{fmatrix}}.
\end{eqnarray*}
And, the result follows from (\ref{specialization}).
\end{proof}

Fix a positive transcendental number $t$.
Then the extension
$K_{\mathcal{O},\,N}(\sqrt[N]{t})/K(t)$ is Galois
because $K_{\mathcal{O},\,N}$ contains
$\zeta_N$ by (\ref{specialization}).
Since the polynomial $x^N-t$ in $x$ is irreducible over $K_{\mathcal{O},\,N}(t)$, we establish the isomorphism
\begin{equation}\label{ZNZ}
\begin{array}{rcl}
\mathbb{Z}/N\mathbb{Z}&\stackrel{\sim}{\rightarrow}&\mathrm{Gal}(K_{\mathcal{O},\,N}(\sqrt[N]{t})/K_{\mathcal{O},\,N}(t))\vspace{0.1cm}\\
\mathrm{[}m]&\mapsto&\left(\sqrt[N]{t}\mapsto\zeta_N^m\sqrt[N]{t}\right).
\end{array}
\end{equation}
Furthermore, since  $\sqrt[N]{t}$ is also a transcendental number, we get the following isomorphism
\begin{equation}\label{ssK}
\begin{array}{rcl}
\mathrm{Gal}(K_{\mathcal{O},\,N}(\sqrt[N]{t})/K(\sqrt[N]{t}))
&\stackrel{\sim}{\rightarrow}&\mathrm{Gal}(K_{\mathcal{O},\,N}/K)\\
\sigma&\mapsto&\sigma|_{K_{\mathcal{O},\,N}}.
\end{array}
\end{equation}

\begin{lemma}\label{semiproduct}
We have
\begin{equation*}
\mathrm{Gal}(K_{\mathcal{O},\,N}(\sqrt[N]{t})/K(t))=
\mathrm{Gal}(K_{\mathcal{O},\,N}(\sqrt[N]{t})/K_{\mathcal{O},\,N}(t))\rtimes
\mathrm{Gal}(K_{\mathcal{O},\,N}(\sqrt[N]{t})/K(\sqrt[N]{t})),
\end{equation*}
where $\mathrm{Gal}(K_{\mathcal{O},\,N}(\sqrt[N]{t})/K(\sqrt[N]{t}))$
acts on $\mathrm{Gal}(K_{\mathcal{O},\,N}(\sqrt[N]{t})/K_{\mathcal{O},\,N}(t))$
by conjugation.
\end{lemma}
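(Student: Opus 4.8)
The plan is to apply the fundamental theorem of Galois theory to the field tower $K(t)\subseteq K_{\mathcal{O},\,N}(t),\,K(\sqrt[N]{t})\subseteq K_{\mathcal{O},\,N}(\sqrt[N]{t})$ and to recognize the claimed decomposition as the standard internal semidirect product criterion $G=\mathcal{N}\rtimes H$, where $G=\mathrm{Gal}(K_{\mathcal{O},\,N}(\sqrt[N]{t})/K(t))$, $\mathcal{N}=\mathrm{Gal}(K_{\mathcal{O},\,N}(\sqrt[N]{t})/K_{\mathcal{O},\,N}(t))$ and $H=\mathrm{Gal}(K_{\mathcal{O},\,N}(\sqrt[N]{t})/K(\sqrt[N]{t}))$. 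Recall that $K_{\mathcal{O},\,N}(\sqrt[N]{t})/K(t)$ is already known to be Galois, so $G$ makes sense and $\mathcal{N},\,H$ are genuine subgroups of $G$ under the Galois correspondence (note $K(t)\subseteq K(\sqrt[N]{t})$ since $t=(\sqrt[N]{t})^N$). It then suffices to verify three things: $\mathcal{N}$ is normal in $G$; $\mathcal{N}\cap H=\{1\}$; and $|\mathcal{N}|\,|H|=|G|$.

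First I would establish normality. The subgroup $\mathcal{N}$ is normal in $G$ precisely when its fixed field $K_{\mathcal{O},\,N}(t)$ is Galois over $K(t)$. Since $K_{\mathcal{O},\,N}/K$ is abelian and $t$ is transcendental over $K_{\mathcal{O},\,N}$, restriction gives an isomorphism $\mathrm{Gal}(K_{\mathcal{O},\,N}(t)/K(t))\stackrel{\sim}{\rightarrow}\mathrm{Gal}(K_{\mathcal{O},\,N}/K)$; in particular $K_{\mathcal{O},\,N}(t)/K(t)$ is Galois and $\mathcal{N}\trianglelefteq G$. Next the trivial intersection: under the Galois correspondence $\mathcal{N}\cap H$ is the Galois group of $K_{\mathcal{O},\,N}(\sqrt[N]{t})$ over the compositum $K_{\mathcal{O},\,N}(t)\cdot K(\sqrt[N]{t})$. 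But this compositum contains both $K_{\mathcal{O},\,N}$ and $\sqrt[N]{t}$, hence equals $K_{\mathcal{O},\,N}(\sqrt[N]{t})$ itself, so $\mathcal{N}\cap H=\{1\}$.

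Finally the order count, which forces $\mathcal{N}H=G$. From the isomorphism in (\ref{ZNZ}) we have $|\mathcal{N}|=N$, and from (\ref{ssK}) we have $|H|=|\mathrm{Gal}(K_{\mathcal{O},\,N}/K)|=[K_{\mathcal{O},\,N}:K]$. On the other hand $[K_{\mathcal{O},\,N}(\sqrt[N]{t}):K(t)]=[K_{\mathcal{O},\,N}(\sqrt[N]{t}):K_{\mathcal{O},\,N}(t)]\,[K_{\mathcal{O},\,N}(t):K(t)]=N\cdot[K_{\mathcal{O},\,N}:K]$, using the irreducibility of $x^N-t$ over $K_{\mathcal{O},\,N}(t)$ for the first factor and the restriction isomorphism above for the second. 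Hence $|\mathcal{N}|\,|H|=|G|$; combined with $\mathcal{N}\cap H=\{1\}$ and $\mathcal{N}\trianglelefteq G$ this gives $|\mathcal{N}H|=|\mathcal{N}|\,|H|=|G|$, so $\mathcal{N}H=G$, and the decomposition $G=\mathcal{N}\rtimes H$ follows, with $H$ acting on $\mathcal{N}$ by conjugation as the statement asserts.

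The argument is essentially formal, so I do not anticipate a serious obstacle; the only point requiring a little care is the repeated use of the fact that adjoining the transcendental $t$ (or $\sqrt[N]{t}$) to the number-field extension $K_{\mathcal{O},\,N}/K$ neither changes the relative degree nor destroys the Galois property. This is exactly what lets the purely group-theoretic criterion go through, and it is what underlies both the restriction isomorphism used for normality and the degree factorization used in the order count.
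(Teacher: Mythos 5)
Your proof is correct and takes essentially the same route as the paper's: both verify the internal semidirect product criterion by establishing that $\mathcal{N}$ is normal in $G$ (via the fact that $K_{\mathcal{O},\,N}(t)/K(t)$ is Galois, indeed abelian, because $K_{\mathcal{O},\,N}/K$ is), that $\mathcal{N}\cap H=\{1\}$, and that $|\mathcal{N}|\,|H|=|G|$ via the degree count $[K_{\mathcal{O},\,N}(\sqrt[N]{t}):K(t)]=N\,[K_{\mathcal{O},\,N}:K]$. The only difference is that you spell out details the paper leaves implicit, such as the compositum argument for the trivial intersection.
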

\begin{proof}
Let
\begin{equation*}
G=\mathrm{Gal}(K_{\mathcal{O},\,N}(\sqrt[N]{t})/K(t)),~
N=\mathrm{Gal}(K_{\mathcal{O},\,N}(\sqrt[N]{t})/K_{\mathcal{O},\,N}(t))
~\textrm{and}~
H=\mathrm{Gal}(K_{\mathcal{O},\,N}(\sqrt[N]{t})/K(\sqrt[N]{t})).
\end{equation*}
Note that the map
\begin{eqnarray*}
N\times H&\rightarrow&
NH\\
(\sigma_1,\,\sigma_2)&\mapsto&\sigma_1\sigma_2
\end{eqnarray*}
is bijective because $N\cap H
=\{\mathrm{id}_{K_{\mathcal{O},\,N}(\sqrt[N]{t})}\}$.
Since the extension $K_{\mathcal{O},\,N}(t)/K(t)$ is abelian,
$N$ is normal in  $G$
by Galois theory.
Moreover, we see that
\begin{equation*}
|N|\cdot|H|=|N|\cdot
|\mathrm{Gal}(K_{\mathcal{O},\,N}(t)/K(t))|
=|G|.
\end{equation*}
Therefore we conclude that $G$ is the
semidirect product of the normal subgroup $N$
and $H$
in the sense of \cite[p. 76]{Lang02}.
\end{proof}

Since the group $\Gamma_1(N)$ acts on the set $\mathcal{Q}(D_\mathcal{O},\,N)$, so does its subgroup
$\Gamma(N)$.
Let $\sim_{\Gamma(N)}$ be the equivalence relation on $\mathcal{Q}(D_\mathcal{O},\,N)$ induced from the action of $\Gamma(N)$.

\begin{lemma}\label{Qgexpand}
If $Q=ax^2+bxy+cy^2\in\mathcal{Q}(D_\mathcal{O},\,N)$ and $\gamma=\begin{bmatrix}p&q\\r&s\end{bmatrix}\in\Gamma_1(N)$, then we have
\begin{equation*}
Q^\gamma=(ap^2+bpr+cr^2)x^2+(2apq+2bqr+2crs+b)xy+(aq^2+bqs+cs^2)y^2.
\end{equation*}
\end{lemma}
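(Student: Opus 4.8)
The plan is to compute $Q^\gamma$ directly from the definition of the $\mathrm{SL}_2(\mathbb{Z})$-action recalled in $\S$\ref{sect5}. Writing $\gamma\begin{bmatrix}x\\y\end{bmatrix}=\begin{bmatrix}px+qy\\rx+sy\end{bmatrix}$, I would substitute to obtain
\begin{equation*}
Q^\gamma=a(px+qy)^2+b(px+qy)(rx+sy)+c(rx+sy)^2,
\end{equation*}
and then expand the three summands and collect the coefficients of $x^2$, $xy$, and $y^2$.

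For the $x^2$-coefficient the expansion gives $ap^2+bpr+cr^2$, and for the $y^2$-coefficient it gives $aq^2+bqs+cs^2$, both matching the claimed formula immediately with no further input. The only step that requires an extra ingredient is the middle coefficient: a direct expansion produces $2apq+b(ps+qr)+2crs$, whereas the asserted coefficient is $2apq+2bqr+2crs+b$.

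The heart of the argument---though it is hardly an obstacle---is to reconcile these two expressions using the determinant condition. Since $\gamma\in\Gamma_1(N)\subseteq\mathrm{SL}_2(\mathbb{Z})$, we have $ps-qr=\det(\gamma)=1$, hence $ps+qr=(ps-qr)+2qr=1+2qr$. Substituting this into $b(ps+qr)$ yields $b+2bqr$, so the $xy$-coefficient becomes $2apq+2bqr+2crs+b$, exactly as stated. I would remark that the hypothesis $\gamma\in\Gamma_1(N)$ is used here only through $\det(\gamma)=1$, so in fact the identity holds verbatim for every $\gamma\in\mathrm{SL}_2(\mathbb{Z})$; the restriction to $\Gamma_1(N)$ is relevant only because that is the group acting on $\mathcal{Q}(D_\mathcal{O},\,N)$ in the surrounding discussion. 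The entire proof is thus a routine expansion, with the single conceptual point being the use of $\det(\gamma)=1$ to simplify the cross term.
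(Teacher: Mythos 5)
Your proof is correct and is exactly the paper's argument: the paper's proof consists of the single remark that the identity ``is straightforward from the fact $\det(\gamma)=ps-qr=1$,'' which is precisely the expansion and determinant substitution you carried out in detail. Nothing to add.
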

\begin{proof}
It is straightforward from the fact $\det(\gamma)=ps-qr=1$.
\end{proof}

\begin{corollary}
One can regard
$\mathcal{Q}(D_\mathcal{O},\,N)/\sim_{\Gamma(N)}$
as a group isomorphic to the Galois group $\mathrm{Gal}(K_{\mathcal{O},\,N}(\sqrt[N]{t})/K(t))$.
\end{corollary}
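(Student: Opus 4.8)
The plan is to compare the obvious refinement map $p\colon X:=\mathcal{Q}(D_\mathcal{O},N)/\!\sim_{\Gamma(N)}\,\longrightarrow\,\mathcal{C}_N(D_\mathcal{O})=\mathcal{Q}(D_\mathcal{O},N)/\!\sim_{\Gamma_1(N)}$ with the projection $G:=\mathrm{Gal}(K_{\mathcal{O},N}(\sqrt[N]{t})/K(t))\twoheadrightarrow \mathrm{Gal}(K_{\mathcal{O},N}(\sqrt[N]{t})/K(\sqrt[N]{t}))$ coming from the semidirect product of Lemma \ref{semiproduct}, and then to transport the group law of $G$ across a bijection $\Phi\colon X\xrightarrow{\sim}G$ lying over the isomorphism $\psi_{\mathcal{O},N}=\sigma_{\mathcal{O},N}\circ\phi_{\mathcal{O},N}$ of Theorem \ref{CDGKK}. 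First I would record that $\Gamma(N)$ is normal in $\Gamma_1(N)$ with $\Gamma_1(N)/\Gamma(N)\xrightarrow{\sim}\mathbb{Z}/N\mathbb{Z}$, the isomorphism sending $\left[\begin{smallmatrix}p&q\\ r&s\end{smallmatrix}\right]$ to its upper-right entry $q$ modulo $N$. Consequently the fibres of $p$ are precisely the orbits of the residual $\mathbb{Z}/N\mathbb{Z}$-action, and this action is transitive because $\Gamma_1(N)$ already acts transitively on each $\Gamma_1(N)$-class; by Lemma \ref{Qgexpand} it is the coefficient shift $(a,b,c)\mapsto(a,\,b+2aq,\,\ast)\Mod{N}$.

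The crux is to show that this $\mathbb{Z}/N\mathbb{Z}$-action is \emph{free}, so that every fibre of $p$ has exactly $N$ elements. Suppose $\gamma\in\Gamma_1(N)$ with image $q\in\mathbb{Z}/N\mathbb{Z}$ fixes the $\Gamma(N)$-class of $Q$; then $Q^{\gamma\delta}=Q$ for some $\delta\in\Gamma(N)$, so $\gamma\delta$ lies in the group of proper automorphs of $Q$. Since $D_\mathcal{O}\neq-3,-4$, this automorph group is $\{\pm I_2\}$, whence $\gamma\equiv\pm I_2\Mod{N}$; as the reduction of $\gamma$ has the shape $\left[\begin{smallmatrix}1&q\\0&1\end{smallmatrix}\right]$, both signs force $q\equiv0\Mod{N}$ (the case $-I_2$ already requires $N\mid 2$). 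Hence the stabiliser is trivial, the action is free, and $|X|=N\,|\mathcal{C}_N(D_\mathcal{O})|$. This automorph computation, together with the bookkeeping of the sign $-I_2$ in the small cases $N\le 2$, is the step I expect to be the main obstacle, and it is the point at which the hypothesis $D_\mathcal{O}\neq-3,-4$ is genuinely used.

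On the Galois side, Lemma \ref{semiproduct} together with (\ref{ZNZ}), (\ref{ssK}) and Theorem \ref{CDGKK} exhibits $G$ as a semidirect product $(\mathbb{Z}/N\mathbb{Z})\rtimes H$, where $H=\mathrm{Gal}(K_{\mathcal{O},N}(\sqrt[N]{t})/K(\sqrt[N]{t}))\cong\mathrm{Gal}(K_{\mathcal{O},N}/K)\cong\mathcal{C}_N(D_\mathcal{O})$, so that $|G|=N\,|\mathcal{C}_N(D_\mathcal{O})|=|X|$ and the projection $G\to G/(\mathbb{Z}/N\mathbb{Z})\cong H$ is again a $\mathbb{Z}/N\mathbb{Z}$-torsor over $\mathcal{C}_N(D_\mathcal{O})$. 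I would then fix a base representative $Q_C$ for each class $C\in\mathcal{C}_N(D_\mathcal{O})$ and define $\Phi$ fibrewise, sending $[Q_C^{\gamma_q}]_{\Gamma(N)}$ (with $\gamma_q$ a lift of $q$) to $\nu_q\cdot h_C\in(\mathbb{Z}/N\mathbb{Z})\rtimes H$, where $h_C$ is the image of $\psi_{\mathcal{O},N}(C)$ under (\ref{ssK}) and $\nu_q(\sqrt[N]{t})=\zeta_N^{q}\sqrt[N]{t}$ via (\ref{ZNZ}); freeness makes $\Phi$ well defined and bijective, and equivariant for the two $\mathbb{Z}/N\mathbb{Z}$-torsor structures over $\mathcal{C}_N(D_\mathcal{O})$. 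Since $\Phi$ is a bijection onto the group $G$, pulling back the multiplication of $G$ equips $X$ with a group structure for which $\Phi$ is an isomorphism and $p$ is a homomorphism onto $\mathcal{C}_N(D_\mathcal{O})$. Finally, to see this is the \emph{intended} (base-point independent) structure, one checks that the conjugation action of $H$ on the normal factor is multiplication by the cyclotomic character $h\mapsto\det$, which under $\psi_{\mathcal{O},N}$ is the determinant $a'\equiv a^{-1}\Mod{N}$ of the matrix $\left[\begin{smallmatrix}1 & -a'(b+b_\mathcal{O})/2\\0&a'\end{smallmatrix}\right]$ of Theorem \ref{CDGKK}, and that this matches the $\mathcal{C}_N(D_\mathcal{O})$-action on the shift coordinate; this confirms that the transported law is the expected semidirect product and yields the desired identification $X\cong G$.
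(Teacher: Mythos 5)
Your freeness step is where the proof breaks down as written. The corollary carries no hypothesis on $D_\mathcal{O}$: the assumption $D_\mathcal{O}\neq-3,\,-4$ is declared only for \S\ref{sect7} and \S\ref{sect8}, and both Theorem \ref{CDGKK} and this corollary are stated for arbitrary orders (in particular for the maximal orders of $\mathbb{Q}(\mathrm{i})$ and $\mathbb{Q}(\sqrt{-3})$, where the proper automorph group of a form has order $4$ or $6$, not $2$). So your appeal to ``automorphs are $\pm I_2$'' is unavailable in general, and your closing remark that this hypothesis ``is genuinely used'' is precisely the misconception: it is neither available nor needed. Freeness holds for every discriminant by the computation the paper itself runs in its injectivity step: if $\gamma\in\Gamma_1(N)$ and $Q^{\gamma\delta}=Q$ for some $\delta\in\Gamma(N)$, write $\gamma\delta=\left[\begin{smallmatrix}p&q\\r&s\end{smallmatrix}\right]\in\Gamma_1(N)$; comparing $xy$-coefficients via Lemma \ref{Qgexpand} gives $0=apq+bqr+crs\equiv aq\Mod{N}$ (since $p\equiv1$, $r\equiv0\Mod{N}$), and $\gcd(a,N)=1$ forces $q\equiv0\Mod{N}$, i.e.\ $\gamma\delta\in\Gamma(N)$ and hence $\gamma\in\Gamma(N)$. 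The ingredient genuinely used is $\gcd(a,N)=1$ from the definition of $\mathcal{Q}(D_\mathcal{O},N)$, not the size of the automorph group. (Relatedly, your parenthetical description of the residual action as the shift $b\mapsto b+2aq$ modulo $N$ is too coarse to detect freeness when $N$ is even; the complete invariant is $b$ modulo $2N$, equivalently $(b-b_\mathcal{O})/2$ modulo $N$, which is exactly the exponent the paper attaches to $\zeta_N$.)

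With that repair, your torsor-counting argument does prove the literal statement, but by a genuinely different and less informative route than the paper's. The paper writes down one explicit, choice-free map $\psi$ sending $[Q]_{\Gamma(N)}$, $Q=ax^2+bxy+cy^2$, to the automorphism acting by $f(\tau_\mathcal{O})\mapsto f([Q]_{\Gamma_1(N)})$ and $\sqrt[N]{t}\mapsto\zeta_N^{(b-b_\mathcal{O})/2}\sqrt[N]{t}$, then checks well-definedness, injectivity (the computation above) and surjectivity (solving explicitly with $Q'=Q^{\left[\begin{smallmatrix}1&a'\{m-(b-b_\mathcal{O})/2\}\\0&1\end{smallmatrix}\right]}$), and finally transports the group law through $\psi$. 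Your $\Phi$ instead depends on choices of base forms $Q_C$ and lifts $\gamma_q$; it differs from $\psi$ fibrewise by the relabelling $q\mapsto(b_C-b_\mathcal{O})/2+a_Cq$, so the transported law a priori depends on the choices. That is still enough for the bare assertion ``one can regard $\mathcal{Q}(D_\mathcal{O},N)/\!\sim_{\Gamma(N)}$ as a group isomorphic to $\mathrm{Gal}(K_{\mathcal{O},N}(\sqrt[N]{t})/K(t))$'', but your final paragraph, intended to certify that the structure is base-point independent, is not a proof: matching the conjugation action of $H$ on the normal factor only constrains the semidirect-product type; it neither pins down the bijection nor shows independence of the choices. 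The paper's formulation gets canonicity for free because the Kummer exponent $(b-b_\mathcal{O})/2$ is read off from the form itself.
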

\begin{proof}
For $Q\in\mathcal{Q}(D_\mathcal{O},\,N)$, we denote by $[Q]_{\Gamma(N)}$ and $[Q]_{\Gamma_1(N)}$
its classes in $\mathcal{Q}(D_\mathcal{O},\,N)/\sim_{\Gamma(N)}$ and $\mathcal{C}_N(D_\mathcal{O})$, respectively.
Define a map
\begin{eqnarray*}
\psi~:~\mathcal{Q}(D_\mathcal{O},\,N)/\sim_{\Gamma(N)}&\rightarrow&\mathrm{Gal}(K_{\mathcal{O},\,N}(\sqrt[N]{t})/K(t))\\
\mathrm{[}Q]_{\Gamma(N)}=[ax^2+bxy+cy^2]_{\Gamma(N)} & \mapsto &
\left(\begin{array}{l}
f(\tau_\mathcal{O})\mapsto f([Q]_{\Gamma_1(N)})~(f\in\mathcal{F}_N~\textrm{is finite at}~\tau_\mathcal{O}),\\
\sqrt[N]{t}\mapsto
\zeta_N^\frac{b-b_\mathcal{O}}{2}\sqrt[N]{t}
\end{array}\right).
\end{eqnarray*}
First, we shall check that $\psi$ is well defined. Let $Q=ax^2+bxy+cy^2,\,Q'=a'x^2+b'xy+c'y^2\in\mathcal{Q}(D_\mathcal{O},\,N)$ such that
$[Q]_{\Gamma(N)}=[Q']_{\Gamma(N)}$. Since there is a natural surjection $\mathcal{Q}(D_\mathcal{O},\,N)/\sim_{\mathrm{SL}_2(\mathbb{Z})}
\rightarrow\mathcal{C}_N(D_\mathcal{O})$, we attain
$[Q]_{\Gamma_1(N)}=[Q']_{\Gamma_1(N)}$ and so
\begin{equation*}
f([Q]_{\Gamma_1(N)})=f([Q']_{\Gamma_1(N)})\quad(f\in\mathcal{F}_N~\textrm{is finite at}~\tau_\mathcal{O}).
\end{equation*}
Furthermore, since
\begin{equation*}
Q'=Q^\gamma\quad\textrm{for some}~\gamma=\begin{bmatrix}p&q\\r&s\end{bmatrix}\in\Gamma(N),
\end{equation*}
we find that
\begin{eqnarray*}
\frac{b'-b}{2}&=&apq+bqr+crs\quad\textrm{by Lemma \ref{Qgexpand}}\\
&\equiv&0\Mod{N}
\quad\textrm{because}~q\equiv r\equiv0\Mod{N}.
\end{eqnarray*}
Thus we obtain that
\begin{equation*}
\frac{b'-b_\mathcal{O}}{2}\equiv\frac{b-b_\mathcal{O}}{2}\Mod{N},
\end{equation*}
and hence
\begin{equation*}
\zeta_N^{\frac{b'-b_\mathcal{O}}{2}}\sqrt[N]{t}=\zeta_N^{\frac{b-b_\mathcal{O}}{2}}\sqrt[N]{t}.
\end{equation*}
Therefore $\psi$ is well defined.
\par
Second, we shall prove that $\psi$ is injective. Suppose that
\begin{equation*}
\psi([Q]_{\Gamma(N)})=\psi([Q']_{\Gamma(N)})\quad\textrm{for some}~Q=ax^2+bxy+cy^2,\,Q'=a'x^2+b'xy+c'y^2\in\mathcal{Q}(D_\mathcal{O},\,N).
\end{equation*}
Since
\begin{equation*}
f([Q]_{\Gamma_1(N)})=f([Q']_{\Gamma_1(N)})\quad\textrm{for all}~f\in\mathcal{F}_N~
\textrm{finite at}~\tau_\mathcal{O},
\end{equation*}
we get by Theorem \ref{CDGKK} that $[Q]_{\Gamma_1(N)}=[Q']_{\Gamma_1(N)}$ and so
\begin{equation*}
Q'=Q^\gamma\quad\textrm{for some}~\gamma=\begin{bmatrix}p&q\\r&s\end{bmatrix}\in\Gamma_1(N).
\end{equation*}
Moreover, since  $\zeta_N^\frac{b-b_\mathcal{O}}{2}=\zeta_N^\frac{b'-b_\mathcal{O}}{2}$, we achieve
\begin{equation}\label{bbbb}
\frac{b-b_\mathcal{O}}{2}\equiv
\frac{b'-b_\mathcal{O}}{2}\Mod{N}
\end{equation}
and derive that
\begin{eqnarray*}
0&\equiv&\frac{b'-b}{2}\Mod{N}\quad\textrm{by (\ref{bbbb})}\\
&\equiv&apq+bqr+crs\quad\textrm{by Lemma \ref{Qgexpand}}\\
&\equiv&aq\Mod{N}\quad\textrm{because}~p\equiv1~\textrm{and}~r\equiv0\Mod{N}.
\end{eqnarray*}
It then follows from the fact $\gcd(a,\,N)=1$ that $q\equiv0\Mod{N}$. Thus $\gamma$ belongs to $\Gamma(N)$ and so
\begin{equation*}
[Q]_{\Gamma(N)}=[Q']_{\Gamma(N)}.
\end{equation*}
This observation shows the injectivity of $\psi$.
\par
Third, we shall show that $\psi$ is surjective. Let $\sigma\in\mathrm{Gal}(K_{\mathcal{O},\,N}(\sqrt[N]{t})/K(t))$.
Then there exists a pair $(Q,\,m)$ of $Q=ax^2+bxy+cy^2\in\mathcal{Q}(D_\mathcal{O},\,N)$ and $m\in\mathbb{Z}$ such that
\begin{equation*}
f(\tau_\mathcal{O})^\sigma=f([Q]_{\Gamma_1(N)})~(f\in\mathcal{F}_N~\textrm{is finite at $\tau_\mathcal{O}$})
\quad\textrm{and}\quad
\sqrt[N]{t}^{\,\sigma}=\zeta_N^m\sqrt[N]{t}
\end{equation*}
by Theorem \ref{CDGKK}, (\ref{ZNZ}), (\ref{ssK}) and Lemma \ref{semiproduct}.
Observe that these actions of $\sigma$ completely determine $\sigma$.
If we set
\begin{equation*}
Q'=a'x^2+b'xy+c'y^2=Q^{\left[\begin{smallmatrix}1&a'\{m-(b-b_\mathcal{O})/2\}\\0&1\end{smallmatrix}\right]}
\end{equation*}
where $a'$ is an integer satisfying $aa'\equiv1\Mod{N}$,
then we get that
\begin{equation*}
f(\tau_\mathcal{O})^{\psi([Q']_{\Gamma(N)})}=f([Q']_{\Gamma_1(N)})
=f([Q]_{\Gamma_1(N)})\quad(f\in\mathcal{F}_N~\textrm{is finite at $\tau_\mathcal{O}$})
\end{equation*}
and
\begin{equation*}
\sqrt[N]{t}^{\,\psi([Q']_{\Gamma(N)})}=\zeta_N^\frac{b'-b_\mathcal{O}}{2}\sqrt[N]{t}=
\zeta_N^{aa'm}\sqrt[N]{t}=\zeta_N^m\sqrt[N]{t}
\end{equation*}
by Lemma \ref{Qgexpand}. Thus we have $\sigma=\psi([Q']_{\Gamma(N)})$, which
proves that $\psi$ is surjective as desired.
\par
Finally, through the bijection $\psi$ we can endow the set $\mathcal{Q}(D_\mathcal{O},\,N)/\sim_{\Gamma(N)}$ with a binary operation
so that $\mathcal{Q}(D_\mathcal{O},\,N)/\sim_{\Gamma(N)}$ is isomorphic to
$\mathrm{Gal}(K_{\mathcal{O},\,N}(\sqrt[N]{t})/K(t))$.
\end{proof}

\section {The $L$-functions for orders}\label{sect13}

As an analogue of the Weber $L$-function for a ray class character modulo $N\mathcal{O}_K$,
we shall define an $L$-function for a character of $\mathcal{C}_N(\mathcal{O})$.

\begin{definition}\label{defL}
Let $\chi$ be a character of $\mathcal{C}_N(\mathcal{O})$.
\begin{enumerate}
\item[\textup{(i)}]
We define
the $L$-function $L_\mathcal{O}(\,\cdot\,,\,\chi)$ by
\begin{equation*}
L_\mathcal{O}(s,\,\chi)=\sum_{\mathfrak{a}\in
\mathcal{M}(\mathcal{O},\,N)}\frac{\chi([\mathfrak{a}])}{\mathrm{N}_\mathcal{O}(\mathfrak{a})^s}
\quad(s\in\mathbb{C},~\mathrm{Re}(s)>1)
\end{equation*}
where $[\mathfrak{a}]$ is the class of $\mathfrak{a}$ in $\mathcal{C}_N(\mathcal{O})$.
\item[\textup{(ii)}]
For each $C\in\mathcal{C}_N(\mathcal{O})$, we define the $\zeta$-function
$\zeta_\mathcal{O}(\,\cdot\,,\,C)$ by
\begin{equation*}
\zeta_\mathcal{O}(s,\,C)=\sum_{\mathfrak{a}\in
C\cap\mathcal{M}(\mathcal{O},\,N)}\frac{1}{\mathrm{N}_\mathcal{O}(\mathfrak{a})^s}
\quad(s\in\mathbb{C},\,\mathrm{Re}(s)>1).
\end{equation*}
\end{enumerate}
\end{definition}

\begin{remark}
\begin{enumerate}
\item[(i)] We have
\begin{equation*}
L_\mathcal{O}(s,\,\chi)=\sum_{C\in\mathcal{C}_N(\mathcal{O})}\chi(C)\zeta_\mathcal{O}(s,\,C).
\end{equation*}
\item[(ii)]
In Definition \ref{defL} (i),
what if $\mathcal{C}_N(\mathcal{O})$ and $\mathcal{M}(\mathcal{O},\,N)$
are replaced by $\mathcal{C}_N(\mathcal{O},\,\ell_\mathcal{O}N)$ and
$\mathcal{M}(\mathcal{O},\,\ell_\mathcal{O}N)$, respectively\,?
For a character $\psi$ of $\mathcal{C}_N(\mathcal{O},\,\ell_\mathcal{O}N)$,
define
\begin{equation*}
\mathcal{L}_\mathcal{O}(s,\,\psi)=\sum_{\mathfrak{a}\in
\mathcal{M}(\mathcal{O},\,\ell_\mathcal{O}N)}\frac{\psi([\mathfrak{a}])}{\mathrm{N}_\mathcal{O}(\mathfrak{a})^s}
\quad(s\in\mathbb{C},~\mathrm{Re}(s)>1)
\end{equation*}
where $[\mathfrak{a}]$ is the class of $\mathfrak{a}$ in $\mathcal{C}_N(\mathcal{O},\,\ell_\mathcal{O}N)$.
In particular, if $\ell_\mathcal{O}$ divides $N$, then we get
$\mathcal{C}_N(\mathcal{O},\,\ell_\mathcal{O}N)=\mathcal{C}_N(\mathcal{O})$ and so
$\mathcal{L}_\mathcal{O}(s,\,\psi)=L_\mathcal{O}(s,\,\psi)$.
Let $\widetilde{\psi}$ be the character of the ray class group
$\mathcal{C}_{\ell_\mathcal{O}N}(\mathcal{O}_K)$ achieved
by composing three homomorphisms
\begin{equation*}
\mathcal{C}_{\ell_\mathcal{O}N}(\mathcal{O}_K)
\stackrel{\textrm{natural}}{\longtwoheadrightarrow}I(\mathcal{O}_K,\,\ell_\mathcal{O}N)/
P_{\mathbb{Z},\,N}(\mathcal{O}_K,\,\ell_\mathcal{O}N)
\stackrel{\sim}{\longrightarrow}\mathcal{C}_N(\mathcal{O},\,\ell_\mathcal{O}N)
\stackrel{\psi}{\longrightarrow}\mathbb{C}^*.
\end{equation*}
Here the second isomorphism is the one established in Proposition \ref{generalized}.
And we find by Lemmas \ref{normpreserving} (i) and \ref{monoid} that
\begin{equation*}
\mathcal{L}_\mathcal{O}(s,\,\psi)=\sum_{\mathfrak{a}\in\mathcal{M}(\mathcal{O},\,\ell_\mathcal{O}N)}
\frac{\widetilde{\psi}([\mathfrak{a}\mathcal{O}_K])}
{\mathrm{N}_{\mathcal{O}_K}(\mathfrak{a}\mathcal{O}_K)^s}=
\sum_{\mathfrak{b}\in\mathcal{M}(\mathcal{O}_K,\,\ell_\mathcal{O}N)}
\frac{\widetilde{\psi}([\mathfrak{b}])}{\mathrm{N}_{\mathcal{O}_K}(\mathfrak{b})^s}
=L_{\mathcal{O}_K}(s,\,\widetilde{\psi}).
\end{equation*}
When $N=1$, Meyer (\cite{Meyer}) gave a concrete formula for
the value $L_{\mathcal{O}_K}(1,\,\widetilde{\psi})$.

\end{enumerate}
\end{remark}

\begin{lemma}\label{P}
Let $Q=ax^2+bxy+cy^2\in\mathcal{Q}(D_\mathcal{O},\,N)$,
$\mathfrak{c}=[\omega_Q,\,1]$ and
$[\mathfrak{c}]$ be the class of $\mathfrak{c}$ in $\mathcal{C}_N(\mathcal{O})$.
Let
\begin{equation*}
P=\{\lambda\in a\overline{\mathfrak{c}}~|~\lambda\neq0~
\textrm{and}~\lambda\equiv1\Mod{N\mathcal{O}}\}.
\end{equation*}
\begin{enumerate}
\item[\textup{(i)}] We get
$[\mathfrak{c}]\cap\mathcal{M}(\mathcal{O},\,N)=\{
\lambda\mathfrak{c}~|~\lambda\in P\}$.
\item[\textup{(ii)}] If $\lambda,\,\mu\in P$, then
\begin{equation*}
\lambda\mathfrak{c}=\mu\mathfrak{c}
\quad\Longleftrightarrow\quad\mu=\zeta\lambda~
\textrm{for some}~\zeta\in\mathcal{O}^*~\textrm{such that}~\zeta\equiv1\Mod{N\mathcal{O}}.
\end{equation*}
\end{enumerate}
\end{lemma}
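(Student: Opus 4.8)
The statement describes the fiber of the map $[\mathfrak{c}]\cap\mathcal{M}(\mathcal{O},\,N)$ in terms of the explicit set $P$ of generators $\lambda\in a\overline{\mathfrak{c}}$ with $\lambda\equiv1\Mod{N\mathcal{O}}$. The plan is to prove the two parts directly by unwinding the definitions, using Lemma \ref{normbasic} (iii) to identify $a\overline{\mathfrak{c}}=\mathrm{N}_\mathcal{O}(\mathfrak{c})\,a\,\mathfrak{c}^{-1}$ as a scalar multiple of $\mathfrak{c}^{-1}$, and Lemma \ref{anotherP} together with Lemma \ref{QQwI} to handle the congruence condition modulo $N\mathcal{O}$.

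\textbf{Part (i).} First I would establish the inclusion $\{\lambda\mathfrak{c}~|~\lambda\in P\}\subseteq[\mathfrak{c}]\cap\mathcal{M}(\mathcal{O},\,N)$. Given $\lambda\in P$, the ideal $\lambda\mathfrak{c}$ is clearly in the same class $[\mathfrak{c}]$ since $\lambda\equiv1\Mod{N\mathcal{O}}$ means $\lambda\mathcal{O}\in P_N(\mathcal{O})$ by definition (\ref{PNO}); moreover $\lambda\mathfrak{c}\subseteq a\overline{\mathfrak{c}}\,\mathfrak{c}=a\,\mathrm{N}_\mathcal{O}(\mathfrak{c})\mathcal{O}\subseteq\mathcal{O}$ by Lemma \ref{normbasic} (iii), so $\lambda\mathfrak{c}$ is an integral ideal, and its norm is prime to $N$ because $\mathfrak{c}\in\mathcal{M}(\mathcal{O},\,N)$ (via Lemma \ref{QQwI}) and $\lambda\equiv1\Mod{N\mathcal{O}}$ forces $\mathrm{N}_{K/\mathbb{Q}}(\lambda)\equiv1\Mod{N}$, so by Lemmas \ref{primelemma} and \ref{normbasic} (i)--(ii) we land in $\mathcal{M}(\mathcal{O},\,N)$. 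For the reverse inclusion, take $\mathfrak{a}\in[\mathfrak{c}]\cap\mathcal{M}(\mathcal{O},\,N)$; then $\mathfrak{a}=\frac{\nu_1}{\nu_2}\mathfrak{c}$ with $\nu_1\equiv\nu_2\equiv1\Mod{N\mathcal{O}}$ by the description of $P_N(\mathcal{O})$ in Lemma \ref{anotherP}. Setting $\lambda=\frac{\nu_1}{\nu_2}$, I would verify that $\lambda\in a\overline{\mathfrak{c}}$ by checking $\lambda\mathfrak{c}=\mathfrak{a}\subseteq\mathcal{O}$, which gives $\lambda\in\mathfrak{c}^{-1}\cap\mathcal{O}$; the factor $a$ and the conjugate $\overline{\mathfrak{c}}$ enter through the identity $a\overline{\mathfrak{c}}=a\,\mathrm{N}_\mathcal{O}(\mathfrak{c})\mathfrak{c}^{-1}$ and the fact (Lemma \ref{QaN}) that $a\mathfrak{c}$ is an integral $\mathcal{O}$-ideal of norm $a$. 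The congruence $\lambda\equiv1\Mod{N\mathcal{O}}$ follows from $\nu_1\equiv\nu_2\Mod{N\mathcal{O}}$ by inverting $\nu_2$ modulo $N\mathcal{O}$ as in the proof of Lemma \ref{anotherP}.

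\textbf{Part (ii).} For the second part I would argue that $\lambda\mathfrak{c}=\mu\mathfrak{c}$ holds if and only if $\lambda\mathcal{O}=\mu\mathcal{O}$, since $\mathfrak{c}$ is invertible (proper); this is equivalent to $\mu=\zeta\lambda$ for a unit $\zeta\in\mathcal{O}^*$. The only remaining point is to show that this $\zeta$ automatically satisfies $\zeta\equiv1\Mod{N\mathcal{O}}$: from $\lambda\equiv\mu\equiv1\Mod{N\mathcal{O}}$ and $\mu=\zeta\lambda$, reducing modulo $N\mathcal{O}$ gives $\zeta\equiv\zeta\lambda\equiv\mu\equiv1\Mod{N\mathcal{O}}$ because $\lambda$ is a unit congruent to $1$. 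Conversely, if $\zeta\equiv1\Mod{N\mathcal{O}}$ then $\mu=\zeta\lambda$ and $\lambda\in P$ give $\mu\in P$ as well, and $\lambda\mathfrak{c}=\mu\mathfrak{c}$ trivially.

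\textbf{Main obstacle.} The routine bookkeeping is entirely the membership $\lambda\in a\overline{\mathfrak{c}}$: one must track carefully how the normalization factor $a$ and the complex conjugate ideal $\overline{\mathfrak{c}}$ convert the condition ``$\lambda\mathfrak{c}\subseteq\mathcal{O}$'' into ``$\lambda\in a\overline{\mathfrak{c}}$'' via Lemma \ref{normbasic} (iii). I expect this identification to be the only genuinely nontrivial step; the congruence manipulations and the unit argument in part (ii) are straightforward once the structure of $P_N(\mathcal{O})$ from Lemma \ref{anotherP} is in hand.
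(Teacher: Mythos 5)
Your proof is correct and follows essentially the same route as the paper's: for (i) you show $\lambda\mathfrak{c}\subseteq\mathcal{O}$ via $a\overline{\mathfrak{c}}\,\mathfrak{c}=\mathcal{O}$ and then that it is prime to $N$ (the paper packages this last step as Lemma \ref{IONMOMON} rather than a direct norm computation), while the reverse inclusion and part (ii) coincide with the paper's argument (extract $\lambda=\nu_1/\nu_2$, use $\mathfrak{c}^{-1}=a\overline{\mathfrak{c}}$, then invertibility of $\mathfrak{c}$ plus the congruence trick for the unit $\zeta$). One citation slip worth fixing: Lemma \ref{QQwI} gives $\mathfrak{c}\in I(\mathcal{O},N)$, not $\mathfrak{c}\in\mathcal{M}(\mathcal{O},N)$ (the fractional ideal $\mathfrak{c}=[\omega_Q,1]$ contains $\mathcal{O}$, hence is not integral); the integral ideal prime to $N$ that your norm argument actually uses is $a\mathfrak{c}$, furnished by Lemmas \ref{QaN} and \ref{primelemma}.
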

\begin{proof}
\begin{enumerate}
\item[(i)] Let $\lambda\in P$. We see that
\begin{equation*}
\lambda\mathfrak{c}\in[\mathfrak{c}]\quad\textrm{and}\quad
\lambda\mathfrak{c}\in a\overline{\mathfrak{c}}\mathfrak{c}
=\frac{1}{a}(a\mathfrak{c})(\overline{a\mathfrak{c}})
=\mathcal{O}
\end{equation*}
by Lemmas \ref{normbasic} (iii) and \ref{QaN}.
It then follows from Lemma \ref{IONMOMON} that
\begin{equation*}
\lambda\mathfrak{c}\in
I(\mathcal{O},\,N)\cap\mathcal{M}(\mathcal{O})=\mathcal{M}(\mathcal{O},\,N).
\end{equation*}
Thus we attain the inclusion
\begin{equation*}
\{\lambda\mathfrak{c}~|~\lambda\in P\}\subseteq[\mathfrak{c}]\cap
\mathcal{M}(\mathcal{O},\,N).
\end{equation*}
\par
Now, let $\mathfrak{a}\in[\mathfrak{c}]\cap\mathcal{M}(\mathcal{O},\,N)$.
Since $\mathfrak{a}\in[\mathfrak{c}]$, we have
\begin{equation*}
\mathfrak{a}=\lambda\mathfrak{c}\quad
\textrm{with}~\lambda=\frac{\lambda_1}{\lambda_2}~
\textrm{for some}~
\lambda_1,\,\lambda_2\in\mathcal{O}\setminus\{0\}~\textrm{such that}~
\lambda_1\equiv\lambda_2\equiv1\Mod{N\mathcal{O}}.
\end{equation*}
Therefore we derive from the fact $\mathfrak{a}\subseteq\mathcal{O}$ that
\begin{equation*}
\lambda\in\mathfrak{c}^{-1}=a\overline{\mathfrak{c}}\subseteq\mathcal{O}
\end{equation*}
again by Lemmas \ref{normbasic} (iii) and \ref{QaN}.
Moreover, since
\begin{equation*}
\lambda_2\lambda=\lambda_1\quad\textrm{and}\quad\lambda_1\equiv\lambda_2\equiv1\Mod{N\mathcal{O}},
\end{equation*}
we deduce $\lambda\equiv1\Mod{N\mathcal{O}}$, and hence $\lambda\in P$ and
$\mathfrak{a}=\lambda\mathfrak{c}\in\{
\lambda\mathfrak{c}~|~\lambda\in P\}$.
This proves the converse inclusion
\begin{equation*}
[\mathfrak{c}]\cap\mathcal{M}(\mathcal{O},\,N)
\subseteq \{
\lambda\mathfrak{c}~|~\lambda\in P\}.
\end{equation*}
\item[(ii)] Assume that $\lambda\mathfrak{c}=\mu\mathfrak{c}$.
Then we attain $\lambda\mathcal{O}=\mu\mathcal{O}$ and so
$\mu=\zeta\lambda$ for some $\zeta\in\mathcal{O}^*$.
Furthermore, since $\lambda\equiv\mu\equiv1\Mod{N\mathcal{O}}$, we must have $\zeta\equiv1\Mod{N\mathcal{O}}$.
And, the converse is obvious.
\end{enumerate}
\end{proof}

Let
\begin{equation*}
\gamma_{\mathcal{O},\,N}=
\left|\{\nu\in\mathcal{O}^*~|~\nu\equiv1\Mod{N\mathcal{O}}\}\right|.
\end{equation*}

\begin{proposition}\label{zetaC}
Let $C\in\mathcal{C}_N(\mathcal{O})$ and so
$C=\phi_{\mathcal{O},\,N}([Q])$ for some
$Q=ax^2+bxy+cy^2\in\mathcal{Q}(D,\,N)$ by \textup{Proposition \ref{phibijective}}.
Then we have
\begin{equation*}
\zeta_\mathcal{O}(s,\,C)=\frac{1}{\gamma_{\mathcal{O},\,N}(N^2a)^s}
\sum_{(m,\,n)
\in\mathbb{Z}^2\setminus
\{(0,\,-\frac{a'}{N})\}}\frac{1}{|m(-\overline{\omega}_Q)+n+\frac{a'}{N}|^{2s}}
\end{equation*}
where $a'$ is an integer which satisfies $aa'\equiv1\Mod{N}$.
\end{proposition}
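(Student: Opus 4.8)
The plan is to collapse the ideal-theoretic sum defining $\zeta_\mathcal{O}(s,\,C)$ into a two-dimensional lattice sum by parametrizing the integral ideals lying in the class $C=[\mathfrak{c}]$ with $\mathfrak{c}=[\omega_Q,\,1]$. First I would apply Lemma \ref{P}: part (i) gives $C\cap\mathcal{M}(\mathcal{O},\,N)=\{\lambda\mathfrak{c}~|~\lambda\in P\}$, and part (ii) shows that $\lambda\mapsto\lambda\mathfrak{c}$ is exactly $\gamma_{\mathcal{O},\,N}$-to-one, since $\lambda\mathfrak{c}=\mu\mathfrak{c}$ holds precisely when $\mu=\zeta\lambda$ for a unit $\zeta\in\mathcal{O}^*$ with $\zeta\equiv1\Mod{N\mathcal{O}}$, and there are $\gamma_{\mathcal{O},\,N}$ such units. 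Hence $\zeta_\mathcal{O}(s,\,C)=\gamma_{\mathcal{O},\,N}^{-1}\sum_{\lambda\in P}\mathrm{N}_\mathcal{O}(\lambda\mathfrak{c})^{-s}$.

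Next I would compute the norm. By Lemma \ref{QaN} the ideal $a\mathfrak{c}$ is integral of norm $a$, so by multiplicativity of $\mathrm{N}_\mathcal{O}$ (Lemma \ref{normbasic} (i), (ii)) extended to fractional ideals, $\mathrm{N}_\mathcal{O}(\mathfrak{c})=a/\mathrm{N}_{K/\mathbb{Q}}(a)=1/a$, and therefore $\mathrm{N}_\mathcal{O}(\lambda\mathfrak{c})=\mathrm{N}_{K/\mathbb{Q}}(\lambda)/a=|\lambda|^2/a$. This converts the sum into $(a^s/\gamma_{\mathcal{O},\,N})\sum_{\lambda\in P}|\lambda|^{-2s}$.

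The crux is the explicit description of $P=\{\lambda\in a\overline{\mathfrak{c}}~|~\lambda\neq0,~\lambda\equiv1\Mod{N\mathcal{O}}\}$. Here I would write $a\overline{\mathfrak{c}}=a[\overline{\omega}_Q,\,1]=[a(-\overline{\omega}_Q),\,a]$, so that $\{a(-\overline{\omega}_Q),\,a\}$ is a $\mathbb{Z}$-basis; the identity $a(-\overline{\omega}_Q)=\tau_\mathcal{O}+(b+b_\mathcal{O})/2$, valid because $b\equiv b_\mathcal{O}\Mod{2}$, shows $a\overline{\mathfrak{c}}\subseteq\mathcal{O}$ and expresses a general element $\lambda=Ma(-\overline{\omega}_Q)+La$ in the basis $\{\tau_\mathcal{O},\,1\}$ of $\mathcal{O}$ as $\lambda=M\tau_\mathcal{O}+\bigl(M(b+b_\mathcal{O})/2+La\bigr)$. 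The single congruence $\lambda\equiv1\Mod{N\mathcal{O}}$ then splits into $M\equiv0\Mod{N}$ and $La\equiv1\Mod{N}$, i.e. $M=Nm$ and $L\equiv a'\Mod{N}$, so $L=Nn+a'$. Substituting yields $\lambda=aN\bigl(m(-\overline{\omega}_Q)+n+a'/N\bigr)$ and a bijection of $P$ with $\mathbb{Z}^2$ minus the one point making $\lambda=0$ (which arises only when $a'/N\in\mathbb{Z}$, namely $N=1$), whence $|\lambda|^2=(aN)^2\,|m(-\overline{\omega}_Q)+n+a'/N|^2$.

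Finally I would assemble the pieces: the prefactor collapses to $a^s/\bigl(\gamma_{\mathcal{O},\,N}(aN)^{2s}\bigr)=1/\bigl(\gamma_{\mathcal{O},\,N}(N^2a)^s\bigr)$, producing the stated formula. I expect the main obstacle to be the third step — verifying that $\{a(-\overline{\omega}_Q),\,a\}$ really is a $\mathbb{Z}$-basis of $a\overline{\mathfrak{c}}$ lying inside $\mathcal{O}$, and translating the lattice congruence $\lambda\equiv1\Mod{N\mathcal{O}}$ faithfully into the two coordinate conditions, correctly inserting the inverse $a'$ of $a$ modulo $N$ (using $\gcd(a,\,N)=1$) and matching the excluded point $(0,\,-a'/N)$.
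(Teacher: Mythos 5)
Your proposal is correct and follows essentially the same route as the paper's proof: parametrize $C\cap\mathcal{M}(\mathcal{O},\,N)$ via Lemma \ref{P} with the $\gamma_{\mathcal{O},\,N}$-to-one map $\lambda\mapsto\lambda\mathfrak{c}$, compute $\mathrm{N}_\mathcal{O}(\lambda\mathfrak{c})=\mathrm{N}_{K/\mathbb{Q}}(\lambda)/a$ from Lemmas \ref{normbasic} and \ref{QaN}, expand $\lambda$ over the basis $\{-a\overline{\omega}_Q,\,a\}$ of $a\overline{\mathfrak{c}}=\mathfrak{c}^{-1}$, and translate $\lambda\equiv1\Mod{N\mathcal{O}}$ into the congruences $u\equiv0$, $va\equiv1\Mod{N}$ before substituting $u=Nm$, $v=Nn+a'$. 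The only cosmetic difference is that you pass through the basis $\{\tau_\mathcal{O},\,1\}$ via $a(-\overline{\omega}_Q)=\tau_\mathcal{O}+(b+b_\mathcal{O})/2$, whereas the paper works directly with $\mathcal{O}=[-a\overline{\omega}_Q,\,1]$; both are the same computation.
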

\begin{proof}
Let $\mathfrak{c}=[\omega_Q,\,1]$.
Then we find that
\begin{eqnarray*}
\zeta_\mathcal{O}(s,\,C)&=&\sum_{\mathfrak{a}\in[\mathfrak{c}]\cap
\mathcal{M}(\mathcal{O},\,N)}\frac{1}{\mathrm{N}_\mathcal{O}(\mathfrak{a})^s}\\
&=&\frac{1}{\gamma_{\mathcal{O},\,N}}\sum_{\lambda\in P}\frac{1}{\mathrm{N}_\mathcal{O}(\lambda\mathfrak{c})^s}
\quad\textrm{where}~
P=\{\lambda\in a\overline{\mathfrak{c}}~|~\lambda\neq0~
\textrm{and}~\lambda\equiv1\Mod{N\mathcal{O}}\},\\
&&\hspace{3.6cm}
\textrm{by Lemma \ref{P}}\\
&=&\frac{1}{\gamma_{\mathcal{O},\,N}}\sum_{\lambda\in P}
\left(\frac{\mathrm{N}_\mathcal{O}(a\mathcal{O})}{\mathrm{N}_\mathcal{O}(\lambda\mathcal{O})\mathrm{N}_\mathcal{O}(a\mathfrak{c})}\right)^s\quad\textrm{by Lemmas \ref{normbasic} (ii) and \ref{QaN}}\\
&=&\frac{1}{\gamma_{\mathcal{O},\,N}}\sum_{\lambda\in P}
\left(\frac{a^2}{\mathrm{N}_{K/\mathbb{Q}}(\lambda)\cdot a}\right)^s\quad\textrm{by Lemmas \ref{normbasic} (i) and \ref{QaN}}\\
&=&\frac{a^s}{\gamma_{\mathcal{O},\,N}}\sum_{\tiny\begin{array}{l}(u,\,v)\in\mathbb{Z}^2
\setminus\{(0,\,0)\}\\
\textrm{such that}~
u\equiv0,\,va\equiv1\Mod{N}\end{array}}
\frac{1}{|u(-a\overline{\omega}_Q)+va|^{2s}}\\
&&\hspace{4cm}\textrm{because}~
\lambda\in P,~
a\overline{\mathfrak{c}}=[-a\overline{\omega}_Q,\,a]~\textrm{and}~
\mathcal{O}=[-a\overline{\omega}_Q,\,1]\\
&=&\frac{a^s}{\gamma_{\mathcal{O},\,N}}\sum_{(m,\,n)\in\mathbb{Z}^2\setminus\{(0,\,-\frac{a'}{N})\}}
\frac{1}{|Nm(-a\overline{\omega}_Q)+(Nn+a')a|^{2s}}\\
&=&\frac{1}{\gamma_{\mathcal{O},\,N}(N^2a)^s}\sum_{(m,\,n)\in\mathbb{Z}^2\setminus\{(0,\,-\frac{a'}{N})\}}
\frac{1}{|m(-\overline{\omega}_Q)+n+\frac{a'}{N}|^{2s}}.
\end{eqnarray*}
\end{proof}

\section{Derivatives of $L$-functions at $s=0$}

In this section, we shall
derive a formula of
the derivative $L'_\mathcal{O}(0,\,\chi)$
in terms of ray class invariants for the order $\mathcal{O}$.
\par
By Lemma \ref{normbasic} (ii),
we can extend the norm map $\mathrm{N}_\mathcal{O}:\mathcal{M}(\mathcal{O})\rightarrow\mathbb{Z}$ to
the well-defined function
\begin{equation*}
I(\mathcal{O})\rightarrow\mathbb{Q},\quad\mathfrak{a}\mathfrak{b}^{-1}\mapsto
\mathrm{N}_\mathcal{O}(\mathfrak{a})\mathrm{N}_\mathcal{O}(\mathfrak{b})^{-1}\quad(\mathfrak{a},\,\mathfrak{b}\in\mathcal{M}(\mathcal{O}))
\end{equation*}
and denote it again by $\mathrm{N}_\mathcal{O}$.

\begin{definition}\label{g(C)}
Let $C\in\mathcal{C}_N(\mathcal{O})$.
\begin{enumerate}
\item[\textup{(i)}] Let $N=1$. Take a proper $\mathcal{O}$-ideal $\mathfrak{c}$ in the class $C$ and
define
\begin{equation*}
g_{\mathcal{O},\,N}(C)=g_{\mathcal{O},\,1}(C)=
(2\pi)^{12}\,\mathrm{N}_\mathcal{O}(\mathfrak{c}^{-1})^6|\Delta(\mathfrak{c}^{-1})|.
\end{equation*}
\item[\textup{(ii)}] If $N\geq2$, then we define
\begin{equation*}
g_{\mathcal{O},\,N}(C)=
g_{\left[\begin{smallmatrix}0&\frac{1}{N}\end{smallmatrix}\right]}^{12N}(C).
\end{equation*}
\end{enumerate}
\end{definition}

\begin{remark}\label{invremark}
\begin{enumerate}
\item[(i)]
Let $\{\xi_1,\,\xi_2\}$ be a $\mathbb{Z}$-basis for $\mathfrak{c}^{-1}$ such that
\begin{equation*}
\xi:=\frac{\xi_1}{\xi_2}\in\mathbb{H}.
\end{equation*}
By using Lemma \ref{normbasic} (i) and the well-known fact that the function
\begin{equation*}
\mathbb{H} \rightarrow \mathbb{C},\quad
\tau \mapsto \Delta([\tau,\,1])
\end{equation*}
is a modular form for $\mathrm{SL}_2(\mathbb{Z})$ of weight $12$
(\cite[Theorem 3 in Chapter 3 and Theorem 5 in Chapter 18]{Lang87}), we find that
\begin{equation}\label{2Nd}
(2\pi)^{12}\,\mathrm{N}_\mathcal{O}(\mathfrak{c}^{-1})^6|\Delta(\mathfrak{c}^{-1})|=(2\pi)^{12}\,\mathrm{N}_\mathcal{O}([\xi,\,1])^6|\Delta([\xi,\,1])|.
\end{equation}
And one can readily check that this value depends only on the class $C$, not on the special choice of $\mathfrak{c}$.
\item[(ii)] Since the modular form $\Delta$ and the Siegel function
$g_{\left[\begin{smallmatrix}0&\frac{1}{N}\end{smallmatrix}\right]}^{12N}$
have no zeros and poles on $\mathbb{H}$, the invariant $g_{\mathcal{O},\,N}(C)$
is always finite and nonzero.
\item[(iii)]
If $N\geq2$, then $g_{\mathcal{O},\,N}(C)=g_{\left[\begin{smallmatrix}0&\frac{1}{N}\end{smallmatrix}\right]}^{12N}(C)$ belongs to $K_{\mathcal{O},\,N}$ and satisfies
\begin{equation*}
g_{\mathcal{O},\,N}(C)^{\sigma_{\mathcal{O},\,N}(C')}=
g_{\left[\begin{smallmatrix}0&\frac{1}{N}\end{smallmatrix}\right]}^{12N}(C)^{\sigma_{\mathcal{O},\,N}(C')}
=g_{\left[\begin{smallmatrix}0&\frac{1}{N}\end{smallmatrix}\right]}^{12N}(CC')
=g_{\mathcal{O},\,N}(CC')
\end{equation*}
by Theorem \ref{transformation}.
\item[(iv)] Recently, Jung and Kim showed that
if $D_\mathcal{O}\neq-3,\,-4$ and $N\geq2$, then $g_{\mathcal{O},\,N}(C_0)^n$ generates $K_{\mathcal{O},\,N}$ over $K$ for any nonzero integer $n$ (\cite[Theorem 1.1]{J-K}).
\end{enumerate}
\end{remark}

For a pair $(\omega,\,z)\in\mathbb{C}\times
\mathbb{H}$, we define the $\xi$-function $\xi(\,\cdot\,,\,\omega,\,z)$ by
\begin{equation*}
\xi(s,\,\omega,\,z)=\sum_{\tiny\begin{array}{l}(m,\,n)
\in\mathbb{Z}^2~\textrm{such that}\\
mz+n+\omega\neq0
\end{array}}\frac{1}{
|mz+n+\omega|^{2s}}\quad(s\in\mathbb{C},~\mathrm{Re}(s)>1).
\end{equation*}
Put
\begin{eqnarray}
\eta(z)&=&e^{\frac{\pi\mathrm{i}z}{12}}\prod_{n=1}^\infty
\left(1-e^
{2\pi\mathrm{i}nz}\right),\nonumber\\
\vartheta_1(\omega,\,z)&=&2e^{\frac{\pi\mathrm{i}z}{6}}
(\sin\pi\omega)\eta(z)\prod_{n=1}^\infty
\left(1-e^{2\pi\mathrm{i}(nz+\omega)}\right)
\left(1-e^{2\pi\mathrm{i}(nz-\omega)}\right).\label{theta}
\end{eqnarray}

\begin{proposition}[Kronecker's limit formula]\label{Kronecker}
The $\xi$-function satisfies the following properties.
\begin{enumerate}
\item[\textup{(i)}] It has an analytic continuation
on the whole complex plane and
\begin{equation*}
\xi(0,\,\omega,\,z)=
\left\{
\begin{array}{rl}
-1 & \textrm{if}~\omega\in[z,\,1],\\
0 & \textrm{if}~\omega\not\in[z,\,1].
\end{array}\right.
\end{equation*}
\item[\textup{(ii)}] If we let  $\xi'=\displaystyle\frac{d\xi}{ds}$, then
\begin{equation*}
\xi'(0,\,\omega,\,z)=
\left\{\begin{array}{ll}
-\ln\left|4\pi^2\eta(z)^4\right| & \textrm{if}~\omega\in[z,\,1],\\
-\ln
\left|\frac{\vartheta_1(\omega,\,z)}{\eta(z)}\,
e^{\frac{\pi\mathrm{i}\omega(\omega-\overline{\omega})}
{z-\overline{z}}}
\right|^2 & \textrm{if}~\omega\not\in[z,\,1].
\end{array}\right.
\end{equation*}
\end{enumerate}
\end{proposition}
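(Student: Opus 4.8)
The statement is the classical Kronecker limit formula, and the plan is to obtain the analytic continuation of $\xi(s,\omega,z)$ by the Mellin--theta method and then to read off the value and the derivative at $s=0$, treating separately the two cases according to whether $\omega$ lies in the lattice $[z,1]$. Since the summand depends only on $\omega$ modulo $[z,1]$, in the case $\omega\in[z,1]$ I may translate and assume $\omega=0$, so that $\xi(s,0,z)=\sum_{(m,n)\neq(0,0)}|mz+n|^{-2s}$ is the Epstein zeta function of the lattice $[z,1]$; this case is governed by the first Kronecker limit formula. The case $\omega\notin[z,1]$ is the second Kronecker limit formula.

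Writing $z=x+\mathrm{i}y$, I first introduce the theta series $\Theta(t)=\sum_{(m,n)\in\mathbb{Z}^2}e^{-\pi t\,|mz+n+\omega|^2/y}$ and use the Mellin representation
\[
\frac{\Gamma(s)}{\pi^s}\,\xi(s,\omega,z)=\int_0^\infty t^{s-1}\bigl(\Theta(t)-\delta\bigr)\,dt,
\]
where $\delta=1$ if $\omega\in[z,1]$ (to remove the excluded term at the origin) and $\delta=0$ otherwise. Poisson summation in the two integer variables, using that $[z,1]$ has covolume $y$, yields a transformation of the shape $\Theta(1/t)=t\,\Theta^{\ast}(t)$, where $\Theta^{\ast}$ is the theta series of the dual lattice twisted by the additive character attached to $\omega$. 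Splitting $\int_0^\infty=\int_1^\infty+\int_0^1$ and applying this transformation to the second integral produces a representation holomorphic in $s$ away from the poles coming from the constant terms of $\Theta$ and $\Theta^{\ast}$, which gives the continuation to all of $\mathbb{C}$ and exhibits the simple pole of $\Gamma(s)$ at $s=0$. Since $1/\Gamma(s)=s+\gamma s^2+\cdots$ near $s=0$, writing $\Gamma(s)\xi(s,\omega,z)=A/s+B+\cdots$ gives $\xi(0,\omega,z)=A$ and $\xi'(0,\omega,z)=B+\gamma A$; the residue $A$ is exactly the coefficient $-\delta$ from the constant term of the integrand, so $\xi(0,\omega,z)=-1$ when $\omega\in[z,1]$ and $0$ otherwise, which is part~(i).

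For part~(ii) I extract the finite part $B$ from the continued expression. After carrying out the remaining integer summation, the non-constant Fourier modes assemble into the logarithm of an infinite product in $q=e^{2\pi\mathrm{i}z}$. When $\omega\in[z,1]$ this product is recognized, through the $q$-expansion of the Dedekind eta function, as $4\pi^2|\eta(z)|^4$, giving $\xi'(0,\omega,z)=-\ln|4\pi^2\eta(z)^4|$; here the terms in $\ln y$ arising from the covolume cancel, which is why no archimedean factor survives. When $\omega\notin[z,1]$, writing $\omega=\alpha z+\beta$ with $\alpha,\beta\in\mathbb{R}$, the same product is matched with the infinite product defining $\vartheta_1(\omega,z)$ in (\ref{theta}), and the prefactor $e^{\pi\mathrm{i}\omega(\omega-\overline{\omega})/(z-\overline{z})}=e^{\pi\mathrm{i}\alpha\omega}$ supplies exactly the non-holomorphic normalization that renders the combination the correct modular object, yielding $\xi'(0,\omega,z)=-\ln\bigl|(\vartheta_1(\omega,z)/\eta(z))\,e^{\pi\mathrm{i}\omega(\omega-\overline{\omega})/(z-\overline{z})}\bigr|^2$.

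The continuation and the evaluation of $\xi(0,\omega,z)$ are routine once the theta transformation is in hand; the delicate point is the second case of part~(ii), namely summing the only conditionally convergent inner series over $n$ at $s=0$ (so that Hecke's analytic-continuation trick is needed) and then pinning down \emph{all} constants: the factor $4\pi^2$, the exact power $\eta^4$, and above all the precise Gaussian normalization $e^{\pi\mathrm{i}\omega(\omega-\overline{\omega})/(z-\overline{z})}$ that turns the bare product into the theta quotient of (\ref{theta}), which is what ultimately ties $\xi'(0,\omega,z)$ to the Siegel functions in (\ref{infiniteproduct}). Alternatively, and more economically, one may quote the first and second Kronecker limit formulae in the forms found in \cite{Siegel} or \cite{Lang87}, transport the classical expansions at $s=1$ to $s=0$ through the functional equation of the Epstein zeta function, and verify that the constants reduce to those displayed above.
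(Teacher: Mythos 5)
The paper gives no argument for this proposition at all: its ``proof'' is the single line ``See \cite[Theorem 4]{Berndt}, \cite{Shintani} or \cite{Stark}'', delegating both limit formulas at $s=0$ to the literature. Your proposal therefore takes a genuinely different route: you sketch the classical argument itself (Mellin representation, theta transformation via Poisson summation on the shifted lattice $[z,\,1]$, splitting the integral at $t=1$, and reading off the residue and finite part at $s=0$), which is in substance the proof carried out in those references, while your closing alternative (quote the $s=1$ formulas from \cite{Siegel} or \cite{Lang87} and transport them to $s=0$ through the functional equation) is essentially the citation route the paper takes. The skeleton of your sketch is sound: the invariance of the summand under $\omega\mapsto\omega+m_0z+n_0$ justifies reducing the first case to $\omega=0$; the residue bookkeeping correctly yields $\xi(0,\,\omega,\,z)=-\delta$, which is part (i); and the identity $e^{\pi\mathrm{i}\omega(\omega-\overline{\omega})/(z-\overline{z})}=e^{\pi\mathrm{i}\alpha\omega}$ for $\omega=\alpha z+\beta$ with $\alpha,\,\beta\in\mathbb{R}$ is correct. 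Two points would need care in a full write-up. First, your displayed Mellin identity is off by a factor $y^{s}$: with $\Theta(t)=\sum_{(m,n)}e^{-\pi t|mz+n+\omega|^2/y}$ the left-hand side must be $\pi^{-s}\Gamma(s)\,y^{s}\,\xi(s,\,\omega,\,z)$, and consequently the expansion at $s=0$ produces $\ln y$ and $\ln\pi$ contributions beyond your ``$B+\gamma A$''; this is precisely the $\ln y$ whose cancellation you invoke in case (ii), so the bookkeeping has to be done with the corrected identity (one checks that $\xi'(0,\,0,\,z)=\ln y-\ln(4\pi^2y|\eta(z)|^4)=-\ln|4\pi^2\eta(z)^4|$, in agreement with the statement). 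Second, the identification of the finite part with $4\pi^2|\eta(z)|^4$, respectively with the $\vartheta_1/\eta$ quotient of (\ref{theta}), is asserted rather than carried out; that identification is the entire analytic content of the two limit formulas, and, as you yourself note, it is legitimate -- and is exactly what the paper does -- to delegate it to \cite{Berndt}, \cite{Shintani} or \cite{Stark}.
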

\begin{proof}
See \cite[Theorem 4]{Berndt}, \cite{Shintani} or \cite{Stark}.
\end{proof}

\begin{theorem}\label{derivative}
If $\chi$ is a character of
$\mathcal{C}_N(\mathcal{O})$, then we have
\begin{equation*}
L_\mathcal{O}'(0,\,\chi)=
\displaystyle-\frac{1}{\gamma_{\mathcal{O},\,N}6N}
\sum_{C\in\mathcal{C}_N(\mathcal{O})}\chi(C)
\ln\left|g_{\mathcal{O},\,N}(C)\right|.
\end{equation*}
\end{theorem}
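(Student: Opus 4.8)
The plan is to reduce the statement to a single-class computation and then feed in Kronecker's limit formula. Since $L_\mathcal{O}(s,\chi)=\sum_{C\in\mathcal{C}_N(\mathcal{O})}\chi(C)\zeta_\mathcal{O}(s,C)$ by the Remark following Definition \ref{defL}, differentiating at $s=0$ gives $L_\mathcal{O}'(0,\chi)=\sum_{C}\chi(C)\zeta_\mathcal{O}'(0,C)$, so it suffices to show $\zeta_\mathcal{O}'(0,C)=-\frac{1}{\gamma_{\mathcal{O},N}6N}\ln|g_{\mathcal{O},N}(C)|$ for each $C=\phi_{\mathcal{O},N}([Q])$ with $Q=ax^2+bxy+cy^2$. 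First I would rewrite Proposition \ref{zetaC}: with $z=-\overline{\omega}_Q\in\mathbb{H}$ and $\omega=a'/N$ (where $aa'\equiv1\Mod{N}$), the excluded point $(0,-a'/N)$ is a genuine lattice point exactly when $N=1$, and in either case the inner sum is precisely the Kronecker series $\xi(s,\omega,z)$. Hence $\zeta_\mathcal{O}(s,C)=\gamma_{\mathcal{O},N}^{-1}(N^2a)^{-s}\xi(s,\omega,z)$, and the product rule yields
\[
\zeta_\mathcal{O}'(0,C)=\frac{1}{\gamma_{\mathcal{O},N}}\Big(-\ln(N^2a)\,\xi(0,\omega,z)+\xi'(0,\omega,z)\Big).
\]
I would then apply Proposition \ref{Kronecker}, splitting into the cases $N\geq2$ and $N=1$.

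For $N\geq2$ the number $\omega=a'/N$ is real and does not lie in $[z,1]$, so Proposition \ref{Kronecker}(i) gives $\xi(0,\omega,z)=0$; moreover $\omega-\overline{\omega}=0$, so the exponential factor in Proposition \ref{Kronecker}(ii) is trivial and $\xi'(0,\omega,z)=-\ln|\vartheta_1(\omega,z)/\eta(z)|^2$. The crux is to identify this with a Siegel function: comparing the product (\ref{theta}) of $\vartheta_1$ with the expansion (\ref{infiniteproduct}) of $g_{\left[\begin{smallmatrix}0&a'/N\end{smallmatrix}\right]}$ (here $v_1=0$, $v_2=a'/N$), I expect $\vartheta_1(\omega,z)/\eta(z)=-i\,g_{\left[\begin{smallmatrix}0&a'/N\end{smallmatrix}\right]}(z)$, so that $\xi'(0,\omega,z)=-2\ln|g_{\left[\begin{smallmatrix}0&a'/N\end{smallmatrix}\right]}(z)|$. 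Finally, applying the Fricke-family rule (Proposition \ref{Frickefamily}) to Lemma \ref{fmatrix} with $f=g_{\left[\begin{smallmatrix}0&1/N\end{smallmatrix}\right]}^{12N}$, and using $\left[\begin{smallmatrix}0&1/N\end{smallmatrix}\right]\left[\begin{smallmatrix}1&-a'(b+b_\mathcal{O})/2\\0&a'\end{smallmatrix}\right]=\left[\begin{smallmatrix}0&a'/N\end{smallmatrix}\right]$, gives $g_{\mathcal{O},N}(C)=g_{\left[\begin{smallmatrix}0&a'/N\end{smallmatrix}\right]}^{12N}(-\overline{\omega}_Q)$, whence $2\ln|g_{\left[\begin{smallmatrix}0&a'/N\end{smallmatrix}\right]}(z)|=\frac{1}{6N}\ln|g_{\mathcal{O},N}(C)|$. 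Since $\xi(0,\omega,z)=0$ kills the $\ln(N^2a)$ term, this is exactly the claimed constant.

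For $N=1$ one takes $a'=0$, so $\omega=0\in[z,1]$ and $N^2a=a$; Proposition \ref{Kronecker} gives $\xi(0,0,z)=-1$ and $\xi'(0,0,z)=-\ln|4\pi^2\eta(z)^4|$, hence $\zeta_\mathcal{O}'(0,C)=\gamma_{\mathcal{O},1}^{-1}(\ln a-\ln|4\pi^2\eta(z)^4|)$. To match Definition \ref{g(C)}(i) I would use $\Delta([\tau,1])=\eta(\tau)^{24}$ for the scaled Eisenstein series, the transformation identity (\ref{2Nd}), and the norm computation $\mathrm{N}_\mathcal{O}([z,1])=1/a$ (from $\mathfrak{c}^{-1}=a[\overline{\omega}_Q,1]$, Lemma \ref{normbasic}(iii) and Lemma \ref{QaN}); together these give $g_{\mathcal{O},1}(C)=(2\pi)^{12}a^{-6}|\eta(z)|^{24}$ and hence $-\tfrac16\ln|g_{\mathcal{O},1}(C)|=\ln a-\ln|4\pi^2\eta(z)^4|$, as needed. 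Multiplying by $\chi(C)$ and summing over $C$ then yields the asserted formula.

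I expect the main obstacle to be the bookkeeping in the $N\geq2$ identification: verifying the precise constant in $\vartheta_1(\omega,z)/\eta(z)=-i\,g_{\left[\begin{smallmatrix}0&a'/N\end{smallmatrix}\right]}(z)$ from the two infinite products (reconciling the half-integer $q$-powers, the $2\sin\pi\omega$ prefactor of $\vartheta_1$ against the $(1-\zeta_N^{a'})$ leading factor of the Siegel function, and the absorption of $\eta$), in tandem with the vanishing of the Kronecker exponential and the matrix identity feeding Lemma \ref{fmatrix}. By contrast the $N=1$ case is purely a matter of tracking the $(2\pi)$-normalisations consistently.
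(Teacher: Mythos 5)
Your proposal is correct and follows essentially the same route as the paper's proof: reduce to $\zeta_\mathcal{O}'(0,C)$ via the remark after Definition \ref{defL}, invoke Proposition \ref{zetaC} to recognize the Kronecker series $\xi(s,a'/N,-\overline{\omega}_Q)$, apply Proposition \ref{Kronecker} in the two cases $N=1$ and $N\geq2$, and identify $\vartheta_1/\eta$ with the Siegel function $g_{\left[\begin{smallmatrix}0&a'/N\end{smallmatrix}\right]}$ and $g_{\mathcal{O},N}(C)$ with $g_{\left[\begin{smallmatrix}0&a'/N\end{smallmatrix}\right]}^{12N}(-\overline{\omega}_Q)$ via Lemma \ref{fmatrix} and Proposition \ref{Frickefamily}. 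The details you flag as obstacles (the $\vartheta_1$--Siegel constant, the vanishing exponential factor, and the $(2\pi)$-normalisation of $\Delta$ in the $N=1$ case) all check out exactly as you anticipate, and are precisely the points the paper passes over by citing the defining products (\ref{infiniteproduct}), (\ref{theta}) and the identity (\ref{2Nd}).
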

\begin{proof}
Let $C\in\mathcal{C}_N$ and so $C=\phi_{\mathcal{O},\,N}([Q])$ for some
$Q=ax^2+bxy+cy^2$. Let $a'$ be an integer satisfying $aa'\equiv1\Mod{N}$.
Since
\begin{equation*}
\zeta_\mathcal{O}(s,\,C)=
\frac{1}{\gamma_{\mathcal{O},\,N}(N^2a)^s}
\,\xi(s,\,\tfrac{a'}{N},\,-\overline{\omega}_Q)
\end{equation*}
by Proposition \ref{zetaC},
we find that
\begin{eqnarray*}
\zeta_\mathcal{O}'(0,\,C)&=&
\frac{1}{\gamma_{\mathcal{O},\,N}}\left(-\ln(N^2a)\xi(0,\,\tfrac{a'}{N},\,-\overline{\omega}_Q)
+\xi'(0,\,\tfrac{a'}{N},\,-\overline{\omega}_Q)\right)\quad\textrm{by Proposition \ref{Kronecker} (i)}\\
&=&\frac{1}{\gamma_{\mathcal{O},\,N}}\times
\left\{\begin{array}{ll}
\ln a-\ln\left|4\pi^2\eta(-\overline{\omega}_Q)^4\right| & \textrm{if}~N=1,\\
-\ln
\left|\frac{\vartheta_1(\tfrac{a'}{N},\,-\overline{\omega}_Q)}{\eta(-\overline{\omega}_Q)}
\right|^2 & \textrm{if}~N\geq2,
\end{array}\right.\quad\textrm{by Proposition \ref{Kronecker} (ii)}\\
&=&\frac{1}{\gamma_{\mathcal{O},\,N}}\times\left\{\begin{array}{ll}
-\ln\left|4\pi^2\mathrm{N}_\mathcal{O}([-\overline{\omega}_Q,\,1])\eta(-\overline{\omega}_Q)^4\right| & \textrm{if}~N=1,\\
-\ln\left|g_{\left[\begin{smallmatrix}0&\frac{a'}{N}
\end{smallmatrix}\right]}(-\overline{\omega}_Q)\right|^2 & \textrm{if}~N\geq2,
\end{array}\right.\\
&&\hspace{6cm}
\textrm{by Lemma \ref{QaN} and Definitions (\ref{infiniteproduct}), (\ref{theta})}\\
&=&\frac{1}{\gamma_{\mathcal{O},\,N}}\times\left\{\begin{array}{ll}
-\frac{1}{6}\ln|g_{\mathcal{O},\,1}(C)| & \textrm{if}~N=1,\\
-\frac{1}{6N}\ln|g_{\mathcal{O},\,N}(C)| & \textrm{if}~N\geq2,
\end{array}\right.\quad\textrm{by Definition \ref{g(C)} and (\ref{2Nd})}.
\end{eqnarray*}
Therefore we conclude that
\begin{equation*}
L_\mathcal{O}'(0,\,\chi)=
\sum_{C\in\mathcal{C}_N(\mathcal{O})}\chi(C)\zeta_\mathcal{O}'(0,\,C)=
-\frac{1}{\gamma_{\mathcal{O},\,N}6N}\sum_{C\in\mathcal{C}_N(\mathcal{O})}
\chi(C)
\ln|g_{\mathcal{O},\,N}(C)|.
\end{equation*}
\end{proof}

\section {An example of $\mathcal{C}_N(D_\mathcal{O})$}

We shall present an example of $\mathcal{C}_N(D_\mathcal{O})$
and its application further in order to find the minimal polynomial of the invariant $g_{\mathcal{O},\,N}(C_0)$ over $K$.
\par
Let $K=\mathbb{Q}(\sqrt{-2})$ and $\mathcal{O}=[5\sqrt{-2},\,1]$.
Then we get
\begin{equation*}
D_\mathcal{O}=-200,\quad\ell_\mathcal{O}=5,\quad
\tau_\mathcal{O}=5\sqrt{-2},\quad
\mathrm{min}(\tau_\mathcal{O},\,\mathbb{Q})=x^2+b_\mathcal{O}x+c_\mathcal{O}=x^2+50.
\end{equation*}
On the other hand, there are six reduced forms of discriminant $D_\mathcal{O}=-200$, namely,
\begin{equation*}
\begin{array}{lll}
Q_1=x^2+50y^2, & Q_2=2x^2+25y^2,\\
Q_3=3x^2-2xy+17y^2, &
Q_4=3x^2+2xy+17y^2,\\
Q_5=6x^2-4xy+9y^2, & Q_6=6x^2+4xy+9y^2.
\end{array}
\end{equation*}
Let $N=3$, and set
\begin{equation*}
\begin{array}{ll}
\widetilde{Q}_1=Q_1=x^2+50y^2,&\widetilde{Q}_2=Q_2=2x^2+25y^2,\vspace{0.1cm}\\
\widetilde{Q}_3=Q_3^{\left[\begin{smallmatrix}0&-1\\1&\phantom{-}0 \end{smallmatrix}\right]}=17x^2+2xy+3y^2,&
\widetilde{Q}_4=Q_4^{\left[\begin{smallmatrix}0&-1\\1&\phantom{-}0 \end{smallmatrix}\right]}=17x^2-2xy+3y^2,\vspace{0.1cm}\\
\widetilde{Q}_5=Q_5^{\left[\begin{smallmatrix}1&-1\\1&\phantom{-}0 \end{smallmatrix}\right]}=11x^2-8xy+6y^2,&
\widetilde{Q}_6=Q_6^{\left[\begin{smallmatrix}\phantom{-}1&1\\-1&0 \end{smallmatrix}\right]}=11x^2+8xy+6y^2\\
\end{array}
\end{equation*}
so as to have $\widetilde{Q}_i\in \mathcal{Q}(D_\mathcal{O},\,N)$ ($i=1,\,2,\,\ldots,\,6$).
By utilizing the fact
\begin{equation*}
W_{\mathcal{O},\,N}/U_{\mathcal{O},\,N}
\simeq\mathrm{Gal}(K_{\mathcal{O},\,N}/H_\mathcal{O})\simeq
P(\mathcal{O},\,N)/P_N(\mathcal{O})
\simeq
P(\mathcal{O},\,\ell_\mathcal{O}N)/P_N(\mathcal{O},\,\ell_\mathcal{O}N)
\end{equation*}
((\ref{WUOG}) and Lemma \ref{PMNPN})
and Lemma \ref{primelemma}, we attain
\begin{eqnarray*}
P(\mathcal{O},\,\ell_\mathcal{O}N)/P_N(\mathcal{O},\,\ell_\mathcal{O}N)=
\left\{
[\mathcal{O}],\,[\tau_\mathcal{O}\mathcal{O}]=[[\tfrac{\sqrt{-2}}{10},\,1]]\right\},
\end{eqnarray*}
which corresponds to the subgroup $\{[\widetilde{Q}_1],\,[50x^2+y^2]\}$
of $\mathcal{C}_N(D_\mathcal{O})=\mathcal{C}_3(-200)$.
In the group $\mathcal{C}_N(D_\mathcal{O})$, we let
\begin{equation*}
g_i=[\widetilde{Q}_i]\quad\textrm{and}
\quad
g_{i+6}=g_i\cdot[50x^2+y^2]\quad
(i=1,\,2,\,\ldots,\,6).
\end{equation*}
By adopting Definition \ref{binaryoperation} one can readily 
find that
$g_j=[\widetilde{Q}_j]$ ($j=7,\,8,\,\ldots,\,12$) with
\begin{equation*}
\begin{array}{ll}
\widetilde{Q}_7=50x^2+y^2, & \widetilde{Q}_8=25x^2+2y^2,\\
\widetilde{Q}_9=22x^2-36xy+17y^2, & \widetilde{Q}_{10}=22x^2+36xy+17y^2,\\
\widetilde{Q}_{11}=25x^2+30xy+11y^2, & \widetilde{Q}_{12}=25x^2-30xy+11y^2.
\end{array}
\end{equation*}
The group table of $\mathcal{C}_N(D_\mathcal{O})$ is given as follows. 

\begin{table}[h]
\centering
\caption*{The group table of $\mathcal{C}_3(-200)$}
\label{table}
\begin{tabular}{c|cccccccccccc}
&$g_1$&$g_2$&$g_3$&$g_4$&$g_5$&$g_6$&$g_7$&$g_8$&$g_9$&$g_{10}$&$g_{11}$&$g_{12}$\\
\hline
$g_1$&$g_1$&$g_2$&$g_3$&$g_4$&$g_5$&$g_6$&$g_7$&$g_8$&$g_9$&$g_{10}$&$g_{11}$&$g_{12}$\\
$g_2$&$g_{2}$&$g_{1}$&$g_{12}$&$g_{11}$&$g_{10}$&$g_{9}$&$g_{8}$&$g_{7}$&$g_{6}$&$g_{5}$&$g_{4}$&$g_{3}$\\
$g_3$&$g_{3}$&$g_{12}$&$g_{11}$&$g_{1}$&$g_{8}$&$g_{10}$&$g_{9}$&$g_{6}$&$g_{5}$&$g_{7}$&$g_{2}$&$g_{4}$\\
$g_4$&$g_{4}$&$g_{11}$&$g_{1}$&$g_{12}$&$g_{9}$&$g_{8}$&$g_{10}$&$g_{5}$&$g_{7}$&$g_{6}$&$g_{3}$&$g_{2}$\\
$g_5$&$g_{5}$&$g_{10}$&$g_{8}$&$g_{9}$&$g_{12}$&$g_{1}$&$g_{11}$&$g_{4}$&$g_{2}$&$g_{3}$&$g_{6}$&$g_{7}$\\
$g_6$&$g_{6}$&$g_{9}$&$g_{10}$&$g_{8}$&$g_{1}$&$g_{11}$&$g_{12}$&$g_{3}$&$g_{4}$&$g_{2}$&$g_{7}$&$g_{5}$\\
$g_7$&$g_{7}$&$g_{8}$&$g_{9}$&$g_{10}$&$g_{11}$&$g_{12}$&$g_{1}$&$g_{2}$&$g_{3}$&$g_{4}$&$g_{5}$&$g_{6}$\\
$g_8$&$g_{8}$&$g_{7}$&$g_{6}$&$g_{5}$&$g_{4}$&$g_{3}$&$g_{2}$&$g_{1}$&$g_{12}$&$g_{11}$&$g_{10}$&$g_{9}$\\
$g_9$&$g_{9}$&$g_{6}$&$g_{5}$&$g_{7}$&$g_{2}$&$g_{4}$&$g_{3}$&$g_{12}$&$g_{11}$&$g_{1}$&$g_{8}$&$g_{10}$\\
$g_{10}$&$g_{10}$&$g_{5}$&$g_{7}$&$g_{6}$&$g_{3}$&$g_{2}$&$g_{4}$&$g_{11}$&$g_{1}$&$g_{12}$&$g_{9}$&$g_{8}$\\
$g_{11}$&$g_{11}$&$g_{4}$&$g_{2}$&$g_{3}$&$g_{6}$&$g_{7}$&$g_{5}$&$g_{10}$&$g_{8}$&$g_{9}$&$g_{12}$&$g_{1}$\\
$g_{12}$&$g_{12}$&$g_{3}$&$g_{4}$&$g_{2}$&$g_{7}$&$g_{5}$&$g_{6}$&$g_{9}$&$g_{10}$&$g_{8}$&$g_{1}$&$g_{11}$\\
\end{tabular}
\end{table}
Since $\mathcal{C}_N(D_\mathcal{O})$ has three elements $g_2$, $g_7$, $g_8$ of order $2$, 
it is isomorphic to $\mathbb{Z}_2\times\mathbb{Z}_6$.
\par
Write $\widetilde{Q}_i=a_ix^2+b_ixy+c_iy^2$ ($i=1,\,2,\,\ldots,\,12$). We then achieve that
\begin{eqnarray*}
\mathrm{min}(g_{\mathcal{O},\,N}(C_0),\,K)
&=&\displaystyle\prod_{C\in\mathcal{C}_N(\mathcal{O})}\left(x-
g_{\left[\begin{smallmatrix}0&\frac{1}{N}\end{smallmatrix}\right]}^{12N}(C)\right)\\
&&\hspace{2cm}\textrm{by Definition \ref{g(C)} (ii) and Remark \ref{invremark} (iii), (iv)}\\
&=&\displaystyle\prod_{i=1}^{12}\left(x-
\left(g_{\left[\begin{smallmatrix}0&\frac{1}{N}\end{smallmatrix}\right]}^{12N}\right)
^{\left[
\begin{smallmatrix}1&-a_i'(b_i+b_\mathcal{O})/2\\0&a_i'\end{smallmatrix}
\right]}
(-\overline{\omega}_{\widetilde{Q}_i})
\right)\\
&&\hspace{2cm}\textrm{where $a_i'$ is an integer such that $a_ia_i'\equiv1\Mod{N}$}\\
&&\hspace{2cm}\textrm{by Definition \ref{DeffQ} and Lemma \ref{fmatrix}}\\
&=&\displaystyle\prod_{i=1}^{12}\left(x-
g_{\left[\begin{smallmatrix}0&\frac{a_i'}{N}\end{smallmatrix}\right]}^{12N}
(-\overline{\omega}_{\widetilde{Q}_i})
\right)\quad\textrm{by Proposition \ref{Frickefamily}}.
\end{eqnarray*}
By making use of the definition (\ref{infiniteproduct}), one can numerically estimate 
$\mathrm{min}(g_{\mathcal{O},\,3}(C_0),\,K)$  as
\begin{equation*}
\begin{array}{rcl}
&&x^{12}
-19732842623587344380 x^{11}
+85622274889372918445313749346 x^{10}\vspace{0.1cm}\\
&&+583422788794106041510392970996250100 x^9\vspace{0.1cm}\\
&&+2412956602599045666947505580865471555967855 x^8\vspace{0.1cm}\\
&&+4622030004758636935674310042187173142345125210120 x^7\vspace{0.1cm}\\
&&+5159683938264742220691719229969015883694331066838711900 x^6\vspace{0.1cm}\\
&&+202375300752001975403428909178152428797277946213173155269640 x^5\vspace{0.1cm}\\
&&+2017771307025673942770713882875344826204880202806909292959103855 x^4\vspace{0.1cm}\\
&&-2883328681523953153105049905288236082276160171409937896788594572300 x^3\vspace{0.1cm}\\
&&+4487601627619641192200184812721309459195966653602482165478526149968226 x^2\vspace{0.1cm}\\
&&-19833699482405442556441925074783039534555541722620 x+1.
\end{array}
\end{equation*}
This tells us that the invariant $g_{\mathcal{O},\,3}(C_0)$ is in fact a unit as algebraic integer. 

\section*{Statements \& Declarations}

\subsection*{Funding}
The first named author was supported by the National Research Foundation of Korea (NRF) grant funded by the Korea government (MSIT) (No. RS-2023-00252986).
The third named (corresponding) author was supported
by Hankuk University of Foreign Studies Research Fund of 2023 and 
by the National Research Foundation of Korea (NRF) grant funded by the Korea government (MSIT) (No. RS-2023-00241953).

\subsection*{Competing Interests}
The authors have not disclosed any competing interests.

\subsection*{Data availability}
Data sharing is not applicable to this article as no datasets
were generated or analysed during the current study.

\bibliographystyle{amsplain}

\begin{thebibliography}{99}

\bibitem {Berndt} B. C. Berndt, \textit{Identities involving the coefficients of a class of Dirichlet series VI},
Trans. Amer. Math. Soc., 160 (1971), 157--167.

\bibitem {B-C} A. Bourdon and P. L. Clark,
\textit{Torsion points and Galois representations on CM elliptic curves},
Pacific J. Math. 305 (2020), no. 1, 43--88.

\bibitem {B-C-P} A. Bourdon, P. L. Clark and P. Pollack,
\textit{Anatomy of torsion in the CM case}, Math. Z. 285
(2017), no. 3--4, 795--820.

\bibitem {Cho} B. Cho, \textit{Primes of the form
$x^2+ny^2$ with conditions $x\equiv1~\mathrm{mod}~N,~y\equiv0~\mathrm{mod}~N$},
J. Number Theory 130 (2010), no. 4, 852--861.

\bibitem {C-P} P. L. Clark and P. Pollack,
\textit{The truth about torsion in the CM case, II}, Q. J. Math. 68
(2017), no. 4, 1313--1333.

\bibitem {Cox} D. A. Cox, \textit{Primes of the Form $x^2+ny^2$--Fermat, Class field theory, and Complex Multiplication},
3rd ed. with solutions, with contributions by Roger Lipsett,
AMS Chelsea Publishing, Providence, R.I., 2022.

\bibitem {E-K-S} I. S. Eum, J. K. Koo and D. H. Shin, \textit{Binary quadratic forms and ray class groups},
Proc. Roy. Soc. Edinburgh Sect. A 150 (2020), no. 2, 695--720.

\bibitem {Gauss} C. F. Gauss, \textit{Disquisitiones Arithmeticae}, Leipzig, 1801.

\bibitem {Janusz} G. J. Janusz, \textit{Algebraic Number Fields},
2nd ed., Grad. Studies in Math. 7, Amer. Math. Soc., Providence,
R. I., 1996.

\bibitem {J-K} H. Y. Jung and C. H. Kim, \textit{Modularity of Galois traces of ray class invariants},
Ramanujan J. 54 (2021), no. 2, 355--383.

\bibitem {J-K-S18} H. Y. Jung, J. K. Koo and D. H. Shin,
\textit{Generation of ray class fields modulo $2$, $3$, $4$ or $6$ by using the Weber function},
J. Korean Math. Soc. 55 (2018), no. 2, 343--372.

\bibitem {J-K-S22} H. Y. Jung, J. K. Koo and D. H. Shin,
\textit{Class fields generated by coordinates of elliptic curves}, Open Math. 20 (2022), no. 1, 1145--1158. 

\bibitem {J-K-S-Y} H. Y. Jung, J. K. Koo, D. H. Shin and D. S. Yoon,
\textit{On some $p$-adic Galois representations and form class groups},
Mathematika 68 (2022), no. 2, 535--564.

\bibitem {K-L} D. Kubert and S. Lang, \textit{Modular Units}, Grundlehren der mathematischen Wissenschaften 244, Spinger-Verlag, New York-Berlin, 1981.

\bibitem {Lang87} S. Lang, \textit{Elliptic Functions}, With an appendix by J. Tate, 2nd ed., Grad. Texts in Math. 112, Spinger-Verlag, New York, 1987.

\bibitem {Lang02} S. Lang, \textit{Algebra}, 3rd ed.,
Grad. Texts in Math. 211, Springer-Verlag, New York, 2002.

\bibitem {Lozano-Robledo} \'{A}. Lozano-Robledo, \textit{Galois representations attached to elliptic curves with complex multiplication}, 
Algebra Number Theory 16 (2022), no. 4, 777--837.

\bibitem {Meyer} C. Meyer, \textit{\"{U}ber einige Anwendungen Dedekindscher Summen}, 
J. Reine Angew. Math. 198 (1957), 143--203.

\bibitem {Ramachandra}
K. Ramachandra, \textit{Some applications of Kronecker's limit
formula}, Ann. of Math. (2) 80 (1964), 104--148.

\bibitem {Schertz} R. Schertz,
\textit{Complex multiplication}, New Mathematical Monographs, 15,
Cambridge University Press, Cambridge, 2010.

\bibitem {Shimura} G. Shimura, \textit{Introduction to the Arithmetic Theory of Automorphic Functions}, Iwanami Shoten and Princeton University
Press, Princeton, N. J., 1971.

\bibitem {Shintani} T. Shintani,
\textit{A proof of the classical Kronecker limit formula},
Tokyo J. Math. 3 (1980), no. 2, 191--199.

\bibitem {Siegel} C. L. Siegel,
\textit{Lectures on Advanced Analytic Number Theory},
Notes by S. Raghavan, Tata Institute of Fundamental Research Lectures on Mathematics 23, Tata Institute of Fundamental Research, Bombay 1965.

\bibitem {Silverman94} J. H. Silverman, \textit{Advanced Topics in the Arithmetic of Elliptic Curves}, Grad. Texts in Math. 151, Springer-Verlag, New York, 1994.

\bibitem {Silverman09} J. H. Silverman, \textit{The Arithmetic of Elliptic Curves}, 2nd ed., Grad. Texts in Math. 106, Springer, Dordrecht, 2009.

\bibitem {Sohngen} H. S\"{o}hngen, \textit{Zur Komplexen Multiplikation}, Math. Ann. 111 (1935), no. 1, 302–328.

\bibitem {Stark}  H. M. Stark, \textit{Class fields and modular forms of weight one},
Modular functions of one variable, V (Proc. Second Internat. Conf., Univ. Bonn, Bonn, 1976), pp. 277--287, Lecture Notes in Math. 601, Springer, Berlin, 1977.

\bibitem {Stevenhagen} P. Stevenhagen, \textit{Hilbert's 12th problem, complex multiplication and Shimura reciprocity},
Class field theory—its centenary and prospect (Tokyo, 1998), 161--176, Adv. Stud. Pure Math. 30, Math. Soc. Japan, Tokyo, 2001.

\end{thebibliography}

\address{
Department of Mathematics\\
Dankook University\\
Cheonan-si, Chungnam 31116\\
Republic of Korea} {hoyunjung@dankook.ac.kr}
\address{
Department of Mathematical Sciences \\
KAIST \\
Daejeon 34141\\
Republic of Korea} {jkgoo@kaist.ac.kr}
\address{
Department of Mathematics\\
Hankuk University of Foreign Studies\\
Yongin-si, Gyeonggi-do 17035\\
Republic of Korea} {dhshin@hufs.ac.kr}
\address{
Department of Mathematics Education\\
Pusan National University\\
Busan 46241\\Republic of Korea}
{dsyoon@pusan.ac.kr}

\end{document}